\title{
Inference in Randomized Least Squares and PCA \\
via Normality of Quadratic Forms}
\author
{Leda Wang,
Zhixiang Zhang,
and Edgar Dobriban\footnote{
Author information:
LW: School of Mathematics,  
Univ. of Science and Technology of China,
\texttt{wangleda@mail.ustc.edu.cn}.
ZZ: Department of Mathematics, 
University of Macau,
\texttt{zhixz@wharton.upenn.edu}.
ED: Department of Statistics and Data Science,  
University of Pennsylvania,
\texttt{dobriban@wharton.upenn.edu}.
}}
\begin{document}

\maketitle
\begin{abstract}
Randomized algorithms can be used to speed up the analysis of large datasets.
In this paper, we develop a unified methodology for statistical inference via randomized sketching or projections in two of the most fundamental problems in multivariate statistical analysis:
 least squares and PCA. The methodology applies to fixed datasets---i.e., is data-conditional---and the only randomness is due to the randomized algorithm. We propose statistical inference methods for a broad range of sketching distributions, such as the subsampled randomized Hadamard transform (SRHT), Sparse Sign Embeddings (SSE) and CountSketch, sketching matrices with i.i.d.~entries, and uniform subsampling. 
To our knowledge, no comparable methods are available 
for SSE and for SRHT in PCA. 
Our novel theoretical approach rests on showing the asymptotic normality of certain quadratic forms.   As a contribution of broader interest, we show central limit theorems for quadratic forms of the SRHT,  relying on a novel proof via a dyadic expansion that leverages the recursive structure of the Hadamard transform. Numerical experiments using both synthetic and empirical datasets support the efficacy of our methods, and in particular suggest that sketching methods can have better computation-estimation tradeoffs than recently proposed optimal subsampling methods. 
  
\end{abstract}

\tableofcontents
\section{Introduction}
\label{intro}

Randomized algorithms, including
Monte Carlo methods, stochastic optimization, as well as randomized sketching and random projections, can be used to speed up the analysis of large datasets, and have a broad range of applications  
\citep{spall2005introduction,owen2019monte,
vempala2005random,halko2011algorithm, mahoney2011randomized,woodruff2014sketching,Lee:Ng:2020, cannings2021random}. 
At the same time, randomization leads to additional variability, which must be controlled.
By viewing the numerical quantity of interest as an unknown parameter, recent works have developed methods for statistical inference based on randomized sketching algorithms in both least squares regression, see e.g., 
\cite{lopes2018error,lopes2019improving,lopes2020error,wang2018optimal,wang2021optimal,yu2022optimal,ahfock2021statistical,bartan2023distributed,lee2022least,zhang2023framework,ma2022asymptotic}, etc, 
and in principal component analysis (PCA),
see e.g., 
\cite{lopes2020error}; as discussed more in \Cref{relw}.

At the moment, developing methods for statistical inference via randomized sketching algorithms requires a case-by-case analysis for each specific problem and sketching distribution.
In particular, the above works study 
two of the possibly most fundamental problems in multivariate statistics---linear regression and PCA---separately and with different methods.
Moreover, various works study different specific distributions of the sketching or projection matrices, such as Gaussian sketching 
\citep[e.g.,][etc]{lopes2020error, ahfock2021statistical,bartan2023distributed},
projections with i.i.d.~entries \citep[e.g.,][etc]{lopes2018error},
subsampling \citep[e.g.,][etc]{wang2018optimal,wang2021optimal,yu2022optimal,lopes2020error,ma2022asymptotic},
CountSketch \citep[e.g.,][etc]{ahfock2021statistical}, and
Hadamard sketching \citep[e.g.,][etc]{lopes2019bootstrap,ahfock2021statistical}.
It is unclear whether each problem requires a completely different approach and methods, or a unifying approach exists.

In this work, we develop a
unifying approach 
for statistical inference for randomized sketching methods in those two fundamental multivariate statistical problems,
least squares and PCA.
Our key insight is that both problems can be reduced to proving the asymptotic normality of certain quadratic forms of the sketching matrices.
We develop a theoretical framework (see \Cref{qfgreatfig1} for an illustration)
that makes only a few assumptions on the data, 
and develops methods for statistical inference in randomized least-squares and PCA  if appropriate 
quadratic forms are normal.
Then we apply our general theory to develop methods for a broad set of 
sketching distributions (matrices with i.i.d.~entries, subsampled randomized Hadamard transforms, sparse sign embeddings and CountSketch, uniform orthogonal sketching and uniform subsampling).
We also 
study the computational cost of our methods in detail, 
and show empirically that our sketching methods have a favorable speed-accuracy tradeoff compared to optimal subsampling methods.

In more detail,
we consider a sequence of deterministic datasets $(\mD_n)_{n\ge 1}$, following the framework of \cite{zhang2023framework}.
We are interested in a parameter $\theta_n = \theta_n(\mD_n) \in \R^d$, for some fixed positive integer $d$, which is a deterministic function of the data.
We consider the setting where the data is 
so large that we cannot access it directly to perform least squares or PCA computations.
We instead observe the output $\mA_m(\mD_n,S_{m,n})$ of a known \emph{randomized algorithm}  $\mA_m$, based on an auxiliary source of randomness $S_{m,n}$ whose distribution $Q_{m,n}$ is chosen by the user.
However, the random variable  $S_{m,n}$ may have a large size and is not assumed to be observed.
Having observed $ Z_{m,n}$, we are interested in statistical inference for $\theta_n$.

\begin{figure}[ht]
    \centering
    \includegraphics[width=0.55\linewidth]{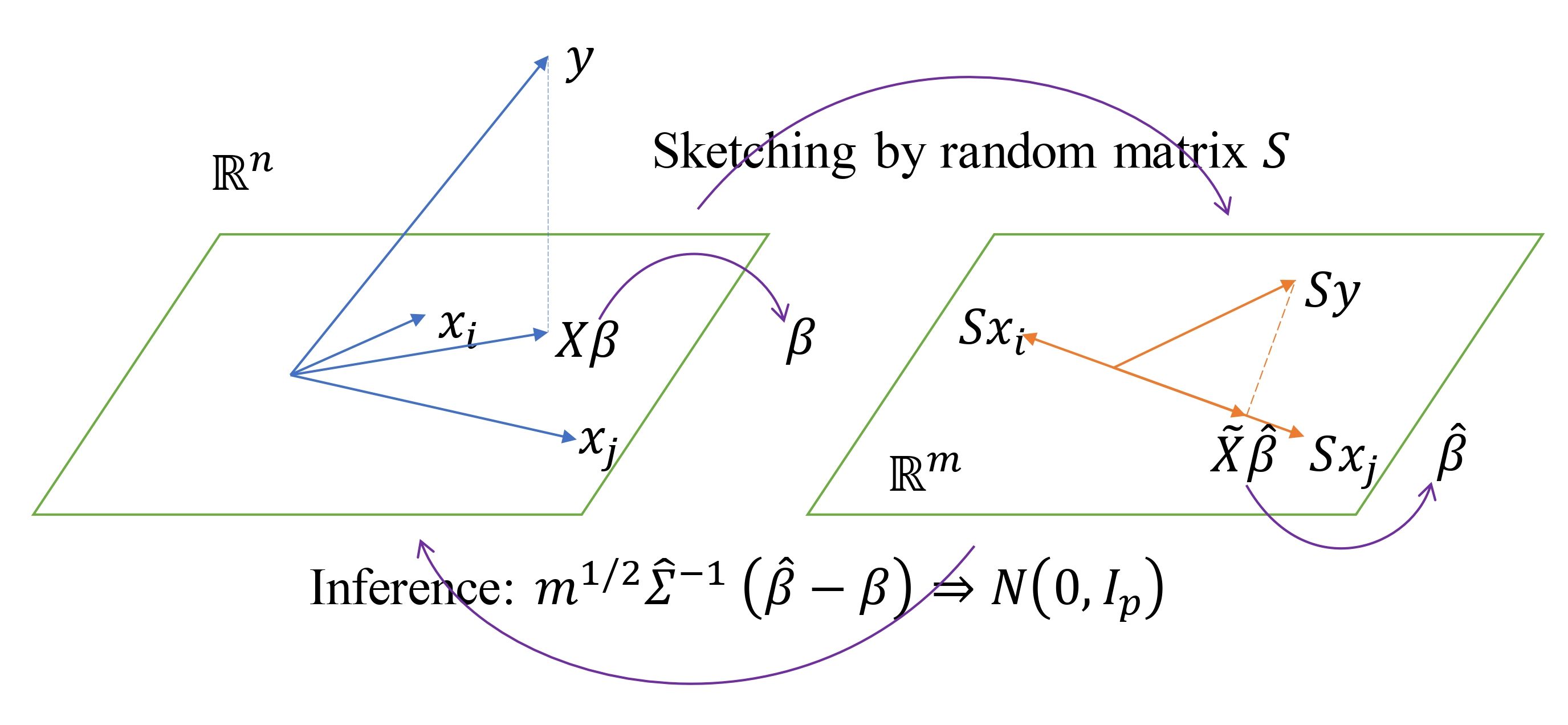}
    \caption{An illustration of 
    the geometry of the sketched least squares estimator,  highlighting our inferential objective.}
    \label{qfgreatfig1}
\end{figure}

In the  \emph{least squares problem}---see  \Cref{qfgreatfig1} for an illustration---the unobserved dataset is a deterministic $n\times p$ matrix $X_n$---where often $n$ is the sample size and $p$ is the number of features---and a deterministic $n\times 1$ vector $y_n$.
We are interested in the deterministic least squares parameter
 $\beta_n= (X_n^\top X_n)^{-1} X_n^\top y_n$.
For an $m\times n$ random sketching matrix $S_{m,n}$, 
the sketched data is
$(\tX_{m,n}, \ty_{m,n}) = (S_{m,n}X_n,S_{m,n}y_{n})$.
We consider two possible estimators of $\beta_n$, the 
\emph{sketch-and-solve} (or, complete sketching) least squares and \emph{partial sketching} estimators \citep[e.g.,][etc]{sarlos2006improved,drineas2006sampling},  defined, respectively as:
\begin{equation}\label{ss} 
\hbs=\left(\tX_{m,n}^\top \tX_{m,n}\right)^{-1}\tX_{m,n}^\top \ty_{m,n}  \quad \text{and} \quad \hbp=\left(\tX_{m,n}^\top \tX_{m,n}\right)^{-1}X_n^\top y_{n}.\end{equation}
The observed data for sketch-and-solve regression is 
$(\tX_{m,n}, \ty_{m,n}) = (S_{m,n}X_n,S_{m,n}y_{n})$, 
and for partial sketching, it is $S_{m,n}X_n$ and 
$X_n^\top y_{n}$. 
In some cases, $X_n^\top y_{n}$ can be computed relatively efficiently, and using it may improve accuracy.

In the  \emph{principal component analysis problem}, 
the unobserved dataset is the $n\times p$ matrix $X_n$, while 
the observed dataset is
$\tX_{m,n} = S_{m,n}X_n$.
Our targets of inference are the singular values  of $X_n$---or equivalently the eigenvalues of $X_n^\top X_n$---and the right singular vectors of $X_n$.
We consider the corresponding singular values and vectors of $\tX_{m,n}$ as estimators. 

\begin{figure} 
    \centering
    \includegraphics[width=1\textwidth]{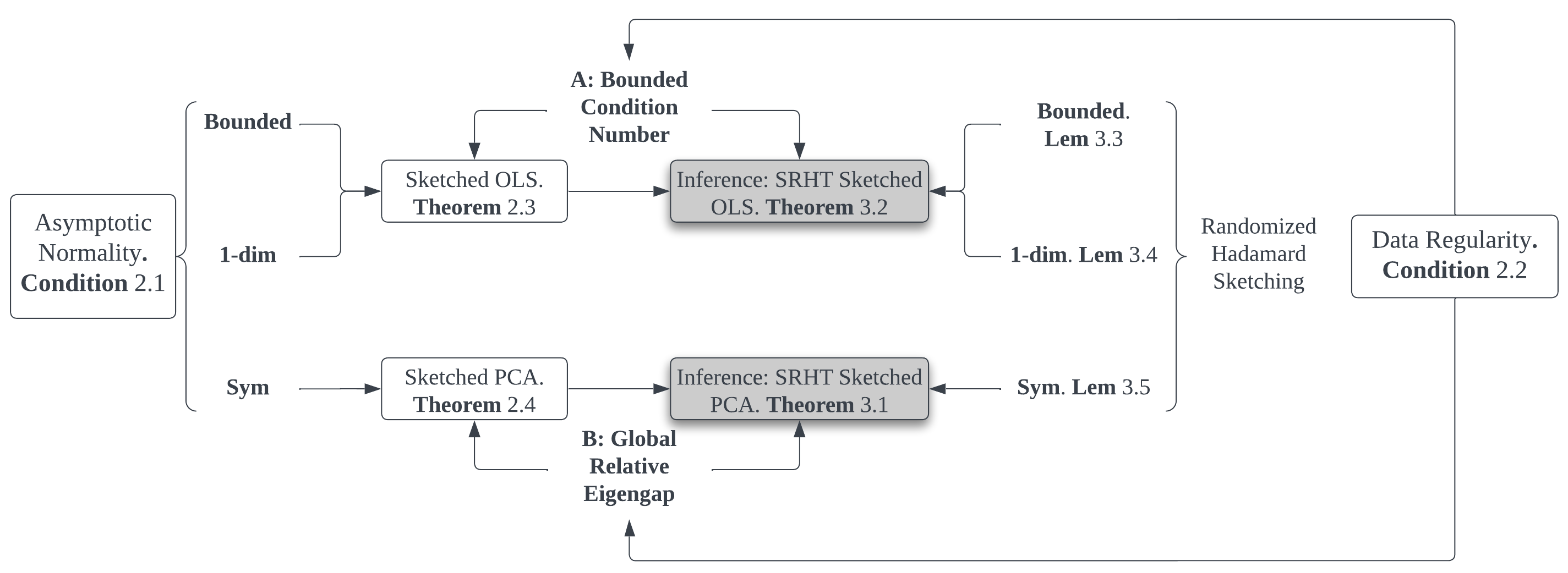}
    \caption{A flowchart illustrating our theoretical framework.
    This graph shows the connections between the various conditions  and results to develop methods for   statistical inference in  least squares (LS, also referred to as ordinary least squares or OLS) and PCA using SRHT (Subsampled Randomized Hadamard Transform) sketching. In particular, note that all components of Condition \ref{generalquadratic forms} are used directly in our key general inference results.}
    \label{SRHT_flowchart}
\end{figure}
\subsection{Contributions}
\begin{enumerate}
    \item {\bf Unifying approach to statistical inference in randomized least squares (LS) and PCA.} We propose a unifying approach for statistical inference in randomized least squares and PCA (Section \ref{secframework}). 
    Our approach reduces this to showing the asymptotic normality of certain sequences of quadratic forms.
    See Figure \ref{SRHT_flowchart}
    for the logical flow of ideas and the connections between the various results in our framework. 
    The methodology applies to fixed datasets---i.e., is data-conditional---under mild conditions such as a bounded condition number.
    In particular, we do not impose any statistical model (such as i.i.d.-ness or a linear model) 
    on the data.
    
    Specifically, we propose methods for inference in 
    sketched least squares (Theorem \ref{lemgeneols}), 
    under the asymptotic normality of quadratic forms from Condition \ref{generalquadratic forms}.
    We also develop methods for inference in  sketched PCA (Theorem \ref{infPCAgeneral}), when certain 
    matrix-valued quadratic forms converge to a multivariate Gaussian distribution. 
    See Table \ref{sumasyPCA} for a summary.
    For completeness, we also show how our results can be used for inference for population parameters when given an i.i.d.~sample from a population.
    
    \item 
    {\bf  Inference methods for a broad range of sketching distributions.} 
    We then propose statistical inference methods for a broad range of sketching distributions, such as 
    the subsampled 
randomized Hadamard transform (SRHT, Section \ref{srht}),
Sparse Sign Embeddings (SSE) and CountSketch (Section \ref{cs-sse}),
sketching matrices with i.i.d.~entries (Section \ref{iid}), 
uniform partial orthogonal or Haar sketches (Section \ref{haar}), 
and uniform subsampling (Section \ref{unif}).
For several of these, either no comparable methods have been available before (e.g., for SSE, and for SRHT in PCA), or our results allow for broader conditions on the data (e.g., for i.i.d.~sketches, and for SRHT in LS).

      \item {\bf Technical contributions: CLTs for quadratic forms.}
      As a technical contribution of broader interest, we show novel central limit theorems for quadratic forms of  sketching matrices. 
      This includes the SRHT (Lemmas \ref{lemgeneralqfsrht} and \ref{lemsymqfsrht}), which relies on a novel proof via a dyadic expansion, leveraging the Hadamard transform.
        It also includes SSE and CountSketch  (Lemmas \ref{lemgeneralqfcs} and \ref{lemsymqfcs}), which relies on 
        the analysis of a weighted degenerate U-statistic of order two via
        delicate moment calculations.
        CLTs for random quadratic forms are an important tool in random matrix theory, see  e.g. \citep{bai2008central}; and thus we think our results are of broader interest.
      
    \item {\bf Computational analysis and empirical experiments.} 
    We analyze and compare the computational cost of various sketching methods (Section \ref{comp}). Additionally, we conduct numerical experiments using both synthetic and empirical data (Section \ref{secsimulation}), which support the efficacy of our methods. 
    A comparison of our methods with certain optimal subsampling approaches (\Cref{csk}) supports that sketching methods achieve a better empirical time-accuracy tradeoff.
    Our experiments are reproducible with code provided at \url{github.com/Futwangalerda/normality-QF-sketching-apps}. 
    \end{enumerate}
\subsection{Related Work}
\label{relw}
Sketching and random projection methods have been  studied extensively in multiple domains \citep[etc]{vempala2005random, li2006very, halko2011finding, mahoney2011randomized, woodruff2014sketching, drineas2016randnla, martinsson2020randomized, cannings2021random}.
They have been applied to various problems, 
including 
PCA and SVD \citep[e.g.,][etc]{frieze2004fast,halko2011finding, yang2021reduce,zhang2022perturbation},
linear and 
ridge regression \citep{lu2013faster,chen2015fast, wang2017sketched, dobriban2019asymptotics, liu2019ridge, lacotte2020limiting, derezinski2021sparse}, 
two sample testing \citep{lopes2011more,srivastava2016raptt},
 testing point null hypotheses in non-parametric regression \citep{liu2019sharp},
testing in single-index models \citep{liu2022random}, nonparametric testing \citep{liu2019sharp}, and convex optimization \citep{pilanci2015randomized,pilanci2016iterative,pilanci2017newton}, etc.

Works such as \cite{lopes2018error, ahfock2021statistical, lee2022least, bartan2023distributed} have developed methods for statistical inference in least squares regression.
\cite{ahfock2021statistical} 
study the statistical properties of
complete, partial, and certain one-step corrected sketching estimators in least squares.
They develop methods for finite-sample exact inference for Gaussian sketching.
For Hadamard sketching and the CountSketch, they show the asymptotic normality of the sketched estimators, using a technical approach that is different from ours. 
Compared to \cite{ahfock2021statistical}, we develop a unifying framework that includes both PCA and least squares, study the setting where the sketch dimension diverges, and consider a broader range of sketching distributions. 

Bootstrap methods for inference using sketching have been studied separately for least squares 
\citep{lopes2018error} and PCA \citep{lopes2020error}, while we develop a unified approach.
Moreover, they require certain convergence conditions for the unobserved data, while we only need some regularity conditions (such as a bounded condition number), by using subsequence arguments.
\cite{lopes2020error} considers  
Gaussian sketching methods and subsampling in PCA, again under certain convergence conditions.

\cite{zhang2023framework} propose a framework for statistical inference via randomized algorithms.
They consider arbitrary algorithms, only assuming that their output has a limiting error distribution around the true quantity/parameter of interest, without knowing what the distribution is.
They develop a range of methods for statistical inference in this context, including sub-randomization---generalizing subsampling \citep{politis1999subsampling}---multi-run plug-in and multi-run aggregation.
When the limiting distribution 
belongs to a known parametric family with unknown parameters, they consider inference based on estimating those parameters.
Our setting belongs to this latter case, 
and our main contribution is to show how to derive the limiting distributions in a unifying way in sketching for linear regression and PCA.

\cite{wang2018optimal,wang2021optimal,yu2022optimal} investigate optimal subsampling for logistic regression,  quantile regression models as well as for more general maximum quasi-likelihood estimators. 
They establish consistency and asymptotic normality of weighted subsampling estimators, 
and derive optimal subsampling probabilities that minimize the asymptotic mean squared error, as well as methods to estimate these probabilities.
%For uniform subsampling, the central limit theorem in \cite{yu2022optimal} is consistent with our findings. 
These approaches impose a regression model on the data, while we consider the data as fixed. 
\cite{ma2022asymptotic} study 
weighted sampling estimators in a fixed-data setting, where the sample size is fixed. In contrast, we consider a growing sample size. 

\subsection{A Concrete Result}

To give the reader a concrete example of the type of results that we can obtain, we state here a method for inference in PCA using the Subsampled Randomized Hadamard Transform (SRHT) \citep{ailon2006approximate}, see \Cref{srht}.
In \Cref{PCAsrht}, we consider an $m\times n$, $m\le n$, 
SHRT sketching matrix $S_{m,n}$, 
data matrices $(X_n)_{n\ge  1}$
satisfying an eigengap condition, 
with vanishing leverage scores (i.e., 
row norms of their left singular matrices).
We show that 
for any $i\in[p]$,
as $m,n\to \infty$ such that $m/\log^2 n\to\infty$ with $p$ fixed, 
we have for the unobserved data eigenvalues $\Lambda_{n,i}=\lambda_i(X_n^\top X_n)$
and the observed sketched eigenvalues $\hat{\Lambda}_{m,n,i}=\lambda_i(X_n^\top S_{m,n}^\top S_{m,n} X_n)$ that with $\gamma_n = m/n$,
\begin{equation}\label{infPCAvalsrht}
\sqrt{\frac{m}{3(1-\gamma_n)}}\hat{\Lambda}_{m,n,i}^{-1}\left(\hat{\Lambda}_{m,n,i}-\Lambda_{n,i}\right)\Rightarrow \N(0,1).
\end{equation}
Letting $z_{1-\alpha/2}$ be the normal $1-\alpha/2$ quantile for some $\alpha\in(0,1)$,
this result can be used to form  confidence intervals $[\hat{\Lambda}_{m,n,i}(1-z_{1-\alpha/2}\sqrt{3(1-\gamma_n)/m})  , \hat{\Lambda}_{m,n,i}(1+z_{1-\alpha/2}\sqrt{3(1-\gamma_n)/m})]$
with $1-\alpha$ asymptotic coverage
for the data eigenvalues $\Lambda_{n,i}$ based on the sketched eigenvalues $\hat{\Lambda}_{m,n,i}$.

\subsection{Notation and Definitions}
\label{not}

We present a self-contained summary 
of notations needed in our work.
The reader may skip to \Cref{secframework} 
and refer back to this section as needed.

{\bf Basics.}
For a positive integer $p\ge 1$, 
we let $[p] = \{1,\ldots,p\}$.
For a vector $a=(a_1,\ldots,a_n)^\top\in\R^n$, we 
write $\|a\|_q=(\sum_{i=1}^n |a_i|^q)^{1/q}$
for its $\ell_q$ norm,
where $1\le  q<\infty$. 
For $q=\infty$, we denote $\|a\|_\infty=\max_{1\le  i\le  n}|a_i|$. 
For any $n\ge 1$, we define $\nsp$ to be the unit sphere in Euclidean space $\R^n$. 
For any real numbers $x$ and $y$, 
we define ${\lceil x\rceil}$ as the smallest integer greater than or equal to $x$, and $x\wedge y=\min\{x,y\}$.
We use the symbol ``$\mrd$" to denote the differential of a function.  
For an event $A$, $I\{A\}$ denotes its indicator function, which equals to unity if $A$ happens and to zero otherwise. 

{\bf Convergence.}
For two positive sequences $\{a_n\}_{n \ge 1},\{b_n\}_{n \ge 1}$, we write $a_n=O(b_n)$ if there exists a positive constant $C$ such that $a_n / b_n \le C$ and we write $a_n=o(b_n)$ if $a_n / b_n \rightarrow 0$. 
Moreover, $O_P(\cdot)$ and $o_P(\cdot)$ have similar meanings to $O(\cdot)$ and $o(\cdot)$ respectively, but hold asymptotically with probability tending to one.
For two positive sequences $\{a_n\}_{n \ge 1},\{b_n\}_{n \ge 1}$, we write $a_n=\Theta(b_n)$ if $a_n=O(b_n)$ and $b_n=O(a_n)$.
We also denote weak convergence of a sequence of probability measures $\{Q_n\}_{n\ge 1}$ to
a probability measure $Q$ by $Q_n \Rightarrow Q$.

{\bf Matrices.} 
Let $\Sp$ be the set of $p\times p$ positive definite matrices.
For a vector $(v_1,\ldots, v_n)^\top$, we define $\diag(v_1,\ldots, v_n)$ as the diagonal matrix with diagonal entries $v_i$ for $i\in [n]$.
For positive integers $m, n$,
and for a matrix $A\in\R^{m\times n}$, we denote the operator norm of $A$ as $\|A\|$.
If further $m\le n$, the Stiefel manifold $W_{m,n}$
is the set of ordered orthonormal $m$-tuples of vectors in $\R^n$.
Recall that for square matrices $A, B$, 
$A\preceq B$ denotes that $B-A$ is positive semi-definite.
For two matrices $A,B$ of the same size $k\times$, $A\odot B$ is Hadamard product where $(A\odot B)_{ij}=A_{ij}B_{ij}$ for all  $(i,j)\in[k]\times[l]$.
For two matrices $A,B$,
$A\otimes B$ denotes the tensor product.
For a sequence of matrices $\{A_n\}_{n\ge 1}$ with fixed dimensions, we write $A_n = O(1)$ if $\|A_n\| = O(1)$.

For $i\in [p]$,
we let $E_{ii}$ be a $p\times p$ matrix with 
the $(i,i)$-th entry equal to unity, 
and other entries equal to zero.
For positive integers $k,l$,
we often denote  the  columns 
of 
a $k\times l$ matrix $A$ by $(a_j)_{j\in[l]}$. 
To study limiting distributions of matrices, it will be convenient to work with their vectorizations:

\begin{definition}[Vectorization]\label{vi}
We define $\vec\colon\R^{k\times l}\to\R^{kl}$ as the vectorization operator listing the columns of a matrix placing the leftmost one on top, i.e.,
for a $k\times l$ matrix $A$,
$\vec(A)= (a_1^\top,\ldots,a_p^\top)^\top$. 
\end{definition}

Vectorization provides an identification of $[kl]$ and $[k]\times [l]$---whose elements we denote by either $(i,j)$, $(ij$), or $ij$---so that $\vec(A)_{(i,j)}=A_{ij}$.
Given indices $(i,j) \in [k] \times [l]$
of a matrix, we refer to the 
corresponding indices in the vectorization of the matrix as follows:

\begin{definition}[Vectorized Indexing of a Matrix]\label{mii}
For $(i,j) \in [k] \times [l]$,
$\sigma_{k,l}(i,j) \in [kl]$ is defined as the index of $A_{ij}$ in the vectorization $\vec(A)$, 
so that 
$\vec(A)_{\sigma_{k,l}(i,j)}= \vec(A)_{(i,j)}=A_{ij}$.
\end{definition}

Similarly, we can represent the indices in $[p(p+1)/2]$ as $\{(11), (12),\ldots,(1p), (22),\ldots, (2p), \ldots, (pp)\}$, which are the indices in the upper triangular part of a $p\times p$ matrix.
When dealing with matrices that have one or both dimensions of size $kl$ or $p(p+1)/2$,
we may use the above forms of indexing for those dimensions.

When working with symmetric matrices, we will often need to extract their diagonals and above-diagonal components.
To apply this operation to a vectorized $p\times p$ matrix, we define the following upper triangulation operator.
Recall that for $i,j\in \R$, $\delta_{ij}$ is the Kronecker delta which equals unity of $i=j$ and zero otherwise.

% of a $kl\times kl$ matrix $B$. For two indices $h_1,h_2\in[kl]$, we denote $B_{h_1,h_2}:=B_{\sigma_{k,l}(h_1),\sigma_{k,l}(h_2)}$.

\begin{definition}[Upper Triangulation Operator]\label{uto}
For every $p\in\NN_{>0}$, 
we consider the upper triangulation operator $D_p: \R^{p^2}\to\R^{p(p+1)/2}$, 
which in a matrix form takes values
$(D_p)_{(ij),(kl)}=\delta_{ik}\delta_{jl}$, $i,j,k,l\in[p]$, and $k\le  l$. 
\end{definition}

We will require the following 
definition to describe covariance matrices 
of vectorized $p\times p$ matrices.

\begin{definition}[Absolutely Symmetric Matrix]\label{abs}
For a positive integer $p$,
we say that a $p^2\times p^2$ positive semi-definite matrix $G$ is absolutely symmetric if $G_{(ij),(kl)} = G_{(ij),(lk)} = G_{(ji),(kl)} = G_{(ji),(lk)}$ for all $i,j,k,l\in[p]$, and 
with the  the upper triangulation operator $D_p$ from Definition \ref{uto},
$D_p^\top G D_p$ is positive definite.     
\end{definition}

For an absolutely symmetric matrix $G$, there is a one-to-one correspondence between $D_p^\top G D_p$ and $G$. Indeed,  although $D_p^\top G D_p$ is a principal submatrix of $G$, it contains all distinct elements of $G$.

We will also need the following two special matrices.

\begin{definition}[Special Matrices $Q_p$ and $P_p$]\label{spmx}
For a positive integer $p$,
we denote $Q_p=\vec(I_p)\vec(I_p)^\top$.
Moreover, 
we denote the  $p^2\times p^2$ 
matrix representing matrix transposition in the vectorized matrix space by $P_p$, 
such that
$(P_p)_{ij,kl}=\delta_{il} \delta_{jk}$ for all $i,j,k,l\in[p]$. 
\end{definition}

\section{Unifying Approach via Normality of Quadratic Forms}\label{secframework}

In this section, we present the core results 
of our unifying approach 
for inference in randomized PCA and LS
via normality of quadratic forms.

\subsection{Unifying Conditions}

Notice that the sketch-and-solve least squares solution from \eqref{ss} 
can be written as 
\begin{equation*}
   \hbs = (X_n^\top S_{m,n}^\top S_{m,n} X_n)^{-1} X_n^\top S_{m,n}^\top S_{m,n} y_n.
\end{equation*}
Therefore, we expect that
\emph{quadratic forms} 
$a_n^\top S_{m,n}^\top S_{m,n} b_n$ for appropriate vectors $a_n,b_n$ play a crucial role in determining the distribution of this estimator.
Similar observations can be made about the partial sketching estimator as well as about the singular values and vectors of the sketched data.
Our analysis confirms this intuition.

In addition, if we
let the SVD\footnote{Here, $U_n$ denotes a $n\times p$ partial orthogonal matrix satisfying $U_n^\top U_n = I_p$, $L_n= \diag(\ell_{n,1},\ldots,\ell_{n,p})$, where $\ell_{n,1}\ge \ldots\ge \ell_{n,p}>0$ are the singular values of $X_n$ in non-decreasing order, 
and $V_n$ is a $p\times p$ orthogonal matrix containing the right singular vectors of $X_n$.} of $X_n$ be $X_n = U_n L_n V_n^\top$, then
\begin{equation}\label{geneformucomplete0}
\begin{aligned}
   \hbs& =  V_n L_n^{-1} (U_n^\top S_{m,n}^\top S_{m,n} U_n)^{-1} U_n^\top S_{m,n}^\top S_{m,n} y_n.
\end{aligned}\end{equation}
Hence, 
due to the form of $U_n^\top S_{m,n}^\top S_{m,n} y_n$, 
$U_n$ is expected to play a special role.
We introduce
an $n\times (n-p)$ orthogonal complement $U_{n,\perp}$ 
of $U_n$, 
so that the matrix $[U_n, U_{n,\perp}]$ forms an $n\times n$ orthogonal matrix.
We let $\ep_n=y_n-X_n \beta_n$ be the true residuals.
% We remark that
% $\|\ep_n\|^2 =y_n^\top (I-U_n U_n^\top) y_n= \|\ep_n\|^2$.
% and
% $\|X_n\beta_n\|^2 =y_n^\top U_n U_n^\top y_n= \|X_n \beta_n\|^2$.
We also let $\bar\ep_n=\ep_n/\|\ep_n\|$ 
be the normalized residuals,
and
$\widebar{X_n\beta_n} = X_n\beta_n/\|X_n\beta_n\|$, 
be the normalized fitted values.
It will be assumed that $\ep_n\neq 0$ and $\widebar{X_n\beta_n}\neq 0$, respectively, whenever these are used.
% Much of our analysis
% will concern
% $U_{n,\perp}w_n =\ep_n/\|\ep_n\|$, which represents the normalized projection of $y_n$ onto the orthogonal complement of the column space of $U_n$. 
% The choice of $U_{n,\perp}$ does not affect $U_{n,\perp}w_n$.
We also let $\Lambda_n = L_n^2$.

The normality conditions of quadratic forms are 
required to hold only for sequences of vectors $({a_n, \ta_n})_{n\ge 1}$ such that $a_n, \ta_n\in\nsp$ for all $n$ and $({a_n, \ta_n})_{n\ge 1}$ satisfy 
certain conditions---such as certain forms of delocalization, as seen informally in Table \ref{sumqf}---that depend on the sketching matrix $S_{m,n}$. 
Typically, the more  ``randomness" $S_{m,n}$ has, the weaker these conditions. For instance, Gaussian sketches require weak conditions (but are computationally impractical), while uniform subsampling requires stronger conditions.
To handle this in a general way,
we introduce abstract conditions called Condition $\mathfrak{p}$, $\mathfrak{P}$, and $\mathfrak{P}'$. These will depend on the specific distributions of sketching matrices used, and will be specified in each case in \Cref{spec}.
Since we are aiming to develop a theory with near-optimal yet unified conditions, our next condition lists several types of properties that we may require of quadratic forms.
While this condition may seem complicated, each component is used crucially in our framework; see \Cref{SRHT_flowchart} for an illustration.

\begin{condition}[Asymptotic Normality of Quadratic Forms]\label{generalquadratic forms}
Consider the asymptotic regime where $n\to \infty$, $m=m_n\to\infty$ with $m<n$, while 
$p$ is fixed.
Let $(S_{m,n})_{m,n\ge 1}$ be a sequence of $m\times n$ sketching matrices, and $U_n$ be 
the $n\times p$ left singular matrix of $X_n$. 
Let $\bar\ep_n=\ep_n/\|\ep_n\|$, 
$\widebar{X_n\beta_n} = X_n\beta_n/\|X_n\beta_n\|$, assuming they are well-defined when used.
We consider a sequence of positive scalars $(\tau_{m,n})_{m,n\ge  1}$, such that
$\tau_{m,n}/m\to 0$.
Consider 
Conditions $\mathfrak{p}$, $\mathfrak{P}$,
which refer to sequences of vectors $({a_n, \ta_n})_{n\ge 1}$ such that $a_n, \ta_n\in\nsp$ for all $n$ and $({a_n, \ta_n})_{n\ge 1}$, 
and 
Condition $\mathfrak{P}'$
for the sequence of matrices $(U_n)_{n\ge 1}$. 
Consider the following boundedness condition:
\begin{compactitem}
\item {\bf \textup{\texttt{Bounded}}: Bounded Quadratic Form Under Condition $\mathfrak{p}$}.
For any sequence $({a_n, \ta_n})_{n\ge 1}$ 
 of vector-pairs
such that $a_n, \ta_n\in\nsp$ for all $n$ and $({a_n, \ta_n})_{n\ge 1}$ satisfy Condition $\mathfrak{p}$, we have 
\begin{equation*}\label{boundquadform}
\sqrt{m} \tau_{m,n}^{-1/2}\left(a_n^\top S_{m,n}^\top S_{m,n} \ta_n - a_n^\top \ta_n\right)=O_P(1).
\end{equation*}
\end{compactitem}

Moreover, consider the following asymptotic normality conditions for quadratic forms:
\begin{compactitem}
\item {\bf \textup{\texttt{1-dim}}: One-dimensional Quadratic Form Under Condition $\mathfrak{P}$}. For any sequences of vectors $({a_n, \ta_n})_{n\ge 1}$ such that $a_n, \ta_n\in\nsp$ for all $n$,
and $({a_n, \ta_n})_{n\ge 1}$ satisfy Condition $\mathfrak{P}$,
there exists a sequence
$\left(\sigma_n^2(a_n, \ta_n)\right)_{n\ge 1}$ uniformly bounded away from zero and infinity, such that
\begin{equation*}\label{1dimquadform}
\sqrt{m} \tau_{m,n}^{-1/2} \sigma_n^{-1}(a_n, \ta_n) \left(a_n^\top S_{m,n}^\top S_{m,n} \ta_n - a_n^\top \ta_n\right)\Rightarrow \N(0, 1).
\end{equation*}
Moreover, 
for all $n\ge 1$,  
there is $M_n=M_n(\bar \ep_n)$
and 
$\Mp=\Mp(\widebar{X_n\beta_n})\in\R^{p\times p}$,
also possibly depending on $U_n$,
such that for any $b_n\in\psp$ % while noticing that $\|U_nb_n\|^2=b_n^\top U_n^\top U_nb_n=b_n^\top b_n=1$, 
with $(U_nb_n, \bar{\ep}_n)_{n\ge 1}$,
$(U_nb_n, \widebar{X_n\beta_n})_{n\ge 1}$,
satisfying Condition $\mathfrak{P}$, $\sigma_n^2$ has the form
\begin{align}
    \label{defMn}\sigma_n^2(U_nb_n, \bar{\ep}_n)&=b_n^\top M_n(\bar \ep_n) b_n;\qquad
    \sigma_n^2(U_nb_n, \widebar{X_n\beta_n})=b_n^\top \Mp(\widebar{X_n\beta_n})b_n.
\end{align}
As a special case, for $\alpha\in\{0,1\}$, 
%consider the special case of Condition {\bf \textup{\texttt{1-dim'}}}, 
consider Condition {\bf \textup{\texttt{1-dim'}}}--- the special case of Condition  {\bf \textup{\texttt{1-dim}}}---where 
$\sigma_n^2(a_n, \ta_n)=1+(\alpha+1)(a_n^\top \ta_n)^2$ for all $n\ge 1$ and $({a_n, \ta_n})_{n\ge 1}$ satisfying Condition $\mathfrak{P}$.

\item {\bf \textup{\texttt{Sym}}: Symmetric Matrix Quadratic Form Under Condition $\mathfrak{P}'$}. For $(U_n)_{n\ge 1}$ satisfying Condition $\mathfrak{P}'$, there exists 
a sequence of $p^2\times p^2$ absolutely symmetric matrices $(G_n)_{n\ge 1}$ as per Definition \ref{abs}, 
such that there is $0<c<C<\infty$
for which 
the upper triangulation operator $D_p$ from Definition \ref{uto} satisfies
$cI_{p(p+1)/2}\preceq D_p^\top G_n D_p\preceq CI_{p(p+1)/2}$, 
and vectorizing as per Definition \ref{vi},
\begin{equation}\label{symmatform-1}
\sqrt{m} (\tau_{m,n}D_p^\top G_n D_p)^{-1/2} D_p^\top \vec(U_n^\top S_{m,n}^\top S_{m,n} U_n - I_p) \Rightarrow \N(0, I_{p(p+1)/2}).
\end{equation}
As a special case, consider \textup{Condition} {\bf \texttt{Sym-1}}, under which there exists 
a $p^2\times p^2$ absolutely symmetric matrix $G$ as per Definition \ref{abs}, such that
\begin{equation}\label{symmatform}
\sqrt{m} \tau_{m,n}^{-1/2} \vec(U_n^\top S_{m,n}^\top S_{m,n} U_n - I_p) \Rightarrow \N(0, G).
\end{equation}

As an even more special case, 
we consider \textup{Condition} {\bf \textup{\texttt{Sym-1'}}} 
for $\alpha\in\{0,1\}$,
such that for $P_p$ and $Q_p$ from Definition \ref{spmx},
$G=I_{p^2}+P_p+\alpha Q_p$.

\end{compactitem}
\end{condition}

It is not hard to see that under Condition \ref{generalquadratic forms} \textup{\texttt{1-dim'}}, we can take
$M_n=I_p$.
Moreover, 
in most cases---but not always---we
take 
$\mathfrak{p}=\mathfrak{P}$
and 
use the results in Condition \ref{generalquadratic forms} \textup{\texttt{1-dim}} to derive Condition \ref{generalquadratic forms} \textup{\texttt{Bounded}}. 
However,
Condition \ref{generalquadratic forms} \textup{\texttt{Bounded}} is 
more general than Condition \ref{generalquadratic forms} \textup{\texttt{1-dim}}.

% Notice that \eqref{symmatform-1} is equivalent to $$
% \sqrt{m} (\tau_{m,n}D_p^\top G D_p)^{-1/2} D_p^\top \vec(U_n^\top S_{m,n}^\top S_{m,n} U_n - I_p) \Rightarrow \N(0, I_{p(p+1)/2}),$$ 
% and \eqref{symmatform} is a special case. 

Moreover, we will work under some
mild conditions on the singular values of the data matrix.
In particular, our theory and methods do not require that the sequence of data matrices converge in any way.

\begin{condition}[Condition Number and Eigengap]\label{spectralcondition}
For a sequence 
$(X_n)_{\ge 1}$
of $n\times p$ data matrices, 
where $\ell_{n,1}\ge \ldots\ge \ell_{n,p}>0$ are the singular values of $X_n$, 
consider the following conditions:
\begin{compactitem}
\item {\bf A: Bounded Condition Number.}
We have
$
\limsup_{n\to \infty} \ell_{n,1}/\ell_{n,p}<\infty$.
\item {\bf B: Global Relative Eigengap.} 
Condition \ref{spectralcondition} \textup{\texttt{A}} holds, and further
\begin{equation*}\label{itheigengap}
\liminf_{n\to \infty} \min_{i\in[p-1]}\frac{\ell_{n,i}}{\ell_{n,i+1}}>1.
\end{equation*}
\end{compactitem}
\end{condition}

Condition \ref{spectralcondition} \textup{\texttt{A}} ensures that the condition number of the data matrix is bounded, which is required 
to use tightness arguments in the proofs.
Conditions  \textup{\texttt{B}} ensures that the eigengaps 
are sufficiently large, 
and in particular that the singular values are distinct,
which
is helpful for performing inference on the individual eigenvalues\footnote{One could 
relax this assumption and aim to conduct inference for singular values associated with arbitrary singular spaces; we leave this to future work.}.
In our work, we will impose either one of \textup{\texttt{A}} or \textup{\texttt{B}}, as specified below.

\subsection{Asymptotic Distributions under Unifying Conditions}

\subsubsection{Sketched Least Squares}
Recall that 
we are interested in the deterministic least squares parameter
 $\beta_n= (X_n^\top X_n)^{-1} X_n^\top y_n$.
Under Conditions \ref{generalquadratic forms} and \ref{spectralcondition}, 
we can derive the 
asymptotic distributions of
the sketched LS solutions, as well as methods for statistical inference.
See \Cref{pflemgeneols} for the proof of this result.

\begin{theorem}
[Limiting Distribution and Inference for Sketched LS Estimators]\label{lemgeneols}
Let $(X_n)_{n\ge 1}$ be a sequence of data matrices satisfying Condition \ref{spectralcondition} \textup{\texttt{A}}, and $(S_{m,n})_{m,n\ge 1}$ be a sequence of sketching matrices.
Let $\bar\ep_n=\ep_n/\|\ep_n\|$, 
$\widebar{X_n\beta_n} = X_n\beta_n/\|X_n\beta_n\|$, assuming they are well-defined when used.
Suppose that 
for any $(b_n)_{n\ge 1}$
satisfying $b_n\in\psp$ for all $n$,
$({U_nb_n, U_nb_n})_{n\ge 1}$
and $({U_nb_n, \bar{\ep}_n})_{n\ge 1}$ satisfy Condition $\mathfrak{P}$. 
Then under 
Conditions \ref{generalquadratic forms} \textup{\texttt{Bounded}} and \textup{\texttt{1-dim}}, 
the sketch-and-solve LS estimator $\hbs$ from \eqref{ss} has the limiting distribution
\beq\label{ssa}
m^{1/2}\tau_{m,n}^{-1/2}\left( V_n L_n^{-1} M_n(\bar \ep_n)L_n^{-1} V_n^\top \right)^{-1/2}\|\ep_n\|^{-1}(\hbs-\beta_n) \Rightarrow \N(0,I_p),
\eeq
 with $M_n$ from \eqref{defMn}.
Moreover, under 
Condition \ref{generalquadratic forms} \textup{\texttt{Bounded}} and \textup{\texttt{1-dim}} with $\Mp$ from \eqref{defMn}, 
the partial sketching LS estimator $\hbp$ from \eqref{ss} has the limiting distribution
\beq\label{ssb} 
m^{1/2}\tau_{m,n}^{-1/2}\left(V_n L_n^{-1} \Mp(\widebar{X_n\beta_n}) L_n^{-1} V_n^\top \right)^{-1/2}\|X_n\beta_n\|^{-1}(\hbp-\beta_n) \Rightarrow \N(0,I_p).\eeq 
In the special case that 
Condition \ref{generalquadratic forms} \textup{\texttt{1-dim'}} holds 
 for $\alpha\in\{0,1\}$,
we further have
the following inferential results for the unobserved least squares parameter $\beta_n$ based
on the observed $\tX_{m,n},\ty_{m,n}$, and $\tep_{m,n} = \ty_{m,n}-\tX_{m,n}\hbs$:
\beq\label{1dim'lsinf1}
m^{1/2}\tau_{m,n}^{-1/2}\|\tep_{m,n}\|^{-1}(\tX_{m,n}^\top \tX_{m,n})^{1/2}(\hbs-\beta_n) \Rightarrow \N(0,I_p)
\eeq
and
\beq\label{1dim'lsinf2} m^{1/2}\tau_{m,n}^{-1/2}\left(
\|\tX_{m,n} 
\hbp\|^2 \cdot (\tX_{m,n}^\top \tX_{m,n})^{-1}+ (\alpha+1)\hbp \hbpt\right)^{-1/2}(\hbp-\beta_n) \Rightarrow \N(0,I_p).\eeq

\end{theorem}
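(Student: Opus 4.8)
\emph{Overview and algebraic reduction.} The plan is to first reduce both estimators to explicit expressions in quadratic forms of $S_{m,n}$ via the SVD $X_n = U_nL_nV_n^\top$, then lift the one-dimensional Condition \ref{generalquadratic forms} \texttt{1-dim} to a multivariate self-normalized central limit theorem through a Cram\'er--Wold device, and finally---under Condition \ref{generalquadratic forms} \texttt{1-dim'}---replace the unknown population normalizers by their sketch-based plug-ins using matrix perturbation bounds. For the reduction, write $W_{m,n} := U_n^\top S_{m,n}^\top S_{m,n}U_n$. Since $U_n^\top\ep_n = 0$ and $y_n = X_n\beta_n + \ep_n$, a direct computation from \eqref{geneformucomplete0} gives $\hbs - \beta_n = \|\ep_n\|\,V_nL_n^{-1}W_{m,n}^{-1}\zeta_{m,n}$ with $\zeta_{m,n} := U_n^\top S_{m,n}^\top S_{m,n}\bar\ep_n$; and, writing $X_n\beta_n = U_n(L_nV_n^\top\beta_n)$ so that $\|X_n\beta_n\| = \|L_nV_n^\top\beta_n\|$ and $\widebar{X_n\beta_n} = U_n(L_nV_n^\top\beta_n)/\|X_n\beta_n\|$, similarly $\hbp - \beta_n = -\|X_n\beta_n\|\,V_nL_n^{-1}W_{m,n}^{-1}\zeta_{m,n}'$ with $\zeta_{m,n}' := U_n^\top S_{m,n}^\top S_{m,n}\widebar{X_n\beta_n} - U_n^\top\widebar{X_n\beta_n}$. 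Applying Condition \ref{generalquadratic forms} \texttt{Bounded} to the pairs $(U_ne_i,U_ne_j)$ and $((U_ne_i + U_ne_j)/\sqrt2,\,(U_ne_i + U_ne_j)/\sqrt2)$, together with $\tau_{m,n}/m \to 0$, shows $W_{m,n} = I_p + o_P(1)$ and hence $W_{m,n}^{-1} = I_p + o_P(1)$.

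\emph{Multivariate CLT and proof of \eqref{ssa} and \eqref{ssb}.} Fix $c\in\psp$ and put $b_n := M_n(\bar\ep_n)^{-1/2}c/\|M_n(\bar\ep_n)^{-1/2}c\| \in \psp$. By hypothesis $(U_nb_n,\bar\ep_n)$ satisfies Condition $\mathfrak P$, and since $(U_nb_n)^\top\bar\ep_n = 0$, Condition \ref{generalquadratic forms} \texttt{1-dim} with $\sigma_n^2(U_nb_n,\bar\ep_n) = b_n^\top M_n(\bar\ep_n)b_n = \|M_n(\bar\ep_n)^{-1/2}c\|^{-2}$ gives $\sqrt m\,\tau_{m,n}^{-1/2}\,c^\top M_n(\bar\ep_n)^{-1/2}\zeta_{m,n} \Rightarrow \N(0,1)$; as $c$ was arbitrary, the Cram\'er--Wold theorem yields $\xi_{m,n} := \sqrt m\,\tau_{m,n}^{-1/2}M_n(\bar\ep_n)^{-1/2}\zeta_{m,n} \Rightarrow \N(0,I_p)$, and the same argument---now invoking $\mathfrak P$ for $(U_nb_n,\widebar{X_n\beta_n})$, which is implicit through \eqref{defMn}---gives $\eta_{m,n} := \sqrt m\,\tau_{m,n}^{-1/2}\Mp(\widebar{X_n\beta_n})^{-1/2}\zeta_{m,n}' \Rightarrow \N(0,I_p)$. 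With $A_n := V_nL_n^{-1}M_n(\bar\ep_n)^{1/2}$ (so $A_nA_n^\top = V_nL_n^{-1}M_n(\bar\ep_n)L_n^{-1}V_n^\top$) we have $\sqrt m\,\tau_{m,n}^{-1/2}\|\ep_n\|^{-1}(\hbs - \beta_n) = V_nL_n^{-1}W_{m,n}^{-1}M_n(\bar\ep_n)^{1/2}\xi_{m,n} = (A_n + E_n)\xi_{m,n}$, where $\|E_n\| = \|V_nL_n^{-1}(W_{m,n}^{-1} - I_p)M_n(\bar\ep_n)^{1/2}\| = o_P(\ell_{n,p}^{-1})$, while Condition \ref{spectralcondition} \texttt{A} gives $\sigma_{\min}(A_n) = \Theta(\ell_{n,p}^{-1})$ and $\|(A_nA_n^\top)^{-1/2}\| = \Theta(\ell_{n,p})$. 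Left-multiplying by $(A_nA_n^\top)^{-1/2}$: the matrix $(A_nA_n^\top)^{-1/2}A_n$ is orthogonal, $(A_nA_n^\top)^{-1/2}E_n\xi_{m,n} = o_P(1)$, and an orthogonal matrix maps $\N(0,I_p)$ to $\N(0,I_p)$ (argue along convergent subsequences of these deterministic orthogonal matrices), so Slutsky gives \eqref{ssa}. Replacing $(A_n,\xi_{m,n},\bar\ep_n,M_n)$ by $(V_nL_n^{-1}\Mp(\widebar{X_n\beta_n})^{1/2},\,\eta_{m,n},\,\widebar{X_n\beta_n},\,\Mp)$ gives \eqref{ssb}.

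\emph{Plug-in normalizers and proof of \eqref{1dim'lsinf1} and \eqref{1dim'lsinf2}.} Under Condition \ref{generalquadratic forms} \texttt{1-dim'}, $\sigma_n^2(U_nb_n,\bar\ep_n) = 1$, so $M_n(\bar\ep_n) = I_p$ and \eqref{ssa} becomes $\sqrt m\,\tau_{m,n}^{-1/2}\|\ep_n\|^{-1}(X_n^\top X_n)^{1/2}(\hbs - \beta_n) \Rightarrow \N(0,I_p)$; likewise $\sigma_n^2(U_nb_n,\widebar{X_n\beta_n}) = 1 + (\alpha+1)(b_n^\top \bar w_n)^2$ with $\bar w_n := L_nV_n^\top\beta_n/\|X_n\beta_n\|$, so $\Mp(\widebar{X_n\beta_n}) = I_p + (\alpha+1)\bar w_n\bar w_n^\top$, and since $V_nL_n^{-1}\bar w_n = \beta_n/\|X_n\beta_n\|$, \eqref{ssb} becomes $\sqrt m\,\tau_{m,n}^{-1/2}(\|X_n\beta_n\|^2(X_n^\top X_n)^{-1} + (\alpha+1)\beta_n\beta_n^\top)^{-1/2}(\hbp - \beta_n) \Rightarrow \N(0,I_p)$. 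It remains to show the sketch-based normalizers are ratio-consistent. For \eqref{1dim'lsinf1}, the normal equations $\tX_{m,n}^\top\tep_{m,n} = 0$ give $\|\tep_{m,n}\|^2 = \|S_{m,n}\ep_n\|^2 - (\hbs - \beta_n)^\top\tX_{m,n}^\top\tX_{m,n}(\hbs - \beta_n)$, where the first term is $\|\ep_n\|^2(1 + o_P(1))$ by Condition \ref{generalquadratic forms} \texttt{1-dim'} applied to $(\bar\ep_n,\bar\ep_n)$ and the second is $o_P(\|\ep_n\|^2)$ by \eqref{ssa} and Condition \ref{spectralcondition} \texttt{A}; moreover $(X_n^\top X_n)^{-1/2}\tX_{m,n}^\top\tX_{m,n}(X_n^\top X_n)^{-1/2} = V_nW_{m,n}V_n^\top = I_p + o_P(1)$, which upgrades to $(\tX_{m,n}^\top\tX_{m,n})^{1/2}(X_n^\top X_n)^{-1/2} = I_p + o_P(1)$ via the bound $\|A^{1/2} - B^{1/2}\| \le \|A - B\|/(\lambda_{\min}(A)^{1/2} + \lambda_{\min}(B)^{1/2})$ and Condition \ref{spectralcondition} \texttt{A}; Slutsky gives \eqref{1dim'lsinf1}. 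For \eqref{1dim'lsinf2}, with $\hat\Sigma_n := \|\tX_{m,n}\hbp\|^2(\tX_{m,n}^\top\tX_{m,n})^{-1} + (\alpha+1)\hbp\hbpt$ and $\Sigma_n := \|X_n\beta_n\|^2(X_n^\top X_n)^{-1} + (\alpha+1)\beta_n\beta_n^\top$, Condition \ref{spectralcondition} \texttt{A} gives $\Sigma_n$ uniformly well-conditioned, and using $\hbp - \beta_n = o_P(\|\beta_n\|)$ from \eqref{ssb} one checks $\|\tX_{m,n}\hbp\|^2 = \|X_n\beta_n\|^2(1 + o_P(1))$ and $\tX_{m,n}^\top\tX_{m,n} = X_n^\top X_n + o_P(\ell_{n,1}^2)$, hence $\|\hat\Sigma_n - \Sigma_n\| = o_P(\|\Sigma_n\|)$; the square-root perturbation bound again yields $\hat\Sigma_n^{-1/2}\Sigma_n^{1/2} = I_p + o_P(1)$, and Slutsky gives \eqref{1dim'lsinf2}.

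\emph{Main obstacle.} The SVD bookkeeping is routine. The genuinely delicate steps are: (i) the passage in the second paragraph from the scalar Condition \texttt{1-dim} to a multivariate self-normalized CLT---the Cram\'er--Wold reparametrization by $M_n(\bar\ep_n)^{-1/2}$ absorbs the $n$-dependence of the test directions, but one must still control the $n$-dependent orthogonal and diagonal factors (a subsequence argument handles the orthogonal part) and verify that $E_n = o_P(\sigma_{\min}(A_n))$, for which the bounded-condition-number Condition \ref{spectralcondition} \texttt{A} is essential; and (ii) the ratio-consistency of the data-dependent normalizers in the third paragraph, which rests on relative-error matrix square-root and inverse perturbation bounds, leaning on Condition \ref{spectralcondition} \texttt{A} at essentially every estimate. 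A minor point worth noting is that the stated hypotheses supply Condition $\mathfrak P$ only for the pairs $(U_nb_n,U_nb_n)$ and $(U_nb_n,\bar\ep_n)$, whereas the partial-sketching claims and the \texttt{1-dim'} plug-ins additionally require it (implicitly) for $(U_nb_n,\widebar{X_n\beta_n})$, $(\bar\ep_n,\bar\ep_n)$, and $(\widebar{X_n\beta_n},\widebar{X_n\beta_n})$.
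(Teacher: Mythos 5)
Your proof is correct and takes essentially the same high-level route as the paper---SVD reduction, lifting the scalar Condition \ref{generalquadratic forms} \texttt{1-dim} to a multivariate statement through Cram\'er--Wold, and ratio-consistency of the plug-in normalizers under Condition \ref{spectralcondition} \texttt{A}---but is mildly different in two places that are worth flagging.

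First, where the paper applies Condition \ref{generalquadratic forms} \texttt{1-dim} with $b_n = L_n^{-1}V_n^\top c/\|X_n^{\dag\top}c\|$ and then handles the $n$-dependence of the resulting covariance by running a subsequence argument on the normalized matrix $T_n = V_nL_n^{-1}M_nL_n^{-1}V_n^\top/\|L_n^{-2}\|$ (compactness, pass to a convergent sub-subsequence, identify the limit, conclude via the metric $d_p$), you instead reparametrize the test direction by $M_n^{-1/2}$, choosing $b_n = M_n(\bar\ep_n)^{-1/2}c/\|M_n(\bar\ep_n)^{-1/2}c\|$; this absorbs the covariance into the direction and delivers $\xi_{m,n}\Rightarrow\N(0,I_p)$ directly, with the subsequence reasoning relegated to the orthogonal factor $(A_nA_n^\top)^{-1/2}A_n$. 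In fact that last step needs no subsequence at all: a deterministic orthogonal $O_n$ satisfies $O_n\xi_{m,n}\Rightarrow\N(0,I_p)$ whenever $\xi_{m,n}\Rightarrow\N(0,I_p)$, by rotation invariance of the limit and local-uniformity of characteristic-function convergence. Your route is slightly cleaner but requires $M_n$ invertibility (which Condition \texttt{1-dim} gives, since $\sigma_n^2$ is bounded away from zero); both are valid.

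Second, your derivation of $\|\tep_{m,n}\|^2 = \|S_{m,n}\ep_n\|^2 - (\hbs-\beta_n)^\top\tX_{m,n}^\top\tX_{m,n}(\hbs-\beta_n)$ via the sketched normal equations is tidier than the paper's three-term expansion in \eqref{89ahd+} followed by Cauchy--Schwarz, but reaches the same conclusion. The operator-norm square-root perturbation bound you invoke to convert $(X_n^\top X_n)^{-1/2}\tX_{m,n}^\top\tX_{m,n}(X_n^\top X_n)^{-1/2} = I_p + o_P(1)$ into $(\tX_{m,n}^\top\tX_{m,n})^{1/2}(X_n^\top X_n)^{-1/2}\to_P I_p$ is correct and supported by Condition \ref{spectralcondition} \texttt{A}. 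For $\hat\Sigma_n^\pa$ the paper measures consistency in the Frobenius norm scaled by $\|\beta_n\|^2$; your formulation in terms of $\|\Sigma_n\|$ is equivalent given Condition \texttt{A}.

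Two small imprecisions, neither fatal: (i) you list the pair $(U_ne_i, U_ne_j)$ with $i\neq j$ as an input to Condition \texttt{Bounded}, but the hypothesis supplies $\mathfrak P$ only for pairs of the form $(U_nb_n,U_nb_n)$; the polarization pair $((U_n(e_i\pm e_j)/\sqrt2, U_n(e_i\pm e_j)/\sqrt2))$ that you also list is what actually does the work, as in the paper's $\ep$-cover argument. (ii) Your final remark about the implicit use of $\mathfrak P$ for $(U_nb_n,\widebar{X_n\beta_n})$, $(\bar\ep_n,\bar\ep_n)$, $(\widebar{X_n\beta_n},\widebar{X_n\beta_n})$ is accurate and applies equally to the paper's own proof (which invokes Condition \texttt{Bounded} for $(\bar\ep_n,\bar\ep_n)$ without this pair appearing in the stated hypotheses); this appears to be a deliberate elision, covered by the remark following Condition \ref{generalquadratic forms} that in most instances $\mathfrak p = \mathfrak P$ and both hold over all unit vectors under the per-sketch instantiations in \Cref{spec}.
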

The results \eqref{1dim'lsinf1}  and 
\eqref{1dim'lsinf2} can be used in the standard way to form confidence regions for the unknown least squares parameter $\beta_n$ based on the observed $\tX_{m,n},\ty_{m,n}$.
% We only require the asymptotic normality of specific quadratic forms, 
% which holds in a broader range for popular sketching algorithms. 
To form some intuition for \eqref{ssa},
% we remark that
% $\|\ep_n\|^2 =y_n^\top (I-U_n U_n^\top) y_n= \|y_n-X_n \beta_n\|^2$
% and
% $\|X_n\beta_n\|^2 =y_n^\top U_n U_n^\top y_n= \|X_n \beta_n\|^2$.
if Condition {\bf \textup{\texttt{1-dim'}}} holds, then we can take $M_n  = I_p$, and so  \eqref{ssa}
reduces to 
\beq\label{simols}
m^{1/2}\tau_{m,n}^{-1/2}\left(X_n^\top X_n \right)^{-1/2}\|y_n-X_n \beta_n\|^{-1}(\hbs-\beta_n) \Rightarrow \N(0,I_p),
\eeq
showing that the effective covariance matrix
is a multiple of $(X_n^\top X_n^\top )^{-1}$.
This form is familiar from classical multivariate statistics based on i.i.d.~data, and shows that our results can be viewed as a generalization.
However, our theorem applies to arbitrary fixed data $X_n, y_n$, and imposes no distributional assumptions such as i.i.d.-ness. 

In the above result, 
statistical inference requires a 
stronger condition (Condition \ref{generalquadratic forms} {\texttt{1-dim'}) than the existence of the asymptotic distribution.
It turns out that guarantees for
statistical inference more generally can be better handled case-by-case for various sketching distributions.
% a unique part because we need to show the consistency in different sketching settings. In this context, we will primarily discuss the asymptotic normality here and leave the study of pivotal statistics behind.
% \Cref{lemgeneols} summarizes our strategy and serves as a guide for studying the asymptotic behavior of both complete and partial sketching estimators across different sketching methods.
In the remainder of the paper, we will use this result to derive limiting distributions and inferential methods for various distributions of sketching matrices. 
Table \ref{sumasyols} 
summarizes some of these results. 

While our results are data-conditional and do not assume that the data was generated by a statistical model, 
they can also be used for inference on population regression parameters when the data follows a population linear regression model. The following corollary illustrates this. 
See \Cref{pfrndls} for the proof.
\begin{corollary}[Inference for regression parameters in a linear model]\label{rndls}
Assume the data follows a standard linear model with true parameter $\beta^*$, that is, $y_n=X_n\beta^*+\ep_n$, where the error vector $\ep_n$
has i.i.d.~components with mean zero and variance $\sigma_o^{2}$, and $X_n$ can be either random or deterministic. 
Under the conditions of \Cref{lemgeneols}, 
let Condition \ref{generalquadratic forms} \textup{\texttt{1-dim'}} and the Lindeberg-Feller condition as stated in Example 2.28 of \cite{van1998asymptotic} hold. 
If in addition $m/(n\tau_{m,n})\to 0$, we have 
\beq\label{rndls1}
m^{1/2}\tau_{m,n}^{-1/2}\|\tep_{m,n}\|^{-1}(\tX_{m,n}^\top \tX_{m,n})^{1/2}(\hbs-\beta^*) \Rightarrow \N(0,I_p).
\eeq
If instead  $m/(\tau_{m,n}\|X_n\beta^*\|^2)\to_P 0$, then 
\beq\label{rndls2} m^{1/2}\tau_{m,n}^{-1/2}\left(
\|\tX_{m,n} 
\hbp\|^2 \cdot (\tX_{m,n}^\top \tX_{m,n})^{-1}+ (\alpha+1)\hbp \hbpt\right)^{-1/2}(\hbp-\beta^*) \Rightarrow \N(0,I_p).\eeq

\end{corollary}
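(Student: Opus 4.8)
The plan is to reduce \Cref{rndls} to \Cref{lemgeneols} by writing, under the linear model,
$\hbs-\beta^* = (\hbs-\beta_n) + (\beta_n-\beta^*)$, where $\beta_n = (X_n^\top X_n)^{-1}X_n^\top y_n$ is the full-data least squares parameter and $\beta_n-\beta^* = (X_n^\top X_n)^{-1}X_n^\top \ep_n$. The first term is exactly what \Cref{lemgeneols} controls: applied conditionally on $(X_n,y_n)$, its display \eqref{1dim'lsinf1} gives $A_{m,n} := m^{1/2}\tau_{m,n}^{-1/2}\|\tep_{m,n}\|^{-1}(\tX_{m,n}^\top\tX_{m,n})^{1/2}(\hbs-\beta_n)\Rightarrow\N(0,I_p)$. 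So \eqref{rndls1} will follow from Slutsky's theorem once I show that the same quantity with $\hbs-\beta_n$ replaced by $\beta_n-\beta^*$, call it $B_{m,n}$, is $o_P(1)$; the partial-sketching claim \eqref{rndls2} is obtained in the same manner, starting from \eqref{1dim'lsinf2} instead.

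To bound $B_{m,n}$ I would track three quantities. First, $\ep_n^\top X_n(X_n^\top X_n)^{-1}X_n^\top\ep_n$ is a quadratic form in the i.i.d.\ mean-zero, variance-$\sigma_o^2$ noise of a rank-$p$ projection, so it has conditional mean $\sigma_o^2 p$ given the (exogenous) $X_n$, hence $\|(X_n^\top X_n)^{1/2}(\beta_n-\beta^*)\| = O_P(1)$. Second, under Condition \ref{generalquadratic forms} \textup{\texttt{Bounded}} together with the bounded condition number in Condition \ref{spectralcondition} \textup{\texttt{A}}, one has $c\,X_n^\top X_n \preceq \tX_{m,n}^\top\tX_{m,n} \preceq C\,X_n^\top X_n$ on an event of probability tending to one (this already appears inside the proof of \Cref{lemgeneols}), so $\|(\tX_{m,n}^\top\tX_{m,n})^{1/2}(\beta_n-\beta^*)\| = O_P(1)$ as well. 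Third, the proof of \Cref{lemgeneols} shows that $\|\tep_{m,n}\|$ is of the same order as the full residual norm $\|\ep_n\|$, and the law of large numbers gives $\|\ep_n\|^2 = \Theta_P(n)$; hence $\|\tep_{m,n}\|^{-1} = O_P(n^{-1/2})$. Combining the three bounds, $\|B_{m,n}\| = O_P\!\big(\sqrt{m/(n\tau_{m,n})}\big) = o_P(1)$ by the assumption $m/(n\tau_{m,n})\to 0$. For \eqref{rndls2} the relevant normalization in \eqref{1dim'lsinf2} is of order $\|X_n\beta^*\|$ rather than $\|\ep_n\|$ (and $\hbp$, $\|\tX_{m,n}\hbp\|$ are close to $\beta_n$, $\|X_n\beta_n\|$ with probability tending to one), so the analogous computation yields $O_P\!\big(\sqrt{m/(\tau_{m,n}\|X_n\beta^*\|^2)}\big) = o_P(1)$ under $m/(\tau_{m,n}\|X_n\beta^*\|^2)\to_P 0$.

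It then remains to upgrade the data-conditional statement $A_{m,n}\Rightarrow\N(0,I_p)$ to unconditional weak convergence, since under the linear model $(X_n,y_n)$ is random. This is where the Lindeberg-Feller condition enters: it is what guarantees that the hypotheses of \Cref{lemgeneols} --- in particular that the sequences $(U_nb_n,\bar\ep_n)_{n\ge1}$, and (using $\|X_n\beta^*\|\to\infty$, which is forced in the partial case) $(U_nb_n,\widebar{X_n\beta_n})_{n\ge1}$, satisfy Condition $\mathfrak{P}$ --- hold with probability tending to one for the random residuals and fitted values built from $\ep_n$. A standard subsequence argument then transfers the conditional convergence to the unconditional one: along any subsequence, extract a further subsequence on which the relevant high-probability events occur eventually, invoke \Cref{lemgeneols} along that realization, and conclude. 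Together with $B_{m,n}=o_P(1)$ and Slutsky's theorem this gives \eqref{rndls1}, and the identical argument gives \eqref{rndls2}.

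The main obstacle is this last step --- making rigorous the passage from the data-conditional guarantees of \Cref{lemgeneols}, whose hypotheses are phrased for deterministic sequences and involve $\limsup/\liminf$-type conditions, to an unconditional statement about the random model, while correctly keeping the two independent sources of randomness ($S_{m,n}$ and $\ep_n$) separated. By comparison, the moment bookkeeping in the second step is routine, as all the comparisons between the sketched quantities $\tX_{m,n}^\top\tX_{m,n}$, $\|\tep_{m,n}\|$, $\|\tX_{m,n}\hbp\|$ and their full-data counterparts are already available from the proof of \Cref{lemgeneols}.
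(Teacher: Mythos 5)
Your plan — decompose $\hbs-\beta^*=(\hbs-\beta_n)+(\beta_n-\beta^*)$, apply \Cref{lemgeneols} to the first term, show the second term is $o_P(1)$ under the same normalization, and conclude by Slutsky — is structurally identical to the paper's proof. The calibrations you track (namely $\|(X_n^\top X_n)^{1/2}(\beta_n-\beta^*)\|=O_P(1)$, $\tX_{m,n}^\top\tX_{m,n}\asymp_P X_n^\top X_n$, $\|\tep_{m,n}\|\asymp_P\|\ep_n\|\asymp_P n^{1/2}$, and the analogous replacements of $\|\tX_{m,n}\hbp\|$, $\hbp$ by $\|X_n\beta_n\|$, $\beta_n$ for the partial case) are exactly the ingredients the paper uses via \eqref{89ahd}, \eqref{89ahd+}, and the law of large numbers. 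For \eqref{rndls2} you would still need to pass from $\|X_n\beta_n\|$ to $\|X_n\beta^*\|$, but your $O_P(1)$ bound gives $\|X_n\beta_n\|/\|X_n\beta^*\|\to_P 1$ once one observes $\|X_n\beta^*\|\to_P\infty$, so that gap is readily filled.

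The one substantive misreading is your account of why the Lindeberg--Feller condition is assumed. It is \emph{not} there to ensure that the hypotheses of \Cref{lemgeneols} hold with probability tending to one for random residuals and fitted values; those hypotheses are already assumed wholesale by the clause ``Under the conditions of \Cref{lemgeneols}.'' The paper invokes Lindeberg--Feller to cite Example 2.28 of van der Vaart, which gives the CLT $\sigma_o^{-1}(X_n^\top X_n)^{1/2}(\beta_n-\beta^*)\Rightarrow\N(0,I_p)$ --- a slightly stronger statement than you need, since your second-moment argument already yields the $O_P(1)$ bound. Relatedly, you overstate the difficulty of upgrading the conditional convergence of $A_{m,n}$ to an unconditional one: because the conditional limit $\N(0,I_p)$ does not depend on the realization of the data, this is immediate by bounded convergence once the conditions of \Cref{lemgeneols} are taken to hold for the random sequence, and the paper dispatches it in a single sentence. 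So: correct, same route, but you have attributed the Lindeberg hypothesis to the wrong step and spent effort on a passage the paper regards as trivial.
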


For various types of sketching methods discussed in this paper, $m/(n\tau_{m,n})\to 0$
reduces to $m/n\to 0$. 
Further, $m/(\tau_{m,n}\|X_n\beta^*\|^2)\to_P 0$
has a similar interpretation if $\|X_n\beta^*\|^2=\Theta(n)$, which is a genericity 
condition that holds if e.g., the matrix $X_n$ has 
i.i.d.~entries with zero mean and unit variance, and if $\|\beta^*\|^2 =\Theta(1)$.
Therefore, given data sampled i.i.d.~from a population, if the sketched sample size 
grows slower than the original sample size, 
we can directly use the sketched solution for inference about the population regression coefficient.
However, we emphasize that our general theory is data-conditional and is thus stronger and more broadly applicable, as it does not assume any specific statistical model generating the data.

{\begin{table}[ht]
        \renewcommand{\arraystretch}{1.2}
        \centering
        \caption{
       Summary of some of our results for sketching in LS.
      For various distributions of sketching matrices, we summarize the choices of 
      $\tau_{m,n}$,
      $\sigma_n^2(a_n, \ta_n)$, $G_n$,
      and Condition $\mathfrak{P}$ used
      in Condition \ref{generalquadratic forms} for which we show 
       Theorem \ref{lemgeneols}. 
       Here $\gamma_n = m/n$,
       and as defined in Definition \ref{spmx}, 
       $Q_p=\vec(I_p)\vec(I_p)^\top$,
while $P_p$ is the  $p^2\times p^2$ 
matrix representing matrix transposition in the vectorized matrix space.
Also,
$u_{n,i}$, $i\in[n]$ are the rows of the $n\times p$ left singular matrix $U_n$ of $X_n$.
       See \Cref{spec} for the details.}
        \label{sumqf}
        \centering
        \begin{tabular}{|c|c|c|c|c|}
        \hline
       Sketching Distribution& $\tau_{m,n}$ & $\sigma_n^2(a_n, \ta_n)$ & $G_n$ & Condition $\mathfrak{P}$\\
        \hline 
        \hline
         Hadamard &  $1-\gamma_n$ & $1+2(a_n^\top \ta_n)^2$ & $I_{p^2}+P_p+Q_p$& $\ell_\infty$-delocalization \\
         \hline 
         CountSketch/SSE & $1$ & $1+(a_n^\top \ta_n)^2$ & $I_{p^2}+P_p $ & $\ell_4$-delocalization \\
             \hline
       i.i.d.,~$\kappa_{n,4}-3=o(1)$ & $1$ & $1+(a_n^\top \ta_n)^2$ & $I_{p^2}+P_p $ & \\
       \hline
         Haar &  $1-\gamma_n$ & $1+(a_n^\top \ta_n)^2$  & $I_{p^2}+P_p $ & \\
         \hline
         Uniform Subsampling &  $1-\gamma_n$ & $n\sum_{i=1}^n a_{n,i}^2\ta_{n,i}^2$  & $n\sum_{i=1}^n \left((u_{n,i}u_{n,i}^\top)\otimes(u_{n,i}u_{n,i}^\top)\right)$ & See \Cref{unif} \\
         \hline
        \end{tabular}\label{sumasyols} 
    \end{table}}

\subsubsection{Sketched PCA}
\label{spca}

We now turn to sketch-and-solve principal component analysis.
For a $p\times p$ symmetric matrix $\Xi$  and each $i\in[p]$, 
we denote by $\lambda_i(\Xi)$  the $i$-th largest eigenvalue of $\Xi$,
and by $v_i(\Xi)$ an associated eigenvector
such that the first non-zero coordinate of each vector is positive.
%, assuming it is unique.
We can define $\lambda_i$ and $v_i$ 
as continuous functions on $p\times p $ symmetric matrices,
and if $\Xi$ has distinct eigenvalues,
we can also define them differentiably
in a small neighborhood of $\Xi$,
see e.g., \cite{magnus2019matrix}. 
In this context, we also define $\Lambda_{n,i}=\lambda_i(X_n^\top X_n)$ and $\hat{\Lambda}_{m,n,i}=\lambda_i(X_n^\top S_{m,n}^\top S_{m,n} X_n)$ for $i\in[p]$. 
Additionally, $v_{n,i}$ is an eigenvector of $X_n^\top X_n$ associated with the eigenvalue $\Lambda_{n,i}$, while $\hat{v}_{m,n,i}$ 
is an eigenvector of $X_n^\top S_{m,n}^\top S_{m,n} X_n$ associated with the eigenvalue $\hat{\Lambda}_{m,n,i}$.

For each $i\in[p]$, 
we
define the function
$\Delta_i$ below,
used to characterize the asymptotic covariance structure of the $i$-th eigenvector,
defined for $\Xi\in\Sp$ with distinct eigenvalues and an absolutely symmetric $G$ as per Definition \ref{abs} as
\begin{equation}\label{defHi}
\Delta_i(\Xi, G)
:=
\sum_{k\neq i, l\neq i} \frac{\lambda_i(\Xi)\sqrt{\lambda_k(\Xi)\lambda_l(\Xi)}}{\left(\lambda_i(\Xi)-\lambda_k(\Xi)\right)\left(\lambda_i(\Xi)-\lambda_l(\Xi)\right)}G_{(ik),(il)}v_k(\Xi)v_l(\Xi)^\top.
\end{equation}
For $\Xi$ with repeated eigenvalues, we 
formally define $\Delta_i(\Xi, G) = \infty \cdot I_p$,
so that for all nonzero $c\in \R^p$,
$(c^\top \Delta_i(\Xi, G)c)^{-1/2}=0$. 
Moreover, 
we define 
$\Delta_{n,i}:=\Delta_i(X_n^\top X_n, G_n)$ 
and the plug-in estimator 
$\hat{\Delta}_{m,n,i}:=\Delta_i(X_n^\top S_{m,n}^\top S_{m,n} X_n, G_n)$. 
Our main result for the sketched PCA is the following. See \Cref{pfinfPCAgeneral} for the proof.
\begin{theorem}[Inference in Sketched PCA]\label{infPCAgeneral}
Let $(X_n)_{n\ge 1}$ satisfy Condition \ref{spectralcondition} \textup{\texttt{B}},
$(U_n)_{n\ge 1}$ satisfy Condition $\mathfrak{P}'$, and $(S_{m,n})_{m,n\ge 1}$ 
satisfy Condition \ref{generalquadratic forms} \textup{\texttt{Sym}} for some  $(G_n)_{n\ge 1}$.  
Then
for any $i\in[p]$,
we have
the following inferential result for the unobserved data eigenvalues $\Lambda_{n,i}=\lambda_i(X_n^\top X_n)$
based on the observed sketched eigenvalues $\hat{\Lambda}_{m,n,i}=\lambda_i(X_n^\top S_{m,n}^\top S_{m,n} X_n)$:
\begin{equation}\label{infsingval}
\sqrt{\frac{m}{\tau_{m,n}(G_n)_{(ii),(ii)}}}\hat{\Lambda}_{m,n,i}^{-1}\left(\hat{\Lambda}_{m,n,i}-\Lambda_{n,i}\right)\Rightarrow \N(0,1).
\end{equation}
Moreover,
for
$\Delta_i$ from \eqref{defHi}
and $\Delta_{n,i}:=\Delta_i(X_n^\top X_n, G_n)$,
for any vector $c\in\psp$ 
with 
$\liminf_{n\to\infty}c^\top \Delta_{n,i}c>0$, we have
the following  inferential result for the unobserved 
linear combinations $c^\top v_{n,i}$
of the eigenvector $v_{n,i}$ of $X_n^\top X_n$ associated with the eigenvalue $\Lambda_{n,i}$, 
based on the observed  eigenvector $\hat{v}_{m,n,i}$ of $X_n^\top S_{m,n}^\top S_{m,n} X_n$ associated with the eigenvalue $\hat{\Lambda}_{m,n,i}$ and the plug-in estimator 
$\hat{\Delta}_{m,n,i}:=\Delta_i(X_n^\top S_{m,n}^\top S_{m,n} X_n, G_n)$: 
\begin{equation}\label{infsingvec}
\sqrt{m}\tau_{m,n}^{-1/2}(c^\top\hat{\Delta}_{m,n,i}c)^{-1/2}c^\top\left(\hat{v}_{m,n,i}-v_{n,i}\right)\Rightarrow \N(0,1).
\end{equation}

In particular, under Condition \ref{generalquadratic forms} \textup{\texttt{Sym-1'}}, we have
\begin{equation}\label{infsingval'}
\sqrt{\frac{m}{(2+\alpha)\tau_{m,n}}}\hat{\Lambda}_{m,n,i}^{-1}\left(\hat{\Lambda}_{m,n,i}-\Lambda_{n,i}\right)\Rightarrow \N(0,1),
\end{equation}
and for any vector $c\in\psp$ satisfying $\limsup_{n\to\infty}(c^\top v_{n,i})^2<1$,
\begin{equation}\label{infsingvec'}
\sqrt{\frac{m}{\tau_{m,n}}}\left(\sum_{k\neq i}\frac{\hat{\Lambda}_{m,n,i}\hat{\Lambda}_{m,n,k}}{(\hat{\Lambda}_{m,n,i}-\hat{\Lambda}_{m,n,k})^2} (c^\top\hat{v}_{m,n,k})^2\right)^{-1/2}c^\top\left(\hat{v}_{m,n,i}-v_{n,i}\right)\Rightarrow \N(0,1).
\end{equation}

\end{theorem}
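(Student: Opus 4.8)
The plan is to reduce both claims to a delta-method computation applied to the matrix-valued quadratic form $W_{m,n} := U_n^\top S_{m,n}^\top S_{m,n} U_n$, whose standardized fluctuation around $I_p$ is exactly what Condition~\ref{generalquadratic forms}~\textup{\texttt{Sym}} controls. Using the SVD $X_n = U_n L_n V_n^\top$, one has $X_n^\top S_{m,n}^\top S_{m,n} X_n = V_n L_n W_{m,n} L_n V_n^\top$, so $\hat\Lambda_{m,n,i} = \lambda_i(L_n W_{m,n} L_n)$, $\Lambda_{n,i} = \lambda_i(L_n^2)$, any eigenvector $w$ of $L_n W_{m,n} L_n$ gives an eigenvector $V_n w$ of $X_n^\top S_{m,n}^\top S_{m,n} X_n$, and $v_{n,i} = V_n e_i$ since $L_n^2$ is diagonal with the $\ell_{n,k}^2$ in decreasing order. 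Writing $E_{m,n} := W_{m,n} - I_p$, which is symmetric, Condition~\ref{generalquadratic forms}~\textup{\texttt{Sym}} together with $c I_{p(p+1)/2} \preceq D_p^\top G_n D_p \preceq C I_{p(p+1)/2}$ give that $Z_{m,n} := \sqrt{m}\,(\tau_{m,n} D_p^\top G_n D_p)^{-1/2} D_p^\top \vec(E_{m,n}) \Rightarrow \N(0, I_{p(p+1)/2})$ and, since all free entries of the symmetric matrix $E_{m,n}$ sit inside $D_p^\top \vec(E_{m,n})$, that $\|E_{m,n}\| = O_P(\sqrt{\tau_{m,n}/m}) = o_P(1)$.

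Since Condition~\ref{spectralcondition}~\textup{\texttt{B}} makes the eigenvalues of $X_n^\top X_n$ distinct with relative eigengap bounded below and $\ell_{n,1}/\ell_{n,p}$ bounded above, the maps $\lambda_i$ and (up to sign) $v_i$ are differentiable in a fixed-size relative neighbourhood of $X_n^\top X_n$, with differentials at a matrix $\Xi$ in a direction $H$ given by $v_i(\Xi)^\top H v_i(\Xi)$ and $\sum_{k \neq i} (\lambda_i(\Xi) - \lambda_k(\Xi))^{-1} (v_k(\Xi)^\top H v_i(\Xi)) v_k(\Xi)$ \citep{magnus2019matrix}. Applying this at $\Xi = X_n^\top X_n$ with the perturbation $H_{m,n} = V_n L_n E_{m,n} L_n V_n^\top$, using $V_n^\top v_{n,i} = e_i$ and bounding the second-order remainders by $O_P(\Lambda_{n,i} \|E_{m,n}\|^2) = o_P(\Lambda_{n,i}\sqrt{\tau_{m,n}/m})$ via Condition~\ref{spectralcondition}~\textup{\texttt{B}}, one obtains
\begin{align*}
\hat\Lambda_{m,n,i} - \Lambda_{n,i} &= \Lambda_{n,i} (E_{m,n})_{ii} + o_P\!\big( \Lambda_{n,i}\sqrt{\tau_{m,n}/m}\,\big), \\
c^\top(\hat v_{m,n,i} - v_{n,i}) &= \sum_{k \neq i} \frac{\ell_{n,k}\ell_{n,i}}{\Lambda_{n,i} - \Lambda_{n,k}} (E_{m,n})_{ki}\, (c^\top v_{n,k}) + o_P\!\big(\sqrt{\tau_{m,n}/m}\,\big).
\end{align*}
The first line plus the $(ii)$-marginal of $Z_{m,n}$ gives $\sqrt{m/\tau_{m,n}}\,(G_n)_{(ii),(ii)}^{-1/2}(\hat\Lambda_{m,n,i}-\Lambda_{n,i})/\Lambda_{n,i} \Rightarrow \N(0,1)$, and replacing $\Lambda_{n,i}$ in the denominator by $\hat\Lambda_{m,n,i}$ is legitimate by Slutsky since $\hat\Lambda_{m,n,i}/\Lambda_{n,i} \to_P 1$; this is \eqref{infsingval}. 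For the second line I would write its leading term as $\sqrt{\tau_{m,n}/m}\,\eta_n^\top Z_{m,n}$ for a coefficient vector $\eta_n$ (possible because $E_{m,n}$ is symmetric, so its entries are linear functionals of $D_p^\top\vec(E_{m,n})$), and check, using the absolute symmetry of $G_n$ and the factorisation $\frac{\Lambda_{n,i}\sqrt{\Lambda_{n,k}\Lambda_{n,l}}}{(\Lambda_{n,i}-\Lambda_{n,k})(\Lambda_{n,i}-\Lambda_{n,l})} = \frac{\ell_{n,k}\ell_{n,i}}{\Lambda_{n,i}-\Lambda_{n,k}}\cdot\frac{\ell_{n,l}\ell_{n,i}}{\Lambda_{n,i}-\Lambda_{n,l}}$, that $\|\eta_n\|^2 = c^\top\Delta_{n,i}c$ with $\Delta_{n,i}$ as in \eqref{defHi}; hence $(c^\top\Delta_{n,i}c)^{-1/2}\eta_n^\top Z_{m,n} = (\eta_n/\|\eta_n\|)^\top Z_{m,n} \Rightarrow \N(0,1)$, and a Slutsky step using $c^\top\hat\Delta_{m,n,i}c / (c^\top\Delta_{n,i}c) \to_P 1$---which follows from the scale-invariance and continuity of $\Delta_i$ away from repeated eigenvalues, $\Lambda_{n,i}^{-1}X_n^\top S_{m,n}^\top S_{m,n} X_n - \Lambda_{n,i}^{-1}X_n^\top X_n = o_P(1)$, and $\liminf_n c^\top\Delta_{n,i}c > 0$---yields \eqref{infsingvec}.

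Because the eigenvalue ratios $\ell_{n,k}/\ell_{n,i}$, the eigenvectors $v_{n,i}$, and the matrices $G_n$ need not converge, I would thread the above through the usual subsequence principle: along any subsequence pass to a further one on which $\ell_{n,k}/\ell_{n,i}$, $v_{n,i}$ and $D_p^\top G_n D_p$ (hence $G_n$) converge, so $\Lambda_{n,i}^{-1}X_n^\top X_n$ converges to a matrix with distinct eigenvalues and the delta method runs with fixed limits; since the limit law $\N(0,1)$ is the same for every subsequence, the claims hold for the full sequence. The sign conventions of $\hat v_{m,n,i}$ and $v_{n,i}$ agree with probability tending to one by consistency of $\hat v_{m,n,i}$. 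The special cases follow by inserting $G = I_{p^2} + P_p + \alpha Q_p$: a direct index computation gives $(G)_{(ii),(ii)} = 1 + 1 + \alpha = 2 + \alpha$, so \eqref{infsingval} becomes \eqref{infsingval'}; and $(G)_{(ki),(li)} = \delta_{kl}$ for $k, l \neq i$, so $c^\top\Delta_{n,i}c = \sum_{k \neq i} \frac{\Lambda_{n,i}\Lambda_{n,k}}{(\Lambda_{n,i}-\Lambda_{n,k})^2}(c^\top v_{n,k})^2$, giving \eqref{infsingvec'}; finally, $\sum_k (c^\top v_{n,k})^2 = 1$ (orthonormality of $\{v_{n,k}\}$ and $c \in \psp$) gives $\sum_{k\neq i}(c^\top v_{n,k})^2 = 1 - (c^\top v_{n,i})^2$, which with $\limsup_n (c^\top v_{n,i})^2 < 1$ and $\frac{\Lambda_{n,i}\Lambda_{n,k}}{(\Lambda_{n,i}-\Lambda_{n,k})^2}$ bounded below under Condition~\ref{spectralcondition}~\textup{\texttt{B}} forces $\liminf_n c^\top\Delta_{n,i}c > 0$, so the general result applies.

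I expect the main obstacle to be making the delta-method remainders rigorous \emph{uniformly in $n$} on the possibly divergent eigenvalue scale: one must show the second-order terms in both expansions are genuinely of smaller order than the respective leading terms with constants independent of $n$, which is precisely where Condition~\ref{spectralcondition}~\textup{\texttt{B}} (bounded condition number and a true relative eigengap) is indispensable, and this has to be carried through the subsequence reduction so that both the perturbation bounds and the identity $\|\eta_n\|^2 = c^\top\Delta_{n,i}c$ hold with $n$-independent constants.
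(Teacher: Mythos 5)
Your proposal is correct and takes essentially the same route as the paper: reduce to convergent subsequences of the normalized data (and of $D_p^\top G_n D_p$) using Condition~\ref{spectralcondition}~\textup{\texttt{B}} to place $\Lambda_{n,1}^{-1}X_n^\top X_n$ in a compact subset of $\Sp$ with distinct-eigenvalue limits, apply the eigenvalue and eigenvector differentials from \cite{magnus2019matrix} to the perturbation governed by Condition~\ref{generalquadratic forms}~\textup{\texttt{Sym}}, then pass to the plug-in normalizers by Slutsky, and instantiate the special case. The paper just packages the convergent-case delta-method step as a separate Proposition~\ref{delta} (using a mean-value/indicator-event device rather than your Taylor-with-remainder phrasing, and carrying the tensor-product $\big((VL)\otimes(VL)\big)G\big((LV^\top)\otimes(LV^\top)\big)$ through explicitly), but the substance and the order of the argument match yours.
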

For inference on $k$-dimensional functionals
$C^\top v_{n,i}$, where $C\in\R^{p\times k}$ satisfies $\liminf_n\lambda_{\min}(C^\top \Delta_{n,i}C)>0$,
we have an almost identical argument that
\begin{equation*}
\sqrt{m}\tau_{m,n}^{-1/2}(C^\top\hat{\Delta}_{m,n,i}C)^{-1/2}C^\top\left(\hat{v}_{m,n,i}-v_{n,i}\right)\Rightarrow \N(0,I_k).
\end{equation*}

By leveraging a subsequence argument,
we avoid assuming convergence of the data, 
expanding the applicability of our approach compared to e.g., \cite{lopes2020error}.
Further, Table \ref{sumasyPCA} 
summarizes some of the results we obtain in \Cref{spec}
for the limiting distributions for sketching in PCA by applying \Cref{infPCAgeneral}.

{\begin{table}[ht]
        \renewcommand{\arraystretch}{1.2}
        \centering
        \caption{
     Summary of some of our results for sketching in PCA.
      For various distributions of sketching matrices, 
      and for any $i\in[p]$, the table shows the asymptotic variances $\sigma^2(T_{\lambda})$ of $T_{\lambda}:=\sqrt{m}\hat{\Lambda}_{m,n,i}^{-1}(\hat{\Lambda}_{m,n,i}-\Lambda_{n,i})$ and 
      $\sigma^2(T_{c^\top v})$ of
      $T_{c^\top v}:=\sqrt{m}\left(\sum_{k\neq i}\hat{\Lambda}_{m,n,i}\hat{\Lambda}_{m,n,k}/(\hat{\Lambda}_{m,n,i}-\hat{\Lambda}_{m,n,k})^2 (c^\top\hat{v}_{m,n,k})^2\right)^{-1/2}c^\top(\hat{v}_{m,n,i}-v_{n,i})$.
 We also show
      Condition $\mathfrak{P}$ used
      in Condition \ref{generalquadratic forms} for which we prove
       Theorem \ref{infPCAgeneral}. 
       We recall that $\gamma_n = m/n$.
       See \Cref{spec} for the details.
        }
        \label{sumasyPCA}
        \centering
        \begin{tabular}{|c|c|c|c|}
        \hline
      Sketching Distribution & $\sigma^2(T_{\lambda})$ & $\sigma^2(T_{c^\top v})$ &  Condition $\mathfrak{P}$\\
       \hline
        \hline 
         Hadamard &   $3(1-\gamma_n) $ & $1-\gamma_n $ & $\ell_\infty$-delocalization\\
         \hline
         CountSketch/SSE & $2$ & $1$ & $\ell_4$-delocalization\\
             \hline
       i.i.d.,~$\kappa_{n,4}-3=o(1)$&  $2$ & $1$ & \\
       \hline
         Haar &  $2(1-\gamma_n)$ & $1-\gamma_n $ &  \\
         \hline
         Uniform subsampling & $(n-m)\sum_{k=1}^n(U_n)_{ki}^4$  & $(1-\gamma_n) c^\top\hat{\Delta}_{m,n,i}c$ & See \Cref{unif} \\
         \hline
        \end{tabular}
    \end{table}}

For completeness, we also provide a result for inference when the data is assumed to be an i.i.d.~sample from a statistical model; see \Cref{pfrndpca} for the proof.

\begin{corollary}[Inference for eigenvalues of the covariance matrix given an i.i.d.~sample]\label{rndpca}
Let the rows of $X_n$ be sampled i.i.d.~from a $p$-dimensional distribution having finite
fourth cumulant with a covariance matrix
$\Sigma\in\Sp$. Denote $\lambda_i^*:=\lambda_i(\Sigma)$ for $i\in[p]$. 
Under the conditions of \Cref{infPCAgeneral}, 
 if in addition $m\cdot\left(n\tau_{m,n}(G_n)_{(ii),(ii)}\right)^{-1} $ $\to 0$, we have for any $i\in[p]$ that
\begin{equation}\label{ri}
\sqrt{\frac{m}{\tau_{m,n}(G_n)_{(ii),(ii)}}}\hat{\Lambda}_{m,n,i}^{-1}\left(\hat{\Lambda}_{m,n,i}-\lambda_{i}^*\right)\Rightarrow \N(0,1).
\end{equation}
\end{corollary}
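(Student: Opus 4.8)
The plan is to derive \eqref{ri} from the eigenvalue conclusion \eqref{infsingval} of \Cref{infPCAgeneral} by a Slutsky argument: replace the data eigenvalue $\Lambda_{n,i}=\lambda_i(X_n^\top X_n)$ by the population eigenvalue $\lambda_i^*$, and then show the replacement error vanishes after the $\sqrt{m/\tau_{m,n}}$ normalization. The first step is to justify applying \Cref{infPCAgeneral} when $X_n$ is random. Its hypotheses---Condition \ref{spectralcondition} \textup{\texttt{B}} for $(X_n)_{n\ge 1}$ and Condition $\mathfrak{P}'$ (a delocalization condition on the left singular vectors) for $(U_n)_{n\ge 1}$---are phrased for deterministic data, so one argues they hold with probability tending to one: since the rows of $X_n$ are i.i.d., the Gram matrix $X_n^\top X_n$ converges almost surely to $\Sigma$ (with the normalization implicit in the statement), so the relative singular-value gaps and the condition number of $X_n$ converge to those of $\Sigma$---which in particular forces $\Sigma$ to have distinct eigenvalues, so that Condition \ref{spectralcondition} \textup{\texttt{B}} holds eventually---and the leverage scores of $X_n$ vanish almost surely, so Condition $\mathfrak{P}'$ holds. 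Conditioning on the data on these high-probability events (and integrating over the data, or passing to a subsequence), \Cref{infPCAgeneral} yields
\[
T_n:=\sqrt{\frac{m}{\tau_{m,n}(G_n)_{(ii),(ii)}}}\,\hat{\Lambda}_{m,n,i}^{-1}\bigl(\hat{\Lambda}_{m,n,i}-\Lambda_{n,i}\bigr)\Rightarrow\N(0,1),
\]
in which the only remaining randomness is that of the sketching matrix $S_{m,n}$.

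Next I would decompose
\[
\sqrt{\frac{m}{\tau_{m,n}(G_n)_{(ii),(ii)}}}\,\hat{\Lambda}_{m,n,i}^{-1}\bigl(\hat{\Lambda}_{m,n,i}-\lambda_i^*\bigr)=T_n+R_n,\qquad R_n:=\sqrt{\frac{m}{\tau_{m,n}(G_n)_{(ii),(ii)}}}\,\hat{\Lambda}_{m,n,i}^{-1}\bigl(\Lambda_{n,i}-\lambda_i^*\bigr),
\]
and show $R_n=o_P(1)$. Two facts enter. First, $\hat{\Lambda}_{m,n,i}/\Lambda_{n,i}\to_P 1$: writing $T_n=\sqrt{m/(\tau_{m,n}(G_n)_{(ii),(ii)})}\,(1-\Lambda_{n,i}/\hat{\Lambda}_{m,n,i})$ and using $T_n=O_P(1)$ from its convergence in distribution, we get $1-\Lambda_{n,i}/\hat{\Lambda}_{m,n,i}=O_P(\sqrt{\tau_{m,n}(G_n)_{(ii),(ii)}/m})=o_P(1)$, via $\tau_{m,n}/m\to 0$ and the existence of $0<c<C<\infty$ with $c\le (G_n)_{(ii),(ii)}\le C$ (which follows from the spectral bounds on $D_p^\top G_n D_p$ in Condition \ref{generalquadratic forms} \textup{\texttt{Sym}}, since $(ii)$ indexes a diagonal entry); hence also $\hat{\Lambda}_{m,n,i}\to_P\lambda_i^*>0$ via $\Lambda_{n,i}\to\lambda_i^*$. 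Second, by the classical central limit theorem for the eigenvalues of $X_n^\top X_n$---valid under the assumed finite fourth moment (equivalently, finite fourth cumulant) by the multivariate CLT for the Gram matrix followed by the delta method with the eigenvalue map $\lambda_i(\cdot)$, which is differentiable at $\Sigma$ since $\Sigma$ has a simple $i$-th eigenvalue \citep{magnus2019matrix}---we have $\Lambda_{n,i}-\lambda_i^*=O_P(n^{-1/2})$. Combining, $R_n=O_P(\sqrt{m/(n\tau_{m,n}(G_n)_{(ii),(ii)})})$, which is $o_P(1)$ exactly by the added hypothesis $m\cdot(n\tau_{m,n}(G_n)_{(ii),(ii)})^{-1}\to 0$; Slutsky's theorem then gives \eqref{ri}.

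The step I expect to be the main obstacle is the interface between the data randomness and the algorithmic randomness: \Cref{infPCAgeneral} is a fixed-data statement, so its conclusion must be transferred to random $X_n$ carefully---via the almost-sure convergence of the Gram matrix and of the leverage scores, plus a conditioning or subsequence argument---and one must check that, because $R_n$ is asymptotically negligible, only a Slutsky step is needed, not a joint central limit theorem in $(S_{m,n},X_n)$. A secondary point of care is tracking the $n$-scaling: $\Lambda_{n,i}$ and $\hat{\Lambda}_{m,n,i}$ live on the scale of the Gram matrix $X_n^\top X_n$, so the classical eigenvalue CLT must be invoked with the matching normalization so that $R_n$ is genuinely of order $\sqrt{m/(n\tau_{m,n}(G_n)_{(ii),(ii)})}$ and thus absorbed by the added rate condition. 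The remaining steps---the decomposition above and the ratio estimate $\hat{\Lambda}_{m,n,i}/\Lambda_{n,i}\to_P 1$---are routine.
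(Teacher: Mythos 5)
Your proposal is correct and takes essentially the same route as the paper: apply \Cref{infPCAgeneral} conditionally on the data to get \eqref{infPCA1}, invoke the classical fixed‑$p$ eigenvalue CLT for $\Lambda_{n,i}-\lambda_i^*$ at rate $n^{-1/2}$ (the paper cites \cite{waternaux1976asymptotic} directly where you reconstruct it via the multivariate CLT and delta method, which is equivalent under the finite‑fourth‑cumulant assumption), and conclude by Slutsky using the extra rate condition $m/(n\tau_{m,n}(G_n)_{(ii),(ii)})\to 0$. The additional care you take at the random‑data/fixed‑data interface and the explicit ratio bound $\hat{\Lambda}_{m,n,i}/\Lambda_{n,i}\to_P 1$ are details the paper compresses into a single sentence, but the argument is the same.
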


Similar to the inference in a linear model as in Corollary \ref{rndls}, 
$m\cdot\left(n\tau_{m,n}(G_n)_{(ii),(ii)}\right)^{-1}\to 0$ usually reduces to $m/n\to 0$. 
Thus, %as for OLS,  
we can use sketching for inference about the population eigenvalues if $m\ll n$.
Results for eigenvectors are similar and thus omitted.

\section{Inference for Specific Sketching Distributions}
\label{spec}

We now derive inference results 
in sketched LS and PCA
for a variety of sketching distributions.
We start with the 
Subsampled Randomized Hadamard Transform or SRHT \citep{ailon2006approximate},
which is computationally efficient and applies under broad conditions on the data, while being nontrivial to analyze
(\Cref{srht}).
We also provide results for the CountSketch \citep{charikar2002finding}
and Sparse Sign Embeddings 
\citep{achlioptas2001database}, which have similar advantages (\Cref{cs-sse}).
We further analyze 
sketching matrices with i.i.d.~entries (\Cref{iid}),
uniform orthogonal (Haar) sketching (\Cref{haar}),
and uniform sampling (\Cref{unif}).
In these sections, we present the results for PCA first, as they are simpler to state.

We let $\gamma_n = m/n$
and for $i\in[p]$, we denote the $i$-th row of $U_n$ by $U_{i:}$.

\subsection{Subsampled Randomized Hadamard Transform}
\label{srht}
Hadamard matrices $H_t$ 
are defined for non-negative integers $t$
inductively, starting with $H_0=1$ and letting 
for all $t\ge 0$, 
$$H_{t+1}  = \frac{1}{\sqrt{2}}\begin{pmatrix}
    H_t & H_t\\
  H_t & -H_t
\end{pmatrix}.$$
When $n = 2^l$  is a power of two, for some non-negative integer $l$, an $m\times n$, $m\le n$, subsampled 
randomized Hadamard transform
(SRHT) sketching matrix 
\citep{ailon2006approximate}
is defined 
as $S_{m,n} = \sqrt{n/m}B H_l D$, where $B$ is an $n\times n$ diagonal matrix with
i.i.d.~Bernoulli random variables with a success probability of $m/n$. 
Furthermore, $D$ is an $n\times n$ diagonal matrix with i.i.d.~$\pm 1$ Rademacher random variables on the diagonal.
For other $n$, one can use the SRHT for the smallest power of two greater than $n$, and by padding the data with zeroes, see \Cref{tr}. 
Thus, 
we can assume without loss of generality that $n$ is a power of two because 
this does not affect the validity of the conditions specified below.
Matrix-vector multiplication via 
the SRHT can be efficiently performed in $O(n\log n)$ floating-point operations, leveraging the Fast Fourier Transform \citep{ailon2006approximate}. 

Our results rely on the following
conditions, where $a_n, \ta_n\in\nsp$ and $U_n$ is the $n\times p$ left singular matrix of $X_n$
 for all $n\ge 1$: 
%and we define Condition $\mathfrak{P}$ the set of all vectors.
\begin{align}
\label{assmaxo1} \textbf{Condition } \mathfrak{P} &: \|a_n\|_\infty \to 0, \text{ or }\|\ta_n\|_\infty\to 0; \\
\label{l_inftydelocalize} \textbf{Condition } \mathfrak{P'} &:\lim_{n\rightarrow \infty} \max_{i=1,\ldots, n} \|U_{i:}\| = 0.
\end{align}
Condition $\mathfrak{P}'$ requires that all \emph{leverage scores} tend to zero; which can be interpreted as there being no $X_n$-outliers. 
Condition $\mathfrak{P}$ is used
implicitly in our inference results, and explicitly in the supporting results (Lemmas \ref{lemboundqfsrht} and \ref{lemgeneralqfsrht}).
Moreover, Condition $\mathfrak{P}'$ is used
explicitly in our inference results.

{\bf PCA.}
We have the following inferential result for the unobserved data eigenvalues
as well as eigenvectors
using the observed sketched eigenvalues $\hat{\Lambda}_{m,n,i}=\lambda_i(X_n^\top S_{m,n}^\top S_{m,n} X_n)$ and eigenvectors $\hat{v}_{m,n,i}$ $=$ $v_i(X_n^\top S_{m,n}^\top S_{m,n} X_n)$, $i\in [p]$.
%Our first result shows how to do inference in PCA after SRHT sketching.

\begin{theorem}[Inference for Eigenvalues and Eigenvectors with SRHT Sketching]\label{PCAsrht}
For all $(X_n)_{n\ge  1}$
with SVD $U_nL_n V_n^\top$
satisfying Condition \ref{spectralcondition} \textup{\texttt{B}}, 
suppose $(U_n)_{n\ge 1}$ satisfies Condition $\mathfrak{P}'$ from \eqref{l_inftydelocalize}.
Then as $m,n\to \infty$ with $p$ fixed, 
such that $m/\log^2 n\to\infty$ and 
$\limsup \gamma_n<1$, 
for any $i\in[p]$,
we have for the unobserved data eigenvalues $\Lambda_{n,i}=\lambda_i(X_n^\top X_n)$
and the observed sketched eigenvalues $\hat{\Lambda}_{m,n,i}=\lambda_i(X_n^\top S_{m,n}^\top S_{m,n} X_n)$ that \eqref{infPCAvalsrht} holds.
Further, 
we have for the unobserved 
linear combinations $c^\top v_{n,i}$
of the eigenvector $v_{n,i}$ of $X_n^\top X_n$ associated with the eigenvalue $\Lambda_{n,i}$, 
and the observed  eigenvector $\hat{v}_{m,n,i}$ of $X_n^\top S_{m,n}^\top S_{m,n} X_n$ associated with the eigenvalue $\hat{\Lambda}_{m,n,i}$
that 
for any vector $c\in\psp$ with $\limsup_{n\to\infty}$ $(c^\top v_{n,i})^2<1$:

\begin{equation}\label{infPCAvecsrht}
\sqrt{\frac{m}{1-\gamma_n}}\left(\sum_{k\neq i}\frac{\hat{\Lambda}_{m,n,i}\hat{\Lambda}_{m,n,k}}{(\hat{\Lambda}_{m,n,i}-\hat{\Lambda}_{m,n,k})^2} (c^\top\hat{v}_{m,n,k})^2\right)^{-1/2}c^\top(\hat{v}_{m,n,i}-v_{n,i})\Rightarrow \N(0,1).
\end{equation}
\end{theorem}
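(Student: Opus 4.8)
The plan is to obtain Theorem~\ref{PCAsrht} as a specialization of the general result Theorem~\ref{infPCAgeneral}, so the substance lies in verifying its hypotheses for SRHT sketching. Condition~\ref{spectralcondition}~\texttt{B} is assumed and implies Condition~\ref{spectralcondition}~\texttt{A}; Condition~$\mathfrak{P}'$ from \eqref{l_inftydelocalize} is exactly the vanishing-leverage hypothesis. What remains is to verify Condition~\ref{generalquadratic forms}~\texttt{Sym}, in fact its special case \texttt{Sym-1'} with $\alpha=1$, $\tau_{m,n}=1-\gamma_n$, and $G=I_{p^2}+P_p+Q_p$ from Definition~\ref{spmx}. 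One first checks that this $G$ is absolutely symmetric as in Definition~\ref{abs} and that $D_p^\top G D_p$ decomposes into a $2I_p+\mathbf{1}_p\mathbf{1}_p^\top$ block on diagonal index pairs and the identity on strictly-upper pairs, so its spectrum lies in $[2,2+p]$; thus every structural requirement of the \texttt{Sym} condition and of Theorem~\ref{infPCAgeneral} is met once the CLT \eqref{symmatform} is established.

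To prove \eqref{symmatform} for the SRHT, I would first rewrite the quadratic form. With $S_{m,n}=\sqrt{n/m}\,BH_lD$ one has $S_{m,n}^\top S_{m,n}=(n/m)\,DH_lBH_lD$ (using $H_l^\top=H_l$, $H_l^\top H_l=I_n$, $B^2=B$, $D^2=I_n$), so setting $\tilde U_n:=H_lDU_n$, which is again partial orthonormal since $\tilde U_n^\top\tilde U_n=U_n^\top U_n=I_p$,
\[
U_n^\top S_{m,n}^\top S_{m,n}U_n-I_p=\frac{n}{m}\sum_{i=1}^{n}\Bigl(B_{ii}-\tfrac{m}{n}\Bigr)\tilde u_{n,i}\tilde u_{n,i}^\top ,
\]
where the $\tilde u_{n,i}^\top$ are the rows of $\tilde U_n$ and the $B_{ii}$ are i.i.d.~$\mathrm{Bernoulli}(m/n)$ random variables independent of $D$. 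Conditionally on $D$ this is a sum of independent mean-zero matrices, and the plan has three steps: (i) a delocalization step, showing by Hoeffding's inequality for the Rademacher sums $(H_lDU_n)_{ij}$ (each with variance proxy $1/n$) together with a union bound over the $n$ rows that, on an event of probability tending to one, $\max_i\|\tilde u_{n,i}\|^2=O(\log n/n)$ --- here $\max_i\|U_{i:}\|\to 0$ enters; (ii) a conditional Lindeberg CLT over $B$ given $D$: its maximal summand has norm $O_P((\log n)/m)$ against a standard deviation of order $m^{-1/2}$, which is exactly what forces $m/\log^2 n\to\infty$, and its (random, $D$-dependent) limiting covariance equals $n\sum_i\vec(\tilde u_{n,i}\tilde u_{n,i}^\top)\vec(\tilde u_{n,i}\tilde u_{n,i}^\top)^\top$; and (iii) showing that this fourth-moment matrix converges in probability to the deterministic $I_{p^2}+P_p+Q_p$. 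Steps (i)--(iii) are the content of Lemmas~\ref{lemboundqfsrht}, \ref{lemgeneralqfsrht}, and \ref{lemsymqfsrht}, which also yield Conditions~\ref{generalquadratic forms}~\texttt{Bounded} and \texttt{1-dim} under Condition~$\mathfrak{P}$. The hypothesis $\limsup\gamma_n<1$ keeps $\tau_{m,n}=1-\gamma_n$ bounded away from $0$, so the normalization and the bounds $cI\preceq D_p^\top G_nD_p\preceq CI$ stay nondegenerate.

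With Condition~\ref{generalquadratic forms}~\texttt{Sym-1'} in hand for $\alpha=1$, I would simply apply Theorem~\ref{infPCAgeneral}: since $(G)_{(ii),(ii)}=1+1+1=3=2+\alpha$, specializing \eqref{infsingval'} with $\tau_{m,n}=1-\gamma_n$ gives \eqref{infPCAvalsrht}, and specializing \eqref{infsingvec'} with $\tau_{m,n}=1-\gamma_n$ gives \eqref{infPCAvecsrht}; the side constraint $\limsup_n(c^\top v_{n,i})^2<1$ is precisely the one under which \eqref{infsingvec'} is stated, and Condition~\ref{spectralcondition}~\texttt{B} supplies the distinct eigenvalues and bounded condition number that Theorem~\ref{infPCAgeneral} needs there. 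The main obstacle is step (iii): unlike for i.i.d., Haar, or CountSketch sketches, the rows $\tilde u_{n,i}$ of $H_lDU_n$ are \emph{not} independent, so the fourth-moment limit cannot be read off a short moment calculation and instead calls for a dyadic expansion exploiting the recursion $H_l=\tfrac{1}{\sqrt 2}\left(\begin{smallmatrix}H_{l-1}&H_{l-1}\\ H_{l-1}&-H_{l-1}\end{smallmatrix}\right)$, with the appearance of the extra $Q_p$ term --- absent for CountSketch, i.i.d., and Haar in Table~\ref{sumqf} --- needing careful tracking, as it encodes a nonvanishing trace-type correlation specific to the Hadamard structure. By contrast, passing from the matrix CLT to the eigenvalue and eigenvector functionals via the delta method and tightness is not a difficulty here, being already handled inside the proof of Theorem~\ref{infPCAgeneral}.
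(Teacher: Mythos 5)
Your proof of Theorem~\ref{PCAsrht} matches the paper's: Lemma~\ref{lemsymqfsrht} (under Condition~$\mathfrak{P}'$) gives Condition~\ref{generalquadratic forms}~\texttt{Sym-1'} with $\alpha=1$, $\tau_{m,n}=1-\gamma_n$, $G=I_{p^2}+P_p+Q_p$, and then \eqref{infsingval'} and \eqref{infsingvec'} from Theorem~\ref{infPCAgeneral} (applicable under Condition~\ref{spectralcondition}~\texttt{B}) directly yield \eqref{infPCAvalsrht} and \eqref{infPCAvecsrht}. The extra detail you sketch about proving Lemma~\ref{lemsymqfsrht} itself via a matrix Lindeberg CLT differs slightly from the paper's route for that auxiliary lemma (Cram\'er--Wold with spectral decomposition of $\Phi$ reducing to scalar quadratic-form CLTs), but that lies outside the statement under review and does not affect the correctness of your argument.
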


{\bf LS.}\label{sechada}
 The following result shows 
 how to make inferences on least squares parameters based on sketched LS estimators when using SRHT. 
\begin{theorem}[Inference in Least Squares with SRHT Sketching]\label{thhadamard}
Let  $(S_{m,n})_{m,n\ge 1}$ be a sequence of subsampled randomized Hadamard sketching matrices, and let $X_n$ satisfy Condition \ref{spectralcondition} \textup{\texttt{A}}. 
Denoting for $i\in[p]$ the $i$-th row of $U_n$ by $U_{i:}$,
if either
\begin{equation}\label{assudelocalized}
     (U_n)_{n\ge 1} \textnormal{ satisfies Condition } \mathfrak{P'} \textnormal{ from \eqref{l_inftydelocalize} }
     \quad \text{or}
    \quad \lim_{n\rightarrow \infty} \max_{i=1,\ldots, n} \frac{|\ep_{n,i}|}{\|\ep_n\|} = 0,
\end{equation}
then, as $m\to \infty$, $m/\log^2 n\to\infty$, and for 
$\gamma_n = m/n$, $\limsup \gamma_n<1$, we have 
for the sketch-and-solve LS estimator $\hbs$ from \eqref{ss} that
\begin{equation*}
\left(m/(1-\gamma_n)\right)^{1/2}\|\tep_{m,n}\|^{-1}(\tX_{m,n}^\top \tX_{m,n})^{1/2}(\hbs-\beta_n) \Rightarrow \N(0,I_p).\end{equation*} 
If either
\begin{equation}\label{assudelocalized1}
    (U_n)_{n\ge 1} \textnormal{ satisfies Condition } \mathfrak{P'} \textnormal{ from \eqref{l_inftydelocalize} }
         \quad \text{or} 
    \quad \lim_{n\to\infty}\max_{i=1,\ldots, n} \frac{|y_{n,i} - \ep_{n,i}|}{\|y_n - \ep_n\|} = 0,
\end{equation}
then, in the same asymptotic regime, we have  
for the partial sketching estimator $\hbp$ from \eqref{ss} that
$$\left(m/(1-\gamma_n)\right)^{1/2}\left(\|\tX_{m,n}\hbp\|^2 \cdot (\tX_{m,n}^\top \tX_{m,n})^{-1}+ 2\hbp \hbpt\right)^{-1/2}(\hbp-\beta_n) \Rightarrow \N(0,I_p).$$

\end{theorem}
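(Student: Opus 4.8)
The plan is to obtain Theorem~\ref{thhadamard} from Theorem~\ref{lemgeneols}: that result already converts the asymptotic normality of quadratic forms in Condition~\ref{generalquadratic forms} into precisely the two limit statements claimed here, so the task reduces to verifying Condition~\ref{generalquadratic forms} for the SRHT with the entries in the ``Hadamard'' row of Table~\ref{sumqf} --- namely $\tau_{m,n}=1-\gamma_n$, Condition~$\mathfrak{P}$ equal to the $\ell_\infty$-delocalization in \eqref{assmaxo1}, and the asymptotic variance in the \textup{\texttt{1-dim'}} form with $\alpha=1$, i.e.\ $\sigma_n^2(a_n,\ta_n)=1+2(a_n^\top\ta_n)^2$ --- and then checking that the delocalization hypotheses of Theorem~\ref{lemgeneols} follow from \eqref{assudelocalized} and \eqref{assudelocalized1}. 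Concretely, Lemma~\ref{lemgeneralqfsrht} supplies Condition~\ref{generalquadratic forms}~\textup{\texttt{1-dim'}} with $\alpha=1$ and Lemma~\ref{lemboundqfsrht} supplies Condition~\ref{generalquadratic forms}~\textup{\texttt{Bounded}}, both valid exactly in the regime $m\to\infty$, $m/\log^2 n\to\infty$, $\limsup\gamma_n<1$ appearing in the theorem; the constraint $\limsup\gamma_n<1$ is what keeps $\tau_{m,n}=1-\gamma_n$ bounded away from $0$ so that $\tau_{m,n}^{-1/2}=O(1)$. (Lemma~\ref{lemsymqfsrht} furnishes Condition~\ref{generalquadratic forms}~\textup{\texttt{Sym-1'}} as well, though that is only needed for the PCA part of \Cref{srht}.)

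For the delocalization check, recall that Theorem~\ref{lemgeneols} requires that for every $b_n\in\psp$ the pairs $(U_nb_n,U_nb_n)_{n\ge1}$ and $(U_nb_n,\bar\ep_n)_{n\ge1}$ --- and, for partial sketching, $(U_nb_n,\widebar{X_n\beta_n})_{n\ge1}$ --- satisfy Condition~$\mathfrak{P}$. If $(U_n)_{n\ge1}$ satisfies Condition~$\mathfrak{P}'$ from \eqref{l_inftydelocalize}, then $\|U_nb_n\|_\infty\le\|b_n\|_2\max_{i}\|U_{i:}\|=\max_i\|U_{i:}\|\to0$ uniformly over $b_n\in\psp$, so all these pairs are $\ell_\infty$-delocalized and Theorem~\ref{lemgeneols} applies verbatim. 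If instead only the second alternative of \eqref{assudelocalized} holds, I would use that the least-squares residual is orthogonal to the column space of $X_n$, which equals that of $U_n$, so $U_n^\top\ep_n=0$ and hence $(U_nb_n)^\top\bar\ep_n=0$; together with $\|\bar\ep_n\|_\infty=\max_i|\ep_{n,i}|/\|\ep_n\|\to0$ this makes $(U_nb_n,\bar\ep_n)_{n\ge1}$ $\ell_\infty$-delocalized. The only pair not handled this way is $(U_nb_n,U_nb_n)_{n\ge1}$; inspecting the proof of Theorem~\ref{lemgeneols}, its delocalization enters there only to guarantee $U_n^\top S_{m,n}^\top S_{m,n}U_n=I_p+o_P(1)$, which for the SRHT holds with no leverage-score condition --- the classical subspace-embedding property, valid once $m/\log^2 n\to\infty$ --- so substituting this bound for that step leaves the remainder of the argument intact. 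The partial-sketching hypothesis \eqref{assudelocalized1} is treated identically, using $\widebar{X_n\beta_n}=(y_n-\ep_n)/\|y_n-\ep_n\|$ so that $\|\widebar{X_n\beta_n}\|_\infty=\max_i|y_{n,i}-\ep_{n,i}|/\|y_n-\ep_n\|$.

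With Conditions~\ref{generalquadratic forms}~\textup{\texttt{Bounded}} and \textup{\texttt{1-dim'}} (with $\alpha=1$) in force, Condition~\ref{spectralcondition}~\textup{\texttt{A}} passed through from the hypothesis, and $\tau_{m,n}=1-\gamma_n$, equation~\eqref{1dim'lsinf1} of Theorem~\ref{lemgeneols} yields the stated limit for $\hbs$ and equation~\eqref{1dim'lsinf2} with $\alpha+1=2$ yields the stated limit for $\hbp$ --- these are exactly the two displays of Theorem~\ref{thhadamard}. The genuinely hard ingredient behind all of this is the SRHT central limit theorem itself, Lemma~\ref{lemgeneralqfsrht}, proved separately via the dyadic expansion exploiting the recursive block structure of $H_l$; within the present proof the only subtlety is the case split just described --- recognizing that residual-only, respectively fitted-value-only, delocalization already suffices because the leverage-free subspace-embedding bound absorbs the $(U_nb_n,U_nb_n)$ term for which Condition~$\mathfrak{P}'$ would otherwise be required.
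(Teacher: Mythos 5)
Your proposal is correct and follows essentially the same route as the paper: verify Condition~\ref{generalquadratic forms} for SRHT with $\tau_{m,n}=1-\gamma_n$ and $\alpha=1$ via Lemmas~\ref{lemboundqfsrht} and~\ref{lemgeneralqfsrht}, use the ``either'' alternatives in \eqref{assudelocalized} and \eqref{assudelocalized1} to supply Condition~$\mathfrak{P}$ for the pairs $(U_nb_n,\bar\ep_n)$ and $(U_nb_n,\widebar{X_n\beta_n})$, and then apply Theorem~\ref{lemgeneols}. One small inefficiency: when you reach the pair $(U_nb_n,U_nb_n)$ you invoke ``the classical subspace-embedding property'' as an external justification for $U_n^\top S_{m,n}^\top S_{m,n}U_n=I_p+o_P(1)$, but you have already observed that Lemma~\ref{lemboundqfsrht} requires no delocalization hypothesis whatsoever, and it delivers the sharper rate $\|U_n^\top S_{m,n}^\top S_{m,n}U_n-I_p\|=O_P\big(\sqrt{(1-\gamma_n)/m}\big)$ that the proof of Theorem~\ref{lemgeneols} actually uses (a generic subspace-embedding statement would only give consistency, not this rate). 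The paper simply cites Lemma~\ref{lemboundqfsrht} unconditionally for $(U_nb_n,U_nb_n)$; your argument amounts to the same thing, and you are right that the formal Condition~$\mathfrak{P}$ hypothesis on $(U_nb_n,U_nb_n)$ in Theorem~\ref{lemgeneols} is superfluous for SRHT precisely because the bound it purchases is already free from Lemma~\ref{lemboundqfsrht}.
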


% Some assumptions on $(X_n,y_n)$ are required, since the asymptotic normality of 
% $a_n^\top S_{m,n}^\top S_{m,n} \ta_n$ holds only for some sequences $(a_n)_{n\ge 1}$ and $(\ta_n)_{n\ge 1}$ such that $a_n, \ta_n\in\nsp$ for all $n\ge 1$, as stated in Lemma \ref{lemgeneralqfsrht}. 
% This differs from the other cases of sketching.
% It is worth noting that the asymptotic variances of the sketching estimators differ from the other sketching methods.
The second condition in \eqref{assudelocalized} ensures that the normalized residuals are small, which can be viewed as there being no outliers in the outcome vector $y_n$.

Our result for the sketch-and-solve estimator 
$\hbs$
is related to Theorem 3 (i) of \cite{ahfock2021statistical}. 
They obtain the
asymptotic distribution of the sketched data $S_{m,n}X_n$ for a finite $m$. 
In contrast, we focus on the case where the sketch size $m$ tends to infinity, 
which leads to the additional $1-\gamma_n$ factor in the variance. 
In contrast to the work by \cite{ahfock2021statistical}, 
 our approach allows for $X_n$-outliers, as long as there are no $y_n$-outliers. 
 Moreover, we only require a bounded condition number for $X_n$, while their results 
 require certain convergence conditions on $X_n$ and $y_n$.

{\bf CLT.}
The following lemmas play a fundamental role in the proofs of the above theorems by specifying the boundedness and asymptotic normality of quadratic forms in Condition \ref{generalquadratic forms} for SRHT sketching.
% The vectors on both sides of the quadratic form are deterministic, with the sole randomness occurring in the sketching matrix. 
We show that normality holds if Condition $\mathfrak{P}$ is satisfied, but does not need to hold otherwise.

\begin{lemma}[Bounded Quadratic Forms for SRHT Sketching]\label{lemboundqfsrht}
For any sequences of vectors $(a_n, \ta_n)_{n\ge 1}$, 
such that $a_n,\ta_n \in\nsp$ for all $n\ge 1$,
we have
$$
\sqrt{\frac{m}{1-\gamma_n}}\left(a_n^\top S_{m,n}^\top S_{m,n} \ta_n - a_n^\top \ta_n\right)=O_P(1).
$$
\end{lemma}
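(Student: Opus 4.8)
The plan is to exploit the explicit factorization $S_{m,n} = \sqrt{n/m}\, B H_l D$ of the SRHT and to handle its two independent sources of randomness — the Rademacher diagonal $D$ and the Bernoulli sampling diagonal $B$ — in two stages, conditioning on $D$ first. Since $B$ is a diagonal $0/1$ matrix, $B^\top B = B$, so $S_{m,n}^\top S_{m,n} = \frac{n}{m} D H_l^\top B H_l D$, and writing $u = H_l D a_n$ and $v = H_l D \ta_n$ (deterministic given $D$),
\[ a_n^\top S_{m,n}^\top S_{m,n}\ta_n = \frac{n}{m} \sum_{i=1}^n b_i u_i v_i, \]
where $b_1,\dots,b_n$ are i.i.d.~$\mathrm{Bernoulli}(m/n)$ (the diagonal entries of $B$). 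Because $H_l$ is orthogonal and $D^2 = I_n$, we have $u^\top v = a_n^\top\ta_n$ and $\|u\| = \|v\| = 1$. Hence, conditionally on $D$, the left-hand side above has conditional mean exactly $a_n^\top \ta_n$ and conditional variance $\frac{n}{m}(1-\gamma_n)\sum_{i=1}^n u_i^2 v_i^2$, so that $\frac{m}{1-\gamma_n}$ times this conditional variance equals $n\,\|u\odot v\|_2^2$.

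The crux is then to show $\mathbb{E}_D\big[\,n\,\|u\odot v\|_2^2\,\big] = O(1)$, uniformly over unit vectors $a_n,\ta_n$. Expanding $u_i^2 v_i^2$ as a fourfold sum over indices $j,k,j',k'$ and using that every entry of $H_l$ has absolute value $n^{-1/2}$, only the index configurations in which $j,k,j',k'$ are perfectly matched (all four equal, or split into two pairs) survive after taking the Rademacher expectation, and each surviving configuration contributes a factor $\sum_i (H_l)_{ij}^2 (H_l)_{ik}^2 = 1/n$ (or $\sum_i (H_l)_{ij}^4 = 1/n$). Collecting terms yields the exact identity
\[ \mathbb{E}_D\big[\,n\,\|u\odot v\|_2^2\,\big] \;=\; 1 + 2\,(a_n^\top\ta_n)^2 - 2\sum_{j=1}^n a_{n,j}^2\,\ta_{n,j}^2 \;\le\; 1 + 2\,(a_n^\top\ta_n)^2 \;\le\; 3, \]
a bound uniform in $n$; this also previews the asymptotic variance $1 + 2(a_n^\top\ta_n)^2$ that appears later in Lemma \ref{lemgeneralqfsrht}.

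Finally I would combine the two stages. Chebyshev's inequality applied conditionally on $D$ gives, for any $t > 0$,
\[ \mathbb{P}\!\left(\sqrt{\tfrac{m}{1-\gamma_n}}\,\big|a_n^\top S_{m,n}^\top S_{m,n}\ta_n - a_n^\top\ta_n\big| > t \;\middle|\; D\right) \;\le\; \frac{n\,\|u\odot v\|_2^2}{t^2}; \]
taking expectations over $D$ and invoking the bound above gives $\mathbb{P}\big(\sqrt{m/(1-\gamma_n)}\,|a_n^\top S_{m,n}^\top S_{m,n}\ta_n - a_n^\top\ta_n| > t\big) \le 3/t^2$ for every $n$, which is exactly the $O_P(1)$ claim. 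The only mildly delicate part is the bookkeeping in the Rademacher moment computation — tracking the coincidence patterns of $(j,k,j',k')$ and not double-counting the all-equal configuration — but this is routine, and there is no genuine analytic obstacle here: unlike the companion central limit theorem, this is only a boundedness statement. In particular, the argument never uses $\limsup\gamma_n < 1$; it goes through for any $m < n$, which is why the lemma is stated in that generality.
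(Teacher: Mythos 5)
Your proposal is correct and follows essentially the same route as the paper: condition on the Rademacher diagonal $D$, observe that the conditional mean is exactly $a_n^\top\ta_n$, compute the conditional variance as a Bernoulli sum, and then average over $D$ using the fourth-moment expansion of $(H_lDa_n)_i^2(H_lD\ta_n)_i^2$ to reach the bound $1+2(a_n^\top\ta_n)^2-2\sum_j a_{n,j}^2\ta_{n,j}^2\le 3$. The paper phrases the final step via the law of total variance (noting $\Var[\E[\cdot\mid D]]=0$) rather than conditional Chebyshev followed by averaging, but these are the same argument.
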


The next lemmas establish the normality
of quadratic forms arising in SRHT. 
In the proof, 
finding the asymptotic variances requires intricate and novel calculations, 
using a dyadic representation for the Hadamard matrix elements.

\begin{lemma}[Limiting Distributions of One-dimensional Quadratic Forms for SRHT Sketching]\label{lemgeneralqfsrht}
For any sequences of vectors $(a_{n})_{n\ge 1}$, $(\ta_{n})_{n\ge 1}$  satisfying Condition $\mathfrak{P}$ defined in \eqref{assmaxo1} such that $a_n,\ta_n \in\nsp$ for all $n\ge 1$,
if $m/\log^2 n\to\infty$ and $\limsup m/n <1$, as $m,n\to \infty$, we have 
\begin{equation}\label{hadafixnormal}\sqrt{\frac{m}{1-\gamma_n}} \sqrt{\frac{1}{1+2 (a_n^\top \ta_n)^2}}\left(a_n^\top S_{m,n}^\top S_{m,n}\ta_n - a_n^\top \ta_n\right) \Rightarrow \N(0, 1).\end{equation}
Moreover, 
if Condition $\mathfrak{P}$ fails, 
$\sqrt{m} a_n^\top S_{m,n}^\top S_{m,n} \ta_n$ does not need to be asymptotically normal even if $m/n$ converges.
\end{lemma}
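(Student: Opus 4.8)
The plan is to condition on the Rademacher diagonal $D$ and run a one‑dimensional central limit theorem in the Bernoulli selection variables, after reducing the conditional variance to a quadratic‑form quantity whose concentration is where the ``dyadic'' combinatorics enters. \textbf{Step 1 (reduction to a linear statistic).} Since $D^2=I_n$, $B^\top B=B$, and $H_l^\top H_l=I_n$, we have $S_{m,n}^\top S_{m,n}=(n/m)\,DH_lBH_lD$, so with $u=H_lDa_n$ and $v=H_lD\ta_n$ (hence $u^\top v=a_n^\top\ta_n$),
\[
a_n^\top S_{m,n}^\top S_{m,n}\ta_n-a_n^\top\ta_n=\sum_{i=1}^n\Bigl(\tfrac{n}{m}B_{ii}-1\Bigr)u_iv_i .
\]
Conditionally on $D$ this is a sum of independent mean‑zero terms, linear in the $B_{ii}$, whose conditional variance is $\frac{1-\gamma_n}{\gamma_n}\sum_iu_i^2v_i^2$; thus the conditional variance of $\sqrt{m/(1-\gamma_n)}$ times the display equals $R_n:=n\sum_{i=1}^nu_i^2v_i^2$. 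It then suffices to prove (a) $R_n\to 1+2(a_n^\top\ta_n)^2$ in probability over $D$, and (b) a Berry--Esseen bound for the $B$‑sum conditionally on a high‑probability event in $D$.

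\textbf{Step 2 (the variance limit via a dyadic expansion; the main obstacle).} Writing $(H_l)_{ij}=n^{-1/2}(-1)^{\langle i,j\rangle}$ with $\langle\cdot,\cdot\rangle$ the $\mathbb F_2$‑bilinear form on binary labels, I expand $u_i^2v_i^2$ and sum over $i$, using $\sum_i(-1)^{\langle i,\,j_1\oplus j_2\oplus k_1\oplus k_2\rangle}=n\,\mathbf 1\{j_1\oplus j_2=k_1\oplus k_2\}$, to collapse $R_n$ to $\sum_{j_1\oplus j_2=k_1\oplus k_2}D_{j_1}D_{j_2}D_{k_1}D_{k_2}(a_n)_{j_1}(a_n)_{j_2}(\ta_n)_{k_1}(\ta_n)_{k_2}$. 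Taking $\mathbb{E}_D$ retains only tuples with even index multiplicities; an inclusion--exclusion over the three pairings of the four positions gives the exact identity $\mathbb{E}_DR_n=1+2(a_n^\top\ta_n)^2-2\sum_i(a_n)_i^2(\ta_n)_i^2$, whose last term is at most $\min(\|a_n\|_\infty^2,\|\ta_n\|_\infty^2)\to 0$ under Condition $\mathfrak P$. For $\mathrm{Var}_D(R_n)$ I expand $R_n^2$ into an eight‑fold index sum constrained by \emph{two} $\oplus$‑equations and an even‑multiplicity condition; the matched pairings reproduce $(\mathbb{E}_DR_n)^2$ and a careful accounting shows every remaining contribution is $o(1)$ under $\mathfrak P$ (each is dominated by a power of $\min(\|a_n\|_\infty,\|\ta_n\|_\infty)$ times a bounded factor). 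This combinatorial bookkeeping is the ``intricate and novel'' step flagged in the statement, and it is exactly here that $\mathfrak P$ is used: without it $R_n$ need not concentrate.

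\textbf{Step 3 (conditional CLT and delocalization of $u,v$).} For fixed $i$, $\sqrt n\,u_i=\sum_j(-1)^{\langle i,j\rangle}(a_n)_jD_{jj}$ is a Rademacher sum with variance $\|a_n\|^2=1$ and summand bound $\|a_n\|_\infty$, so Hoeffding and a union bound over $i\in[n]$ give $\max_i|u_i|\vee\max_i|v_i|\le C\sqrt{\log n/n}$ with probability $1-o(1)$, hence $\max_i|u_iv_i|\lesssim\log n/n$ on that event. Since $R_n\to 1+2(a_n^\top\ta_n)^2\ge 1$, the Lyapunov ratio for the conditional $B$‑sum is $O\!\bigl(n\max_i|u_iv_i|/((1-\gamma_n)^{3/2}m^{1/2})\bigr)=O(\log n/\sqrt m)=o(1)$ under $m/\log^2 n\to\infty$ (where $\limsup\gamma_n<1$ keeps the centered‑Bernoulli third moments of constant order). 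So, conditionally on $D$ on the good event, $\sqrt{m/(1-\gamma_n)}(a_n^\top S_{m,n}^\top S_{m,n}\ta_n-a_n^\top\ta_n)$ is within $o(1)$ of $\N(0,R_n)$ in Kolmogorov distance; integrating out the random scale $R_n$ via a characteristic‑function argument and using (a) yields \eqref{hadafixnormal}.

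\textbf{Step 4 (necessity of $\mathfrak P$).} For the last assertion I take the fixed pair $a_n\equiv a=\tfrac1{\sqrt2}(e_1+e_2)$ and $\ta_n\equiv\ta=\tfrac1{\sqrt2}(e_4+e_7)$, whose binary labels satisfy $1\oplus 2\oplus 4\oplus 7=0$; then $a^\top\ta=0$ but $\|a\|_\infty=\|\ta\|_\infty=1/\sqrt2$, so $\mathfrak P$ fails. Here $u,v$ are $O(n^{-1/2})$, so the conditional CLT of Step 3 still applies with only $m\to\infty$, while the computation of Step 2 gives $R_n=1+D_{11}D_{22}D_{44}D_{77}$, equal to $0$ or $2$ each with probability $\tfrac12$, which does not concentrate. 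Hence $\sqrt{m/(1-\gamma_n)}\,a^\top S_{m,n}^\top S_{m,n}\ta$ converges to the non‑Gaussian mixture $\tfrac12\delta_0+\tfrac12\N(0,2)$, and the same holds for $\sqrt m\,a^\top S_{m,n}^\top S_{m,n}\ta$ when $\gamma_n$ is held at a constant in $(0,1)$, so that $m/n$ converges.
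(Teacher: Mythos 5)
Your proposal follows essentially the same route as the paper: condition on $D$, reduce to a linear statistic in the Bernoulli selections, establish the Lyapunov condition via the Hoeffding bound on $\max_i|u_i|$, and show concentration of the conditional variance $R_n=n\sum_i u_i^2v_i^2$ to $1+2(a_n^\top\ta_n)^2$ using the identity $\sum_i h_{ij_1}h_{ij_2}h_{ij_3}h_{ij_4}\neq 0$ iff the XOR of the (zero-based) binary labels vanishes. (The paper organizes Step~2 as an explicit $I_1,\dots,I_4$ decomposition with Lemma~4 handling the only random piece $I_{41}$ by a second-moment bound; your inclusion--exclusion over pairings is the same computation in different packaging, and your observation that $\mathbb{E}_D R_n=1+2(a_n^\top\ta_n)^2-2\sum_i(a_n)_i^2(\ta_n)_i^2$ is exact and matches the paper. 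Your shortcut $I_{42}=2\bigl[(a_n^\top\ta_n)^2-\sum_i(a_n)_i^2(\ta_n)_i^2\bigr]$ is slightly cleaner than the paper's decomposition of $\ta_n$ into components parallel and orthogonal to $a_n$.)

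There is one concrete error, in Step~4. For the Sylvester--Hadamard matrix constructed here we have $H_{ij}=n^{-1/2}(-1)^{\langle i-1,\,j-1\rangle}$, so the condition for $\sum_i h_{ij_1}h_{ij_2}h_{ij_3}h_{ij_4}\neq 0$ is that the \emph{zero-based} labels $j_\ell-1$ XOR to zero. For your choice $a=\tfrac1{\sqrt2}(e_1+e_2)$, $\ta=\tfrac1{\sqrt2}(e_4+e_7)$ the relevant labels are $\{0,1,3,6\}$, and $0\oplus1\oplus3\oplus6=4\neq 0$. Consequently the fourth cross term vanishes identically and $R_n\equiv 1$ deterministically, so the conditional CLT still goes through with variance $1+2(a^\top\ta)^2=1$ --- your example is \emph{not} a counterexample. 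The fix is easy: pick disjointly supported $a,\ta$ whose four zero-based labels XOR to zero, e.g.\ $a=\tfrac1{\sqrt2}(e_1+e_2)$, $\ta=\tfrac1{\sqrt2}(e_3+e_4)$ (labels $\{0,1,2,3\}$, XOR $=0$), or $\ta=\tfrac1{\sqrt2}(e_5+e_6)$ (labels $\{0,1,4,5\}$). With that correction your computation gives $R_n=1+D_{11}D_{22}D_{33}D_{44}\in\{0,2\}$ and the mixture limit $\tfrac12\delta_0+\tfrac12\N(0,2)$ as intended, which is in the same spirit as the paper's explicit four-coordinate example $a_n=\tfrac12(1,1,-1,-1,0,\dots)^\top$, $\ta_n=\tfrac12(-1,1,-1,1,0,\dots)^\top$.
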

\begin{lemma}[Asymptotic Normality of Symmetric Quadratic Form for SRHT Sketching]\label{lemsymqfsrht}
If $(U_n)_{n\ge 1}$ satisfies Condition $\mathfrak{P}'$,  
and $m/\log^2 n\to\infty$ with $\limsup m/n <1$ as $m,n\to \infty$, then
for the $p^2\times p^2$ rank-one-positive definite matrix
 $G=I_{p^2}+P_p+Q_p$,
$$\sqrt{\frac{m}{1-\gamma_n}} \vec(U_n^\top S_{m,n}^\top S_{m,n}U_n-I_p)\Rightarrow \N\left(0,G\right).$$
\end{lemma}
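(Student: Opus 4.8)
The plan is to reduce the multivariate CLT for the matrix-valued quadratic form $U_n^\top S_{m,n}^\top S_{m,n} U_n - I_p$ to the one-dimensional CLT already established in Lemma \ref{lemgeneralqfsrht}, via the Cramér--Wold device. First I would note that it suffices to prove asymptotic normality of every fixed linear combination $\sum_{i,j} t_{ij}\, e_i^\top (U_n^\top S_{m,n}^\top S_{m,n} U_n - I_p) e_j$ for an arbitrary symmetric coefficient matrix $T=(t_{ij})\in\R^{p\times p}$ (symmetry suffices since $U_n^\top S_{m,n}^\top S_{m,n} U_n$ is symmetric), and then identify the limiting variance as $\vec(T)^\top G \vec(T)$ with $G = I_{p^2}+P_p+Q_p$. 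Writing $u_{n,j}=U_n e_j$ for the columns of $U_n$, each entry is the bilinear form $u_{n,i}^\top S_{m,n}^\top S_{m,n} u_{n,j} - \delta_{ij}$; polarizing, one can express the linear combination in terms of the diagonal quadratic forms $w_n^\top S_{m,n}^\top S_{m,n} w_n - \|w_n\|^2$ for vectors $w_n$ that are fixed linear combinations of the columns of $U_n$. Condition $\mathfrak{P}'$ — vanishing leverage scores $\max_i \|U_{i:}\|\to 0$ — guarantees that each such $w_n$ is $\ell_\infty$-delocalized (its max coordinate is bounded by $\|T\|$ times $\max_i\|U_{i:}\|\to 0$), hence Condition $\mathfrak{P}$ from \eqref{assmaxo1} holds for every vector appearing, so Lemma \ref{lemgeneralqfsrht} applies to each of them.

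The key computational step is to determine the joint limiting covariance of the collection $\{ u_{n,i}^\top S_{m,n}^\top S_{m,n} u_{n,j} \}_{i\le j}$. Rather than invoke Lemma \ref{lemgeneralqfsrht} as a black box for each polarized combination, I would go back into its proof and extract the asymptotic covariance directly: the dyadic expansion of $S_{m,n}=\sqrt{n/m}\,BH_lD$ expresses $a_n^\top S_{m,n}^\top S_{m,n} b_n$ as a quadratic form in the Rademacher signs $D$ and the Bernoulli subsampling variables $B$, and the leading fluctuation term, after conditioning appropriately, is a sum of martingale differences whose conditional variance concentrates. Carrying two vector pairs through this computation simultaneously, the cross-covariance of $\sqrt{m/(1-\gamma_n)}(a_n^\top S^\top S b_n - a_n^\top b_n)$ and $\sqrt{m/(1-\gamma_n)}(c_n^\top S^\top S d_n - c_n^\top d_n)$ should converge to $(a_n^\top c_n)(b_n^\top d_n) + (a_n^\top d_n)(b_n^\top c_n) + (a_n^\top b_n)(c_n^\top d_n)$ in the limit, using the delocalization to kill the data-dependent "kurtosis-type" remainders (the terms involving $\sum_i a_{n,i}b_{n,i}c_{n,i}d_{n,i}$, which are $O(\max_i\|U_{i:}\|^2)\to 0$). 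Specializing to $a_n=u_{n,i}$, etc., and using orthonormality $u_{n,i}^\top u_{n,j}=\delta_{ij}$, the covariance of the $(i,j)$ and $(k,l)$ entries converges to $\delta_{ik}\delta_{jl}+\delta_{il}\delta_{jk}+\delta_{ij}\delta_{kl}$, which is precisely $G_{(ij),(kl)}$ for $G=I_{p^2}+P_p+Q_p$.

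Given the covariance identification, the CLT for an arbitrary linear combination follows: expressed as a single one-dimensional quadratic form (or a finite sum of them handled by a further Cramér--Wold / martingale-CLT argument applied to the joint martingale-difference representation), it is asymptotically $\N(0, \vec(T)^\top G\vec(T))$ by Lemma \ref{lemgeneralqfsrht} together with the covariance computation, and by Cramér--Wold the vectorized matrix converges to $\N(0,G)$. Finally I would verify that $G=I_{p^2}+P_p+Q_p$ is absolutely symmetric in the sense of Definition \ref{abs}: the four-index symmetry $G_{(ij),(kl)}=G_{(ji),(kl)}=\cdots$ is immediate from the explicit form, and $D_p^\top G D_p$ is positive definite because $I_{p^2}+P_p$ restricted to the symmetric subspace is $2\,(\text{identity on symmetric part})\succ 0$ and $Q_p=\vec(I_p)\vec(I_p)^\top\succeq 0$ adds a rank-one nonnegative term; hence $G$ is only rank-one positive definite on that subspace but still strictly positive there, matching the lemma's statement. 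The main obstacle is the second step — pushing through the dyadic-expansion variance computation with \emph{two} pairs of vectors at once to get the cross-covariances, and carefully controlling the $B$-randomness (the subsampling) so that the $1-\gamma_n$ normalization comes out correctly and uniformly; this is where Condition $\mathfrak{P}'$ and the scaling $m/\log^2 n\to\infty$, $\limsup\gamma_n<1$ are used, exactly as in Lemma \ref{lemgeneralqfsrht}, and the bulk of the work is bookkeeping on higher mixed moments of Rademacher and Bernoulli variables.
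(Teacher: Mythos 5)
Your strategic skeleton matches the paper's: Cram\'er--Wold reduction to scalar functionals of $U_n^\top S_{m,n}^\top S_{m,n}U_n-I_p$, $\ell_\infty$-delocalization of linear combinations $U_n w$ supplied by Condition $\mathfrak{P}'$, a Lyapunov CLT conditional on the Rademacher signs $D$, and identification of the limiting covariance as $G=I_{p^2}+P_p+Q_p$; and the cross-covariance formula $(a^\top c)(b^\top d)+(a^\top d)(b^\top c)+(a^\top b)(c^\top d)$ you write down is the right limit. But the step you yourself flag as ``the main obstacle'' is essentially the entire content of the lemma, and you never carry it out. ``Carrying two vector pairs through this computation simultaneously'' would require a four-vector analogue of Lemma~\ref{rrp}, i.e.\ control of $n\sum_j (HDa)_j(HDb)_j(HDc)_j(HDd)_j$; this is not a formal consequence of the two-vector statement $n\sum_j R_j^2\tilde R_j^2\to 1+2(a_n^\top\tilde a_n)^2$. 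You would need either a fresh dyadic expansion over index patterns (with more non-vanishing cases than in Lemma~\ref{4i} for the two-vector proof) or a polarization reduction (write $4R_j^a R_j^b=(R_j^{a+b})^2-(R_j^{a-b})^2$ twice and expand into sixteen applications of Lemma~\ref{rrp}); neither is given, so as written there is a genuine gap at the key computation.

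The paper sidesteps any four-vector computation with a trick you gesture at (``polarizing'') but do not exploit: it spectrally decomposes the symmetric Cram\'er--Wold test matrix $\Phi=\sum_i\mu_i w_iw_i^\top$ with orthonormal $w_i$. Then $\tr\bigl(\Phi\,(U_n^\top S_{m,n}^\top S_{m,n}U_n-I_p)\bigr)=\sum_{j=1}^n\bigl(\sum_{i=1}^p\mu_i(R_j^i)^2\bigr)\gamma_n^{-1}B_j-\tr\Phi$ with $R_j^i=(HDU_nw_i)_j$. Conditionally on $D$ this is a \emph{single} weighted sum of the independent Bernoulli's $B_j$, so no joint CLT of several quadratic forms is ever invoked. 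Its $D$-conditional variance is $\frac{1-\gamma_n}{\gamma_n}\sum_j\bigl(\sum_i\mu_i(R_j^i)^2\bigr)^2$, and expanding the outer square yields $\sum_{i_1,i_2}\mu_{i_1}\mu_{i_2}\sum_j(R_j^{i_1})^2(R_j^{i_2})^2$ --- each inner sum involves only \emph{squared} Hadamard coefficients and is therefore exactly what Lemma~\ref{rrp} already controls. Orthonormality of $(U_nw_i)_{i\in[p]}$ gives limit $3$ when $i_1=i_2$ and $1$ otherwise, hence $2\|\mu\|^2+(\sum_i\mu_i)^2=\vec(\Phi)^\top(I_{p^2}+P_p+Q_p)\vec(\Phi)$; the Lyapunov check conditional on $D$ then mirrors the one-dimensional proof. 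In short, your answer is correct and your route would eventually arrive at it, but the covariance derivation you promise is precisely the hard part and is missing, whereas the spectral decomposition of $\Phi$ is what lets the paper reuse Lemma~\ref{rrp} verbatim without any new moment bookkeeping.
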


\subsection{Sparse Sign Embeddings and CountSketch}
\label{cs-sse}

Sparse Sign Embeddings are a type of sparse dimension reduction maps 
\citep[e.g.,][etc]{achlioptas2001database,meng2013low}. 
For a given sparsity $\zeta_m>0$,
a sparse sign embedding is a random matrix of the form $S_{m,n}=(R_{m,1},\ldots,R_{m,n})$, where for all $i\in [n]$, 
$R_{m,i}$ are i.i.d.~uniformly distributed on the set of $n$-vectors with $\zeta_m$ nonzero elements equal to $\pm1/\sqrt{\zeta_m}$, i.e.,
$\{f/\sqrt{\zeta_m}\in\R^m:f_1,\ldots,f_m\in\{\pm1,0\}\text{ and }\|e\|^2=\zeta_m\}$.
Due to their tunable sparsity, sparse sign embeddings allow the user to control the computational cost of sketching, see \Cref{comp}.
Recently, 
they have  been suggested as a ``sensible default" sketching method \citep{epperly2023which}.
For $\zeta_m=1$, 
sparse sign embeddings are also known as 
CountSketch \citep{charikar2002finding}, or
Clarkson-Woodruff sketches \citep{clarkson2017low}.

Our results rely on the following
conditions, where $a_n, \ta_n\in\nsp$ and $U_n$ is the $n\times p$ left singular matrix of $X_n$
 for all $n\ge 1$: 
\begin{align}
\label{csstate1} \textbf{Condition } \mathfrak{P} &: (\sqrt{m}/\zeta_m)\cdot\sum_{i=1}^n a_{n,i}^4\rightarrow 0, \text{ and }(\sqrt{m}/\zeta_m)\cdot\sum_{i=1}^n \ta_{n,i}^4\rightarrow 0; \\
\label{csstate2} \textbf{Condition } \mathfrak{P'} &: (\sqrt{m}/\zeta_m)\cdot\|\vec(U_n)\|^4_4 \rightarrow 0.
\end{align}
As before, these will be used explicitly or implicitly in our results.
Further, in this section, we use $\sum_{i\neq j}$ to abbreviate the summation over $(i,j)\in[n]\times[n]$ subject to $i\neq j$. 

{\bf PCA.}
We have the following result on inference for eigenvalues and eigenvectors with sparse sign embeddings.
This result allows the per-column sparsity $\zeta_m$  to be $o(\sqrt{m})$; which in particular includes the 
CountSketch.

\begin{theorem}[Inference for Eigenvalues and Eigenvectors with Sparse Sign Embeddings]\label{PCAcs}
For $(X_n)_{n\ge  1}$ satisfying Condition \ref{spectralcondition} \textup{\texttt{B}}, suppose $(U_n)_{n\ge 1}$ satisfies Condition $\mathfrak{P}'$, let $m, n\to\infty$, and suppose the sparsity satisfies  $\zeta_m^2/m\to 0$. 
Then we have for the unobserved data eigenvalues $\Lambda_{n,i}=\lambda_i(X_n^\top X_n)$
and the observed sketched eigenvalues $\hat{\Lambda}_{m,n,i}=\lambda_i(X_n^\top S_{m,n}^\top S_{m,n} X_n)$ that
\begin{equation*}
\sqrt{\frac{m}{2}}\hat{\Lambda}_{m,n,i}^{-1}\left(\hat{\Lambda}_{m,n,i}-\Lambda_{n,i}\right)\Rightarrow \N(0,1).
\end{equation*}
Further,  
we have for the unobserved 
linear combinations $c^\top v_{n,i}$
of the eigenvector $v_{n,i}$ of $X_n^\top X_n$ associated with the eigenvalue $\Lambda_{n,i}$, 
and the observed  eigenvector $\hat{v}_{m,n,i}$ of $X_n^\top S_{m,n}^\top S_{m,n} X_n$ associated with the eigenvalue $\hat{\Lambda}_{m,n,i}$
that 
for any vector $c\in\psp$ satisfying $\limsup_{n\to\infty}(c^\top v_{n,i})^2<1$,
\begin{equation*}%\label{singveccs}
\sqrt{m}\left(\sum_{k\neq i}\frac{\hat{\Lambda}_{m,n,i}\hat{\Lambda}_{m,n,k}}{(\hat{\Lambda}_{m,n,i}-\hat{\Lambda}_{m,n,k})^2} (c^\top\hat{v}_{m,n,k})^2\right)^{-1/2}c^\top(\hat{v}_{m,n,i}-v_{n,i})\Rightarrow \N(0,1).
\end{equation*}
\end{theorem}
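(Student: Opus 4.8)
The plan is to derive Theorem \ref{PCAcs} as a direct application of the general inference result for sketched PCA, Theorem \ref{infPCAgeneral}, instantiated at the sparse sign embedding distribution. The only work is to verify that the hypotheses of Theorem \ref{infPCAgeneral} hold under the assumptions here, with the correct choice of $\tau_{m,n}$ and $G_n$. Reading off the last row of Table \ref{sumasyPCA} (and the ``CountSketch/SSE'' row of Table \ref{sumqf}), the target is $\tau_{m,n}=1$ and $G_n\equiv G=I_{p^2}+P_p$; note that this $G$ is exactly the Condition \ref{generalquadratic forms} \textup{\texttt{Sym-1'}} matrix with $\alpha=0$, since $Q_p$ appears with coefficient $\alpha$. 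Hence $(G_n)_{(ii),(ii)}=(I_{p^2}+P_p)_{(ii),(ii)}=2$, which gives the factor $\sqrt{m/2}$ in the eigenvalue statement via \eqref{infsingval'}, and the eigenvector statement is precisely \eqref{infsingvec'} with $\alpha=0$ and $\tau_{m,n}=1$. So once the \texttt{Sym} condition is established, both displays in Theorem \ref{PCAcs} follow verbatim.

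The substantive step is therefore to verify Condition \ref{generalquadratic forms} \textup{\texttt{Sym}} (in fact its special case \textup{\texttt{Sym-1'}} with $\alpha=0$) for sparse sign embeddings: that under Condition $\mathfrak{P}'$ from \eqref{csstate2} and $\zeta_m^2/m\to 0$,
\begin{equation*}
\sqrt{m}\,\vec\!\left(U_n^\top S_{m,n}^\top S_{m,n} U_n - I_p\right)\Rightarrow \N(0, I_{p^2}+P_p).
\end{equation*}
I would get this from Lemma \ref{lemsymqfcs} (the symmetric-quadratic-form CLT for SSE/CountSketch promised in the contributions list), which in turn is built on Lemma \ref{lemgeneralqfcs} for one-dimensional forms; the latter is proved by writing $a_n^\top S_{m,n}^\top S_{m,n}\ta_n - a_n^\top\ta_n$ as a weighted degenerate U-statistic of order two in the i.i.d.\ columns $R_{m,i}$, computing its variance (which produces the $1+(a_n^\top\ta_n)^2$ form and the $\tau_{m,n}=1$ normalization), controlling the fourth-moment/Lyapunov-type remainder using the $\ell_4$-delocalization hypothesis $(\sqrt m/\zeta_m)\sum_i a_{n,i}^4\to 0$, and invoking a martingale or U-statistic CLT. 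The multivariate version needed for \texttt{Sym} follows by the Cramér–Wold device: apply the one-dimensional CLT to $b^\top\vec(U_n^\top S_{m,n}^\top S_{m,n}U_n - I_p)$ for arbitrary fixed $b\in\R^{p^2}$, with the delocalization input coming from $(\sqrt m/\zeta_m)\|\vec(U_n)\|_4^4\to 0$, and identify the limiting covariance as $I_{p^2}+P_p$ (the $P_p$ term reflecting the symmetry $U_n^\top S^\top S U_n = (U_n^\top S^\top S U_n)^\top$ and the absence of a $Q_p$ term reflecting that for SSE the relevant fourth moment does not create the extra trace-type correlation present in the Hadamard case). One also checks the mild quantitative bound $cI_{p(p+1)/2}\preceq D_p^\top G D_p\preceq CI_{p(p+1)/2}$, which is immediate since $G$ is a fixed positive-definite matrix.

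Finally I would discharge the remaining bookkeeping: Condition \ref{spectralcondition} \textup{\texttt{B}} is assumed directly, Condition $\mathfrak{P}'$ is assumed directly, and the condition $\zeta_m^2/m\to 0$ is exactly what Lemmas \ref{lemgeneralqfcs}–\ref{lemsymqfcs} need (it is the SSE analogue of $m/\log^2 n\to\infty$ for SRHT); there is no $\gamma_n$ constraint because $\tau_{m,n}=1$ does not degenerate. With \texttt{Sym-1'} ($\alpha=0$) in hand, Theorem \ref{infPCAgeneral} applies and yields \eqref{infsingval'} and \eqref{infsingvec'}, which are the two claimed displays. I expect the main obstacle to be entirely inside Lemma \ref{lemgeneralqfcs}/\ref{lemsymqfcs} rather than in this theorem: namely the ``delicate moment calculations'' for the weighted degenerate U-statistic — precisely tracking which cross terms survive in the variance (to pin down the coefficient $1$ on $(a_n^\top\ta_n)^2$ and to rule out a $Q_p$ contribution) and bounding the higher-order terms by the $\ell_4$-delocalization quantities so that the remainder is $o_P(1)$ under $\zeta_m^2/m\to 0$. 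Given those lemmas, the proof of Theorem \ref{PCAcs} itself is a short verification-and-substitution argument.
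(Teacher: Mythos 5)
Your proposal matches the paper's proof exactly: both invoke Lemma \ref{lemsymqfcs} to establish Condition \ref{generalquadratic forms} \textup{\texttt{Sym-1'}} with $\alpha=0$ (equivalently $G=I_{p^2}+P_p$, $\tau_{m,n}=1$), then read off the two displays from \eqref{infsingval'} and \eqref{infsingvec'} in Theorem \ref{infPCAgeneral}, with Condition \ref{spectralcondition} \textup{\texttt{B}} and Condition $\mathfrak{P}'$ supplied by hypothesis. The additional commentary you give about how Lemma \ref{lemsymqfcs} would itself be proved (Cramér--Wold plus the weighted degenerate U-statistic martingale CLT from Lemma \ref{lemgeneralqfcs}) also agrees with the paper's route, but concerns the lemma rather than this theorem.
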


{\bf LS.}\label{seccs}
We also have the following 
result for inference about the least squares parameters based on sketch-and-solve with sparse sign embeddings.

\begin{theorem}[Inference in Least Squares with Sparse Sign Embeddings]\label{cs}
Suppose $X_n$ satisfies Condition \ref{spectralcondition} \textup{\texttt{A}}, $m, n\to\infty$, and $\zeta_m^2/m\to 0$. If 
\beq\label{olscsdelocal1}
(U_n)_{n\ge 1} \textnormal{ satisfies Condition } \mathfrak{P'} \textnormal{ from \eqref{csstate2}}
\quad\text{ and}\quad
(\sqrt{m}/\zeta_m)\cdot \frac{\|\epsilon_n\|^4_4}{\|\epsilon_n\|^4}\to 0,
\eeq
then 
for the sketch-and-solve LS estimator $\hbs$ from \eqref{ss}, we have with $ \hat{\Sigma}_{m,n} = \|\tep_{m,n}\|^2\cdot  (\tX_{m,n}^\top \tX_{m,n})^{-1}$ that
$\sqrt{m}\hat{\Sigma}_{m,n}^{-1/2}(\hbs-\beta_n) \Rightarrow \N(0,I_p)$.
Similarly, 
if 
\beq\label{olscsdelocal2}
(U_n)_{n\ge 1} \textnormal{ satisfies Condition } \mathfrak{P'} \textnormal{ from \eqref{csstate2}}\quad\text{ and}\quad
(\sqrt{m}/\zeta_m)\cdot \frac{\|y_n-\epsilon_n\|^4_4}{\|y_n-\epsilon_n\|^4}\to 0,
\eeq
then
for the sketch-and-solve LS estimator $\hbs$ from \eqref{ss} 
we have 
with $\hSigp=\hbeta_{m,n}^{\pa\top}\tX_{m,n}^\top \tX_{m,n}  \hbp (\tX_{m,n}^\top \tX_{m,n})^{-1}+ \hbp \hbpt$
that
$\sqrt{m}(\hSigp)^{-1/2}(\hbp-\beta_n) \Rightarrow \N(0,I_p)$.
\end{theorem}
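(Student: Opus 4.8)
The plan is to apply the general inference result of Theorem \ref{lemgeneols} (the \texttt{1-dim'} consequences \eqref{1dim'lsinf1}--\eqref{1dim'lsinf2}) with the sketching distribution instantiated to sparse sign embeddings, for which $\tau_{m,n}=1$ and $\alpha$ is determined by the covariance structure $G_n = I_{p^2}+P_p$, i.e.~$\alpha=0$. So the key is to verify that, under the stated hypotheses, all the ingredients of Condition \ref{generalquadratic forms} required by Theorem \ref{lemgeneols}---namely \texttt{Bounded}, \texttt{1-dim}, and in fact \texttt{1-dim'}---hold with $\mathfrak{P}$ being the $\ell_4$-delocalization condition \eqref{csstate1}. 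The hard analytic work is packaged in Lemmas \ref{lemgeneralqfcs} and \ref{lemsymqfcs} (the CLTs for one-dimensional and symmetric quadratic forms of sparse sign embeddings), so this proof is mostly a bookkeeping exercise of checking that the vector pairs arising in the least squares estimators satisfy the relevant delocalization conditions, and that $\tau_{m,n}=1$ together with $\zeta_m^2/m\to0$ is exactly the scaling regime in which those lemmas apply.

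Concretely, first I would recall from \eqref{geneformucomplete0} that $\hbs-\beta_n = V_nL_n^{-1}(U_n^\top S_{m,n}^\top S_{m,n}U_n)^{-1}U_n^\top S_{m,n}^\top S_{m,n}\ep_n$, so the only quadratic forms entering the sketch-and-solve estimator are $U_n^\top S_{m,n}^\top S_{m,n}U_n$ (a $p\times p$ symmetric form in the columns of $U_n$) and $U_n^\top S_{m,n}^\top S_{m,n}\ep_n$ (which, coordinate by coordinate, is $u_{n,j}^\top S_{m,n}^\top S_{m,n}\bar\ep_n\cdot\|\ep_n\|$). Thus I need: (i) every column $u_{n,j}$ of $U_n$ is $\ell_4$-delocalized in the sense that $(\sqrt m/\zeta_m)\sum_i u_{n,ij}^4\to0$---this follows from the hypothesis $(U_n)_{n\ge1}$ satisfies Condition $\mathfrak{P}'$ from \eqref{csstate2}, since $\|\vec(U_n)\|_4^4=\sum_{j}\sum_i u_{n,ij}^4$ dominates each column's contribution; (ii) $\bar\ep_n$ is $\ell_4$-delocalized, i.e.~$(\sqrt m/\zeta_m)\|\ep_n\|_4^4/\|\ep_n\|^4\to0$, which is precisely the second condition in \eqref{olscsdelocal1}. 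Given (i) and (ii), the pairs $(U_nb_n,U_nb_n)$ and $(U_nb_n,\bar\ep_n)$ for any $b_n\in\psp$ satisfy Condition $\mathfrak{P}$ (using the fact that a linear combination of $\ell_4$-delocalized unit vectors with bounded coefficients is $\ell_4$-delocalized up to a constant, which is routine via Minkowski), so Lemma \ref{lemgeneralqfcs} gives Condition \ref{generalquadratic forms} \texttt{1-dim} with $\sigma_n^2(a_n,\ta_n)=1+(a_n^\top\ta_n)^2$, i.e.~\texttt{1-dim'} with $\alpha=0$; Lemma \ref{lemboundqfcs} (or the boundedness statement in the CS section) gives \texttt{Bounded}; and Lemma \ref{lemsymqfcs} gives \texttt{Sym-1'} with $G=I_{p^2}+P_p$. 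Then \eqref{1dim'lsinf1} of Theorem \ref{lemgeneols} with $\tau_{m,n}=1$ yields $\sqrt m\,\|\tep_{m,n}\|^{-1}(\tX_{m,n}^\top\tX_{m,n})^{1/2}(\hbs-\beta_n)\Rightarrow\N(0,I_p)$, which, writing $\hat\Sigma_{m,n}=\|\tep_{m,n}\|^2(\tX_{m,n}^\top\tX_{m,n})^{-1}$, is exactly $\sqrt m\,\hat\Sigma_{m,n}^{-1/2}(\hbs-\beta_n)\Rightarrow\N(0,I_p)$.

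For the partial sketching statement under \eqref{olscsdelocal2}, the argument is parallel: from \eqref{ss}, $\hbp-\beta_n=(\tX_{m,n}^\top\tX_{m,n})^{-1}(X_n^\top y_n - \tX_{m,n}^\top\tX_{m,n}\beta_n) = V_nL_n^{-1}\big[(U_n^\top S_{m,n}^\top S_{m,n}U_n)^{-1}-I_p\big]L_n V_n^\top\beta_n$, so the driving quadratic form is again $U_n^\top S_{m,n}^\top S_{m,n}U_n$ together with forms of the type $u_{n,j}^\top S_{m,n}^\top S_{m,n}\,\widebar{X_n\beta_n}$, and here $\widebar{X_n\beta_n}=(y_n-\ep_n)/\|y_n-\ep_n\|$; its $\ell_4$-delocalization is the second condition in \eqref{olscsdelocal2}. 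Applying \eqref{1dim'lsinf2} of Theorem \ref{lemgeneols} with $\alpha=0$ and $\tau_{m,n}=1$ gives $\sqrt m\big(\|\tX_{m,n}\hbp\|^2(\tX_{m,n}^\top\tX_{m,n})^{-1}+\hbp\hbpt\big)^{-1/2}(\hbp-\beta_n)\Rightarrow\N(0,I_p)$; recognizing $\|\tX_{m,n}\hbp\|^2=\hbpt\tX_{m,n}^\top\tX_{m,n}\hbp$ identifies the bracketed matrix with $\hSigp$ as defined in the statement, completing the proof. The main obstacle---or rather, the only place any real care is needed---is confirming that the coordinate-wise quadratic forms $u_{n,j}^\top S_{m,n}^\top S_{m,n}\bar\ep_n$ assembled into the vector $U_n^\top S_{m,n}^\top S_{m,n}\bar\ep_n$ are jointly asymptotically normal (not merely marginally), and that this joint CLT is exactly what the hypotheses of Theorem \ref{lemgeneols} require; but since Theorem \ref{lemgeneols} is stated to take Condition \ref{generalquadratic forms} \texttt{1-dim}/\texttt{1-dim'} as input and handle the joint convergence and the continuous-mapping/delta-method steps internally, all that remains at this level is the delocalization bookkeeping described above, together with checking $\zeta_m^2/m\to0$ matches the regime of Lemmas \ref{lemgeneralqfcs} and \ref{lemsymqfcs}.
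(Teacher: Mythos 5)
Your proposal is correct and follows the same route as the paper's proof: verify that the vector pairs $(U_nb_n,U_nb_n)$, $(U_nb_n,\bar\ep_n)$ (respectively $(U_nb_n,\widebar{X_n\beta_n})$) satisfy Condition $\mathfrak{P}$ from \eqref{csstate1}, apply Lemma \ref{lemgeneralqfcs} to get Condition \ref{generalquadratic forms} \texttt{1-dim'} with $\alpha=0$ and $\tau_{m,n}=1$, then invoke \eqref{1dim'lsinf1}--\eqref{1dim'lsinf2} from Theorem \ref{lemgeneols}. One cosmetic imprecision: there is no separate ``Lemma \ref{lemboundqfcs}'' in the paper---Condition \texttt{Bounded} is obtained by taking $\mathfrak{p}=\mathfrak{P}$ and reading it off from the CLT in Lemma \ref{lemgeneralqfcs}, exactly as the paper remarks after Condition \ref{generalquadratic forms}; and the reference to Lemma \ref{lemsymqfcs}/\texttt{Sym-1'} is extraneous here (it is needed only for the PCA theorem, not for least squares).
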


{\bf CLT.}
The inference results mentioned above are based on the following central limit theorems for
quadratic forms
from 
Condition \ref{generalquadratic forms} for Sparse Sign Embeddings.
First, we have a result on one-dimensional quadratic forms.

\begin{lemma}[Limiting Distributions of One-dimensional Quadratic Forms for Sparse Sign Embeddings]\label{lemgeneralqfcs}
For
a fixed $n$, and 
any sequences $(a_{n})_{n\ge 1}$, $(\ta_{n})_{n\ge 1}$ such that $a_n,\ta_n \in\nsp$ for all $n\ge 1$, 
if
Condition $\mathfrak{P}$ from \eqref{csstate1} holds
as $m\to \infty$ and $\zeta_m^2/m\to 0$, 
then we have
\begin{equation}\label{csclt} \sqrt{\frac{m}{1+ (a_n^\top \ta_n)^2}}\left(a_n^\top S_{m,n}^\top S_{m,n}\ta_n - a_n^\top \ta_n\right) \Rightarrow \N(0, 1).\end{equation}
\end{lemma}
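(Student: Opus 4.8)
The plan is to split the quadratic form into a deterministic diagonal part and a random off-diagonal part, recognize the latter as a completely degenerate weighted $U$-statistic of order two in the i.i.d.\ columns of $S_{m,n}$, and apply a central limit theorem for such statistics. Write $S_{m,n}=(R_1,\dots,R_n)$ with $R_i:=R_{m,i}$, so that $a_n^\top S_{m,n}^\top S_{m,n}\ta_n=\sum_{i,j}a_{n,i}\ta_{n,j}\langle R_i,R_j\rangle$. Since $\|R_i\|^2=1$ deterministically, the diagonal contributes exactly $\sum_i a_{n,i}\ta_{n,i}=a_n^\top\ta_n$, so the centered quantity is
\[
W_{m,n}:=\sum_{i\neq j}a_{n,i}\ta_{n,j}\langle R_i,R_j\rangle=\sum_{i<j}\phi_{ij},\qquad \phi_{ij}:=\big(a_{n,i}\ta_{n,j}+a_{n,j}\ta_{n,i}\big)\langle R_i,R_j\rangle .
\]
Because $\mathbb{E}[R_j]=0$ and $\langle R_i,R_j\rangle$ is bilinear, $\mathbb{E}[\phi_{ij}\mid R_i]=\mathbb{E}[\phi_{ij}\mid R_j]=0$; that is, the kernel is clean (completely degenerate).

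Conditioning on the column supports $T_i\subseteq[m]$ reduces all relevant expectations to the combinatorics of support overlaps, since the $\pm1$ signs are independent Rademacher. The basic identity $\mathbb{E}|T_i\cap T_j|=\zeta_m^2/m$ gives $\mathbb{E}[\langle R_i,R_j\rangle^2]=1/m$, and a short computation using independence across columns yields
\[
\mathrm{Var}(W_{m,n})=\frac{1}{m}\Big(1+(a_n^\top\ta_n)^2-2\sum_i a_{n,i}^2\ta_{n,i}^2\Big).
\]
Since $\zeta_m^2/m\to0$ forces $\sqrt{m}/\zeta_m\to\infty$, Condition $\mathfrak{P}$ (i.e.\ $(\sqrt m/\zeta_m)\sum_i a_{n,i}^4\to0$ and the analogue for $\ta_n$) implies $\sum_i a_{n,i}^4\to0$ and $\sum_i\ta_{n,i}^4\to0$, hence $\sum_i a_{n,i}^2\ta_{n,i}^2=o(1)$ and $m\,\mathrm{Var}(W_{m,n})=\big(1+(a_n^\top\ta_n)^2\big)(1+o(1))$, matching the normalization in \eqref{csclt}.

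It then suffices to show $W_{m,n}/\sqrt{\mathrm{Var}(W_{m,n})}\Rightarrow\N(0,1)$, for which I would invoke a CLT for generalized quadratic forms / completely degenerate $U$-statistics (de Jong's theorem, or equivalently a martingale CLT along the filtration generated by $R_1,\dots,R_j$). This requires two conditions. The maximal-influence (Lindeberg-type) condition is easy: $\sum_{j\neq i}\mathbb{E}[\phi_{ij}^2]\le\frac{2}{m}\big(a_{n,i}^2\|\ta_n\|^2+\ta_{n,i}^2\|a_n\|^2\big)=\frac{2}{m}(a_{n,i}^2+\ta_{n,i}^2)$, and $m\,\mathrm{Var}(W_{m,n})\ge1+o(1)$, so $\max_i\mathrm{Var}(W_{m,n})^{-1}\sum_{j\neq i}\mathbb{E}[\phi_{ij}^2]=O(\|a_n\|_\infty^2+\|\ta_n\|_\infty^2)\to0$ under $\mathfrak{P}$, using $\|a_n\|_\infty^4\le\sum_i a_{n,i}^4\to0$.

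The main obstacle is the fourth-moment condition $\mathbb{E}[W_{m,n}^4]/\mathrm{Var}(W_{m,n})^2\to3$. I would expand $\mathbb{E}[W_{m,n}^4]$ over $4$-tuples of index pairs $(i_s,j_s)_{s=1}^4$ and, conditioning on supports, retain only configurations in which every column index occurs an even number of times, since otherwise the sign-expectation vanishes. The leading term, $3\,\mathrm{Var}(W_{m,n})^2$, arises from matching the four factors into two pairs sharing the same index pair, the ``$3$'' being the number of such matchings. Every other configuration must be shown to be $o(1/m^2)$. This is where the sparsity enters: the higher overlap moments carry factors of $\zeta_m$ — for instance $\mathbb{E}[\langle R_i,R_j\rangle^4]=O\big(m^{-2}+(m\zeta_m^2)^{-1}\big)$, with analogous expressions for overlaps among three or four distinct columns — and one bounds the accompanying weighted sums such as $\sum_{i\neq j}a_{n,i}^4\ta_{n,j}^4$ and the mixed sums using $(\sqrt m/\zeta_m)\sum_i a_{n,i}^4\to0$, $(\sqrt m/\zeta_m)\sum_i\ta_{n,i}^4\to0$, and $\zeta_m^2/m\to0$ to drive each error term below $1/m^2$. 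Enumerating all index-coincidence patterns among up to four columns while tracking the $\zeta_m$-dependence is the technically heavy step; the remainder is routine.
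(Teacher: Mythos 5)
Your decomposition and high-level strategy coincide with the paper's: you strip off the deterministic diagonal using $\|R_i\|^2=1$, recognize the off-diagonal part as a weighted, completely degenerate $U$-statistic of order two in the i.i.d.\ columns, compute the exact variance $\frac{1}{m}\bigl(1+(a_n^\top\ta_n)^2-2\sum_i a_{n,i}^2\ta_{n,i}^2\bigr)$, and observe that $\mathfrak{P}$ together with $\zeta_m^2/m\to0$ kills the correction term. The variance formula, the overlap moment $\E\langle R_i,R_j\rangle^2=1/m$, and the delocalization-type max-influence check all match what the paper does.

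The difference is in how the CLT is packaged and what must then be verified. You propose de Jong's theorem, which after the max-influence condition requires showing $\E[W_{m,n}^4]/\mathrm{Var}(W_{m,n})^2\to 3$ directly. The paper instead runs the Hall--Heyde martingale CLT along the filtration generated by $R_1,\dots,R_k$, filtering the double sum into increments $\Upsilon_k=\sum_{j<k}w_{jk}\langle R_j,R_k\rangle$, and then verifies (i) the Lyapunov condition $s_n^{-4}\sum_k\E\Upsilon_k^4\to0$ and (ii) ratio-consistency of the conditional variance, $\E[(q_n^2-s_n^2)^2]/s_n^4\to0$. Both routes ultimately reduce to fourth-moment estimates controlled by the sparsity; the martingale organization has the advantage that the increment $\Upsilon_k$ only couples index $k$ to indices $<k$, which trims the combinatorics of column-index coincidences to two tractable pieces rather than a global enumeration of configurations in $\E W^4$.

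That said, there is a genuine gap: you declare the fourth-moment bound to be ``routine'' enumeration, but this is the entire technical content of the paper's proof. Establishing $\E\langle R_i,R_j\rangle^4=\frac{m-1+3(\zeta_m-1)^2}{m(m-1)\zeta_m^2}$ and the cross terms, identifying precisely how $(\sqrt m/\zeta_m)\sum_i a_{n,i}^4\to0$ and $\zeta_m^2/m\to0$ suppress each class of non-pairing configurations, and verifying that the resulting error is $o(1/m^2)$, is several pages of careful calculation in the paper (its bound on $\sum_{i<k}w_{ik}^4$ needs the Cauchy--Schwarz estimate $w_{ij}^2\le(a_{n,i}^2+\ta_{n,i}^2)(a_{n,j}^2+\ta_{n,j}^2)$ and the resulting factors of $\sum_i a_{n,i}^4$ and $\sum_i\ta_{n,i}^4$, plus the ratio-consistency computation that parallels but is distinct from the Lyapunov step). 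If you want to go the de Jong route, you would similarly have to enumerate all four-pair index coincidence patterns (two, three, and four distinct columns, with pairs possibly chained as $\{i,j\},\{j,k\},\{k,\ell\},\{\ell,i\}$) and bound each with the appropriate overlap moment; merely asserting that it works does not close the argument. So: same approach, a defensible choice of CLT, but the decisive step is left undone.
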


This lemma represents a novel 
result about CountSketch and Sparse Sign Embeddings. 
Our proof treats the quadratic forms as degenerate U-statistics and uses the martingale central limit theorem.
We next have a result on vectorized symmetric quadratic forms.

\begin{lemma}[Asymptotic Normality of Symmetric Quadratic Form for Sparse Signed Embeddings]\label{lemsymqfcs}
If as $m,n\to\infty$,
$(U_n)_{n\ge 1}$ satisfies Condition $\mathfrak{P}'$ 
from \eqref{csstate2}
while $\zeta_m^2/m\to 0$, then  
\beq\label{cssqf}
\sqrt{m}\vec(U_n^\top S_{m,n}^\top S_{m,n}U_n-I_p)\Rightarrow \N\left(0,I_{p^2}+P_p\right).
\eeq
\end{lemma}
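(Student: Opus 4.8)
The plan is to prove \eqref{cssqf} via the Cram\'er--Wold device, reducing it to a scalar central limit theorem for a degenerate weighted U-statistic of order two, which is then handled by the martingale CLT in the same manner as in the proof of Lemma \ref{lemgeneralqfcs}, only with matrix-structured coefficients. First, write $S_{m,n}^\top S_{m,n}=I_n+E_n$, where $E_n$ has zero diagonal (each column $R_{m,i}$ of $S_{m,n}$ satisfies $R_{m,i}^\top R_{m,i}=1$) and $(E_n)_{ij}=R_{m,i}^\top R_{m,j}$ for $i\ne j$, so that $U_n^\top S_{m,n}^\top S_{m,n}U_n-I_p=U_n^\top E_nU_n$. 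Fixing $t\in\R^{p^2}$ and letting $T\in\R^{p\times p}$ be its matricization ($T_{ab}=t_{(a,b)}$), with $U_{i:}$ the $i$-th row of $U_n$ and writing $R_i=R_{m,i}$,
\[
\sqrt{m}\,t^\top\vec(U_n^\top E_nU_n)=\sqrt{m}\sum_{i\ne j}c_{ij}R_i^\top R_j=\sqrt{m}\sum_{i<j}b_{ij}R_i^\top R_j,\qquad c_{ij}:=U_{i:}^\top TU_{j:},\ \ b_{ij}:=c_{ij}+c_{ji},
\]
which, since $\mathbb{E}[R_i]=0$, is a mean-zero weighted U-statistic in the independent vectors $R_1,\dots,R_n$ (note that if $t$ lies in the kernel of $I_{p^2}+P_p$, i.e.\ $T$ is antisymmetric, then, as $U_n^\top E_nU_n$ is symmetric, this linear combination is identically zero, so we may assume its limiting variance below is positive). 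Ordering the columns and setting $\mathcal{F}_k=\sigma(R_1,\dots,R_k)$, we have $\sqrt{m}\sum_{i<j}b_{ij}R_i^\top R_j=\sum_{k=2}^n D_k$ with $D_k=\sqrt{m}\sum_{i<k}b_{ik}R_i^\top R_k$ and $\mathbb{E}[D_k\mid\mathcal{F}_{k-1}]=0$, a martingale difference sequence.

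Second, I would identify the limiting variance and verify the martingale CLT hypotheses. Using $\mathbb{E}[R_kR_k^\top]=m^{-1}I_m$ and $R_i^\top R_i=1$, the conditional variance is
\[
\sum_k\mathbb{E}[D_k^2\mid\mathcal{F}_{k-1}]=\tfrac12\sum_{i\ne j}(c_{ij}+c_{ji})^2+\sum_k\sum_{\substack{i\ne i'\\ i,i'<k}}b_{ik}b_{i'k}R_i^\top R_{i'}.
\]
Expanding $c_{ij}=U_{i:}^\top TU_{j:}$ and using $U_n^\top U_n=I_p$ gives $\sum_{i,j}c_{ij}^2=\|T\|_F^2$ and $\sum_{i,j}c_{ij}c_{ji}=\mathrm{tr}(T^2)$ exactly, with diagonal corrections $O(\sum_i\|U_{i:}\|^4)=O(\|\vec(U_n)\|_4^4)=o(1)$ under Condition $\mathfrak{P}'$ (here $\zeta_m^2/m\to 0$ forces $\sqrt{m}/\zeta_m\to\infty$, so $\|\vec(U_n)\|_4^4\to0$); hence the first term tends to $\|T\|_F^2+\mathrm{tr}(T^2)=t^\top(I_{p^2}+P_p)t$, and specializing $t$ to the standard basis vectors of $\R^{p^2}$ the same computation yields the limiting covariance $I_{p^2}+P_p$. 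The random term has mean $0$, and I would show it is $o_P(1)$ by bounding its variance: expanding $\mathbb{E}[(R_i^\top R_{i'})(R_j^\top R_{j'})]$ reduces this to combinatorial sums of products of entries of $U_n$, each controlled by $\zeta_m^2/m\to 0$ and $(\sqrt{m}/\zeta_m)\|\vec(U_n)\|_4^4\to 0$. A Lyapunov condition $\sum_k\mathbb{E}[D_k^4]\to 0$ follows from the same style of fourth-moment expansion, leaving coefficient sums such as $\sum_{i,i'}b_{ik}^2b_{i'k}^2$ and $\sum_ib_{ik}^4$ bounded via the sparsity scaling and delocalization. The martingale CLT then gives $\sqrt{m}\,t^\top\vec(U_n^\top E_nU_n)\Rightarrow\N(0,t^\top(I_{p^2}+P_p)t)$ for every $t$, and Cram\'er--Wold yields \eqref{cssqf}.

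The hard part is the variance and Lyapunov estimates in the second step: showing the off-diagonal part of the conditional variance vanishes in probability and bounding $\sum_k\mathbb{E}[D_k^4]$ both require tracking several distinct families of sums over $4$- and $6$-tuples of indices of products of entries of $S_{m,n}$ (after taking expectations) and of $U_n$, and bounding each family by the appropriate power of $\zeta_m/\sqrt{m}$ times powers of $\|\vec(U_n)\|_4^4$ or consequences of orthonormality. This is the ``delicate moment calculation'' the paper refers to; it parallels, but is heavier than, the corresponding step in the proof of Lemma \ref{lemgeneralqfcs}, since the scalar coefficients $a_{n,i}\tilde a_{n,j}$ there are here replaced by $c_{ij}=U_{i:}^\top TU_{j:}$. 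Everything else---the martingale structure, the exact variance identities, and the Cram\'er--Wold reduction---is routine once these coefficients are in hand.
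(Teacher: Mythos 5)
Your proposal is correct and takes essentially the same route as the paper: a Cram\'er--Wold reduction to a scalar degenerate weighted U-statistic of order two, handled by the martingale CLT exactly as in the proof of Lemma~\ref{lemgeneralqfcs}. The paper's version reduces to symmetric test functionals first (since $U_n^\top S_{m,n}^\top S_{m,n} U_n$ is symmetric), spectrally decomposes $\Phi=\sum_i \mu_i w_i w_i^\top$, and expresses the weights as $w_{kj}=2\sum_i \mu_i t^i_k t^i_j$ with $t^i_k=(U_n w_i)_k$; this lets it reuse the Cauchy--Schwarz factorizations and the $\sum_i (t_i^{k_1})^2(t_i^{k_2})^2=o(\zeta_m/\sqrt m)$ bounds from the one-dimensional proof almost verbatim. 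You instead keep the matricization $T$ of the Cram\'er--Wold vector abstract, write the weights as $c_{ij}=U_{i:}^\top T U_{j:}$, and dispense with the antisymmetric part of $T$ at the outset; the variance identities $\sum_{i,j}c_{ij}^2=\|T\|_F^2$ and $\sum_{i,j}c_{ij}c_{ji}=\tr(T^2)$ together with the diagonal correction $\sum_i c_{ii}^2=O(\|\vec(U_n)\|_4^4)$ give the same limit $\vec(\Phi)^\top(I_{p^2}+P_p)\vec(\Phi)$. Since $|c_{ij}|\le \|T\|\,\|U_{i:}\|\,\|U_{j:}\|$, your coefficient bounds factor exactly as the paper's do, so the Lyapunov and variance-concentration estimates go through identically; the two proofs differ only in notation, not in substance.
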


\subsection{Uniform Random Sampling}
\label{unif}
We assume $S_{m,n}=\sqrt{\frac{n}{m}}\bar B_n$, where $\bar B_n=\diag(B_1,\ldots,B_n)$ with i.i.d.~$B_i\sim\Ber(\gamma_n)$,
$\gamma_n=m/n$ for all $i\in[n]$. 
Thus, $\bar B_n$ represents a uniform random subsampling procedure.
Our results rely on the following
conditions, where $a_n, \ta_n\in\nsp$ for all $n\ge 1$, 
$u_{n,i}$, $i\in[n]$ are the rows of the $n\times p$ left singular matrix $U_n$ of $X_n$, 
and
$G_n:=
n\sum_{i=1}^n (u_{n,i}u_{n,i}^\top)\otimes(u_{n,i}u_{n,i}^\top)$: 
\begin{align}
\textbf{Condition } \mathfrak{p}&: \text{for some } 0<c<C,\, 
c\le n\sum_{i=1}^na_{n,i}^2\ta_{n,i}^2
\le C; \ 
\textbf{Condition } \mathfrak{P}: 
\max_{i\in[n]} \frac{(a_{n,i} \ta_{n,i})^2}{\sum_{i=1}^na_{n,i}^2\ta_{n,i}^2}\to 0; \label{assdelocalsubsamp2} \\
\textbf{Condition } \mathfrak{P'}&: 
 \text{for some } 0<c<C,\, 
cI_{p(p+1)/2}\preceq 
D_p^\top G_nD_p\preceq CI_{p(p+1)/2}; \label{assdelocalsubsamp3} 
\\& \text{ for all upper triangular } \Phi\in \R^{p\times p},\,\,  \frac{\max_{i\in[n]}(u_{n,i}^\top \Phi u_{n,i})^2}{\sum_{i=1}^n (u_{n,i}^\top \Phi u_{n,i})^2}\to 0.\nonumber
\end{align}
Again, these results appear explicitly in our asymptotic normality results, and sometimes only implicitly in our inference results.

{\bf PCA.}
If the data $X_n$ has very heterogeneous rows, then 
subsampling 
can introduce a great deal of variability in the resulting estimates.
Therefore, 
obtaining inference and asymptotic normality requires stronger delocalization  conditions  on $U_n$, and 
leads to an asymptotic variance of a delicate form.
\begin{theorem}[Asymptotic Normality and
Inference for Eigenvalues and Eigenvectors with Subsampling]\label{theigss}
Suppose $(X_n)_{n\ge 1}$ satisfies Condition \ref{spectralcondition} \textup{\texttt{B}},
 $(U_n)_{n\ge 1}$ satisfies Condition $\mathfrak{P}'$, and $(S_{m,n})_{m,n\ge 1}$ 
satisfies Condition \ref{generalquadratic forms} \textup{\texttt{Sym}}. 
Then we have for any $i\in [p]$ that
\begin{equation}\label{sse}
\sqrt{\frac{m}{(1-\gamma_n)\cdot n\sum_{k=1}^n(U_n)_{ki}^4}}\hat{\Lambda}_{m,n,i}^{-1}\left(\hat{\Lambda}_{m,n,i}-\Lambda_{n,i}\right)\Rightarrow \N(0,1).
\end{equation}
Let $\tX_{m,n} =  \tU_{m,n}\hat L_{m,n}\hat V_{m,n}^\top $ be the SVD of $\tX_{m,n}$.
If $\liminf \gamma_n >0$, we have
\begin{equation}\label{sse2}
\frac{1}{\sqrt{(1-\gamma_n)\cdot m\sum_{j=1}^m \tilde{U}_{ji}^4}}\hat{\Lambda}_{m,n,i}^{-1}\left(\hat{\Lambda}_{m,n,i}-\Lambda_{n,i}\right)\Rightarrow \N(0,1).
\end{equation}

For $\Delta_{i}$ defined in \eqref{defHi}, and any vector $c\in\psp$ satisfying $\liminf_{n\to\infty}c^\top \Delta_{n,i}c>0$,
\begin{equation*}
\sqrt{m/(1-\gamma_n)}(c^\top\hat{\Delta}_{m,n,i}c)^{-1/2}c^\top\left(\hat{v}_{m,n,i}-v_{n,i}\right)\Rightarrow \N(0,1).
\end{equation*}
\end{theorem}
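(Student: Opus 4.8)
The plan is to derive all three conclusions from the general sketched-PCA result, Theorem~\ref{infPCAgeneral}, specialized to the subsampling sketch $S_{m,n}=\sqrt{n/m}\,\bar B_n$, for which the relevant quantities are $\tau_{m,n}=1-\gamma_n$ and $G_n=n\sum_{i=1}^n(u_{n,i}u_{n,i}^\top)\otimes(u_{n,i}u_{n,i}^\top)$. Its hypotheses hold under the present assumptions: Condition~\ref{spectralcondition}~\textup{\texttt{B}}; Condition~$\mathfrak P'$ on $(U_n)$, i.e.\ \eqref{assdelocalsubsamp3}, whose first clause supplies the required $cI_{p(p+1)/2}\preceq D_p^\top G_n D_p\preceq CI_{p(p+1)/2}$; and Condition~\ref{generalquadratic forms}~\textup{\texttt{Sym}} for $(S_{m,n})$, which is assumed directly. (I note that the last of these is also a routine consequence of $S_{m,n}^\top S_{m,n}=(n/m)\bar B_n$: the matrix $U_n^\top S_{m,n}^\top S_{m,n}U_n$ has mean $I_p$, the entries of $\vec(U_n^\top S_{m,n}^\top S_{m,n}U_n-I_p)$ have covariance $\tfrac{1-\gamma_n}{m}(G_n)_{(ab),(cd)}$, and a Lindeberg CLT under the delocalization clause of $\mathfrak P'$ yields \eqref{symmatform-1} with exactly $\tau_{m,n}=1-\gamma_n$ and $G_n$ as above.)

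Granting this, I would read off the eigenvalue and eigenvector statements immediately. Since $\big((uu^\top)\otimes(uu^\top)\big)_{(ii),(ii)}=(uu^\top)_{ii}^2=u_i^4$, one has $(G_n)_{(ii),(ii)}=n\sum_{k=1}^n(U_n)_{ki}^4$, so \eqref{infsingval} with $\tau_{m,n}=1-\gamma_n$ is precisely \eqref{sse}; and \eqref{infsingvec} with $\tau_{m,n}=1-\gamma_n$, $\Delta_i$ as in \eqref{defHi}, $\Delta_{n,i}=\Delta_i(X_n^\top X_n,G_n)$ and the plug-in $\hat\Delta_{m,n,i}=\Delta_i(X_n^\top S_{m,n}^\top S_{m,n}X_n,G_n)$ --- whose hypothesis $\liminf_n c^\top\Delta_{n,i}c>0$ is assumed --- is the eigenvector statement.

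The substantive part is \eqref{sse2}, the ``data-driven'' form of \eqref{sse}: by \eqref{sse} and Slutsky's theorem it suffices to show that the observable $m\sum_j\tilde U_{ji}^4$ is ratio-consistent for $(G_n)_{(ii),(ii)}=n\sum_k(U_n)_{ki}^4$. From $\tilde U_{m,n}=\tX_{m,n}\hat V_{m,n}\hat L_{m,n}^{-1}$ and $\tX_{m,n}=\sqrt{n/m}\,\bar B_n X_n$, I would first record the identity $\sum_j\tilde U_{ji}^4=\hat\Lambda_{m,n,i}^{-2}(n/m)^2\sum_l B_l\,(X_n\hat v_{m,n,i})_l^4$, noting that the left side is invariant under a sign flip of $\hat v_{m,n,i}$, so its sign may be fixed. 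Three inputs then close the argument. (i) Condition~\textup{\texttt{Sym}} gives $U_n^\top S_{m,n}^\top S_{m,n}U_n\to_P I_p$, whence $\hat\Lambda_{m,n,i}/\Lambda_{n,i}\to_P1$ by eigenvalue continuity and the bounded condition number, and $\|\hat v_{m,n,i}-v_{n,i}\|=O_P(m^{-1/2})$ by Davis--Kahan, using the relative eigengap (Condition~\textup{\texttt{B}}), which gives $\min_{k\neq i}|\Lambda_{n,i}-\Lambda_{n,k}|\ge c\,\Lambda_{n,i}$. (ii) Writing $X_nv_{n,i}=\ell_{n,i}U_ne_i$ and $\eta=X_n(\hat v_{m,n,i}-v_{n,i})$, I would expand $(X_n\hat v_{m,n,i})_l^4=(\ell_{n,i}(U_n)_{li}+\eta_l)^4$ and bound the cross terms by H\"older against $\sum_l\eta_l^4\le\|\eta\|_\infty^2\|\eta\|_2^2$, where $\|\eta\|_2\le\ell_{n,1}\|\hat v_{m,n,i}-v_{n,i}\|$, $\|\eta\|_\infty\le(\max_l\|u_{n,l}\|)\ell_{n,1}\|\hat v_{m,n,i}-v_{n,i}\|$, and $\max_l\|u_{n,l}\|=o(n^{-1/4})$ follows from the delocalization clause of $\mathfrak P'$ taken with $\Phi=E_{ii}$; when $\liminf\gamma_n>0$ this yields $\sum_l B_l(X_n\hat v_{m,n,i})_l^4=(1+o_P(1))\ell_{n,i}^4\sum_l B_l(U_n)_{li}^4$. (iii) A Chebyshev bound, using again $\max_l(U_n)_{li}^4/\sum_l(U_n)_{li}^4\to0$ and $\liminf\gamma_n>0$, gives $\sum_l B_l(U_n)_{li}^4=(1+o_P(1))\gamma_n\sum_l(U_n)_{li}^4$. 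Combining these with $\Lambda_{n,i}=\ell_{n,i}^2$ yields $m\sum_j\tilde U_{ji}^4=(1+o_P(1))\,n\sum_l(U_n)_{li}^4$, the required ratio-consistency.

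I expect the main obstacle to be the \emph{uniform} perturbation control in step (ii): after multiplying by $X_n$ and summing fourth powers, the perturbation error must be negligible relative to the leading term $\ell_{n,i}^4\sum_l(U_n)_{li}^4\asymp\ell_{n,i}^4/n$, and this closes only because $\|\hat v_{m,n,i}-v_{n,i}\|^4=O_P(m^{-2})$, $\max_l\|u_{n,l}\|^2=o(n^{-1/2})$ and $\liminf\gamma_n>0$ together force $n^{1/2}m^{-2}\to0$ --- which is exactly why \eqref{sse2}, unlike \eqref{sse}, needs $\liminf\gamma_n>0$. The remaining pieces --- the subsampled moment bounds for $\sum_l B_l(U_n)_{li}^{2q}$ and the eigenvalue/eigenvector continuity estimates --- should be routine given Conditions~\ref{spectralcondition}~\textup{\texttt{B}} and $\mathfrak P'$.
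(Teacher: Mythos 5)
Your proposal is correct, and for \eqref{sse2} it is organized a little differently from the paper's argument, in a way worth noting. The paper splits the ratio-consistency of $m\sum_j\tilde U_{ji}^4$ into two steps: it first shows, by a direct variance computation, that $m\sum_{j}(s_j^\top u_{n,i})^4 \big/ \big(n\sum_{k}(U_n)_{ki}^4\big)\to_P 1$ (using $\sum_j(s_j^\top u_{n,i})^4=(n/m)^2\sum_l B_l(U_n)_{li}^4$ and the delocalization $\max_l(U_n)_{li}^4/\sum_l(U_n)_{li}^4\to 0$), and then invokes the factorization $\tU_{m,n}=S_{m,n}U_nL_nV_n^\top\hat V_{m,n}\hat L_{m,n}^{-1}$ with $L_nV_n^\top\hat V_{m,n}\hat L_{m,n}^{-1}\to_P I_p$ to transfer the limit to $\tilde U_{ji}$, saying only that one should ``expand the summation in the numerator according to the rules of matrix element multiplication.'' You instead use the exact identity $\sum_j\tilde U_{ji}^4=\hat\Lambda_{m,n,i}^{-2}(n/m)^2\sum_l B_l(X_n\hat v_{m,n,i})_l^4$ and a perturbation expansion $X_n\hat v_{m,n,i}=\ell_{n,i}U_ne_i+\eta$, bounding the cross terms via H\"older with $\|\eta\|_\infty\le(\max_l\|u_{n,l}\|)\ell_{n,1}\|\hat v_{m,n,i}-v_{n,i}\|$ and $\max_l\|u_{n,l}\|=o(n^{-1/4})$. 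The two arguments rely on the same ingredients (the delocalization clause of $\mathfrak P'$ with $\Phi=E_{ii}$, the variance-level rate for $\hat v_{m,n,i}-v_{n,i}$, and $\liminf\gamma_n>0$), but your version makes explicit what the paper leaves implicit — namely exactly why the fourth-power cross terms are dominated by the leading term $\ell_{n,i}^4\sum_lB_l(U_n)_{li}^4\asymp\ell_{n,i}^4\gamma_n/n$, and where $\liminf\gamma_n>0$ is used (to force $n^{1/2}/m^2\to0$). That makes your treatment of the hardest step somewhat more self-contained than the published one; everything else (\eqref{sse}, the eigenvector claim, and the parenthetical remark that Condition~\texttt{Sym} can be checked directly for the Bernoulli diagonal sketch) matches the paper's route through Theorem~\ref{infPCAgeneral} and Lemma~\ref{lemsymqfunifrandsamp}.
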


{\bf LS.}
Additionally, we have the following theorem for inference about the least squares parameters based on uniform subsampling.

\begin{theorem}[Inference in Least Squares with Uniform Random Sampling]\label{thsubsamp}
Let $p$ be fixed and $m,n\to\infty$ with
$0<\liminf m/n \le\limsup m/n <1$.
Suppose that $X_n$ satisfies Condition \ref{spectralcondition} \textup{\texttt{A}}, 
and 
$\|\vec(U_n)\|_\infty=O(1/\sqrt{n})$. 
Also, suppose there exist some constants $0<c<C$ such that 
\begin{equation}\label{strdelocsubsamp1}
cI_p\preceq n U_n^\top \diag\left(\frac{|\ep_{n}|}{\|\ep_n\|}\odot \frac{|\ep_{n}|}{\|\ep_n\|}\right)U_n\preceq CI_p.
\end{equation}
For
\begin{equation*}
\hat{\Sigma}_{m,n} =m(\tX_{m,n}^\top \tX_{m,n})^{-1} \tX_{m,n}^\top \diag(\tep_{m,n} \odot \tep_{m,n}) \tX_{m,n} (\tX_{m,n}^\top \tX_{m,n})^{-1}.\end{equation*}
we have 
$m^{1/2}(1-\gamma_n)^{-1/2}\hat{\Sigma}_{m,n}^{-1/2}(\hbs-\beta_n) \Rightarrow \N(0,I_p)$.
Furthermore, 
assume there exist $0<c<C$ such that
\begin{equation}\label{strdelocsubsamp2}
cI_p\preceq n U_n^\top \diag\left(\frac{|y_n-\ep_{n}|}{\|y_n-\ep_n\|}\odot \frac{|y_n-\ep_{n}|}{\|y_n-\ep_n\|}\right)U_n\preceq CI_p.
\end{equation}
Then with 
\begin{equation*}\hSigp=m(\tX_{m,n}^\top \tX_{m,n})^{-1} \tX_{m,n}^\top \diag\left((\tX_{m,n} \hbp) \odot (\tX_{m,n} \hbp)\right) \tX_{m,n} (\tX_{m,n}^\top \tX_{m,n})^{-1},
\end{equation*}
we have 
$m^{1/2}(1-\gamma_n)^{-1/2}(\hSigp)^{-1/2}(\hbp-\beta_n) \Rightarrow \N(0,I_p)$.
\end{theorem}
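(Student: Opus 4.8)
The plan is to obtain the two limiting distributions in Theorem~\ref{thsubsamp} from the general sketched-least-squares result Theorem~\ref{lemgeneols}, specialized to the uniform subsampling matrix $S_{m,n}=\sqrt{n/m}\,\bar B_n$, and then upgrade these to the stated inferential statements by showing that the sandwich plug-ins $\hat\Sigma_{m,n}$ and $\hSigp$ are relatively consistent estimators of the limiting covariances. For the first part I would invoke the subsampling quadratic-form results of \Cref{unif} --- namely that Condition~\ref{generalquadratic forms}~\texttt{Bounded} and \texttt{1-dim} hold with $\tau_{m,n}=1-\gamma_n$, $\sigma_n^2(a_n,\ta_n)=n\sum_{i=1}^n a_{n,i}^2\ta_{n,i}^2$, hence $M_n(\bar\ep_n)=n\,U_n^\top\diag(\bar\ep_n\odot\bar\ep_n)U_n$ (and the analogous $\Mp$ for $\widebar{X_n\beta_n}$) --- and then verify the remaining hypotheses of Theorem~\ref{lemgeneols}. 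Condition~\ref{spectralcondition}~\texttt{A} is assumed, so it only remains to check that for every $b\in\psp$ the vector pairs $(U_nb,U_nb)_{n\ge1}$ and $(U_nb,\bar\ep_n)_{n\ge1}$ (resp.\ $(U_nb,\widebar{X_n\beta_n})_{n\ge1}$) satisfy Conditions $\mathfrak p$ and $\mathfrak P$ from \eqref{assdelocalsubsamp2}.

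These checks use $\|\vec(U_n)\|_\infty=O(1/\sqrt n)$, which gives $\max_i\|u_{n,i}\|^2=O(1/n)$. For $(U_nb,U_nb)$, Cauchy--Schwarz yields $1=(\sum_i(u_{n,i}^\top b)^2)^2\le n\sum_i(u_{n,i}^\top b)^4\le n\max_i\|u_{n,i}\|^2=O(1)$, so $\mathfrak p$ holds uniformly, while $\max_i(u_{n,i}^\top b)^4/\sum_i(u_{n,i}^\top b)^4=O(1/n)\to0$ gives $\mathfrak P$. For $(U_nb,\bar\ep_n)$, the numerator in $\mathfrak P$ is at most $\max_i\|u_{n,i}\|^2=O(1/n)$, while the denominator equals $b^\top(M_n(\bar\ep_n)/n)b\ge c/n$ by the lower half of \eqref{strdelocsubsamp1}; moreover \eqref{strdelocsubsamp1} is exactly the statement that $\sigma_n^2(U_nb,\bar\ep_n)=b^\top M_n(\bar\ep_n)b$ is uniformly bounded away from $0$ and $\infty$, as Condition~\ref{generalquadratic forms}~\texttt{1-dim} requires. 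Theorem~\ref{lemgeneols} then gives $m^{1/2}(1-\gamma_n)^{-1/2}\Sigma_n^{-1/2}(\hbs-\beta_n)\Rightarrow\N(0,I_p)$ with $\Sigma_n=\|\ep_n\|^2\,V_nL_n^{-1}M_n(\bar\ep_n)L_n^{-1}V_n^\top$; substituting $X_n=U_nL_nV_n^\top$ (so $(X_n^\top X_n)^{-1}X_n^\top=V_nL_n^{-1}U_n^\top$) and $\|\ep_n\|^2(\bar\ep_n\odot\bar\ep_n)=\ep_n\odot\ep_n$ rewrites this in Huber--White form as $\Sigma_n=n(X_n^\top X_n)^{-1}X_n^\top\diag(\ep_n\odot\ep_n)X_n(X_n^\top X_n)^{-1}$, with the partial-sketching analogue obtained by replacing $\ep_n$ with $X_n\beta_n$.

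It then suffices to show $\Sigma_n^{-1/2}\hat\Sigma_{m,n}\Sigma_n^{-1/2}\to I_p$ in probability, after which $m^{1/2}(1-\gamma_n)^{-1/2}\hat\Sigma_{m,n}^{-1/2}(\hbs-\beta_n)=\big(\hat\Sigma_{m,n}^{-1/2}\Sigma_n^{1/2}\big)\,m^{1/2}(1-\gamma_n)^{-1/2}\Sigma_n^{-1/2}(\hbs-\beta_n)\Rightarrow\N(0,I_p)$ by Slutsky. Since $S_{m,n}^\top S_{m,n}=(n/m)\bar B_n$ with $\bar B_n$ a diagonal idempotent, writing $r_n=y_n-X_n\hbs$ (so $\tep_{m,n}=S_{m,n}r_n$ and $\tep_{m,n}\odot\tep_{m,n}=(n/m)\,\bar B_n(r_n\odot r_n)$) the estimator collapses to $\hat\Sigma_{m,n}=m(X_n^\top\bar B_nX_n)^{-1}X_n^\top\diag(\bar B_n(r_n\odot r_n))X_n(X_n^\top\bar B_nX_n)^{-1}$, whose population version (replacing $\bar B_n$ by $\gamma_nI_n$ and $r_n$ by $\ep_n$) is exactly $\Sigma_n$. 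Consistency reduces, via continuous mapping, to three facts: (i) $r_n$ is relatively consistent for $\ep_n$, which follows from the limiting law just derived and Condition~\ref{spectralcondition}~\texttt{A}, since $\|X_n(\hbs-\beta_n)\|\le\ell_{n,1}\|\hbs-\beta_n\|=O_P(\|\ep_n\|(\ell_{n,1}/\ell_{n,p})m^{-1/2})=O_P(\|\ep_n\|m^{-1/2})$, plus a Cauchy--Schwarz bound to pass from $r_n\odot r_n$ to $\ep_n\odot\ep_n$ inside the quadratic form; (ii) $(n/m)X_n^\top\bar B_nX_n\to X_n^\top X_n$ relatively, which follows from Condition~\ref{generalquadratic forms}~\texttt{Bounded} applied (by polarization) to the pairs $(U_nb,U_nb)$, giving $U_n^\top S_{m,n}^\top S_{m,n}U_n=I_p+O_P((1-\gamma_n)^{1/2}m^{-1/2})$; and (iii) the weighted outer-product sum $X_n^\top\diag(\bar B_n(\ep_n\odot\ep_n))X_n=\sum_iB_i\ep_{n,i}^2x_{n,i}x_{n,i}^\top$ (with $x_{n,i}$ the rows of $X_n$) concentrates around its mean $\gamma_nX_n^\top\diag(\ep_n\odot\ep_n)X_n$ relatively.

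Fact (iii) is the main obstacle, and is essentially the same quantitative point as the $\mathfrak P$ check for $(U_nb,\bar\ep_n)$: after peeling off $V_nL_n$ one must show $\sum_iB_i\bar\ep_{n,i}^2(u_{n,i}^\top b)^2$ concentrates around $\gamma_n\,b^\top(M_n(\bar\ep_n)/n)b\ge\gamma_nc/n$ uniformly over $b\in\psp$, and by Chebyshev this requires the relative variance $\frac{\sum_i\bar\ep_{n,i}^4(u_{n,i}^\top b)^4}{(\sum_i\bar\ep_{n,i}^2(u_{n,i}^\top b)^2)^2}$ to vanish, which is controlled by $\max_i\bar\ep_{n,i}^2\|u_{n,i}\|^2$ divided by the denominator. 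The delicacy is that the residual weights $\ep_{n,i}^2$ are fixed data, not random quantities with small maxima, so this step has to combine $\|\vec(U_n)\|_\infty=O(1/\sqrt n)$ with \eqref{strdelocsubsamp1} to force the individual terms $\bar\ep_{n,i}^2\|u_{n,i}\|^2$ to be negligible relative to $b^\top(M_n(\bar\ep_n)/n)b$; I expect this concentration estimate, together with the matching $\mathfrak P$ verification, to be the part of the proof requiring the most care. The partial-sketching statement is handled by the identical argument with $\ep_n,\bar\ep_n,M_n,r_n$ and \eqref{strdelocsubsamp1} replaced by $X_n\beta_n,\widebar{X_n\beta_n},\Mp,X_n\hbp$ and \eqref{strdelocsubsamp2} throughout.
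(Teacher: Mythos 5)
Your proof follows the paper's two-stage route: asymptotic normality with the population sandwich covariance (the paper's Lemma~\ref{lemsubsamp}), obtained via \Cref{lemgeneols} and the uniform-subsampling quadratic-form results (Lemma~\ref{lemboundqfsubsamp}), followed by plug-in consistency (Proposition~\ref{consestsubsamp}). Your Condition~$\mathfrak{p}$ verification for $(U_nb,U_nb)$, the identification $M_n(\bar\epsilon_n)=nU_n^\top\diag(\bar\epsilon_n\odot\bar\epsilon_n)U_n$, the resulting Huber--White form of $\Sigma_n$, and the consistency outline all align with the paper's argument.

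The verification of Condition~$\mathfrak{P}$ from \eqref{assdelocalsubsamp2} for the pair $(U_nb,\bar\epsilon_n)$ has a genuine gap, which you acknowledge in your final paragraph but whose earlier presentation reads as if settled. You bound $\max_i(u_{n,i}^\top b)^2\bar\epsilon_{n,i}^2\le\max_i\|u_{n,i}\|^2=O(1/n)$ (using only $\bar\epsilon_{n,i}^2\le1$) and $\sum_i(u_{n,i}^\top b)^2\bar\epsilon_{n,i}^2\ge c/n$ via \eqref{strdelocsubsamp1}; together these give a ratio that is $O(1)$, not $o(1)$, whereas Condition~$\mathfrak{P}$ requires it to vanish. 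The vanishing is genuinely needed: in the Lindeberg step inside the proof of Lemma~\ref{lemboundqfsubsamp}, the centered Bernoulli summands $J_i$ are uniformly bounded two-point variables, so each truncated term is exactly zero once the maximal normalized weight drops below the threshold, and a non-vanishing $\max_i$ leaves a Lindeberg contribution bounded away from zero. The missing factor is $\max_i\bar\epsilon_{n,i}^2\to0$, and this is \emph{not} implied by \eqref{strdelocsubsamp1} together with $\|\vec(U_n)\|_\infty=O(1/\sqrt{n})$: taking $\bar\epsilon_{n,1}^2=1/2$, $\bar\epsilon_{n,i}^2=1/(2(n-1))$ for $i\ge2$, and an $U_n$ with all entries $O(1/\sqrt n)$ and $\|u_{n,1}\|^2\asymp1/n$ gives $nU_n^\top\diag(\bar\epsilon_n\odot\bar\epsilon_n)U_n$ with eigenvalues bounded in $[1/4,2]$, yet the Lindeberg ratio at $i=1$ with $b$ aligned with $u_{n,1}$ is $\Theta(1)$. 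For fairness, the paper's own proof of Lemma~\ref{lemsubsamp} simply asserts that \eqref{strdelocsubsamp1} implies $\mathfrak{P}$ with no further argument, so this is a shared gap; it is closed by the stronger sufficient condition the paper records just after Theorem~\ref{thsubsamp}, namely $\max_i|\epsilon_{n,i}|/\|\epsilon_n\|=O(1/\sqrt{n})$, which forces $\max_i\bar\epsilon_{n,i}^2=O(1/n)$ and hence ratio $O(1/n)$. The analogous issue recurs in the partial-sketching claim with $\widebar{X_n\beta_n}$ in place of $\bar\epsilon_n$.
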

In fact, 
\eqref{strdelocsubsamp1} holds if there are constants $0<c<C$ such that
$
c/\sqrt{n}\le \min_{i\in[n]}|\ep_{n,i}|/\|\ep_n\|
$
and 
$
\max_{i\in[n]}|\ep_{n,i}|/\|\ep_n\|$
$\le C/\sqrt{n},
$
and similarly \eqref{strdelocsubsamp2} holds if $
c/\sqrt{n}\le \min_{i\in[n]}|y_{n,i}-\ep_{n,i}|/\|y_n-\ep_n\|$
and 
$\max_{i\in[n]}|y_{n,i}-\ep_{n,i}|/\|y_n-\ep_n\|\le C/\sqrt{n}.
$

{\bf CLT.}
The following lemmas play a significant role in the proofs of the above theorems by establishing the limiting behavior from Condition \ref{generalquadratic forms} for uniform subsampling.
\begin{lemma}[Boundedness and Asymptotic Normality of Quadratic Forms for Uniform Random Sampling]\label{lemboundqfsubsamp}
For any sequences of vectors $(a_n, \ta_n)_{n\ge 1}$
such that $a_n,\ta_n \in\nsp$ for all $n\ge 1$
satisfying Condition $\mathfrak{p}$ from \eqref{assdelocalsubsamp2}, 
we have
\begin{equation}\label{ssbo}
    \sqrt{\frac{m}{1-\gamma_n}}\left(a_n^\top S_{m,n}^\top S_{m,n} \ta_n - a_n^\top \ta_n\right)=O_P(1).
\end{equation}
% \end{lemma}
% \begin{lemma}[Limiting Distributions of One-dimensional Quadratic Forms for Uniform Random Sampling]\label{lemboundqfsubsamp}
Further, 
if Condition $\mathfrak{P}$ from \eqref{assdelocalsubsamp2} holds,
then
\begin{equation}\label{ssn} \sqrt{\frac{m}{1-\gamma_n}}\sqrt{\frac{1}{n\sum_{i=1}^n a_{n,i}^2\ta_{n,i}^2}}\left(a_n^\top S_{m,n}^\top S_{m,n}\ta_n - a_n^\top \ta_n\right) \Rightarrow \N(0, 1).\end{equation}
Moreover, there exist sequences $(a_{n})_{n\ge 1}$, $(\ta_{n})_{n\ge 1}$ of unit norm vectors with $a_n,\ta_n \in\R^n$ that do not satisfy 
Condition $\mathfrak{P}$, 
such that 
$\sqrt{m} a_n^\top S_{m,n}^\top S_{m,n} \ta_n$ is not asymptotically normal.
\end{lemma}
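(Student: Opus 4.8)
\emph{Proof plan.} The whole argument rests on one structural observation. Since $S_{m,n}=\sqrt{n/m}\,\bar B_n$ with $\bar B_n=\diag(B_1,\dots,B_n)$ and each $B_i\in\{0,1\}$, we have $S_{m,n}^\top S_{m,n}=(n/m)\diag(B_1,\dots,B_n)$, and therefore
\[
a_n^\top S_{m,n}^\top S_{m,n}\ta_n-a_n^\top\ta_n=\frac{n}{m}\sum_{i=1}^n(B_i-\gamma_n)\,a_{n,i}\ta_{n,i},
\]
an explicit sum of independent mean-zero random variables. Everything then follows from elementary facts about such sums: I will compute the variance and apply Chebyshev's inequality for the boundedness claim \eqref{ssbo}, invoke the Lindeberg--Feller theorem for the normality claim \eqref{ssn}, and exhibit a localized pair of vectors for the final negative statement.

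For \eqref{ssbo}, using $\mathrm{Var}(B_i)=\gamma_n(1-\gamma_n)$ and independence, the right-hand side above has mean zero and variance $\tfrac{1-\gamma_n}{m}\, n\sum_{i=1}^n a_{n,i}^2\ta_{n,i}^2$; equivalently, $\sqrt{m/(1-\gamma_n)}\,(a_n^\top S_{m,n}^\top S_{m,n}\ta_n-a_n^\top\ta_n)$ has variance exactly $n\sum_{i=1}^n a_{n,i}^2\ta_{n,i}^2$, which Condition $\mathfrak{p}$ from \eqref{assdelocalsubsamp2} bounds by a constant, so Chebyshev's inequality gives \eqref{ssbo}. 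The same computation shows that the standardized statistic in \eqref{ssn} coincides with $T_n:=\big(\gamma_n(1-\gamma_n)\sum_i a_{n,i}^2\ta_{n,i}^2\big)^{-1/2}\sum_{i=1}^n(B_i-\gamma_n)a_{n,i}\ta_{n,i}$, which has unit variance.

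To prove \eqref{ssn} I will apply the Lindeberg--Feller central limit theorem to the triangular array $W_{n,i}=(B_i-\gamma_n)a_{n,i}\ta_{n,i}$, $i\in[n]$, whose variances sum to $s_n^2=\gamma_n(1-\gamma_n)\sum_j a_{n,j}^2\ta_{n,j}^2$. The one step needing care---and the main obstacle---is the Lindeberg condition: since $|W_{n,i}|\le|a_{n,i}\ta_{n,i}|$ holds surely, the event $\{|W_{n,i}|>\epsilon s_n\}$ can occur only when $(a_{n,i}\ta_{n,i})^2>\epsilon^2 s_n^2$, so the Lindeberg sum is bounded by the sum of $(a_{n,i}\ta_{n,i})^2\big/\sum_j a_{n,j}^2\ta_{n,j}^2$ over those indices $i$ for which this ratio exceeds $\epsilon^2\gamma_n(1-\gamma_n)$. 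In the asymptotic regime of interest $\gamma_n(1-\gamma_n)$ is bounded away from zero, so Condition $\mathfrak{P}$ from \eqref{assdelocalsubsamp2}, i.e.\ $\max_i (a_{n,i}\ta_{n,i})^2/\sum_j a_{n,j}^2\ta_{n,j}^2\to 0$, makes this index set empty for all large $n$; the Lindeberg sum then vanishes, $T_n\Rightarrow\N(0,1)$, and \eqref{ssn} follows. The remaining work is only the bookkeeping that identifies $T_n$ with the normalization in \eqref{ssn}.

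Finally, for the negative statement I will take $a_n=\ta_n=e_1$, the first standard basis vector of $\R^n$, which violates Condition $\mathfrak{P}$ (and $\mathfrak{p}$), since $(a_{n,1}\ta_{n,1})^2/\sum_j a_{n,j}^2\ta_{n,j}^2=1$. Then $\sqrt m\,a_n^\top S_{m,n}^\top S_{m,n}\ta_n=(n/\sqrt m)B_1$ equals $n/\sqrt m\ge\sqrt n\to\infty$ with probability $\gamma_n$ and $0$ otherwise; hence, whenever $\gamma_n$ stays bounded away from zero---as in the regimes of Theorems \ref{theigss}--\ref{thsubsamp}---this sequence is not tight, its mass escaping to infinity, and so it converges in distribution to no Gaussian law, which is exactly the claim.
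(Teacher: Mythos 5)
Your proposal is correct and follows essentially the same route as the paper: both reduce the quadratic form to the explicit weighted sum of independent Bernoulli variables $\sum_i (B_i-\gamma_n)a_{n,i}\tilde a_{n,i}$ (up to affine rescaling), compute its variance for \eqref{ssbo}, apply the Lindeberg--Feller theorem for \eqref{ssn}, and use $a_n=\tilde a_n=e_1$ as the degenerate counterexample. The only differences are cosmetic. You work directly with $W_{n,i}=(B_i-\gamma_n)a_{n,i}\tilde a_{n,i}$, while the paper introduces the standardized $J_i=(1/\gamma_n-1)^{-1/2}(\tfrac{n}{m}B_i-1)$; these are the same array up to the scale factor $\sqrt{\gamma_n(1-\gamma_n)}$. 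Your Lindeberg step is a bit more transparent because the deterministic bound $|B_i-\gamma_n|\le 1$ turns $\{|W_{n,i}|>\epsilon s_n\}$ into a nonrandom event, so the Lindeberg sum collapses to a ratio of partial sums of $(a_{n,i}\tilde a_{n,i})^2$. And you candidly flag the hypothesis that $\gamma_n(1-\gamma_n)$ is bounded away from zero, which the paper's own Lindeberg estimate uses implicitly (the indicator $I\{\max_j (a_{n,j}\tilde a_{n,j})^2 / s_n^2\cdot J_1^2>\epsilon\}$ is eventually zero only because $J_1^2$ is bounded by $\max\{(1-\gamma_n)/\gamma_n,\gamma_n/(1-\gamma_n)\}$). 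For the counterexample, you argue non-tightness of the unnormalized $\sqrt{m}\,a_n^\top S_{m,n}^\top S_{m,n}\tilde a_n=(n/\sqrt m)B_1$, whereas the paper divides through by $\sqrt{n\sum a^2\tilde a^2}$ first and observes the resulting quantity is a two-point law; both arguments rest on the same choice of vectors and are equally valid.
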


\begin{lemma}[Asymptotic Normality of Symmetric Quadratic Form for Uniform Random Sampling]\label{lemsymqfunifrandsamp}
If $U_n$ 
satisfies Condition $\mathfrak{P}'$ from \eqref{assdelocalsubsamp3}, 
then we have
\begin{equation*}
\sqrt{\frac{m}{1-\gamma_n}} (D_p^\top G_n D_p)^{-1/2} D_p^\top \vec\left(U_{n}^\top S_{m,n}^\top S_{m,n}U_{n}-I_p\right)\Rightarrow \mathcal{N}(0,I_{p(p+1)/2}).
\end{equation*}
\end{lemma}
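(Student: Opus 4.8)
The plan is to recognize the centered matrix quadratic form as a sum of $n$ independent, mean-zero \emph{rank-one} random matrices and apply a triangular-array (Lindeberg--Feller) multivariate central limit theorem, with the delocalization clause of Condition $\mathfrak{P}'$ being exactly what forces the Lindeberg condition. Concretely, since $S_{m,n}=\sqrt{n/m}\,\bar B_n$ with $\bar B_n=\diag(B_1,\dots,B_n)$ and $B_i\in\{0,1\}$, we have $S_{m,n}^\top S_{m,n}=\tfrac nm\bar B_n$, so writing $u_{n,i}$ for the rows of $U_n$ and using $\sum_i u_{n,i}u_{n,i}^\top=U_n^\top U_n=I_p$ and $\mathbb E B_i=\gamma_n$,
\[
U_n^\top S_{m,n}^\top S_{m,n}U_n-I_p=\frac nm\sum_{i=1}^n (B_i-\gamma_n)\,u_{n,i}u_{n,i}^\top .
\]
Applying $\vec$ and then $D_p^\top$, the vector on the left of the claimed display equals the normalized sum $\sqrt{m/(1-\gamma_n)}\,(D_p^\top G_n D_p)^{-1/2}\cdot\tfrac nm\sum_{i=1}^n (B_i-\gamma_n)\,w_{n,i}$, where $w_{n,i}:=D_p^\top\vec(u_{n,i}u_{n,i}^\top)\in\R^{p(p+1)/2}$, of the row-wise independent mean-zero vectors $(B_i-\gamma_n)w_{n,i}$.

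The next step is a covariance computation confirming the normalization is the right one. Using $\mathrm{Var}(B_i)=\gamma_n(1-\gamma_n)$, the identity $\tfrac{n^2}{m^2}\gamma_n(1-\gamma_n)=\tfrac{n(1-\gamma_n)}{m}$, and $\sum_i w_{n,i}w_{n,i}^\top=\tfrac1n D_p^\top G_n D_p$ (which holds because $\vec(uu^\top)\vec(uu^\top)^\top=(uu^\top)\otimes(uu^\top)$ and $G_n=n\sum_i (u_{n,i}u_{n,i}^\top)\otimes(u_{n,i}u_{n,i}^\top)$), one gets that $\tfrac nm\sum_i(B_i-\gamma_n)w_{n,i}$ has covariance $\tfrac{1-\gamma_n}{m}D_p^\top G_n D_p$, so the normalized sum has covariance exactly $I_{p(p+1)/2}$. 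Here Condition $\mathfrak{P}'$ supplies $cI\preceq D_p^\top G_n D_p\preceq CI$, so $(D_p^\top G_n D_p)^{-1/2}$ is well-defined and bounded, and no condition on $X_n$ beyond $U_n^\top U_n=I_p$ is needed.

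Then I would invoke the Cram\'er--Wold device. Fix a unit vector $t\in\R^{p(p+1)/2}$, set $s_n=(D_p^\top G_n D_p)^{-1/2}t$ (which lies in the fixed bounded set $\{\|s\|\le c^{-1/2}\}$), and let $\Phi_n$ be the upper-triangular $p\times p$ matrix whose on-and-above-diagonal entries are the coordinates of $s_n$, so that $s_n^\top w_{n,k}=u_{n,k}^\top\Phi_n u_{n,k}$. Projecting the normalized sum onto $t$ yields the scalar triangular array $\sqrt{m/(1-\gamma_n)}\,\tfrac nm\sum_{k=1}^n (B_k-\gamma_n)\,u_{n,k}^\top\Phi_n u_{n,k}$, whose summands are independent, bounded, mean-zero, with variances summing to $n\sum_k(u_{n,k}^\top\Phi_n u_{n,k})^2=s_n^\top(D_p^\top G_n D_p)s_n=\|t\|^2=1$. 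Since $|B_k-\gamma_n|\le1$, the $k$-th summand is bounded in modulus by $A_n|u_{n,k}^\top\Phi_n u_{n,k}|$ with $A_n=\sqrt{m/(1-\gamma_n)}\cdot n/m$, and, using $\sum_j (u_{n,j}^\top\Phi_n u_{n,j})^2=1/n$,
\[
A_n^2\max_{k\in[n]}(u_{n,k}^\top\Phi_n u_{n,k})^2=\frac{1}{\gamma_n(1-\gamma_n)}\cdot\frac{\max_{k\in[n]}(u_{n,k}^\top\Phi_n u_{n,k})^2}{\sum_{j=1}^n (u_{n,j}^\top\Phi_n u_{n,j})^2}.
\]
Once the right-hand side tends to $0$, the bound $\sum_k \mathbb E[\xi_{n,k}^2\, I\{|\xi_{n,k}|>\epsilon\}]\le I\{A_n\max_k|u_{n,k}^\top\Phi_n u_{n,k}|>\epsilon\}$ makes the Lindeberg sum vanish, the one-dimensional Lindeberg--Feller CLT gives $\N(0,1)$, and Cram\'er--Wold delivers the claim.

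The main obstacle is verifying $\max_{k}(u_{n,k}^\top\Phi_n u_{n,k})^2/\sum_{j}(u_{n,j}^\top\Phi_n u_{n,j})^2\to0$. This is the ratio in Condition $\mathfrak{P}'$, but that condition a priori supplies convergence only for each \emph{fixed} upper-triangular $\Phi$, whereas $\Phi_n$ here varies with $n$ through $G_n$. The fix is to pass to the uniform statement: the ratio is scale-invariant, $\Phi_n$ ranges over a compact set of upper-triangular matrices, and the supremum over all upper-triangular $\Phi$ of this ratio equals $\max_{k\in[n]} w_{n,k}^\top\big(\sum_{j} w_{n,j}w_{n,j}^\top\big)^{-1}w_{n,k}$ --- the largest ``generalized leverage score'' of the $n\times p(p+1)/2$ matrix with rows $w_{n,k}^\top$ --- which is the coordinate-free form of the delocalization in Condition $\mathfrak{P}'$ and hence vanishes; the prefactor $1/(\gamma_n(1-\gamma_n))$ is bounded in the subsampling regime under consideration (with $\gamma_n$ bounded away from $0$ and $1$). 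This delocalization is genuinely necessary: as noted after Lemma \ref{lemboundqfsubsamp}, without it the subsampling quadratic forms need not be asymptotically normal.
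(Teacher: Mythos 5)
Your decomposition $U_n^\top S_{m,n}^\top S_{m,n}U_n-I_p=\tfrac nm\sum_i(B_i-\gamma_n)u_{n,i}u_{n,i}^\top$, the covariance check $\sum_i w_{n,i}w_{n,i}^\top=\tfrac1n D_p^\top G_nD_p$, and the one-dimensional Lindeberg argument via bounded Bernoulli weights are all correct and in the same spirit as the paper's proof (which, like yours, reduces via Cram\'er--Wold to scalar sums $\sum_k u_{n,k}^\top\Phi u_{n,k}(\tfrac nm B_k-1)$ and invokes the Lindeberg--Feller CLT as in Lemma~\ref{lemboundqfsubsamp}).

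The one genuine divergence is how the $n$-dependence of the normalizer $(D_p^\top G_n D_p)^{-1/2}$ is handled. The paper first passes to a subsequence on which $D_p^\top G_nD_p$ converges (using the compactness from $cI\preceq D_p^\top G_nD_p\preceq CI$), so that after Cram\'er--Wold the projection corresponds to a \emph{fixed} upper-triangular $\Phi$ and the pointwise clause of Condition~$\mathfrak{P}'$ applies verbatim. You instead work directly with the $n$-dependent $\Phi_n$ induced by $(D_p^\top G_nD_p)^{-1/2}t$, which is fine but means your Lindeberg condition is not literally the pointwise statement in~\eqref{assdelocalsubsamp3}; you need the ratio to vanish \emph{uniformly} over a $\Phi$ that moves with $n$. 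You correctly identify the sup of the ratio as the leverage score $\max_k w_{n,k}^\top\bigl(\sum_j w_{n,j}w_{n,j}^\top\bigr)^{-1}w_{n,k}$, but the concluding ``hence vanishes'' is asserted rather than derived: pointwise-in-$\Phi$ convergence alone does not give a uniform bound. The missing step is short and should be written out: from the eigenvalue clause of $\mathfrak{P}'$, $\sum_j w_{n,j}w_{n,j}^\top\succeq (c/n)I$ and $\sum_j\|u_{n,j}\|^4\le C/n$, while applying the ratio clause with $\Phi=I_p$ gives $\max_k\|u_{n,k}\|^4=o\bigl(\sum_j\|u_{n,j}\|^4\bigr)=o(1/n)$; combining, $\max_k w_{n,k}^\top\bigl(\sum_j w_{n,j}w_{n,j}^\top\bigr)^{-1}w_{n,k}\le \tfrac{n}{c}\max_k\|w_{n,k}\|^2\le\tfrac nc\max_k\|u_{n,k}\|^4\to0$. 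With that inserted, your argument closes the Lindeberg step without the paper's subsequence detour. Either route of course also needs $\gamma_n$ bounded away from $0$ and $1$ for $1/\bigl(\gamma_n(1-\gamma_n)\bigr)$ to stay bounded, which you note and which is implicit in the applications (Theorem~\ref{thsubsamp}).
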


\subsection{Sketching Matrices with i.i.d.~Entries}
\label{iid}

We now 
let  
$S_{m,n}$ be an $m \times n$ random sketching matrix with i.i.d.~entries for all $n\ge 1$. 
Moreover, we denote the entries of the scaled sketching matrix as $\sqrt{m}S_{m,n}=(T_{ij})_{i\in[m],j\in[n]}$, where $\E T_{ij}=0$ and 
$\E T_{ij}^2=1$, so that  $\EE{S_{m,n}}=0$
and
$\EE{S_{m,n}^\top S_{m,n}}=I_n$.
Additionally, for some positive $\delta^\prime$, we assume $\E T_{ij}^4=\kappa_{n,4}>1+\delta^\prime$,
which excludes distributions whose kurtosis is very close to the theoretical minimum of unity, such as the uniform distribution over $\{-1, 1\}$. 
See the discussion after \Cref{thiid} for more details.
Moreover, 
for $\delta\ge 0$, we denote the moments $\kappa_{n,4+\delta}:=\E T_{ij}^{4+\delta}<+\infty$.

Our results rely on the following
conditions, where $a_n, \ta_n\in\nsp$, and
$u_{n,i}$, $i\in[n]$ are the rows of the $n\times p$ left singular matrix $U_n$ of $X_n$: 
\begin{align}
&\textbf{Condition } \mathfrak{P}: \sup_{n\ge 1}\kappa_{n,4+\delta}<+\infty; \text{ Or }  \kappa_{n,4}\to\infty, \kappa_{n,4+\delta}^{4/(4+\delta)}/\kappa_{n,4}=o(m^{\delta/(4+\delta)}), \nonumber\\
&\qquad\qquad\qquad \qquad\qquad\text{ and for some } 0<c<C,  c\le \sum_{i=1}^n (a_{n,i}\ta_{n,i})^2\le C;\label{piid}\\
&\textbf{Condition } \mathfrak{P'}: \sup_{n\ge 1}\kappa_{n,4+\delta}<+\infty; \text{ Or }  \kappa_{n,4}\to\infty, \kappa_{n,4+\delta}^{4/(4+\delta)}/\kappa_{n,4}=o(m^{\delta/(4+\delta)}), \nonumber\\ 
&\qquad\qquad\qquad \text{ and there exists } c>0 \text{ with } cI_{p(p+1)/2}\preceq D_p^\top \sum_{i=1}^n\left((u_{n,i}u_{n,i}^\top)\otimes(u_{n,i}u_{n,i}^\top)\right)D_p. \label{p'iid}
\end{align}
Both conditions can be satisfied \emph{either}
if the $4+\delta$-th moments $\kappa_{n,4+\delta}$ are uniformly bounded, 
or if they grow sufficiently slowly, 
but the data satisfies certain additional regularity conditions.
As before, these conditions are used
implicitly or explicitly in our results.

{\bf PCA.}
\label{pcaiid}
We have the following result for inference in PCA for sketching matrices with i.i.d.~entries.

\begin{theorem}[Inference for Eigenvalues and Eigenvectors with i.i.d.~Sketching]
\label{iid1}
Consider the SVD of $X_n$ given by $U_n L_nV_n^\top$. 
Let the entries of the scaled i.i.d.~sketching matrix $\sqrt{m}S_{m,n}=(T_{ij})_{i,j}$
satisfy $\E T_{ij}=0, \E T_{ij}^2=1$, $\E T_{ij}^4=\kappa_{n,4}>1+\delta^\prime$ for some $\delta^\prime>0$, and $\E T_{ij}^{4+\delta}<+\infty$ for some $\delta>0$.
Suppose that $(X_n)_{n\ge  1}$ satisfies Condition \ref{spectralcondition} \textup{\texttt{B}} and $(U_n)_{n\ge 1}$ satisfies Condition $\mathfrak{P}'$. 
For all $n\ge 1$, define the $p^2\times p^2$ matrix $\Gamma_n$, 
where $(\Gamma_n)_{(k_1k_2),(k_3k_4)}=(\kappa_{n,4}-3)\sum_{h=1}^n (U_n)_{hk_1}(U_n)_{hk_2}(U_n)_{hk_3}(U_n)_{hk_4}$ for every $k_1,k_2,k_3,k_4\in[p]$.
Then 
\begin{equation}\label{iidgeneralval}
\sqrt{\frac{m}{2+(\Gamma_n)_{(ii),(ii)}}}\hat{\Lambda}_{m,n,i}^{-1}\left(\hat{\Lambda}_{m,n,i}-\Lambda_{n,i}\right)\Rightarrow \N(0,1);
\end{equation}
and letting
$$\hat{\Delta}_{m,n,i}=\sum_{k\neq i, l\neq i}\frac{\hat{\Lambda}_{m,n,i}\sqrt{\hat{\Lambda}_{m,n,k}\hat{\Lambda}_{m,n,l}}}{(\hat{\Lambda}_{m,n,i}-\hat{\Lambda}_{m,n,k})(\hat{\Lambda}_{m,n,i}-\hat{\Lambda}_{m,n,l})}\left(\delta_{kl}+(\Gamma_n)_{(ij),(kl)}\right)\hat{v}_{m,n,k}\hat{v}_{m,n,l}^\top,$$
for any vector $c\in\psp$ satisfying $\liminf_{n\to\infty}c^\top \Delta_{n,i}c>0$, with $\Delta_{n,i}=\Delta_i(X_n^\top X_n, G_n)$  from \Cref{spca},
\begin{equation*}\label{iidgeneralvec}
\sqrt{m}(c^\top \hat{\Delta}_{m,n,i}c)^{-1/2}c^\top(\hat{v}_{m,n,i}-v_{n,i})\Rightarrow \N(0,1).
\end{equation*}
\end{theorem}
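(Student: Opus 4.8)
The plan is to derive both displays from the general sketched-PCA result, Theorem \ref{infPCAgeneral}, by checking its single nontrivial hypothesis---Condition \ref{generalquadratic forms} \textup{\texttt{Sym}}, i.e.\ \eqref{symmatform-1}---for i.i.d.\ sketching, with $\tau_{m,n}=1$ and $G_n=I_{p^2}+P_p+\Gamma_n$ when $\sup_n\kappa_{n,4}<\infty$ (a rescaled choice otherwise, see below). Condition \ref{spectralcondition} \textup{\texttt{B}} and Condition $\mathfrak{P}'$ of \eqref{p'iid} are assumed, so once \eqref{symmatform-1} is verified, \eqref{infsingval} yields the eigenvalue CLT \eqref{iidgeneralval} with variance $\tau_{m,n}(G_n)_{(ii),(ii)}=2+(\Gamma_n)_{(ii),(ii)}$, and \eqref{infsingvec} yields the eigenvector CLT; here one checks that substituting $G_n$ into the function $\Delta_i$ of \eqref{defHi} reproduces exactly the stated plug-in matrix $\hat\Delta_{m,n,i}$, because $(G_n)_{(ik),(il)}=\delta_{kl}+(\Gamma_n)_{(ik),(il)}$ whenever $k\neq i$ and $l\neq i$. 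Thus the whole proof reduces to a central limit theorem for the matrix-valued quadratic form $\sqrt m\,\vec(U_n^\top S_{m,n}^\top S_{m,n}U_n-I_p)$, together with a check that its limiting covariance meets the two-sided bound demanded by Condition \ref{generalquadratic forms} \textup{\texttt{Sym}}.

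For the CLT, write $T=\sqrt m\,S_{m,n}=(T_{ij})$ with i.i.d.\ entries, so $S_{m,n}^\top S_{m,n}=m^{-1}T^\top T$ and $U_n^\top S_{m,n}^\top S_{m,n}U_n=m^{-1}\sum_{k=1}^mW_kW_k^\top$, where $W_k\in\R^p$ is the $k$-th row of $TU_n$. The $W_k$ are i.i.d.\ with $\E W_k=0$ and $\E[W_kW_k^\top]=U_n^\top U_n=I_p$, so
\begin{equation*}
\sqrt m\,\vec\bigl(U_n^\top S_{m,n}^\top S_{m,n}U_n-I_p\bigr)=\frac{1}{\sqrt m}\sum_{k=1}^m\vec\bigl(W_kW_k^\top-I_p\bigr)
\end{equation*}
is a normalized sum of i.i.d.\ centered vectors, to which I would apply the multivariate Lindeberg--Feller CLT after compressing to the distinct coordinates with $D_p$ (the uncompressed limit being degenerate). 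The limiting covariance is identified by a Wick-type expansion: its $\bigl((ab),(cd)\bigr)$ entry equals $\E[W_{1a}W_{1b}W_{1c}W_{1d}]-\delta_{ab}\delta_{cd}$, and substituting $W_{1a}=\sum_iT_{1i}(U_n)_{ia}$ and collecting index matchings gives $\E[W_{1a}W_{1b}W_{1c}W_{1d}]=\delta_{ab}\delta_{cd}+\delta_{ac}\delta_{bd}+\delta_{ad}\delta_{bc}+(\kappa_{n,4}-3)\sum_h(U_n)_{ha}(U_n)_{hb}(U_n)_{hc}(U_n)_{hd}$, the last, excess-kurtosis term coming from the fully coincident indices $i=j=k=l=h$. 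Hence the covariance is precisely $I_{p^2}+P_p+\Gamma_n$, which is absolutely symmetric in the sense of Definition \ref{abs}.

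It remains to verify $cI_{p(p+1)/2}\preceq\tau_{m,n}^{-1}D_p^\top(I_{p^2}+P_p+\Gamma_n)D_p\preceq CI_{p(p+1)/2}$. Since $D_p$ has orthonormal columns, $D_p^\top(I_{p^2}+P_p)D_p$ is diagonal with entries in $\{1,2\}$, hence $\succeq I_{p(p+1)/2}$, while $\Gamma_n=(\kappa_{n,4}-3)\sum_h(u_{n,h}u_{n,h}^\top)\otimes(u_{n,h}u_{n,h}^\top)$ obeys $\|D_p^\top\Gamma_nD_p\|\le|\kappa_{n,4}-3|\sum_h\|u_{n,h}\|^4\le|\kappa_{n,4}-3|\,p^2$ using $\sum_h\|u_{n,h}\|^2=p$ and $\|u_{n,h}\|\le1$. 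When $\sup_n\kappa_{n,4}<\infty$ this gives the bound with $\tau_{m,n}=1$: the lower bound holds since $\Gamma_n\succeq0$ when $\kappa_{n,4}\ge3$, and when $\kappa_{n,4}<3$ from $\kappa_{n,4}>1+\delta'$ together with the lower bound on $D_p^\top\bigl(\sum_h(u_{n,h}u_{n,h}^\top)\otimes(u_{n,h}u_{n,h}^\top)\bigr)D_p$ from Condition $\mathfrak{P}'$. When $\kappa_{n,4}\to\infty$, take instead $\tau_{m,n}=\kappa_{n,4}$ and $G_n=\kappa_{n,4}^{-1}(I_{p^2}+P_p+\Gamma_n)$; the upper bound follows from the same estimate and the lower bound holds for large $n$ since $\kappa_{n,4}^{-1}D_p^\top\Gamma_nD_p\to D_p^\top\bigl(\sum_h(u_{n,h}u_{n,h}^\top)\otimes(u_{n,h}u_{n,h}^\top)\bigr)D_p\succeq cI_{p(p+1)/2}$, while $\tau_{m,n}(G_n)_{(ii),(ii)}=2+(\Gamma_n)_{(ii),(ii)}$ is unchanged so the final normalization matches the statement.

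I expect the heavy-tailed regime $\kappa_{n,4}\to\infty$ to be the main obstacle: there the summands $\vec(W_kW_k^\top-I_p)$ have variance of order $\kappa_{n,4}$, so a plain CLT no longer applies and a Lyapunov condition is needed. I would control $\E\|W_1\|^{4+\delta}$ via Rosenthal's inequality, obtaining $\E\|W_1\|^{4+\delta}\le C\bigl(1+\kappa_{n,4+\delta}\sum_i\|u_{n,i}\|^{4+\delta}\bigr)\le C'(1+\kappa_{n,4+\delta})$ (again $\sum_i\|u_{n,i}\|^4\le p^2$ and $\|u_{n,i}\|\le1$), whence the Lyapunov ratio of exponent $2+\delta/2$ for the $D_p^\top G_nD_p$-normalized sum is of order $\kappa_{n,4+\delta}/(m^{\delta/4}\kappa_{n,4}^{1+\delta/4})$, which tends to zero precisely under the assumed $\kappa_{n,4+\delta}^{4/(4+\delta)}/\kappa_{n,4}=o(m^{\delta/(4+\delta)})$ of \eqref{p'iid}; the light-tailed case is then routine. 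Finally, no subsequence argument is needed at this level, since Theorem \ref{infPCAgeneral} already internalizes the one that removes any convergence assumption on $X_n$.
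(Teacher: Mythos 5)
Your overall route matches the paper's: both reduce to verifying Condition \ref{generalquadratic forms} \textup{\texttt{Sym}} for i.i.d.\ sketching (this is the content of Lemma \ref{symiid}) and then invoke Theorem \ref{infPCAgeneral}. Your direct multivariate Lindeberg--Feller CLT with a Wick expansion and Rosenthal moment bounds is a legitimate alternative to the paper's Cram\'er--Wold plus Bai--Silverstein implementation, and your explicit rescaling $\tau_{m,n}=\kappa_{n,4}$, $G_n\mapsto G_n/\kappa_{n,4}$ in the heavy-tailed regime is in fact more careful than the paper's terse ``Condition \texttt{Sym} holds with $\tau_{m,n}=1$, $G_n=I_{p^2}+P_p+\Gamma_n$,'' which is literally ill-posed if $\kappa_{n,4}\to\infty$ (the two-sided bound on $D_p^\top G_n D_p$ cannot hold). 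Your Wick computation and Lyapunov-ratio bookkeeping check out.

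There is, however, one genuine gap: your justification of the \emph{lower} bound $cI_{p(p+1)/2}\preceq D_p^\top G_n D_p$ when $1+\delta'<\kappa_{n,4}<3$ with $\sup_n\kappa_{n,4+\delta}<\infty$. You appeal to ``the lower bound on $D_p^\top\sum_h(u_{n,h}u_{n,h}^\top)\otimes(u_{n,h}u_{n,h}^\top)D_p$ from Condition $\mathfrak{P}'$.'' But in the regime $\sup_n\kappa_{n,4+\delta}<\infty$, Condition $\mathfrak{P}'$ of \eqref{p'iid} imposes \emph{no} such lower bound; that bound is only posited in the second alternative ($\kappa_{n,4}\to\infty$). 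Worse, even if it were available it would not help: when $\kappa_{n,4}<3$ the matrix $D_p^\top\Gamma_n D_p=(\kappa_{n,4}-3)D_p^\top(\sum_h\cdots)D_p$ is a \emph{negative} perturbation, so a lower bound on $D_p^\top(\sum_h\cdots)D_p$ pushes $D_p^\top G_n D_p$ \emph{down}, not up. The correct argument, in the spirit of \eqref{vlb}, is a rearrangement that makes positivity manifest without any structural assumption on $U_n$. For upper triangular $\Phi$, let $\Psi=(\Phi+\Phi^\top)/2$ and $B=U_n\Psi U_n^\top$; then
\begin{equation*}
\vec(\Phi)^\top G_n\,\vec(\Phi)=2\|B\|_F^2+(\kappa_{n,4}-3)\sum_{j=1}^n B_{jj}^2=(\kappa_{n,4}-1)\sum_{j=1}^n B_{jj}^2+2\sum_{j\neq k}B_{jk}^2\ge\min(\delta',2)\,\|B\|_F^2,
\end{equation*}
and $\|B\|_F^2=\|\Psi\|_F^2\ge\tfrac12\|\Phi\|_F^2$, giving the needed universal bound. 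Once you replace the erroneous citation of Condition $\mathfrak{P}'$ by this decomposition (and keep your remaining steps), the proposal is sound.
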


In \eqref{iidgeneralval}, the excess kurtosis term $\Gamma_n$ vanishes if $\kappa_{n,4}=3$, which holds for Gaussian distributions. 
In that case \eqref{iidgeneralval} depends only on the empirical data.
For $\kappa_{n,4}\neq 3$, \eqref{iidgeneralval} only leads to inference if $(\Gamma_n)_{(ii),(ii)}$ vanishes, which can be viewed as a delocalization condition on the data.
\Cref{iid1}  
is related 
to the results of \cite{anderson1963asymptotic} for Gaussian sketching matrices
and to those of \cite{waternaux1976asymptotic}.
Those 
results obtain the  joint limiting distribution of  all eigenvalues, 
while our results focus on individual eigenvalues but
(1) establish inference methods, 
and (2) broaden the conditions, not requiring the convergence of the data;  
see Section \ref{seciid} for more discussion.

{\bf LS.}
We consider the problem of inference for $\beta_n$ based on 
sketching matrices with i.i.d.~entries. 
The result here is consistent with 
the special case of 
Theorem 3.2 in \cite{zhang2023framework} for fixed $p$,
but our conclusion holds for a broader class of sketching distributions, only assuming that a $4+\delta$-th moment (for any small $\delta>0$) grows sufficiently slowly, whereas they assume all moments are bounded. 
In particular, our work allows sparse i.i.d.~sketches, which requires the $4+\delta$-th moment to grow, whereas \cite{zhang2023framework} does not allow this.

\begin{theorem}[Inference in Least Squares with i.i.d.~Sketching]\label{thiid}
Let $X_n$ satisfy 
Condition \ref{spectralcondition} \textup{\texttt{A}}, let $p$ be fixed and $m,n\to\infty$.
Also, let the entries of $S_{m,n}$ be i.i.d.~copies of $S_{1,1}/m^{1/2}$, where $S_{1,1}$ has zero mean, unit variance and 
kurtosis $\kappa_{n,4} = \E S_{1,1}^4>1+\delta^\prime$ for some $\delta'>0$.
Suppose either that
\begin{compactitem}
    \item [(i)] for some fixed $\delta>0$, $\kappa_{n,4+\delta}=\E |S_{1,1}|^{4+\delta}$ satisfies $\sup_{n\ge 1}\kappa_{n,4+\delta}<+\infty$; or
    \item [(ii)] we have $\kappa_{n,4}\to\infty$, $\kappa_{n,4+\delta}^{4/(4+\delta)}/\kappa_{n,4}=o(m^{\delta/(4+\delta)})$ and also there exist some constants $0<c<C$ such that 
\begin{equation}\label{strdelociid1}
cI_p\preceq U_n^\top \diag\left(\frac{|\ep_{n}|}{\|\ep_n\|}\odot \frac{|\ep_{n}|}{\|\ep_n\|}\right)U_n\preceq CI_p.
\end{equation}
\end{compactitem} 
With
\begin{equation}\label{hsmn}
\hat{\Sigma}_{m,n} =m(\tX_{m,n}^\top \tX_{m,n})^{-1} \tX_{m,n}^\top \diag(\tep_{m,n} \odot \tep_{m,n}) \tX_{m,n} (\tX_{m,n}^\top \tX_{m,n})^{-1},\end{equation}
we have 
for the sketch-and-solve LS estimator $\hbs$ from \eqref{ss} that
$m^{1/2}\hat{\Sigma}_{m,n}^{-1/2}(\hbs-\beta_n) \Rightarrow \N(0,I_p)$.
Furthermore, with 
\begin{equation*}\hSigp=m(\tX_{m,n}^\top \tX_{m,n})^{-1} \tX_{m,n}^\top \diag\left((\tX_{m,n} \hbp) \odot (\tX_{m,n} \hbp)\right) \tX_{m,n} (\tX_{m,n}^\top \tX_{m,n})^{-1},
\end{equation*}
if assuming either (i); or assuming (ii) with \begin{equation*}\label{strdelociid2}
cI_p\preceq U_n^\top \diag\left(\frac{|y_n-\ep_{n}|}{\|y_n-\ep_n\|}\odot \frac{|y_n-\ep_{n}|}{\|y_n-\ep_n\|}\right)U_n\preceq CI_p
\end{equation*} 
instead of \eqref{strdelociid1},
then
we have  
for the partial sketching LS estimator $\hbp$ from \eqref{ss} that
$m^{1/2}(\hSigp)^{-1/2}(\hbp-\beta_n) \Rightarrow \N(0,I_p)$.
\end{theorem}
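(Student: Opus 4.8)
Mirroring the argument behind Theorem~\ref{lemgeneols}, I would use the SVD $X_n=U_nL_nV_n^\top$ to express the estimation errors through two random quantities: the ``quadratic'' matrix $\Psi_n:=U_n^\top S_{m,n}^\top S_{m,n}U_n$ and the ``linear'' vector $U_n^\top S_{m,n}^\top S_{m,n}g_n$ with $g_n\in\{\bar\ep_n,\widebar{X_n\beta_n}\}$. Indeed $\hbs-\beta_n=V_nL_n^{-1}\Psi_n^{-1}U_n^\top S_{m,n}^\top S_{m,n}\ep_n$ (using $X_n^\top\ep_n=0$) and $\hbp-\beta_n=-V_nL_n^{-1}\Psi_n^{-1}(\Psi_n-I_p)L_nV_n^\top\beta_n$. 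The argument then has three ingredients: (a) a multivariate CLT for $\sqrt m\bigl(U_n^\top S_{m,n}^\top S_{m,n}g_n-U_n^\top g_n\bigr)$; (b) the weak law $\Psi_n\to_P I_p$; and (c) consistency of the sandwich estimators $\hat\Sigma_{m,n}$, $\hSigp$ for the asymptotic covariances arising in (a)--(b). Given these, each conclusion follows by writing the pivot as $\hat\Sigma_{m,n}^{-1/2}\Sigma_n^{1/2}$ times a term that is asymptotically $\N(0,\Sigma_n)$, and applying Slutsky's theorem together with a first-order expansion of the displayed SVD formulas; Condition~\ref{spectralcondition}~\textup{\texttt{A}} provides the tightness needed to invert $\Psi_n$ and $\tX_{m,n}^\top\tX_{m,n}$.

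\textbf{Steps (a)--(b).} With $\sqrt m\,S_{m,n}=(t_1,\dots,t_m)^\top$, the rows $t_k\in\R^n$ i.i.d., one has for unit vectors $a,b$ that $\sqrt m\bigl(a^\top S_{m,n}^\top S_{m,n}b-a^\top b\bigr)=m^{-1/2}\sum_{k=1}^m\bigl((t_k^\top a)(t_k^\top b)-a^\top b\bigr)$, a normalized sum of i.i.d.\ mean-zero terms whose summand variance is, by a fourth-moment computation with $\kappa_{n,4}=\E T_{ij}^4$, equal to $1+(a_n^\top b_n)^2+(\kappa_{n,4}-3)\sum_i a_{n,i}^2b_{n,i}^2$; the vector version $\sqrt m\bigl(U_n^\top S_{m,n}^\top S_{m,n}g_n-U_n^\top g_n\bigr)$ has asymptotic covariance $(U_n^\top g_n)(U_n^\top g_n)^\top+I_p+(\kappa_{n,4}-3)U_n^\top\diag(g_n\odot g_n)U_n$. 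The CLT itself is a triangular-array Lyapunov statement, and the two moment hypotheses (i)/(ii) are calibrated exactly to make the Lyapunov ratio vanish: uniformly bounded $(4+\delta)$-th moments in case (i); the slow-growth bound $\kappa_{n,4+\delta}^{4/(4+\delta)}/\kappa_{n,4}=o(m^{\delta/(4+\delta)})$ together with the well-conditioning \eqref{strdelociid1} (an instance of Condition~$\mathfrak{P}'$ in \eqref{p'iid}) in case (ii), the latter keeping the normalizing covariance bounded away from singularity when $\kappa_{n,4}\to\infty$. Step (b) is the same variance formula with $a=b$ a column of $U_n$, since then each entry of $\sqrt m(\Psi_n-I_p)$ has variance $O(1+\kappa_{n,4})=o(m)$.

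\textbf{Step (c).} For sketch-and-solve, $\hbs=\beta_n+o_P(1)$ and $X_n^\top\ep_n=0$ let one substitute $\tep_{m,n}$ by $S_{m,n}\ep_n$ to get $m\,\tX_{m,n}^\top\diag(\tep_{m,n}\odot\tep_{m,n})\tX_{m,n}=m^{-1}\sum_{k=1}^m(t_k^\top\ep_n)^2X_n^\top t_kt_k^\top X_n+o_P(1)$, an average of i.i.d.\ matrices with common mean $\|\ep_n\|^2X_n^\top X_n+(\kappa_{n,4}-3)X_n^\top\diag(\ep_n\odot\ep_n)X_n$ (the cross term $(X_n^\top\ep_n)(X_n^\top\ep_n)^\top$ vanishing); sandwiching by $(\tX_{m,n}^\top\tX_{m,n})^{-1}\approx(X_n^\top X_n)^{-1}$ recovers exactly $\Sigma_n=V_nL_n^{-1}\bigl(\|\ep_n\|^2I_p+(\kappa_{n,4}-3)U_n^\top\diag(\ep_n\odot\ep_n)U_n\bigr)L_n^{-1}V_n^\top$ from (a). A weak law of large numbers for such i.i.d.\ matrix averages, quantified through the $(4+\delta)$-th moments, bounds the remainder, and Slutsky concludes. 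Partial sketching is structurally identical with $X_n\beta_n=y_n-\ep_n$ in place of $\ep_n$; now $X_n^\top(X_n\beta_n)\ne 0$, so the mean of the corresponding matrix average carries an extra term $2(X_n^\top X_n\beta_n)(X_n^\top X_n\beta_n)^\top$, and in addition $\tX_{m,n}\hbp$ must be tied back to $S_{m,n}X_n\beta_n$, its leading fluctuation feeding into the rank-one $\beta_n\beta_n^\top$-type contribution; the relevant delocalization is the $y_n-\ep_n$ analogue of \eqref{strdelociid1}.

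\textbf{Main obstacle.} The crux is step (c) when $\kappa_{n,4}\to\infty$: one has to show that averages of i.i.d.\ random matrices whose summands are degree-four polynomials in the heavy-tailed entries $T_{ij}$ still concentrate around their means, which requires carefully balancing the admissible growth rate of $\kappa_{n,4+\delta}$ against the $m^{-1}$ gain from averaging, while simultaneously certifying that $\hat\Sigma_{m,n}$ and $\hSigp$ stay uniformly well-conditioned---this is precisely where the data-dependent hypotheses such as \eqref{strdelociid1} become indispensable. A secondary difficulty is the explicit factor $m$ in the sandwich estimators, which forces every $o_P$ replacement (of $\tep_{m,n}$ by $S_{m,n}\ep_n$, of $\tX_{m,n}\hbp$ by $S_{m,n}X_n\beta_n$, and of $\tX_{m,n}^\top\tX_{m,n}$ by $X_n^\top X_n$) to be controlled at rate $o_P(m^{-1/2})$ rather than merely $o_P(1)$.
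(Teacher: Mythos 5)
Your proposal follows essentially the same route as the paper's proof: it invokes the general framework of Theorem~\ref{lemgeneols} through the quadratic-form CLT of Lemma~\ref{lemgeneralqfiid} (the variance formula you write is exactly the paper's \eqref{iidvar}), reduces step~(b) to the $a=b$ special case of that lemma, and then establishes ratio-consistency of the sandwich estimators via a triangular-array law of large numbers, which is the content of the paper's Lemma~\ref{lemiid} and Proposition~\ref{consest}. The handling of cases (i) and (ii) via H\"older/Lyapunov is also the paper's mechanism.

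The one point where your sketch is genuinely loose is the partial-sketching consistency step, and it is worth flagging because you are on to something. You correctly note that $X_n^\top(X_n\beta_n)\ne 0$ introduces an extra piece, and $\E\bigl[(t^\top a)^2(t^\top b)^2\bigr]=\|a\|^2\|b\|^2+2(a^\top b)^2+(\kappa_{n,4}-3)\sum_i a_i^2 b_i^2$, so sandwiching $a=X_n(X_n^\top X_n)^{-1}c$ and $b=X_n\beta_n$ does produce a term $2\beta_n\beta_n^\top$, not the $\beta_n\beta_n^\top$ appearing in $\Sigp$ from \eqref{assiidfix2}. But your proposed resolution---that the ``leading fluctuation'' of $\tX_{m,n}\hbp$ around $S_{m,n}X_n\beta_n$ ``feeds into'' the rank-one contribution---does not work: since $\hbp-\beta_n=O_P(m^{-1/2})$, the cross term $\tfrac2m\sum_j (t_j^\top a)^2(t_j^\top b)\,t_j^\top X_n(\hbp-\beta_n)$ is $O_P(m^{-1/2})$ and the quadratic term is $O_P(m^{-1})$, so substituting $\hbp$ by $\beta_n$ changes $\hSigp$ only at lower order and cannot cancel a $\Theta(1)$ discrepancy of size $(c^\top\beta_n)^2$. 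A correct argument must account for this explicitly (e.g.\ via a centering correction of the form $\hSigp-\hbp\hbp^\top$, or a corresponding adjustment in the target covariance), and your proof as written does not close that step.
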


We know that $\kappa_{n,4} \ge  1$, and the minimum value is achieved for a Radamacher variable.
This minimum is excluded from our analysis, because if $\kappa_{n,4}=1$, 
some projections of $\hbs-\beta_n$
in certain directions may be deterministically zero, allowing us to exactly recover some linear functionals of $\beta_n$.
Thus, one will need a more complex result to describe the behavior of $\hbs-\beta_n$.
This can be inferred from equation \eqref{iidvar} in the proofs by choosing specific $a_n$ and $\ta_n$. 
For example, when $a_n = \ta_n = (1,0,\ldots,0)^\top$ or $a_n = (1/\sqrt{2},1/\sqrt{2},0,\ldots, 0)^\top$ and $\ta_n=(1/\sqrt{2},-1/\sqrt{2},0,\ldots,0)^\top$, 
the right-hand side of \eqref{iidvar} is zero. 
Since sketches whose entries are i.i.d.~Rademacher variables are somewhat rarely used, we leave investigating this phenomenon to future work.

For Gaussian sketching, where $S_{m,n} = m^{-1/2}Z_{m,n}$ and $Z_{m,n}$ contains independent standard Gaussian entries, Theorem 1 of \cite{ahfock2021statistical} 
shows that the distribution of $\hbs$ given $\tX_{m,n}$ is normal, with a known mean and covariance matrix.
Hence, exact finite-sample statistical inference---conditional on $\tX_{m,n}$---can be performed.
Our result corresponds to taking the limit as $m\to\infty$ in Theorem 1, part (ii) of \cite{ahfock2021statistical}, 
for the marginal distribution of $\hbs$, where our $m$ is their $k$ and 
our 
$\sum_{i=1}^n \ep_{n,i}^2 = \|y_n-X_n\beta_n\|^2$ is their $RSS_F$.
An equivalent result for Gaussian sketching 
also appears 
in equation (5) of
\cite{bartan2023distributed}. 

% However, since the results are special cases of those from \cite{zhang2023framework} for a fixed dimension,
% with the advantage of simpler proofs and weaker assumptions,
% we present them in \Cref{ar}.

{\bf CLT.}
The previous results are based on the following central limit theorems
for quadratic forms from Condition \ref{generalquadratic forms} for i.i.d.~sketching.

\begin{lemma}[Limiting Distributions of One-dimensional Quadratic Forms for i.i.d.~Sketching]\label{lemgeneralqfiid}
Suppose that the entries of $S_{m,n}$ are i.i.d.~copies of $S_{1,1}/m^{1/2}$, where $S_{1,1}$ has zero mean, unit variance,
kurtosis $\kappa_{n,4} = \E S_{1,1}^4>1+\delta^\prime$ for some $\delta'>0$,
and that for some fixed $\delta>0$, Condition $\mathfrak{P}$ from \eqref{piid} holds. 
Then 
we have 
$$\sqrt{\frac{m}{1+(a_n^\top \ta_n)^2+(\kappa_{n,4}-3)\sum_{i=1}^n (a_{n,i}\ta_{n,i})^2}}\left(a_n^\top S_{m,n}^\top S_{m,n}\ta_n - a_n^\top \ta_n\right) \Rightarrow \N(0, 1).$$
\end{lemma}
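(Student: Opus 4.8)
The plan is to expand the quadratic form over the rows of the sketching matrix, recognize it as a normalized sum of row-wise i.i.d.\ random variables, compute the limiting variance exactly, and then close the argument with a triangular-array central limit theorem. Let $t_k\in\R^n$ denote the $k$-th row of $\sqrt m\,S_{m,n}$, so that the $t_k$ are i.i.d.\ with i.i.d.\ coordinates of mean $0$, variance $1$, fourth moment $\kappa_{n,4}$, and $(4+\delta)$-th absolute moment $\kappa_{n,4+\delta}$. Since $S_{m,n}^\top S_{m,n}=m^{-1}\sum_{k=1}^m t_kt_k^\top$, setting $\xi_k:=(a_n^\top t_k)(t_k^\top\ta_n)$ and noting $\E\xi_k=\sum_i a_{n,i}\ta_{n,i}=a_n^\top\ta_n$, we have
\begin{equation*}
\sqrt m\left(a_n^\top S_{m,n}^\top S_{m,n}\ta_n-a_n^\top\ta_n\right)=\frac1{\sqrt m}\sum_{k=1}^m(\xi_k-\E\xi_k),
\end{equation*}
so the claimed statement is precisely that $(\sum_{k=1}^m(\xi_k-\E\xi_k))/\sqrt{m\sigma_n^2}\Rightarrow\N(0,1)$, where $\sigma_n^2$ is the variance of $\xi_1$ and the $\xi_k$ are i.i.d.\ across $k$ within each $n$.

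First I would compute $\sigma_n^2$. Expanding $\xi_1^2$ and using the Wick-type identity $\E[T_iT_jT_kT_l]=\delta_{ij}\delta_{kl}+\delta_{ik}\delta_{jl}+\delta_{il}\delta_{jk}+(\kappa_{n,4}-3)I\{i=j=k=l\}$ together with $\|a_n\|=\|\ta_n\|=1$ yields, after collecting the three pairings and the diagonal excess term, $\sigma_n^2=1+(a_n^\top\ta_n)^2+(\kappa_{n,4}-3)\sum_{i=1}^n(a_{n,i}\ta_{n,i})^2$, which is exactly the normalizing constant in the statement. To justify this normalization I would record the sum-of-squares identity $1+(a_n^\top\ta_n)^2-2\sum_i(a_{n,i}\ta_{n,i})^2=\sum_{i<j}(a_{n,i}\ta_{n,j}+a_{n,j}\ta_{n,i})^2\ge0$, which gives $\sum_i(a_{n,i}\ta_{n,i})^2\le\tfrac12\left(1+(a_n^\top\ta_n)^2\right)$. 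Hence for $\kappa_{n,4}<3$ one gets $\sigma_n^2\ge\tfrac12\left(1+(a_n^\top\ta_n)^2\right)(\kappa_{n,4}-1)>\tfrac12\delta'$, and for $\kappa_{n,4}\ge3$ trivially $\sigma_n^2\ge1$; so $\sigma_n^2$ is bounded below by a positive constant. Under the first alternative of Condition $\mathfrak{P}$ from \eqref{piid}, $\kappa_{n,4}\le\kappa_{n,4+\delta}^{4/(4+\delta)}$ is bounded, so $\sigma_n^2$ is also bounded above; under the second alternative, the constraint $c\le\sum_i(a_{n,i}\ta_{n,i})^2\le C$ forces $\sigma_n^2=\Theta(\kappa_{n,4})$.

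Next I would verify a Lyapunov condition with excess exponent $\delta/2$. By Cauchy--Schwarz, $\E|\xi_1|^{2+\delta/2}\le(\E|a_n^\top t_1|^{4+\delta})^{1/2}(\E|\ta_n^\top t_1|^{4+\delta})^{1/2}$, and Rosenthal's inequality bounds $\E|a_n^\top t_1|^{4+\delta}$ by a constant depending only on $\delta$ times $(\sum_i a_{n,i}^2)^{(4+\delta)/2}+\kappa_{n,4+\delta}\sum_i|a_{n,i}|^{4+\delta}$, which is $O(\kappa_{n,4+\delta})$ because $\sum_i|a_{n,i}|^{4+\delta}\le\|a_n\|_2^{4+\delta}=1$ and $\kappa_{n,4+\delta}\ge1$; the same holds for $\ta_n$, so $\E|\xi_1-\E\xi_1|^{2+\delta/2}=O(\kappa_{n,4+\delta})$. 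The Lyapunov ratio for the triangular array therefore satisfies
\begin{equation*}
\frac{m\,\E|\xi_1-\E\xi_1|^{2+\delta/2}}{(m\sigma_n^2)^{1+\delta/4}}=O\!\left(\frac{\kappa_{n,4+\delta}}{m^{\delta/4}\,\sigma_n^{2+\delta/2}}\right).
\end{equation*}
Under the first alternative of Condition $\mathfrak{P}$, $\kappa_{n,4+\delta}$ is bounded and $\sigma_n^2\ge\tfrac12\delta'$, so the ratio is $O(m^{-\delta/4})\to0$. Under the second alternative, $\sigma_n^2=\Theta(\kappa_{n,4})$, so the ratio is $O\!\left(\kappa_{n,4+\delta}/(m^{\delta/4}\kappa_{n,4}^{1+\delta/4})\right)$, which tends to $0$ because Condition $\mathfrak{P}$ assumes $\kappa_{n,4+\delta}^{4/(4+\delta)}/\kappa_{n,4}=o(m^{\delta/(4+\delta)})$, and raising both sides to the power $(4+\delta)/4$ gives exactly $\kappa_{n,4+\delta}=o(m^{\delta/4}\kappa_{n,4}^{1+\delta/4})$. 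The Lindeberg--Feller CLT for triangular arrays then yields $(\sum_{k=1}^m(\xi_k-\E\xi_k))/\sqrt{m\sigma_n^2}\Rightarrow\N(0,1)$, which is the assertion.

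The step I expect to be the main obstacle is the bookkeeping under the growing-moment alternative: one must make the Rosenthal moment bound, the exact form of $\sigma_n^2$, and the assumed growth of $\kappa_{n,4+\delta}$ relative to $\kappa_{n,4}$ line up so that the Lyapunov ratio genuinely vanishes — indeed, the exponent $(4+\delta)/4$ in Condition $\mathfrak{P}$ and the choice $\delta/2$ of Lyapunov excess are dictated precisely by this matching, and the lower bound $\sigma_n^2\gtrsim\kappa_{n,4}$ there relies on the two-sided delocalization constraint on $\sum_i(a_{n,i}\ta_{n,i})^2$. The fourth-moment variance computation is routine but also requires care to track the three pairings and the excess-kurtosis diagonal term correctly.
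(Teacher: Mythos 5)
Your proof is correct and follows the same overall route as the paper's: decompose the quadratic form into an i.i.d.\ row sum $\frac{1}{\sqrt m}\sum_k(\xi_k-\E\xi_k)$, compute $\Var\xi_1=1+(a_n^\top\ta_n)^2+(\kappa_{n,4}-3)\sum_i(a_{n,i}\ta_{n,i})^2$ from the fourth-moment identity (the paper cites equation (9.8.6) of \cite{bai2010spectral}, you do the Wick-type bookkeeping directly), establish the lower bound on $\sigma_n^2$ via the sum-of-squares identity (identical to the paper's display \eqref{vlb}), and then verify a moment condition for a triangular-array CLT separately under each branch of Condition~$\mathfrak{P}$, with the exponent $2r=2+\delta/2$ chosen exactly so that the assumed growth $\kappa_{n,4+\delta}^{4/(4+\delta)}/\kappa_{n,4}=o(m^{\delta/(4+\delta)})$ makes the relevant quantity vanish. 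The one genuine difference is the tool used for the $(2+\delta/2)$-th moment of $\xi_1$: the paper treats $\xi_1=s_1^\top(a_n\ta_n^\top)s_1$ as a quadratic form and invokes Lemma~B.26 of \cite{bai2010spectral}, then checks Lindeberg via H\"older and Chebyshev; you instead factor the rank-one quadratic form as $(a_n^\top t_1)(\ta_n^\top t_1)$, apply Cauchy--Schwarz, bound each linear form with Rosenthal's inequality, and check Lyapunov directly. Your route is more self-contained and the exponent bookkeeping lines up identically, but it exploits the rank-one structure of the bilinear form, so unlike Lemma~B.26 it would not immediately transfer to the symmetric-matrix quadratic form handled in Lemma~\ref{symiid} by the same B.26 citation. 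Both approaches cover the two alternatives of Condition~$\mathfrak{P}$ in the same way, and your computations (the $\Theta(\kappa_{n,4})$ order of $\sigma_n^2$ in the second branch, and the final Lyapunov ratio) are correct.
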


\begin{lemma}[Asymptotic Normality of Symmetric Quadratic Form for i.i.d.~Sketching]\label{symiid}
Suppose that the entries of $S_{m,n}$ are i.i.d.~copies of $S_{1,1}/m^{1/2}$, where $S_{1,1}$ has zero mean, unit variance,
kurtosis $\kappa_{n,4} = \E S_{1,1}^4>1+\delta^\prime$ for some $\delta'>0$.
Let $(U_n)_{n\ge 1}$ satisfy Condition $\mathfrak{P}'$.
Then for
$G_n = I_{p^2}+P_p+\Gamma_n$ 
with $\Gamma_n$ from \Cref{iid1}, we have 
$$\sqrt{m} (D_p^\top G_n D_p)^{-1/2} D_p^\top \vec\left(U_{n}^\top S_{m,n}^\top S_{m,n}U_{n}-I_p\right)\Rightarrow \mathcal{N}(0,I_{p(p+1)/2}).$$
\end{lemma}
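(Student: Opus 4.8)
The plan is to express the matrix quadratic form as a normalized sum of i.i.d.\ rank-one contributions and invoke a multivariate Lindeberg--Feller central limit theorem. Write $s_1,\dots,s_m\in\R^n$ for the rows of $\sqrt{m}\,S_{m,n}$, whose entries are i.i.d.\ copies of $T_{11}$, and set $w_a=U_n^\top s_a\in\R^p$. These are i.i.d.\ with $\E[w_aw_a^\top]=U_n^\top U_n=I_p$, and $U_n^\top S_{m,n}^\top S_{m,n}U_n-I_p=\tfrac1m\sum_{a=1}^m(w_aw_a^\top-I_p)$, so that $\sqrt{m}\,\vec(U_n^\top S_{m,n}^\top S_{m,n}U_n-I_p)=m^{-1/2}\sum_{a=1}^m\vec(w_aw_a^\top-I_p)$ is, for each fixed $n$, a normalized sum of i.i.d.\ mean-zero vectors.

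The first concrete step is to compute $\operatorname{Cov}(\vec(w_aw_a^\top))$ exactly. Using $w_{a,k}=\sum_i(U_n)_{ik}T_{a,i}$, the fourth-moment identity $\E[T_iT_jT_kT_l]=\delta_{ij}\delta_{kl}+\delta_{ik}\delta_{jl}+\delta_{il}\delta_{jk}+(\kappa_{n,4}-3)\,I\{i=j=k=l\}$, and $U_n^\top U_n=I_p$, one obtains $\E[w_{k_1}w_{k_2}w_{k_3}w_{k_4}]=\delta_{k_1k_2}\delta_{k_3k_4}+\delta_{k_1k_3}\delta_{k_2k_4}+\delta_{k_1k_4}\delta_{k_2k_3}+(\kappa_{n,4}-3)\sum_i(U_n)_{ik_1}(U_n)_{ik_2}(U_n)_{ik_3}(U_n)_{ik_4}$, hence $\operatorname{Cov}(\vec(w_aw_a^\top))=I_{p^2}+P_p+\Gamma_n=G_n$, with $\Gamma_n$ as in Theorem~\ref{iid1}. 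Since $w_aw_a^\top$ is symmetric, this covariance is supported on the image of $D_p^\top$, so I would pass to the half-vectorizations $\zeta_a:=D_p^\top\vec(w_aw_a^\top-I_p)$, which are i.i.d., mean zero, with covariance $D_p^\top G_nD_p$. A short computation of $\operatorname{Var}(s^\top Bs)=2\|B\|_F^2+(\kappa_{n,4}-3)\sum_iB_{ii}^2$ for $B=U_nMU_n^\top$, together with $\kappa_{n,4}>1+\delta'$, shows $\lambda_{\min}(D_p^\top G_nD_p)\ge c_0>0$ uniformly; moreover $\lambda_{\max}(D_p^\top G_nD_p)=O(1)$ in the bounded-moment case, while in the growing-moment case Condition~$\mathfrak{P}'$ from \eqref{p'iid} (the lower bound $cI\preceq D_p^\top\sum_i(u_{n,i}u_{n,i}^\top)\otimes(u_{n,i}u_{n,i}^\top)D_p$) forces $\lambda_{\min}(D_p^\top G_nD_p)\asymp\kappa_{n,4}$. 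Thus $(D_p^\top G_nD_p)^{-1/2}$ is well-defined with controlled norm.

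The core is then a triangular-array CLT for $m^{-1/2}\sum_{a=1}^m(D_p^\top G_nD_p)^{-1/2}\zeta_a$, whose covariance equals $I_{p(p+1)/2}$ exactly for every $(m,n)$. By the Cram\'er--Wold device it suffices, for each fixed $t\in\R^{p(p+1)/2}$, to prove asymptotic normality of $m^{-1/2}\sum_a\langle t,(D_p^\top G_nD_p)^{-1/2}\zeta_a\rangle$. Rewriting the summand as $s_a^\top C_n s_a-\operatorname{tr}(C_n)$, where $C_n=U_n\tilde T_nU_n^\top$ is a fixed symmetric matrix of rank at most $p$ whose half-vectorization is a fixed multiple of $(D_p^\top G_nD_p)^{-1/2}t$, so that $\|C_n\|_F=\|\tilde T_n\|_F=O\big((\lambda_{\min}(D_p^\top G_nD_p))^{-1/2}\big)$, this is a normalized sum of i.i.d.\ centered quadratic forms in the i.i.d.\ entries of $s_a$---the same structure underlying Lemma~\ref{lemgeneralqfiid}. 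The Lindeberg--Feller CLT then follows once I verify $m^{-\delta/4}\,\E\bigl|s_1^\top C_n s_1-\operatorname{tr}(C_n)\bigr|^{2+\delta/2}\to0$. Since $s_1^\top C_ns_1-\operatorname{tr}(C_n)=\sum_{k,l}(\tilde T_n)_{kl}(w_{1,k}w_{1,l}-\delta_{kl})$ is a fixed-degree polynomial in the $p$ variables $w_{1,k}$, and a Rosenthal bound for the independent sum $w_{1,k}=\sum_i(U_n)_{ik}T_{1,i}$ gives $\E|w_{1,k}|^{4+\delta}\lesssim 1+\kappa_{n,4+\delta}\sum_i(U_n)_{ik}^4\lesssim 1+\kappa_{n,4+\delta}$ (using $\sum_i(U_n)_{ik}^4\le1$), one gets $\E|s_1^\top C_ns_1-\operatorname{tr}(C_n)|^{2+\delta/2}\lesssim\|\tilde T_n\|_F^{2+\delta/2}(1+\kappa_{n,4+\delta})$. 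In case~(i) this is $O(1)$ and the claim is immediate; in case~(ii) one has $\|\tilde T_n\|_F^{2+\delta/2}\lesssim\kappa_{n,4}^{-1-\delta/4}$, so the Lyapunov quantity is $O\big(m^{-\delta/4}\kappa_{n,4+\delta}/\kappa_{n,4}^{1+\delta/4}\big)=O\big(m^{-\delta/4}(\kappa_{n,4+\delta}^{4/(4+\delta)}/\kappa_{n,4})^{(4+\delta)/4}\big)$, which tends to $0$ precisely because of the assumed rate $\kappa_{n,4+\delta}^{4/(4+\delta)}/\kappa_{n,4}=o(m^{\delta/(4+\delta)})$. Reassembling the Cram\'er--Wold projections gives the stated convergence.

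The main obstacle is the bookkeeping in the heavy-tailed case~(ii): one must track precisely how the normalization $(D_p^\top G_nD_p)^{-1/2}$---whose smallest eigenvalue blows up at rate $\kappa_{n,4}$ because of the $\Gamma_n$ term, via Condition~$\mathfrak{P}'$---cancels against the growth of the sketch-entry moments $\kappa_{n,4+\delta}$ and the bound $\sum_i(U_n)_{ik}^4\le1$, so that the hypothesis $\kappa_{n,4+\delta}^{4/(4+\delta)}/\kappa_{n,4}=o(m^{\delta/(4+\delta)})$ is exactly the threshold making the Lyapunov condition hold. A secondary technical point, needed so that $\|\tilde T_n\|_F$ and the limiting covariance are well-behaved, is the uniform two-sided control of $D_p^\top G_nD_p$, which is exactly what the elementary formula for $\operatorname{Var}(s^\top Bs)$ together with $\kappa_{n,4}>1+\delta'$ and Condition~$\mathfrak{P}'$ supplies.
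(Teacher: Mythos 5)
Your proposal is correct and takes essentially the same route as the paper's proof: Cram\'er--Wold reduction to one-dimensional i.i.d.\ centered quadratic forms $s_a^\top C_n s_a - \tr(C_n)$, exact computation of the variance as $\vec(\Phi)^\top G_n\vec(\Phi)$ via the fourth-moment identity for a symmetric quadratic form (equivalently $2\|B\|_F^2 + (\kappa_{n,4}-3)\sum_i B_{ii}^2$), two-sided control of $D_p^\top G_n D_p$ split into the bounded-moment and growing-kurtosis cases, and verification of a Lyapunov condition whose threshold is exactly $\kappa_{n,4+\delta}^{4/(4+\delta)}/\kappa_{n,4}=o(m^{\delta/(4+\delta)})$. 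The only cosmetic differences are that you invoke Rosenthal's inequality to bound $\E|w_{1,k}|^{4+\delta}$ and hence the $(2+\delta/2)$-moment of the quadratic form, where the paper cites Lemma B.26 of Bai and Silverstein for the same end, and you normalize by $(D_p^\top G_n D_p)^{-1/2}$ to make the covariance identically $I_{p(p+1)/2}$ (so convergence follows directly from the triangular-array CLT), whereas the paper passes through a subsequence argument before its Cram\'er--Wold reduction; both routes land on the same Lyapunov computation.
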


\subsection{Uniform Orthogonal Sketching}
\label{haar}
Now, we 
let $\gamma_{n} = m/n$
and introduce a sketching matrix 
$S_{m,n}=\gamma_{n}^{-1/2} S_0$,
where 
$S_0$ is an $m\times n$ matrix drawn from the uniform measure over $m\times n$ partial orthogonal matrices; 
so that $\E S_{m,n}=0$ and $\E S_{m,n}^\top S_{m,n}=I_n$.
Here, Condition $\mathfrak{P}$ and $\mathfrak{P'}$ do not impose any restriction.

{\bf PCA.}
We have the following result for inference in PCA when using sketching matrices uniformly distributed on the space of all $m\times n$ partial orthogonal matrices.
\begin{theorem}[Inference for Eigenvalues and Eigenvectors with Uniform Orthogonal Sketching]\label{PCAHaar}
If  $(X_n)_{n\ge  1}$ satisfies Condition \ref{spectralcondition} \textup{\texttt{B}}
and $m/n\to \gamma\in(0,1)$, then
\begin{equation*}
\sqrt{\frac{m}{2(1-\gamma_n)}}\hat{\Lambda}_{m,n,i}^{-1}\left(\hat{\Lambda}_{m,n,i}-\Lambda_{n,i}\right)\Rightarrow \N(0,1),
\end{equation*}
and for any vector $c\in\psp$ satisfying $\limsup_{n\to\infty}(c^\top v_{n,i})^2<1$,
\begin{equation*}
\sqrt{\frac{m}{(1-\gamma_n)}}\left(\sum_{k\neq i}\frac{\hat{\Lambda}_{m,n,i}\hat{\Lambda}_{m,n,k}}{(\hat{\Lambda}_{m,n,i}-\hat{\Lambda}_{m,n,k})^2} (c^\top\hat{v}_{m,n,k})^2\right)^{-1/2}c^\top(\hat{v}_{m,n,i}-v_{n,i})\Rightarrow \N(0,1).
\end{equation*}
\end{theorem}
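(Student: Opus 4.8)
The plan is to verify that uniform orthogonal sketching meets the abstract hypotheses of \Cref{infPCAgeneral} and then read off both conclusions from its special cases \eqref{infsingval'} and \eqref{infsingvec'}. Because the Haar sketching law does not depend on the data at all, Condition $\mathfrak{P}'$ from \Cref{haar} imposes nothing and Condition \ref{spectralcondition} \textup{\texttt{B}} is assumed by hypothesis; hence the only thing left to establish is Condition \ref{generalquadratic forms} \textup{\texttt{Sym-1'}} with $\alpha = 0$ and $\tau_{m,n} = 1-\gamma_n$, namely
\begin{equation*}
\sqrt{\frac{m}{1-\gamma_n}}\,\vec\!\left(U_n^\top S_{m,n}^\top S_{m,n}U_n - I_p\right)\Rightarrow \N\!\left(0,\ I_{p^2}+P_p\right).
\end{equation*}
Granting this, \eqref{infsingval'} with $(\alpha,\tau_{m,n}) = (0, 1-\gamma_n)$ gives the eigenvalue statement with the normalization $\sqrt{m/(2(1-\gamma_n))}$, and \eqref{infsingvec'} with $\tau_{m,n} = 1-\gamma_n$ gives the eigenvector statement with the normalization $\sqrt{m/(1-\gamma_n)}$; the assumption $\limsup_n (c^\top v_{n,i})^2 < 1$ forces $\liminf_n c^\top \Delta_{n,i}c > 0$ under Condition \ref{spectralcondition} \textup{\texttt{B}}, since then $c^\top \Delta_{n,i}c \asymp 1 - (c^\top v_{n,i})^2$.

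To prove the matrix quadratic-form CLT, write $S_{m,n} = \gamma_n^{-1/2}S_0$ with $S_0$ uniform on $m\times n$ partial orthogonal matrices, and realize $S_0$ as the top $m$ rows of a Haar-distributed $n\times n$ orthogonal matrix $O$. By rotational invariance, $O U_n$ is uniform on the $n\times p$ Stiefel manifold, so it can be written as $Z(Z^\top Z)^{-1/2}$ for an $n\times p$ matrix $Z = [Z_1;Z_2]$ with i.i.d.~standard Gaussian entries, where $Z_1$ denotes its top $m$ rows. Setting $W_1 = Z_1^\top Z_1 \sim \mathrm{Wishart}_p(I_p, m)$ and $W_2 = Z_2^\top Z_2 \sim \mathrm{Wishart}_p(I_p, n-m)$, which are independent, the $m\times p$ matrix $A := S_0 U_n$ satisfies $A^\top A \stackrel{d}{=} (W_1+W_2)^{-1/2}W_1(W_1+W_2)^{-1/2}$. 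In particular the law of $A^\top A$---a matrix-variate Beta / MANOVA ensemble---does not depend on $U_n$, which is exactly why Haar sketching needs no delocalization. Since $U_n^\top S_{m,n}^\top S_{m,n}U_n = \gamma_n^{-1}A^\top A$ and $\E A^\top A = \gamma_n I_p$, it remains to find the Gaussian fluctuations of $A^\top A$ around $\gamma_n I_p$.

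Set $P = W_1/m$ and $Q = W_2/(n-m)$, so $A^\top A = M^{-1/2}\gamma_n P\, M^{-1/2}$ with $M = \gamma_n P + (1-\gamma_n)Q$. The Wishart CLT gives that $\sqrt{m}(P - I_p)$ and $\sqrt{n-m}(Q - I_p)$ converge to independent symmetric Gaussian matrices whose vectorizations have covariance $I_{p^2}+P_p$. Since $\liminf\gamma_n > 0$ and $\limsup\gamma_n < 1$, the matrices $P$, $Q$, $M$ stay uniformly well-conditioned, and first-order expansion of the smooth map $(P,Q)\mapsto M^{-1/2}\gamma_n P\, M^{-1/2}$ at $(I_p, I_p)$ yields
\begin{equation*}
U_n^\top S_{m,n}^\top S_{m,n}U_n - I_p \;=\; \gamma_n^{-1}\!\left(A^\top A - \gamma_n I_p\right) \;=\; (1-\gamma_n)(P - Q) + O_P(1/m),
\end{equation*}
because the derivative of $M^{-1/2}\gamma_n P\, M^{-1/2}$ in the direction $(\delta P,\delta Q)$ at $(I_p,I_p)$ equals $\gamma_n\big[\delta P - (\gamma_n\delta P + (1-\gamma_n)\delta Q)\big] = \gamma_n(1-\gamma_n)(\delta P - \delta Q)$, while the remainder is $O_P(\|P - I_p\|^2 + \|Q - I_p\|^2) = O_P(1/m) = o_P(m^{-1/2})$. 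By independence of $P$ and $Q$, $\mathrm{Var}\!\left(\vec(P - Q)\right) \to \big(\tfrac1m + \tfrac1{n-m}\big)(I_{p^2}+P_p) = \tfrac{1}{m(1-\gamma_n)}(I_{p^2}+P_p)$, hence $\mathrm{Var}\!\left(\vec(U_n^\top S_{m,n}^\top S_{m,n}U_n - I_p)\right) = \tfrac{1-\gamma_n}{m}(I_{p^2}+P_p) + o(1/m)$; Slutsky's lemma then delivers the displayed CLT, with covariance exactly $I_{p^2}+P_p$ and no $Q_p$ term (the Wishart fluctuations carry none).

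The main obstacle is the linearization step: one must check that the symmetric bilinear form of the derivative of $(P,Q)\mapsto M^{-1/2}\gamma_n P\, M^{-1/2}$, combined with the $\gamma_n$-weighting and the two \emph{independent} Wishart fluctuations, reproduces exactly the covariance $I_{p^2}+P_p$, and that the Taylor remainder is genuinely $o_P(m^{-1/2})$ uniformly in the regime $m/n\to\gamma\in(0,1)$. Everything else---the reduction to Condition \ref{generalquadratic forms} \textup{\texttt{Sym-1'}}, the vacuity of $\mathfrak{P}'$, and the final appeal to \eqref{infsingval'}--\eqref{infsingvec'}---is routine bookkeeping; one could instead derive the same CLT from low-order moments of Haar orthogonal matrix entries, but the Gaussian-representation route above is cleaner.
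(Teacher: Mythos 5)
Your overall structure is the same as the paper's: verify Condition~\ref{generalquadratic forms} \textup{\texttt{Sym-1'}} with $\alpha=0$, $\tau_{m,n}=1-\gamma_n$ (i.e.\ prove the matrix CLT of Lemma~\ref{lemsymqfhaar}), then read off both conclusions from \eqref{infsingval'} and~\eqref{infsingvec'} of Theorem~\ref{infPCAgeneral}, using $\limsup_n(c^\top v_{n,i})^2<1$ together with Condition~\ref{spectralcondition}~\textup{\texttt{B}} to deduce $\liminf_n c^\top\Delta_{n,i}c>0$. This bookkeeping is correct.

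Where you genuinely diverge from the paper is in how you prove the matrix quadratic-form CLT. The paper represents $S_0=(ZZ^\top)^{-1/2}Z$ with $Z\in\R^{m\times n}$ Gaussian, exploits rotational invariance to replace $U_n$ by $(I_p,0)^\top$, and partitions $Z$ \emph{by columns} into $Z_1\in\R^{m\times p}$ and $Z_2\in\R^{m\times(n-p)}$, so that the relevant object is $Z_1^\top(Z_1Z_1^\top+Z_2Z_2^\top)^{-1}Z_1$. It then applies Sherman--Morrison--Woodbury, a random-matrix-theory bilinear-form CLT (Theorem 11.8 of \cite{yao2015large}) with the $m\times m$ random inner matrix $(Z_2Z_2^\top)^{-1}$, and computes moments of inverted Wishart matrices to identify the limiting covariance. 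You instead realize $OU_n$ as $Z(Z^\top Z)^{-1/2}$ for $Z\in\R^{n\times p}$ Gaussian, partition \emph{by rows}, and obtain the matrix-beta / MANOVA form $A^\top A\stackrel{d}{=}(W_1+W_2)^{-1/2}W_1(W_1+W_2)^{-1/2}$ with $W_1,W_2$ independent $p\times p$ Wisharts of $m$ and $n-m$ degrees of freedom. This collapses the whole problem to fixed dimension $p$ from the outset, so the classical Wishart CLT (with vectorized covariance $I_{p^2}+P_p$) plus a first-order Taylor expansion of $(P,Q)\mapsto M^{-1/2}PM^{-1/2}$ at $(I_p,I_p)$ suffices, avoiding the RMT machinery and the inverted-Wishart moment computations entirely. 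The variance bookkeeping---$\operatorname{Var}(\vec(P-Q))\sim(m^{-1}+(n-m)^{-1})(I_{p^2}+P_p)=\tfrac{1}{m(1-\gamma_n)}(I_{p^2}+P_p)$ and then multiplication by $(1-\gamma_n)^2$ from the linearization---is correct. This is, in my view, a cleaner and more elementary derivation of Lemma~\ref{lemsymqfhaar} than the one in the paper.

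One small slip: since $U_n^\top S_{m,n}^\top S_{m,n}U_n=\gamma_n^{-1}A^\top A=M^{-1/2}PM^{-1/2}$, the map you should linearize is $M^{-1/2}PM^{-1/2}$, not $M^{-1/2}\gamma_n P M^{-1/2}$; your intermediate formula for the derivative carries an extraneous factor of $\gamma_n$, although the final displayed expansion $(1-\gamma_n)(P-Q)+O_P(1/m)$ is correct (the stray $\gamma_n$ cancels against the $\gamma_n^{-1}$ you should also have divided by). This does not affect the validity of the argument, but should be cleaned up.
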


{\bf LS.}
\Cref{thhaar} provides a method to construct confidence regions for $\beta_n$ for uniform orthogonal sketching. 
This result is a special case of Theorem 3.8 in \cite{zhang2023framework}
for fixed $p$,
and is presented only for completeness.
Moreover, we also present 
a shorter proof following our unified framework.

\begin{theorem}[Inference in Least Squares with Haar Sketching, \cite{zhang2023framework}]\label{thhaar}
Let $p$ be fixed and $m,n-m\to \infty$. Suppose that $\gamma_n^{1/2}S_{m,n}$ is uniformly distributed over the space of all partial $m\times n$ orthogonal matrices. Additionally, we assume that $X_n$ satisfies Condition \ref{spectralcondition} \textup{\texttt{A}}. For the estimator $\hat{\Sigma}_{m,n}$ defined as
$\hat{\Sigma}_{m,n} = \|\tep_{m,n}\|^2\cdot  (\tX_{m,n}^\top \tX_{m,n})^{-1}$,
we have
$\left(m/(1-\gamma_n)\right)^{1/2}\hat{\Sigma}_{m,n}^{-1/2}(\hbs-\beta_n) \Rightarrow \N(0,I_p).$
Furthermore, for the estimator $\hSigp$ defined as
\begin{equation*}\label{hshaarp}
   \hSigp= \hbeta_{m,n}^{\pa\top}\tX_{m,n}^\top \tX_{m,n} 
\hbp \cdot (\tX_{m,n}^\top \tX_{m,n})^{-1}+ \hbp \hbpt,
\end{equation*}
we have
$\left(m/(1-\gamma_n)\right)^{1/2}(\hSigp)^{-1/2}(\hbp-\beta_n) \Rightarrow \N(0,I_p).$
\end{theorem}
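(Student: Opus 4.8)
The plan is to recognize Theorem~\ref{thhaar} as a direct instance of our general least-squares result, Theorem~\ref{lemgeneols}, so that the real work is to verify the quadratic-form normality requirements of Condition~\ref{generalquadratic forms} for Haar sketching. Concretely, I would show that Haar sketching satisfies Condition~\ref{generalquadratic forms}~\textup{\texttt{Bounded}} and the special case \textup{\texttt{1-dim'}} with the parameters $\tau_{m,n}=1-\gamma_n$ and $\alpha=0$ (so that $M_n=\Mp=I_p$); the symmetric matrix condition \textup{\texttt{Sym}} is not needed here since we are only treating least squares, not PCA.

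The heart of the argument is the one-dimensional quadratic-form CLT: for arbitrary sequences of unit vectors $(a_n,\ta_n)_{n\ge1}$ with $a_n,\ta_n\in\nsp$ (no delocalization is imposed in the Haar case),
$$\sqrt{\frac{m}{1-\gamma_n}}\,\frac{1}{\sqrt{1+(a_n^\top\ta_n)^2}}\bigl(a_n^\top S_{m,n}^\top S_{m,n}\ta_n - a_n^\top\ta_n\bigr)\Rightarrow\N(0,1),$$
together with the boundedness statement $\sqrt{m/(1-\gamma_n)}\bigl(a_n^\top S_{m,n}^\top S_{m,n}\ta_n - a_n^\top\ta_n\bigr)=O_P(1)$. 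The key structural fact is rotational invariance: writing $S_{m,n}=\gamma_n^{-1/2}[\,I_m\ \ 0\,]\,Q$ for a Haar-distributed orthogonal $Q$ of size $n$, we get $S_{m,n}^\top S_{m,n}=\gamma_n^{-1}Q^\top\Pi_m Q$ with $\Pi_m$ the coordinate projection onto the first $m$ coordinates, hence $a_n^\top S_{m,n}^\top S_{m,n}\ta_n=\gamma_n^{-1}\sum_{i=1}^m(Qa_n)_i(Q\ta_n)_i$, whose law depends on $(a_n,\ta_n)$ only through $\rho_n:=a_n^\top\ta_n$. I would then couple $(Qa_n,Q\ta_n)$ with normalized Gaussian vectors $g/\|g\|$ and $h/\|h\|$, where $g,h\in\R^n$ have i.i.d.\ coordinate pairs with mean zero, unit variances, and correlation $\rho_n$; a Lindeberg CLT for the partial sums $\sum_{i=1}^m g_ih_i$ of i.i.d.\ mean-$\rho_n$ variables, together with the laws of large numbers for $\|g\|^2/n$, $\|h\|^2/n$, $\langle g,h\rangle/n$, transfers the Gaussian limit, and a direct second-moment computation on the Stiefel manifold (or a Beta-distribution computation in the case $a_n=\ta_n$) identifies the asymptotic variance as $(1-\gamma_n)(1+\rho_n^2)/m$ to leading order. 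Alternatively, these facts can be quoted from the literature on quadratic forms of random projections.

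With the CLT in hand, I would check the remaining hypotheses of Theorem~\ref{lemgeneols}: Condition~\ref{spectralcondition}~\textup{\texttt{A}} holds by assumption, and since Condition~$\mathfrak{P}$ imposes no restriction for Haar sketching, the requirement that $(U_nb_n,U_nb_n)_{n\ge1}$ and $(U_nb_n,\bar\ep_n)_{n\ge1}$ satisfy Condition~$\mathfrak{P}$ is automatic for every $(b_n)$ with $b_n\in\psp$. Thus Condition~\ref{generalquadratic forms}~\textup{\texttt{Bounded}} and \textup{\texttt{1-dim'}} hold with $\tau_{m,n}=1-\gamma_n$ and $\alpha=0$. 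Equation~\eqref{1dim'lsinf1} then gives $(m/(1-\gamma_n))^{1/2}\|\tep_{m,n}\|^{-1}(\tX_{m,n}^\top\tX_{m,n})^{1/2}(\hbs-\beta_n)\Rightarrow\N(0,I_p)$, which is exactly the claim for $\hat{\Sigma}_{m,n}=\|\tep_{m,n}\|^2(\tX_{m,n}^\top\tX_{m,n})^{-1}$; and equation~\eqref{1dim'lsinf2} with $\alpha=0$ gives the partial-sketching claim once one notes that the scalar $\|\tX_{m,n}\hbp\|^2=\hbeta_{m,n}^{\pa\top}\tX_{m,n}^\top\tX_{m,n}\hbp$, so that the centering matrix there coincides with $\hSigp$.

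The main obstacle is the quadratic-form CLT itself, and within it the exact leading-order variance constant, which is where the $1-\gamma_n$ and $1+\rho_n^2$ factors come from. A further subtlety is that $\langle Qa_n,Q\ta_n\rangle$ equals $\rho_n$ \emph{exactly}, whereas the Gaussian coupling only makes $\langle g/\|g\|,h/\|h\|\rangle$ converge to $\rho_n$; reconciling the two requires either an extra conditioning step or a moment-matching estimate on the Stiefel manifold. Since Theorem~\ref{thhaar} is a special case of Theorem~3.8 of~\cite{zhang2023framework}, one could instead bypass the CLT derivation entirely by quoting that result, but presenting the argument above is worthwhile because it shows the Haar case fitting into the same unified framework as the other sketching distributions.
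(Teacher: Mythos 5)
Your top-level plan is the paper's own: reduce to verifying Condition~\ref{generalquadratic forms}~\texttt{1-dim'} with $\tau_{m,n}=1-\gamma_n$ and $\alpha=0$ (Condition~$\mathfrak{P}$ being vacuous for Haar), then invoke Theorem~\ref{lemgeneols}; the paper's proof of Theorem~\ref{thhaar} is exactly this invocation via Lemma~\ref{lemgeneralqfhaar}, and \texttt{Bounded} follows from \texttt{1-dim'} since $\mathfrak{P}$ imposes nothing. Your sketch of the underlying quadratic-form CLT also rests on the same rotational-invariance/Gaussian-transference idea, but with one technical deviation that you flag yourself: you would normalize a \emph{correlated} Gaussian pair $(g,h)$ with correlation $\rho_n$, whose normalized inner product $\langle g/\|g\|,h/\|h\|\rangle$ is only asymptotically $\rho_n$, whereas the Haar pair $(Qa_n,Q\ta_n)$ has $\langle Qa_n,Q\ta_n\rangle=\rho_n$ exactly. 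The paper sidesteps this by first writing $\ta_n=\rho_n a_n+\sqrt{1-\rho_n^2}\,a_n'$ with $a_n'\perp a_n$ and representing the \emph{orthonormal} pair $(Qa_n,Qa_n')$ exactly via Gram--Schmidt applied to two \emph{independent} Gaussians $Z,\tilde Z$; the delta method applied to six sample averages of $Z,\tilde Z$ then pins down the variance $(1-\gamma_n)(1+\rho_n^2)/m$ without any conditioning step. Substituting that orthogonal decomposition into your coupling closes the gap you noted while leaving the rest of your argument intact.
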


{\bf CLT.}
The results of the inference mentioned earlier are based on 
the following central limit theorems 
for quadratic forms arising from Haar sketching matrices.

\begin{lemma}[Limiting Distributions of One-dimensional Quadratic Forms for Haar Sketching]\label{lemgeneralqfhaar}
For any sequences $(a_{n})_{n\ge 1}$, $(\ta_{n})_{n\ge 1}$ such that $a_n,\ta_n \in\nsp$ for all $n\ge 1$, as $m,n-m\to \infty$ %and with $\gamma_n\to \gamma\in(0,1)$, 
we have 
$$\sqrt{\frac{m}{1-\gamma_n}} \sqrt{\frac{1}{1+(a_n^\top \ta_n)^2}}\left(a_n^\top S_{m,n}^\top S_{m,n}\ta_n - a_n^\top \ta_n\right) \Rightarrow \N(0,1).$$
\end{lemma}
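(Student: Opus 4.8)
The plan is to exploit rotational invariance of the Haar measure to reduce the quadratic form to a sum of coordinate products of a uniformly random pair of unit vectors, and then prove a CLT for that sum via a Gaussian representation.

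\emph{Step 1: Reduction to a random pair of unit vectors.} Since $\gamma_n^{1/2}S_{m,n}$ is a uniform $m\times n$ partial orthogonal matrix, we may realize $\gamma_n^{1/2}S_{m,n}$ as the first $m$ rows of a Haar-distributed $O\in O(n)$; then $S_{m,n}^\top S_{m,n}=\gamma_n^{-1}P$ with $P$ the projection onto a uniformly random $m$-dimensional subspace of $\R^n$. Setting $u=Oa_n$, $v=O\ta_n$, and $\rho_n:=a_n^\top\ta_n$, the pair $(u,v)$ is uniform on $\{(u,v)\in\nsp\times\nsp: u^\top v=\rho_n\}$, and
$$a_n^\top S_{m,n}^\top S_{m,n}\ta_n \;\overset{d}{=}\; \gamma_n^{-1}\sum_{i=1}^m u_i v_i.$$
Since $\E P=\gamma_n I_n$ (equivalently, by coordinate exchangeability, $\E[u_iv_i]=\rho_n/n$), we have $\E\sum_{i=1}^m u_iv_i=\gamma_n\rho_n$, so it suffices to prove $\sqrt{m/((1-\gamma_n)(1+\rho_n^2))}\,\gamma_n^{-1}\big(\sum_{i=1}^m u_iv_i-\gamma_n\rho_n\big)\Rightarrow\N(0,1)$.

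\emph{Step 2: Gaussian representation and decomposition.} Generate $(u,v)$ from independent standard Gaussians $x,z\in\R^n$ by $u=x/\|x\|$ and $v=\rho_n u+\sqrt{1-\rho_n^2}\,y/\|y\|$, $y=(I_n-uu^\top)z$. Then, with $s_n:=\sum_{i=1}^m u_i^2$ and $u_{[1:m]}$ denoting $u$ with coordinates above $m$ zeroed,
$$\sum_{i=1}^m u_iv_i-\gamma_n\rho_n \;=\; \underbrace{\rho_n(s_n-\gamma_n)}_{=:T_n^{(1)}}\;+\;\underbrace{\sqrt{1-\rho_n^2}\,\|y\|^{-1}\textstyle\sum_{i=1}^m u_iy_i}_{=:T_n^{(2)}}.$$
Here $s_n=A/(A+B)$ is a ratio of independent $\chi^2_m$ and $\chi^2_{n-m}$ variables; $\sum_{i=1}^m u_iy_i=(u_{[1:m]}-s_n u)^\top z$ is, conditionally on $u$, exactly $\N(0,s_n(1-s_n))$; and $\|y\|^2=\|z\|^2-(u^\top z)^2=n(1+o_P(1))$.

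\emph{Step 3: Two CLTs and their combination.} Since $m,n-m\to\infty$, the delta method applied to the $\chi^2$-ratio gives $\sqrt n(s_n-\gamma_n)\Rightarrow\N(0,2\gamma_n(1-\gamma_n))$, whence the normalized $T_n^{(1)}$ has asymptotic variance $2\rho_n^2/(1+\rho_n^2)$. Conditionally on $u$, peeling off $\sqrt n/\|y\|\to_P1$ by Slutsky and using $s_n\to_P\gamma_n$, the normalized $T_n^{(2)}$ is asymptotically $\N(0,(1-\rho_n^2)/(1+\rho_n^2))$. As $T_n^{(1)}$ is $u$-measurable while $T_n^{(2)}$ has conditional mean zero given $u$, the two contributions are asymptotically uncorrelated; writing $\E[e^{i(sU_n+tV_n)}]=\E\big[e^{isU_n}\,\E[e^{itV_n}\mid u]\big]$ and noting that the inner conditional characteristic function converges in probability to a deterministic Gaussian one, bounded convergence yields joint asymptotic normality with independent limits. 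Their variances sum to $1$, giving the claim. Because $\rho_n,\gamma_n$ are merely sequences, I would pass to subsequences along which they converge (the normalizing constant is chosen precisely so that this dependence cancels in the limit).

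\emph{Main obstacle.} Once the representation is set up the bookkeeping is routine; the genuinely delicate point is handling the coupling between $T_n^{(2)}$ and its random normalization $\|y\|$, which is \emph{not} $u$-measurable, while simultaneously establishing the \emph{joint} asymptotic normality of $T_n^{(1)}$ and $T_n^{(2)}$. This requires the conditioning argument above together with verifying that every lower-order term — the $(u^\top z)^2$ correction in $\|y\|^2$, the $s_n-\gamma_n$ fluctuation inside $T_n^{(2)}$, the $O_P(1)$ error in $\|y\|$ — remains negligible after multiplication by the $\sqrt m$-order normalizing constant.
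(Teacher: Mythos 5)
Your proposal is correct and, at its core, uses the same starting device as the paper: realize the uniform random subspace by orthonormalizing two independent Gaussians and decompose $\ta_n$ into its component along $a_n$ and along $a_n^\perp$. It differs in how the central limit theorem is then packaged. The paper defines six scalar functionals $(\mathcal X_{n,1},\mathcal X_{n,2},\mathcal C_{n,1},\mathcal C_{n,2},\tilde{\mathcal X}_{n,1},\tilde{\mathcal X}_{n,2})$ of the Gaussian coordinates, invokes a joint CLT for these, and then applies a multivariate delta method to a single function $g$ whose gradient encodes the weights. You instead split the centered quadratic form into $T_n^{(1)}=\rho_n(s_n-\gamma_n)$, handled by a $\chi^2$-ratio CLT, and $T_n^{(2)}$, which is a linear functional of an independent Gaussian and hence exactly (conditionally) Gaussian given $u$; you glue these together by conditioning inside the characteristic function. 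Both organizations are valid; yours makes the asymptotic independence of the two contributions more transparent and avoids writing out and differentiating $g$, at the (slight) cost of needing the subsequence step to handle non-convergent $(\rho_n,\gamma_n)$, which the paper effectively also needs but leaves implicit.

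One remark on the obstacle you single out at the end: the ``coupling between $T_n^{(2)}$ and the random normalization $\|y\|$'' is in fact absent. Writing $\hat y=y/\|y\|$, one has $T_n^{(2)}=\sqrt{1-\rho_n^2}\sum_{i\le m}u_i\hat y_i$, which depends on $z$ only through the direction $\hat y$. Conditionally on $u$, the direction $\hat y$ is exactly uniform on the unit sphere of $u^\perp$ and is independent of $\|y\|$; moreover $\sum_{i\le m}u_i\hat y_i=\sqrt{s_n(1-s_n)}\,\langle\hat w,\hat y\rangle$ with $\hat w$ a unit vector in $u^\perp$, so the conditional distribution is the first coordinate of a uniform point on a sphere, and no Slutsky step for $\|y\|$ is needed at all. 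This removes the one point you flagged as delicate and makes the joint characteristic-function argument entirely routine: $\E[e^{itV_n}\mid u]$ is an explicit spherical-marginal characteristic function whose deterministic Gaussian limit in probability (along a subsequence with $\rho_n\to\rho$, $\gamma_n\to\gamma$) follows from $s_n(1-s_n)/(\gamma_n(1-\gamma_n))\to_P1$, which you correctly reduce to $m,\,n-m\to\infty$.
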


\begin{lemma}[Asymptotic Normality of Symmetric Quadratic Form for Haar Sketching]\label{lemsymqfhaar}
With the same notation as above
and $G=I_{p^2}+P_p$, $\gamma_n\to \gamma\in(0,1)$,
we have
\begin{equation*}\label{anh}
\sqrt{m}(1-\gamma_n)^{-1/2} \vec(U_{n}^\top S_{m,n}^\top S_{m,n}U_{n}-I_p)\Rightarrow \N(0,G).
\end{equation*}
\end{lemma}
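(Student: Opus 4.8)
The plan is to reduce the symmetric quadratic form to a smooth function of two independent Wishart matrices through a Gaussian representation, and then combine the multivariate central limit theorem with a matrix Taylor expansion (delta method). \emph{Step 1 (Gaussian representation).} Since $\gamma_n^{1/2}S_{m,n}$ has orthonormal rows, it is distributed as the first $m$ rows of a Haar-distributed orthogonal $n\times n$ matrix $O$; hence $S_{m,n}U_n$ is distributed as $\gamma_n^{-1/2}$ times the first $m$ rows of $OU_n$, and because $U_n$ has orthonormal columns, $OU_n$ is uniform on the Stiefel manifold $W_{p,n}$. Realizing the uniform Stiefel matrix as $Z(Z^\top Z)^{-1/2}$ for $Z\in\R^{n\times p}$ with i.i.d.\ standard Gaussian entries, and writing $Z=[Z_1;Z_2]$ with $Z_1\in\R^{m\times p}$ and $Z_2\in\R^{(n-m)\times p}$, we get
\begin{equation*}
U_n^\top S_{m,n}^\top S_{m,n}U_n\;\overset{d}{=}\;\gamma_n^{-1}\,(A+B)^{-1/2}\,A\,(A+B)^{-1/2},
\end{equation*}
where $A:=Z_1^\top Z_1$ and $B:=Z_2^\top Z_2$ are independent Wishart matrices with $m$ and $n-m$ degrees of freedom and identity scale, and $A+B=Z^\top Z$. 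At $A=mI_p$, $B=(n-m)I_p$ the right-hand side equals $\gamma_n^{-1}n^{-1}mI_p=I_p$, confirming the centering.

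\emph{Step 2 (linearization).} Write $H_A:=(A-mI_p)/\sqrt m$ and $H_B:=(B-(n-m)I_p)/\sqrt{n-m}$, so that $A+B=n(I_p+E)$ with $E=n^{-1}(\sqrt m\,H_A+\sqrt{n-m}\,H_B)=O_P(n^{-1/2})$. On the event $\{\|E\|\le 1/2\}$, which has probability tending to one, we expand $(I_p+E)^{-1/2}=I_p-\tfrac12 E+R$ with $\|R\|=O_P(n^{-1})$. Multiplying out, keeping first-order terms, and using $\gamma_n^{-1}-1=(n-m)/m$, a short computation yields
\begin{equation*}
\sqrt{\tfrac{m}{1-\gamma_n}}\,\vec\!\bigl(U_n^\top S_{m,n}^\top S_{m,n}U_n-I_p\bigr)\;=\;(1-\gamma_n)^{1/2}\,\vec(H_A)\;-\;\gamma_n^{1/2}\,\vec(H_B)\;+\;o_P(1),
\end{equation*}
where the $o_P(1)$ collects the quadratic and higher-order remainders, which are $O_P(\sqrt m/n)$.

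\emph{Step 3 (conclusion).} Writing $A=\sum_{i=1}^m z_iz_i^\top$ with $z_i$ i.i.d.\ $\N(0,I_p)$, one has $\E(z_iz_i^\top)=I_p$ and $\mathrm{Cov}(\vec(z_iz_i^\top))=I_{p^2}+P_p$; since $p$ is fixed, the multivariate CLT gives $\vec(H_A)\Rightarrow\N(0,I_{p^2}+P_p)$, and independently $\vec(H_B)\Rightarrow\N(0,I_{p^2}+P_p)$, as $m,n-m\to\infty$. Because $\gamma_n\to\gamma\in(0,1)$, Slutsky's theorem shows that the left-hand side of the last display converges to a centered Gaussian with covariance $(1-\gamma)(I_{p^2}+P_p)+\gamma(I_{p^2}+P_p)=I_{p^2}+P_p=G$; this limit is supported on the symmetric subspace, consistent with $G$ being singular of rank $p(p+1)/2$, and establishes the lemma.

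\emph{Main obstacle.} The delicate part is Step 2: making rigorous that the remainder from the inverse square-root expansion and from the product expansion is truly $O_P(\sqrt m/n)=o_P(1)$. This uses tightness of $H_A$ and $H_B$ (immediate from their distributional convergence) together with a quantitative expansion of $X\mapsto X^{-1/2}$ near $I_p$ valid on $\{\|E\|\le 1/2\}$, but care is needed because the underlying map depends on $n$ through the scalings, so one cannot directly invoke a fixed-function delta method. We also note that a Cram\'er--Wold reduction to the scalar Lemma \ref{lemgeneralqfhaar} is not a shortcut: an arbitrary linear combination of the entries of $U_n^\top S_{m,n}^\top S_{m,n}U_n$ is a sum of \emph{correlated} quadratic forms along an orthonormal family of vectors, so proving their joint normality is essentially equivalent to the present lemma.
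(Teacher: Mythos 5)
Your argument is correct, and it takes a genuinely different route from the paper. Both proofs begin with a Gaussian representation of the Haar partial-orthogonal matrix, but the decompositions are different. The paper represents $S_0 = (ZZ^\top)^{-1/2}Z$ with $Z\in\R^{m\times n}$ and splits $Z = (Z_1, Z_2)$ \emph{column-wise} into blocks of widths $p$ and $n-p$; this forces one to control $Z_1^\top (Z_2 Z_2^\top)^{-1} Z_1$, where $(Z_2 Z_2^\top)^{-1}$ is an $m\times m$ inverse Wishart matrix whose dimension grows, so the paper invokes a random-matrix CLT for sesquilinear forms (Theorem 11.8 of Yao, Bai and Zheng) and spends effort verifying in-probability limits of $\tfrac1m\sum_i[(Z_2Z_2^\top/(n-p))^{-1}_{ii}]^2$ and of $\tfrac1m\tr[(Z_2Z_2^\top/(n-p))^{-2}]$ via inverse-Wishart marginals and moments of inverse-Wishart traces. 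You instead embed $S_{m,n}U_n$ as the top $m\times p$ block of a Stiefel matrix $Z(Z^\top Z)^{-1/2}$ with $Z\in\R^{n\times p}$ split \emph{row-wise}, giving the exact identity $U_n^\top S_{m,n}^\top S_{m,n}U_n=_d\gamma_n^{-1}(A+B)^{-1/2}A(A+B)^{-1/2}$ with $A,B$ independent \emph{fixed-dimension} $p\times p$ Wisharts. This eliminates all random-matrix machinery: a uniform second-order expansion of $(I_p+E)^{-1/2}$ on $\{\|E\|\le 1/2\}$ yields the exact linear representation $\sqrt{m/(1-\gamma_n)}\vec(U_n^\top S_{m,n}^\top S_{m,n}U_n-I_p)=\sqrt{1-\gamma_n}\,\vec(H_A)-\sqrt{\gamma_n}\,\vec(H_B)+O_P(1/\sqrt{m(1-\gamma_n)})$, and the classical CLT for i.i.d.\ sums of $\vec(z_iz_i^\top)$ with covariance $I_{p^2}+P_p$ finishes the proof (the independence of $A$ and $B$ gives the joint limit, and the convex weights $1-\gamma$ and $\gamma$ both multiplying the same $I_{p^2}+P_p$ explain cleanly why the asymptotic covariance is $\gamma$-free). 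Your approach is more elementary and more transparent about the origin of the $I_{p^2}+P_p$ term; the paper's identity via Sherman--Morrison--Woodbury keeps all the $\gamma_n$-dependence inside a single matrix-valued statistic but at the cost of a heavier RMT input. One small elaboration you might add is how $\gamma_n\to\gamma$ is used: strictly it only enters through $m(1-\gamma_n)\to\infty$ for the error term and through convergence of the deterministic coefficients $\sqrt{1-\gamma_n},\sqrt{\gamma_n}$ so that Slutsky applies to the linear combination; the lemma's hypothesis covers this.
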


\section{Computational Considerations}
\label{comp}

We now examine the computational complexity of performing statistical inference using various sketching methods for both least squares and PCA. 
We are considering computation over a single machine, 
in which case communication cost is not a bottleneck.
We perform
floating-point operation (flop) 
count calculations based on standard matrix computation algorithms. 
For least squares, the original dataset $(X_n, y_n)$ is of size $n \times (p+1)$, while the sketched dataset $(\tX_{m,n}, \ty_{m,n})$ has 
size $m \times (p+1)$; 
for PCA, we only have 
the $n \times p$ matrix
$X_n$ and 
the $m \times p$ matrix
$\tX_{m,n}$.

The computational cost 
consists of two primary components. 
The first part involves obtaining the sketched data $(\tX_{m,n}, \ty_{m,n})$ for 
least squares, or only $\tX_{m,n}$ for PCA. 
The second part involves 
solving the normal equations for the least squares problem, or the computation of eigenvalues and eigenvectors for $\tX_{m,n}^\top \tX_{m,n}$. 
The cost of the second part is $O(mp^2)$, using standard matrix computation methods \citep{golub1989matriz}
irrespective of the chosen sketching method. Therefore, we focus on the cost incurred in the initial step of obtaining the sketched data.

Computing $\tX_{m,n}$ using i.i.d.~sketching in general consists of multiplying 
two dense matrices of size $m\times n$ and $n\times p$,  
and in the worst case takes $O(mnp)$ flops. 
For uniform orthogonal sketching, a significant part of the computational cost arises from generating the sketching matrix. 
This is achieved by applying SVD to an $m \times n$ matrix with Gaussian entries and extracting the matrix of its right singular vectors. 
Since $m < n$, the cost is approximately $O(m^2 n)$, which is an upper bound on the cost of obtaining $\tX_{m,n}$, resulting in an overall complexity of $O(m^2 n)$. 
Hadamard sketching, implemented using the Fast Fourier Transform, incurs a cost of approximately $O(np \log_2 m)$. On the other hand, the cost of the Sparse Sign Embeddings for obtaining $\tX_{m,n}$ is $O(\zeta_m\mathrm{nnz}(X_n))$, where $\mathrm{nnz}(X_n)$ denotes the number of nonzero elements in $X_n$, see e.g., \cite{clarkson2017low}.
These theoretical bounds on the computational cost are summarized in Table \ref{tabcost}.

\begin{table}[ht]
\centering 
\caption{Computational cost of performing sketched LS and PCA using various types of sketching matrices. Here, 
the sample size is $n$, 
the dimension of the data is $p\le n$,
and the sketched sample size is $p\le m\le n$; 
while $\mathrm{nnz}(X_n)$ denotes the number of nonzeros of the matrix $X_n$.}
\begin{tabular}{  |c|c|c|c|c| } 
\hline
 & {i.i.d.~}& {Haar} & {Hadamard} & {Sparse Sign Embeddings}\\
\hline
{LS} & $O(mnp)$ &  $O(m^2 n)$ & $O( n p \log_2 m + m p^2)$ & $O(\zeta_m\mathrm{nnz}(X_n)+mp^2)$ \\
\hline
{PCA} & $O(mnp)$ & $O(m^2 n)$
& $O( n p \log_2 m + m p^2)$ & $O(\zeta_m\mathrm{nnz}(X_n)+mp^2)$ \\ 

\hline
\end{tabular}\label{tabcost}
\end{table}

The conclusions of this analysis in general align with the existing common beliefs that Hadamard sketching and Sparse Sign Embeddings are two of the best sketching methods among the ones considered.
Their computational complexity is lower than that of the other approaches considered such as i.i.d.~and Haar sketching. 
Moreover, we show in experiments (see \Cref{compare_optsubsamp_wallclock_Case2})
that Hadamard sketching can have a better time accuracy trade-off than subsampling methods.
However we emphasize that to achieve their full potential, each of these methods requires optimized numerical implementations.

\section{Numerical Experiments}\label{secsimulation}

We conduct simulations to evaluate and compare various methods on both simulated and empirical data, in terms of both statistical performance and running time (\Cref{rt}).
We compare various forms of sketching in \Cref{csk}.
We consider the following three cases for the simulated data:
\begin{itemize}
    \item Case 1: For both LS and PCA, 
    we generate $X_n=U_n L_nV_n^\top$, where $U_n$ is sampled uniformly from the set of $n\times p$ partial orthogonal matrices, 
    $V_n^\top$ is sampled uniformly from the set of $p\times p$ orthogonal matrices, 
    and 
    $L_n=\diag(1,1/2,\ldots,1/p)$. 
    Additionally, for LS we generate $y_n\in \R^n$ with i.i.d.~Uniform $(0,1)$ entries.
    
    \item Case 2:  
    As \cite{lopes2018error}
    in a slightly different setting, 
    we let $A_n$ be an $n\times p$ matrix with i.i.d.~rows distributed according to the multivariate t-distribution $t_2(0,C)$ where $C =(c_{ij}) = (2\cdot 0.5^{|i-j|})$. 
    Let $U_n$ be the left singular matrix of $A_n$. We construct $L_n$ with entries equally spaced in the interval $[0.1,1]$ and generate $V_n^\top$ as the right singular matrix of a matrix with i.i.d.~$\N(0,1)$ entries.
    Finally, we set $X_n = U_nL_n V_n^\top$ for both LS and PCA. 
    To generate $y_n$ in LS, we set $b= (1_{0.2p},t 1_{0.6p},1_{0.2p})^\top$ with $t=0.1$, ensuring that $0.2p$ is an integer.
    Moreover, we generate $\mathcal{E}_n\in \R^n$ with i.i.d.~normal entries having a standard deviation of 0.01, and then let $y_n = X_n b + \mathcal{E}_n$.

    \item Case 3: 
    For both LS and PCA, we let $X_n$ 
    be an $n\times p$ Gaussian random matrix
    such that for all $j\in[p]$,
    $(X_n)_{i,j}\sim \mathcal{N}(0,1)$ when $i\le  n/2$, and $(X_n)_{i,j}\sim \mathcal{N}(5,1)$ otherwise. After that, we standardize each column to have zero mean and unit variance.  
    For linear regression, we also generate $y_n\in \R^n$ with i.i.d.~Uniform$(0,1)$ entries.
\end{itemize}

In Case 1, 
the left singular matrix 
meets all delocalization conditions required by our results, 
due to the isotropy of the normal distribution.  
The design in Case 2 may not satisfy the delocalization conditions
required for Hadamard sketching and CountSketch. 
This case illustrates that those conditions are in fact needed for asymptotic normality. 
The delocalization is governed 
by the distribution from which the rows are drawn. 
If we sample each row from a heavy-tailed distribution, 
we
expect that 
more outliers---observations with a large norm---may be present in the matrix. 
Consequently, the singular vectors are more likely to be localized. 
Simulations with 
several degrees of freedom for the $t$-distributions and with several covariance matrices 
show that our formulas are more accurate
for larger degrees of freedom.
For a given degree of freedom, 
our formulas have roughly the same level of accuracy for various covariance matrices.

In Case 3,
before normalization,
$X_n$ has one spiked singular value of order $\sqrt{n}$ and $p-1$
singular values of order $\sqrt{n}$. 
The relative eigengap condition holds for the first singular value. 

\subsection{Inference from Sketch-and-Solve Estimators}
We perform experiments to compare several sketching methods discussed in the paper. 

\subsubsection{Simulated Data}
In this section, we compare the performance of various sketching methods for statistical inference in least squares and PCA. 
We generate simulated data 
with $n=2,048$ and $p=15$ for each of the cases.
To evaluate the methods, we generate 500 independent sketches for each value of $m$ ranging from 200 to 1600 with gaps of 200. 
We use three sketching methods: i.i.d.~sketching, Hadamard sketching, and CountSketch.
We note that SSE is very similar to CountSketch, both in theory and in our experiments, therefore we focus on CountSketch in the experiments.

For Case 1, \Cref{n2048p15_Case1_coverage_merge} presents the coverage of 95\% intervals for the first coordinate of $\beta_n$, 
the largest eigenvalue and the 
first coordinate of the
corresponding eigenvector of $X_n^\top X_n$, while \Cref{coverage_merge} shows the results for the second largest eigenvalue and the corresponding eigenvector. 
These figures also show a 95\% Clopper-Pearson interval for the coverage. 
All methods under consideration exhibit coverage probabilities close to the nominal level of $0.95$.
For other cases, we show the corresponding simulation results in \Cref{coverage_merge}; we sometimes observe some undercoverage for small $m$.

\begin{figure} 
    \centering
    \includegraphics[width=1.0\textwidth]{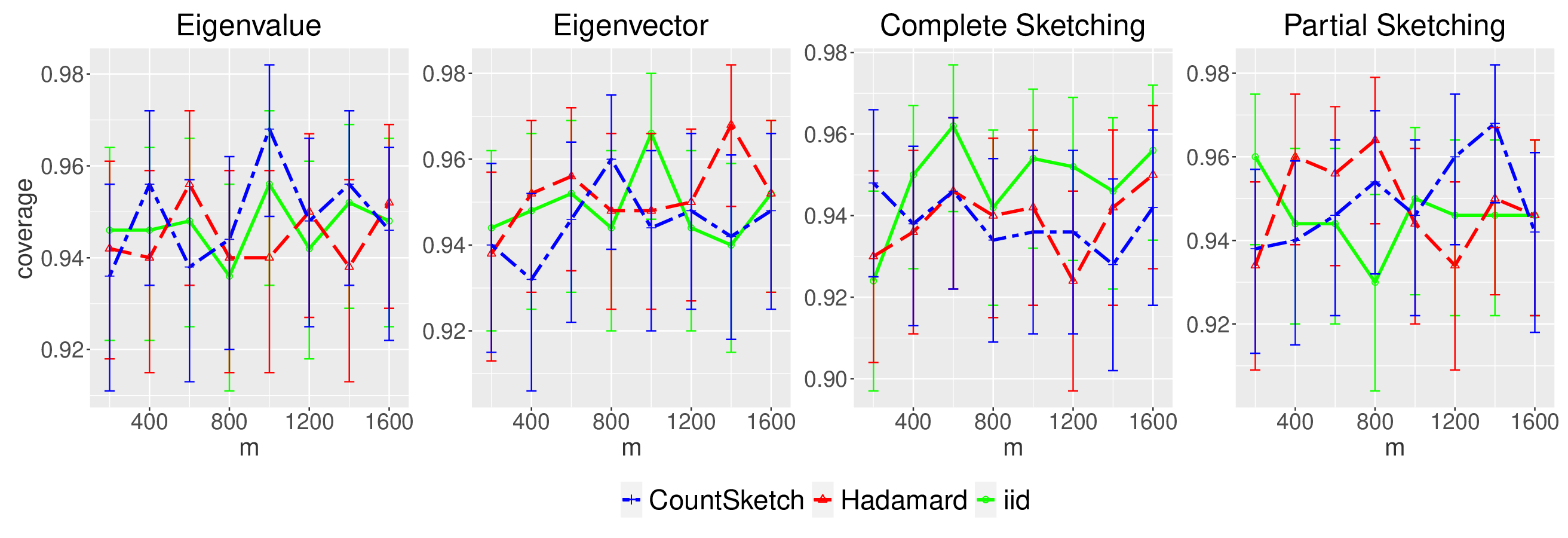}
    \caption{Coverage of nominal 95\% intervals, 
    along with 95\% Clopper-Pearson intervals for the coverage, for $\lambda_k$, $c^\top v_k$, and the first coordinates of two least squares solutions in Case 1, with $p=15, n=2048$, and 500 Monte Carlo trials for each setting.}
    \label{n2048p15_Case1_coverage_merge}
\end{figure}

\begin{figure} 
    \centering
    \includegraphics[width=1.0\textwidth]{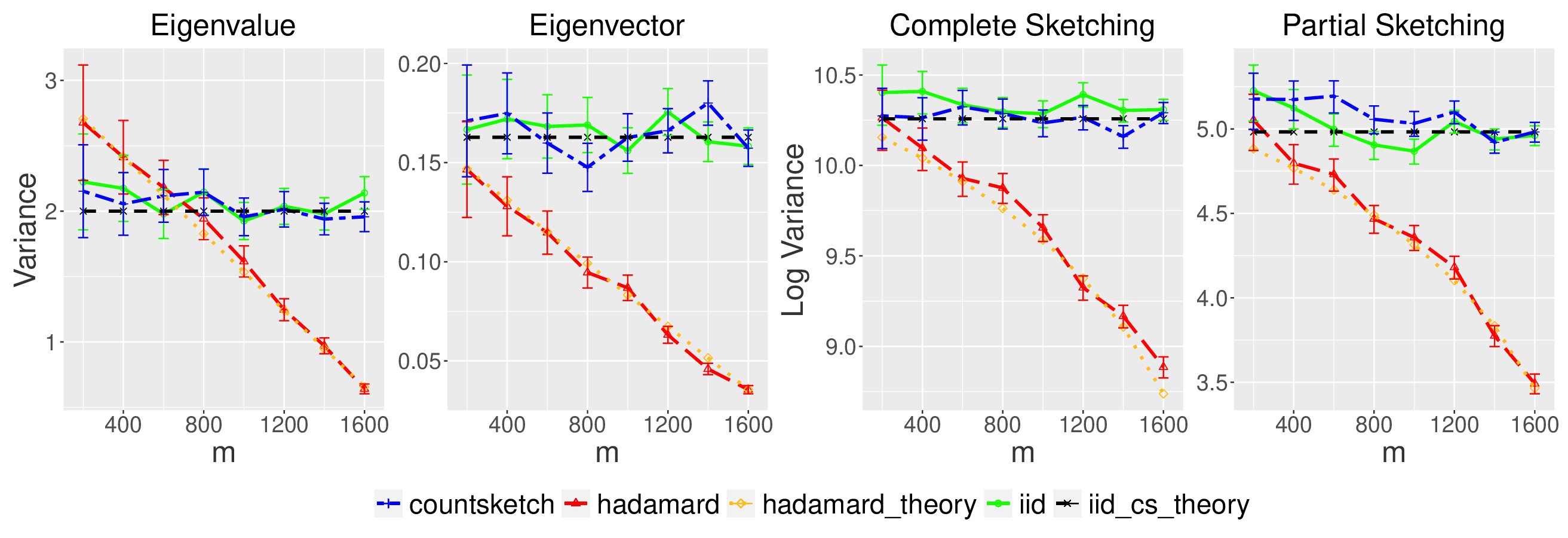}
    \caption{Variances of $\sqrt{m}\hat{\Lambda}_{m,n,k}$ and $\sqrt{m}c^\top\hat{v}_{m,n,k}$, and logarithm of variances of $\sqrt{m}c^\top \hbs$ and $\sqrt{m}c^\top \hbp$ for Case 1, using $p=15, n=2048, k=1$, and $c=(1,0,\ldots,0)^\top$. Here ``iid\_theory" is obtained from \Cref{PCAiid} and \Cref{thiid},  ``hadamard\_theory" from \Cref{PCAsrht} and \Cref{sechada}, ``CountSketch\_theory" from \Cref{PCAcs} and \Cref{cs}, corresponding to least squares and PCA, respectively.}
    \label{n2048p15_Case1_var_coverage_merge}
\end{figure}

\subsubsection{Empirical Data Example}

An empirical illustration of 
sketched least squares, 
we consider the New York flights (nycflights13) dataset \citep{nycflights13}. This has 60,448 observations and 21 features, with the response being the arrival delay. 
%The data and processing pipeline are available at \url{link}. 
We show coverage results for the least squares solution in \Cref{realdata_conf_merge_new}.
We observe that all methods have coverage close to the nominal level. 

To illustrate sketch-and-solve PCA, we use the Human Genome Diversity Project (HGDP) dataset, previously analyzed in studies such as \cite{cann2002human, li2008worldwide}. 
We use the Centre d’Etude du Polymorphisme Humain panel, which contains Single Nucleotide Polymorphism (SNP) data for 1,043 observations, representing 51 populations from Africa, Europe, Asia, Oceania, and the Americas. 
Focusing on the SNPs located on chromosome 22,
we create the data matrix $X_n$ by selecting the first 20 features. In this $n\times p$ matrix, $X_{ij}$ represents the count of the minor allele of SNP $j$ in the genome of individual $i$ and takes values in $\{0, 1, 2\}$. 
To preprocess the data, we perform SNP-wise standardization by centering each SNP using its mean and scaling by its standard error. 
Subsequently, we impute the missing values as zeros, as in \cite{dobriban2019deterministic}.
\Cref{coverage_merge,var_merge} shows the coverage results as well as the asymptotic variances for different sketching methods based on the HGDP dataset.
We observe that the coverage probability of the considered methods is close to the nominal level.

We also perform sketch-and-solve PCA using the Million Song Dataset (MSD, \cite{Bertin-Mahieux2011}), 
as an example of a larger dataset, with $n = 515,344$ and $p = 90$. The analysis takes about two days on a personal computer. 
We present coverage results for the first singular value and its corresponding right singular vector in \Cref{realdata_conf_merge_new}, 
showing that the Clopper-Pearson intervals for the coverage of all approaches are centered near the nominal level.

\subsubsection{Running Time}
\label{rt}

To empirically analyze the running time of our methods,
we conduct experiments 
with various types of sketching  methods for PCA in Cases 1 and 3. 
We choose $m$ ranging from $200$ to $1600$ in increments of $200$. 
For a fixed value of $m$, we repeat the experiment process $500$ times.

From  the results in 
\Cref{testtime_inset}, Table \ref{tabcostexper1} and \ref{tabcostexper2}, 
CountSketch is the fastest method, while sparse sign embeddings with $\zeta_m > 1$ are slower but still perform very well. 
In terms of computational cost, SRHT is slower than CountSketch and sparse sign embeddings.
Sketching matrices with i.i.d.~entries are slower, and Haar sketching matrices are the slowest. 
The results from \Cref{testtime_inset} qualitatively 
align with our theoretical analysis of computational complexity from \Cref{comp}.
We emphasize that specific wall clock times depend on the numerical implementation used, and our experiments
should be viewed as giving only a
rough idea of the relative  performance of the various methods.

We also analyze a larger
data set
in Case 1, with $n=2^{21}, p=100$, and $k=1$. 
This leads to a dataset that is large enough to be somewhat challenging to be analyzed on a PC, but still realistic to occur in applications.
We use SRHT as the primary sketching method for PCA. 
We vary the value of the sketch size $m$ from 5,000 to 75,000 with increments of 10,000. 
The coverage and computation time results are presented in \Cref{massivedata}.
Our solution achieves the target coverage even when the sketch dimension is small. 
Additionally, as the sketch size increases, 
the computational time required for SRHT sketched PCA 
grows slowly, but remains significantly lower than that of PCA on the full data.
At the same time, 
as the sketching dimension increases, 
the length of the confidence intervals for the SRHT sketched solution decreases, as expected. 
Here the values of the target parameters of interest are $\lambda_1 = 1$ and 
$c^\top v_1 = 0.087$;  hence the confidence intervals are short and informative.
These experiments show the trade-off 
between statistical accuracy and computational costs.

\begin{figure}
    \centering
    \includegraphics[width=\textwidth]{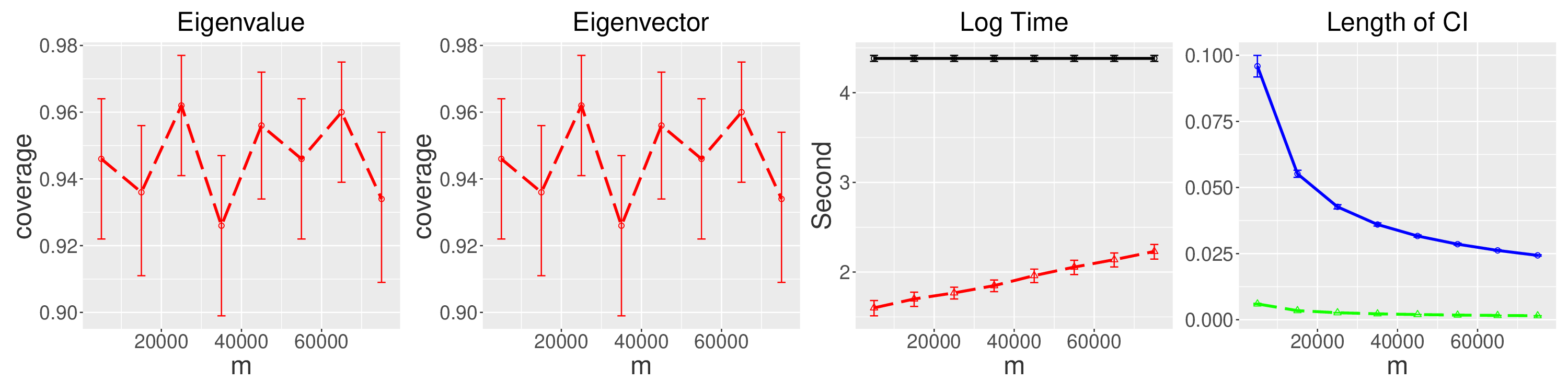}
    \caption{The first two subfigures show the coverage of nominal 95\% intervals, along with 95\% Clopper-Pearson intervals for the coverage, for $\lambda_1$ and $c^\top v_1$ as a function of the sketch size $m$; see \Cref{rt}. 
    The results cover Case 1, with $p=100, n=2^{21}$, and 500 Monte Carlo trials. 
    The third subfigure displays the computational time for the SRHT sketched PCA (red dashed line) and the original PCA solution (black solid line). The fourth subfigure shows the length of the confidence intervals of SRHT sketched  eigenvalues (blue solid line) and eigenvectors (green dashed line).}
    \label{massivedata}
\end{figure}

\begin{figure}[ht]
    \centering
    \includegraphics[width=0.5\textwidth]{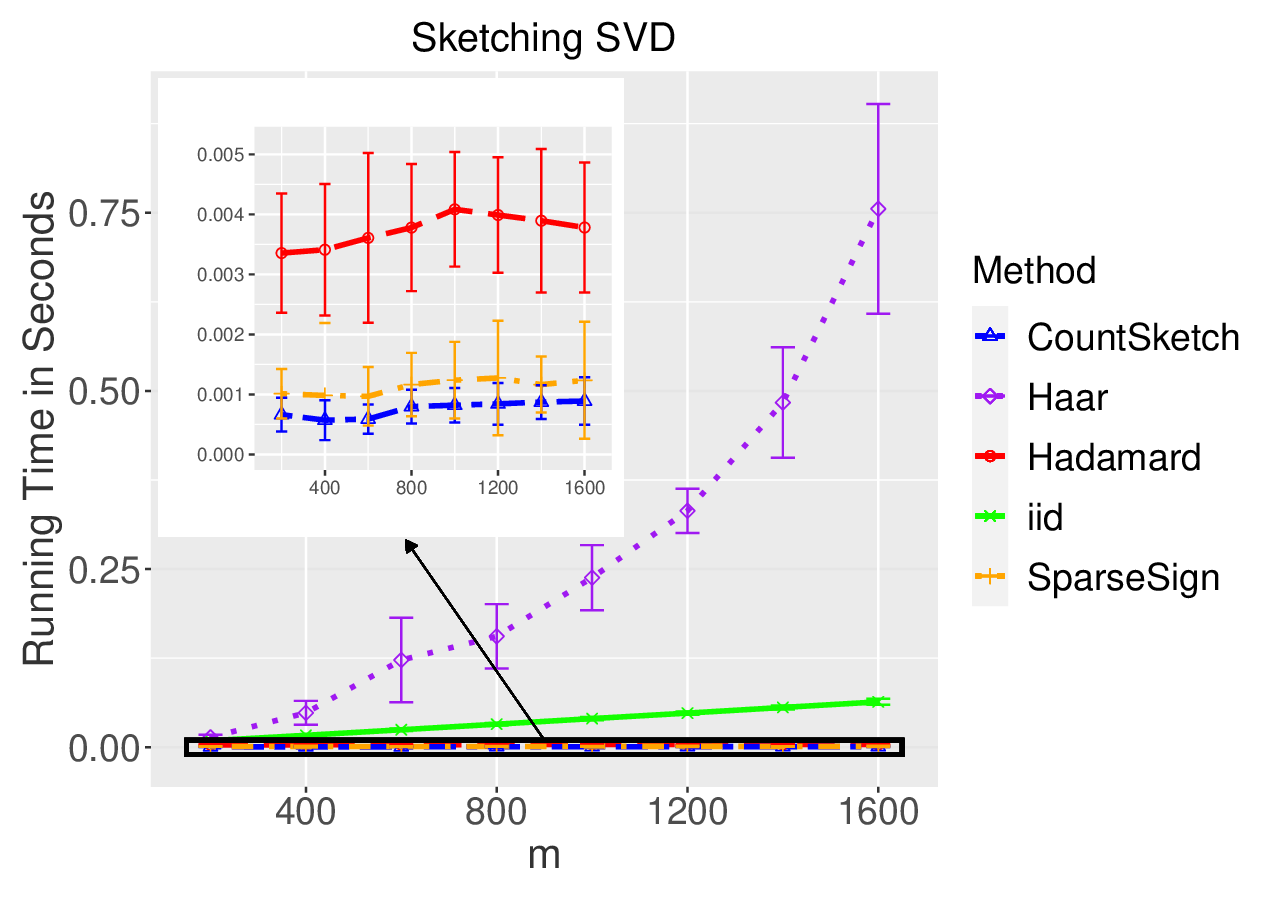}
    \caption{Computational cost experiment in Case 1 with $p=15$, $n=2048$, where $m$ ranges from $200$ to $1600$ in increments of $200$. For each sketching method, 500 simulations are conducted, and the mean time of a single step is calculated and plotted. 
    Standard errors over 500 simulations are shown. Here we use Sparse Sign Embeddings with $\zeta_m=8$.}
    \label{testtime_inset}
\end{figure}

\begin{table}[ht]\label{rt1}
\centering 
\caption{Computational cost (in seconds) of the experiments for Case 1 with $500$ simulations, $p=15$, $n=2048$. 
}
\begin{tabular}{  |c|c|c|c|c|c|c|c|c| } 
\hline
{$m$} & 200 & 400 & 600 & 800 & 1000 & 1200 & 1400 & 1600\\
\hline
{i.i.d.~Gaussian Sketching} & 4.54 & 8.13 & 11.96 & 15.73 & 19.52 & 23.28 & 27.27 & 31.31 \\
\hline
{SRHT Sketching} & 2.18 & 2.08 & 2.06 & 2.26 & 2.52 & 2.25 & 2.14 & 2.19 \\
\hline
{CountSketch} & 0.32 & 0.30 & 0.33 & 0.38 & 0.39 & 0.46 & 0.48 & 0.51 \\
\hline
{SSE, $\zeta_m=8$} & 0.67 & 0.71 & 0.72 & 0.83 & 0.85 & 0.84 & 0.90 & 0.96 \\
\hline
\end{tabular}\label{tabcostexper1}
\end{table}

\begin{table}[ht]\label{rt2}
\centering 
\caption{Computational costs  (in seconds) of the experiments for Case 3, using the same
protocol as in \Cref{rt1}.}
\begin{tabular}{  |c|c|c|c|c|c|c|c|c| } 
\hline
{$m$} & 200 & 400 & 600 & 800 & 1000 & 1200 & 1400 & 1600\\
\hline
{i.i.d.~Gaussian Sketching} & 4.48 & 8.22 & 12.13 & 16.31 & 19.93 & 24.78 & 29.98 & 33.26 \\
\hline
{SRHT Sketching} & 2.23 & 2.08 & 2.23 & 2.22 & 3.07 & 2.41 & 2.74 & 2.43 \\
\hline
{CountSketch} & 0.39 & 0.37 & 0.37 & 0.49 & 0.51 & 0.55 & 0.59 & 0.67 \\
\hline
{SSE, $\zeta_m=8$} & 0.85 & 0.76 & 1.04 & 1.14 & 1.23 & 1.19 & 1.24 & 1.25 \\
\hline
\end{tabular}\label{tabcostexper2}
\end{table}

\subsection{Comparison of Various Forms of Sketching}
\label{csk}
We proceed to compare the asymptotic variances across
various sketching methods.
\Cref{n2048p15_Case1_var_coverage_merge,var_merge}
compare various sketching methods in principal component analysis with respect to the ratio $m/n$ of the sketch size to the sample size. 
These figures display the variance of $\sqrt{m}c^\top \hat{\Lambda}_{m,n,k}$ for simulated data in Cases 1, 2, and 3, respectively, where $n=2048$, $p=10$, and $500$ replications are considered for each $m$. 

The asymptotic variances of i.i.d.~sketching estimators for least squares are provided
in equations \eqref{assiidfix} and \eqref{assiidfix2} for the sketch-and-solve and partial sketching cases, respectively. Similarly, for PCA, the theoretical results can be derived from Table \ref{sumasyPCA}.
Notably, the asymptotic properties of CountSketch closely resemble those of i.i.d.~Gaussian sketching, denoted as ``iid\_cs\_theory". Furthermore, we use ``hadamard\_theory" to denote the asymptotic results for SRHT sketching.

For sketch-and-solve least squares in Case 1, 
when the signal-to-noise ratio $R_n^2$ approaches zero,
partial sketching estimators have 
smaller variances than sketch-and-solve estimators, see Figure \ref{n2048p15_Case1_var_coverage_merge}. 
This finding aligns with the comparison of the relative efficiency of complete and partial sketching from \cite{ahfock2021statistical}. 
The empirical variances closely approximate the theoretical asymptotic variances. 

In Case 1 for PCA, 
the eigengap is large for all $k\in[p]$,
ensuring that the variances are close to the predicted values, 
see Figure \ref{n2048p15_Case1_var_coverage_merge}.
We also observe good performance 
in other cases, see \Cref{var_merge}.
Furthermore, when employing Hadamard sketching, 
the variances of both $\sqrt{m}\hat{\Lambda}_{m,n,k}$ and $\sqrt{m}c^\top\hat{v}_{m,n,k}$ 
show a linear decrease in $m$, 
and tend to zero when $m$ approaches $n$. 
In contrast, there is no significant decreasing trend in the asymptotic variance of i.i.d.~sketching and CountSketch when $p$ remains fixed. Instead, these variances remain relatively stable as $m$ increases. 
These observations are consistent with the theoretical results summarized in Table \ref{sumasyPCA}.

\subsubsection{Comparison with Optimal Subsampling}

We also compare our sketched least squares methods 
with the  optimal subsampling algorithm from \cite{yu2022optimal}. 
Their Algorithm 2 involves three steps. First, a pilot subsample of $r_0$ datapoints is chosen  uniformly,  
and is used to form a rough estimate of the parameter. 
%and the subset serves as the initialization for the subsampling procedure. 
Then, this estimate 
is used to further estimate the optimal subsample weights, 
and the new datapoints are added by weighted subsampling. 
The final subset is used to solve the maximum likelihood equation and obtain the final estimate. 

We compare the asymptotic MSE of the least squares solutions in 
Figure \ref{compare_optsubsamp_wallclock_Case2}.
We focus on Hadamard sketching due to
the availability of an efficient implementation in the same language---R---for which the implementation of the methods from 
\cite{yu2022optimal} are available. %\zx{maybe not mentioning this is better?}

For Case 2, all subsampling methods are less accurate than Hadamard sketching. 
This shows 
the benefits of using sketching-type methods beyond subsampling. 
In Case 2, where the data matrices differ significantly from i.i.d.~models, subsampling methods tend to be less stable.  
On the other hand, sketching methods like Hadamard sketching mix the data before subsampling, which helps mitigate the impact of ``outlier" data points. 
Figure \ref{compare_optsubsamp_wallclock_Case2} shows that Hadamard sketching achieves the ``empirical Pareto frontier”, 
which means that among the methods compared, empirically Hadamard sketching has the best  tradeoff in balancing computational cost and statistical accuracy. 

\begin{figure}[ht]
    \centering
    \includegraphics[width=0.55\textwidth]{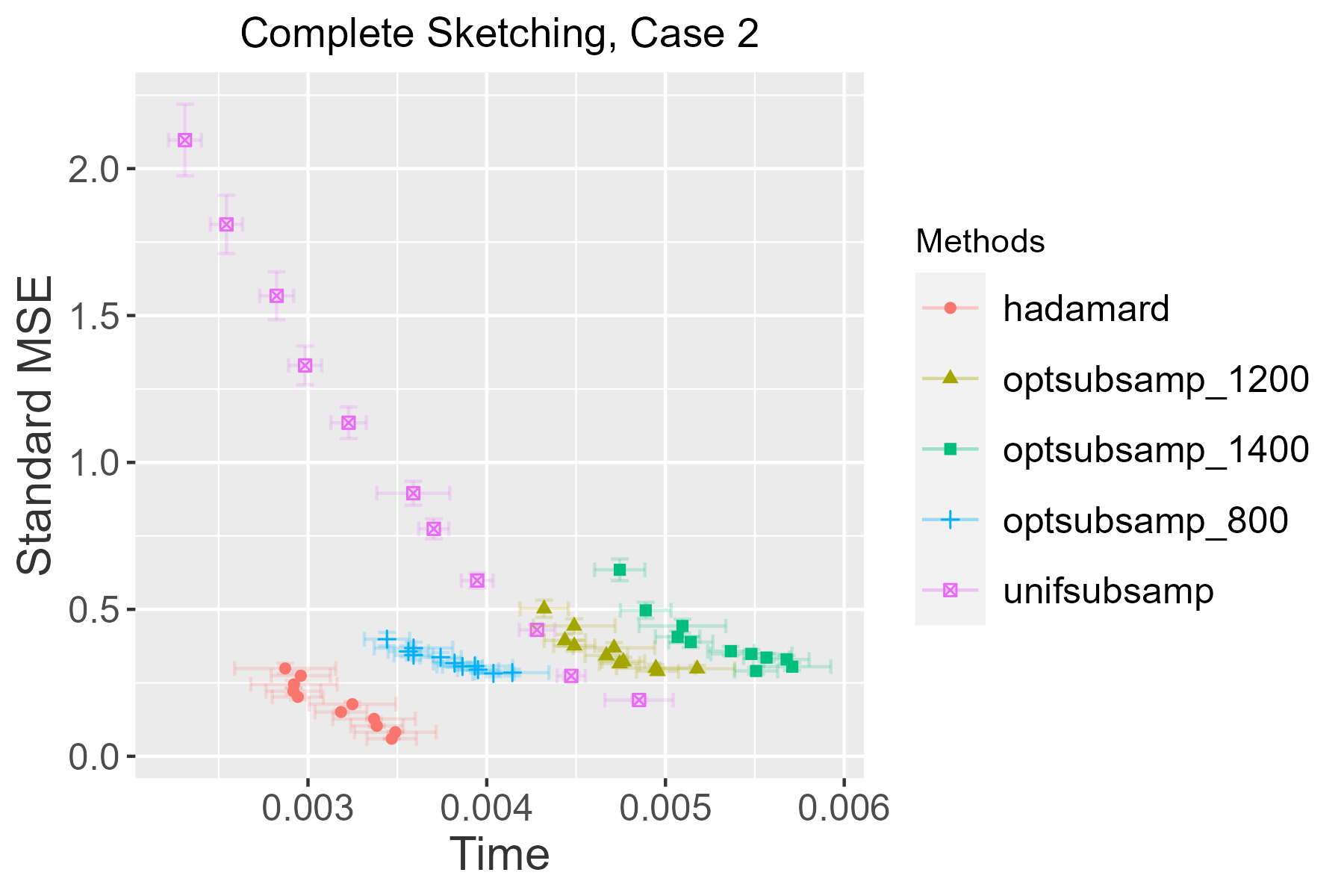}
    \caption{Comparison of subsampling methods for Case 2, using $p=25, n=4096$ with 2000 Monte Carlo trials for each setting. Here ``optsubsamp\_800” is the optimal subsampling method from Algorithm 2 in \cite{yu2022optimal} with pilot subsample size $r_0=800$, while ``optsubsamp\_1200” and ``optsubsamp\_1400” 
     refer to the same algorithm using $r_0=1200$ and $r_0=1400$.}
    \label{compare_optsubsamp_wallclock_Case2}
\end{figure}

\section{Summary}
In this work, we 
propose a unified methodology for statistical inference in sketched PCA and LS 
built on the asymptotic normality of quadratic forms. 
This relies on finding
the limiting distribution of quadratic forms of the output of several sketching algorithms. 
We find novel asymptotic distributions of quadratic forms for Sparse Sign Embeddings and SRHT sketching. 
For i.i.d.~sketching matrices, we
significantly expand the conditions on 
the data under which the results apply
compared to prior works such as \cite{zhang2023framework}.
We also perform simulations to evaluate and compare our methods on both simulated and empirical data. 
Our simulations show that in certain scenarios, SRHT sketching achieves a better computation-accuracy tradeoff compared to subsampling-type methods. 

\section*{Acknowledgements}

This work was supported in part by the NSF, the ONR, and the Sloan Foundation.

\section{Appendix}
% \begin{figure} 
%     \centering
%     \includegraphics[width=1.0\textwidth]{QF_flowchart_sum.png}
%     \caption{A flowchart summarizing how the different results in different sketching scenes are connected.}
%     \label{sum_flowchart}
% \end{figure}
\subsection{Technical Reminders}
\label{tr}

{\bf Additional notation.}
We use $\sum_{i<j}$ and $\sum_{i\neq j}$ to abbreviate the summation over $(i,j)\in[n]\times[n]$ subject to  $i<j$ and $i\neq j$, respectively. Also, we define $1_m=(1,\ldots,1)^\top\in\R^m$, 
and 
$\{e_i\}_{i=1}^m$
be the canonical orthonormal basis 
 of $\R^m$.
We will sometimes denote the Euclidean inner product by 
$\langle \cdot, \cdot \rangle$.
Denote by $A=_dB$ that two random vectors $A, B$ have the same distribution.

For any positive integer $p\ge 1$,
we denote the probability distribution
of a random vector $X \in \R^p$ by $L(X)$.
Since the space of probability measures on $\R^p$ is metrizable, 
there is a metric $d_p$ on this space, such that 
for a sequence of random vectors $(X_n)_{n\ge 1}$ and a random vector $X$,
the weak convergence $X_n \Rightarrow X$ is equivalent to $d_p(L(X_n),L(X))\to 0$.

{\bf Tensor Operations.} 
Next, recall that 
for any matrices $A$, $B$ and $C$ of conformable sizes, 
we have that $\vec(ABC)=(C^\top\otimes A)\vec(B)$; see 
for example
 Theorem 2.4.2 in \cite{magnus2019matrix}. Additionally, for two vectors $a,b\in \R^p$, we have that \begin{equation}\label{r1tensor}
     b\otimes a=(b\otimes I_p)a,
 \end{equation} as well as $b\otimes a=\vec(ab^\top).$
Further, 
for the matrix $P_p$ from Definition \ref{spmx},
representing matrix transposition in the vectorized matrix space,
\begin{equation}\label{Pp0}
(a\otimes b)^\top P_p(c\otimes d)=(a\otimes b)^\top (d\otimes c)=(a^\top d)(b^\top c).
\end{equation}
In particular, for any $a\in\R^p$,
$\left((a\otimes I_p)^\top P_p(a\otimes I_p)\right)_{ij}=(a\otimes e_i)^\top P_p(a\otimes e_j)=(a^\top e_i)(a^\top e_j)=e_i^\top aa^\top e_j=(aa^\top)_{ij}$,
for all $i,j\in[p]$, so \begin{equation}\label{Pp1}(a\otimes I_p)^\top P_p(a\otimes I_p)=aa^\top.\end{equation}

Moreover, for the matrix 
$Q_p=\vec(I_p)\vec(I_p)^\top$
 from Definition \ref{spmx},
we have  $(Q_p)_{ij,kl}=\delta_{ij} \delta_{kl}$. 
Thus, 
for all $p\times p$ matrices $\Phi,\Phi$,
we have 
\begin{equation}\label{Qp0}
\vec(\Phi)^\top Q_p \vec(\Psi)=\sum_{(ij),(kl)}\Phi_{ij}\Psi_{kl}\delta_{ij}\delta_{kl}
=\sum_{i,k}\Phi_{ii}\Psi_{kk}
=\biggl(\sum_{i=1}^p\Phi_{ii}\biggr)\biggl(\sum_{k=1}^p\Psi_{kk}\biggr)=(\tr\Phi)(\tr\Psi).\end{equation}
Thus, 
 for any $a\in\R^p$,
$\left((a\otimes I_p)^\top Q_p(a_n\otimes I_p)\right)_{ij}=(a\otimes e_i)^\top Q_p(a\otimes e_j)=(a^\top e_i)(a^\top e_j)=e_i^\top aa^\top e_j=(aa^\top)_{ij}$, for all $i,j\in[p]$, so \begin{equation}\label{Qp1}(a\otimes I_p)^\top Q_p(a\otimes I_p)=aa^\top.\end{equation}

{\bf Subsampled 
randomized Hadamard transform (SRHT).} 
For a positive integer $n$,
we 
let $l={\lceil \log_2 n\rceil}$
and
$n':= 2^l \ge n$.
An SRHT sketching matrix 
\citep{ailon2006approximate}
can be expressed as
$S_{m,n} = \sqrt{n'/m}B H_l D \Pi_n$, where $B$ is an $n'\times n'$ diagonal matrix with
i.i.d.~Bernoulli random variables with a success probability of $m/n'$. 

Furthermore, $D$ is an $n'\times n'$ diagonal matrix with i.i.d.~$\pm 1$ Rademacher random variables on the diagonal.
Finally, $\Pi_n$ is an $n'\times n$ 
with $\Pi_n^\top = [I_n, 0_{n'-n,n}]^\top$, where 
$0_{n'-n,n}$ is a $(n'-n) \times n$ zero matrix.

\subsection{Proof of \texorpdfstring{\Cref{lemgeneols}}{Theorem \ref{lemgeneols}}}
\label{pflemgeneols}

\subsubsection*{Sketch-and-solve}

We first present the proof for $\hbs$. 
Condition \ref{generalquadratic forms}
\textup{\texttt{Bounded}}
implies that for any sequence of 
nonzero vectors $({a_n, \ta_n})_{n\ge 1}$ of unit norm
satisfying Condition $\mathfrak{P}$, $|a_n^\top S_{m,n}^\top S_{m,n} \ta_n - a_n^\top \ta_n|=O_P (\tau_{m,n}^{1/2}/\sqrt{m})$.
By assumption,
for any $(b_n)_{n\ge 1}$
satisfying $b_n\in\psp$ for all $n$,
$({U_nb_n, U_nb_n})_{n\ge 1}$
and $({U_nb_n, \bar{\ep}_n})_{n\ge 1}$ satisfy Condition $\mathfrak{P}$. 
Therefore, taking 
$a_n=U_n b_n$ and $\ta_n=\bar{\ep}_n$, 
and noticing that 
$\|U_n b_n\|^2=b_n^\top U_n^\top U_n b_n=b_n^\top b_n=1$
and $\|\bar{\ep}_n\|=1$,
we find that
$$|b_n^\top (U_n^\top S_{m,n}^\top S_{m,n} U_n - I_p) b_n|=O_P \left(
\frac{\tau_{m,n}^{1/2}}{\sqrt{m}}\right)
\,\,\,\,\mathrm{ and  }\,\,\,\,
|b_n^\top U_n^\top S_{m,n}^\top S_{m,n} \bar{\ep}_n|=O_P\left (\frac{\tau_{m,n}^{1/2}}{\sqrt{m}}\right).$$ 
Therefore, since $p$ is fixed,
for any small $\ep>0$,
we can take a union bound over $b_n$ in a fixed $\ep$-cover of $\Sp$ to conclude that
\begin{equation}\label{nc}
    \|U_n^\top S_{m,n}^\top S_{m,n} U_n - I_p\| = O_P \left(\frac{\tau_{m,n}^{1/2}}{\sqrt{m}}\right), \quad \|U_n^\top S_{m,n}^\top S_{m,n} \bar{\ep}_n \| = O_P \left(\frac{\tau_{m,n}^{1/2}}{\sqrt{m}}\right).
\end{equation}
Using that $\tau_{m,n}/m\to 0$, it follows that 
$  \|(U_n^\top S_{m,n}^\top S_{m,n} U_n)^{-1} - I_p\| = O_P(\tau_{m,n}^{1/2}/\sqrt{m})$.
From equation \eqref{geneformucomplete0},
we have  that
\begin{equation}\label{geneformucomplete}\begin{aligned}
   \hbs& = 
   \beta_n+ 
    \|\ep_n\|\cdot  V_n L_n^{-1} (U_n^\top S_{m,n}^\top S_{m,n} U_n)^{-1} U_n^\top S_{m,n}^\top S_{m,n} \bar{\ep}_n.
\end{aligned}\end{equation}
Therefore,
from \eqref{nc},
for any $c \in\R^n$ such that 
$X_n^{\dag \top} c\neq 0$,
using also that $ U_n^\top U_{n,\perp} = 0$
so that $U_n^\top \bar{\ep}_n=0$,
and  that
$X_n^{\dag \top} = U_n L_n^{-1} V_n^\top$,
we have that if $\ep_n\neq 0$,
\begin{align}\label{approxbyqfskechols}
    &\|\ep_n\|^{-1}\|X_n^{\dag \top} c\|^{-1}c^\top (\hbs -\beta_n)\nonumber\\
    &=\|X_n^{\dag \top} c\|^{-1}c^\top  V_n L_n^{-1} [(U_n^\top S_{m,n}^\top S_{m,n} U_n)^{-1} - I_p] U_n^\top S_{m,n}^\top S_{m,n} \bar{\ep}_n
    +\|X_n^{\dag \top} c\|^{-1}c^\top  V_n L_n^{-1} U_n^\top S_{m,n}^\top S_{m,n} \bar{\ep}_n    \nonumber\\
    &=O_P\left(\frac{\tau_{m,n}}{m}\right)
    +\|X_n^{\dag \top} c\|^{-1}c^\top V_n L_n^{-1} U_n^\top (S_{m,n}^\top S_{m,n}-I_n) \bar{\ep}_n
    + \|X_n^{\dag \top} c\|^{-1}c^\top V_n L_n^{-1} U_n^\top \bar{\ep}_n
    \nonumber\\
    &=\|X_n^{\dag \top} c\|^{-1}c^\top V_n L_n^{-1} U_n^\top (S_{m,n}^\top S_{m,n}-I_n) \bar{\ep}_n+O_P\left(\frac{\tau_{m,n}}{m}\right).
\end{align}
We will use this approximation to study the distribution of $\hbs$.
The distribution of the first term of the last line of \eqref{approxbyqfskechols} can be studied via Condition \ref{generalquadratic forms}
\textup{\texttt{1-dim}}.
In that condition, taking 
$a_n=U_n b_n$ and
$\ta_n=\bar{\ep}_n$
for every $n\ge 1$, where $b_n\in\psp$,  we find that
with $M_n$ defined in  \eqref{defMn}, 
$$
m^{1/2} \tau_{m,n}^{-1/2}\left(b_n^\top M_n b_n\right)^{-1/2}b_n^\top U_n^\top \left(S_{m,n}^\top S_{m,n} - I_n \right) \bar{\ep}_n \Rightarrow \N(0,1).
$$
Since $X_n^{\dag \top} = U_n L_n^{-1} V_n^\top$,
by taking $b_n=L_n^{-1} V_n^\top c/\|X_n^{\dag \top} c\| \in\psp$ and cancelling out $\|X_n^{\dag \top} c\|$, 
we can rewrite this as 
\begin{equation}\label{ssa2}
m^{1/2} \tau_{m,n}^{-1/2}\left(c^\top V_n L_n^{-1} M_nL_n^{-1} V_n^\top c\right)^{-1/2} c^\top V_n L_n^{-1} U_n^\top (S_{m,n}^\top S_{m,n}-I_n) \bar{\ep}_n \Rightarrow \N(0,1).    
\end{equation}

Noting that $L_n^{-1}=\diag(\ell_{n,1}^{-1},\ldots,\ell_{n,p}^{-1})$, with 
 $0<\ell_{n,1}^{-1}\le \ldots\le \ell_{n,p}^{-1} $, 
let 
\beqs%\label{tw}
T_n:=
\frac{V_n L_n^{-1} M_nL_n^{-1} V_n^\top}{ \|L_n^{-2}\|}
\textnormal{  and  }
W_n:=m^{1/2}\tau_{m,n}^{-1/2}T_{n}^{-1/2}\|\ep_n\|^{-1}\|L_n^{-1}\|^{-1}(\hbs-\beta_n).
\eeqs
where recall that $\|\cdot\|$ is the operator norm.
Then, recalling the metric $d_p$ metrizing weak convergence from \Cref{tr},
by simple algebra
our desired claim
\eqref{ssa} amounts to 
$d_p(L(W_n),\N(0,I_p))\to 0$. 

We will prove this 
by a  subsequence argument.
First, we argue that we can find a compact set $B \subset \Sp$ such that for all $n\ge 1$, $T_n \in B$. 
To see this, 
first, we have $\|T_n\|\le  \|M_n\| $; also, 
$\lambda_{\min}(T_n)\ge \lambda_{\min}(L_n^{-1})^2\cdot\lambda_{\min}\left(M_n\right)/\|L_n^{-2}\|=\ell_{n,p}^2/\ell_{n,1}^2 \lambda_{\min}\left(M_n\right)$. 
Since
by Condition \ref{generalquadratic forms} {\bf \textup{\texttt{1-dim}}},
for all $n\ge 1$, $a^\top M_n a=\sigma_n^2(U_na, \bar{\ep}_n)$ are bounded uniformly bounded away from zero and infinity for any $a\in\psp$, the spectra of all $(M_n)_{n\ge 1}$ are uniformly bounded away from zero and infinity, and 
thus using that $(X_n)_{n\ge 1}$  satisfies Condition \ref{spectralcondition} \textup{\texttt{A}},
so are the the spectra of all $(T_n)_{n\ge 1}$. 
This shows that there is  a compact set $B \subset \Sp$ such that for all $n\ge 1$, $T_n \in B$.

Next, 
we continue with the  subsequence argument.
Consider any subsequence $(n_k)_{k\ge 1}$ of $\NN$. 
Since $T_{n_k} \in B$ for all $k\ge 1$  and $B$ is bounded, there exists a further sub-subsequence $(n_k')_{k\ge 1}$ of $(n_k)_{k\ge 1}$ such that $T_{n_k'} \to T$, for some $T \in \Sp$. 
We denote  by $(m_k')_{k\ge 1}$ the sequence of sketch sizes associated with $(n_k')_{k\ge 1}$. 
Using that $(c^\top V_n L_n^{-1} M_nL_n^{-1} V_n^\top c)^{1/2} = \|T_n^{1/2} c\|\cdot\|L_n^{-1}\|$,
it follows 
from \eqref{ssa2}
that 
$$(m_k')^{1/2}\tau_{m_k', n_k'}^{-1/2}\|L_{n_k'}^{-1}\|^{-1} 
c^\top V_{n_k'} L_{n_k'}^{-1} U_{n_k'}^\top (S_{m_k',n_k'}^\top S_{m_k',n_k'}-I_{n_k'}) \bar{\ep}_{n_k'} \Rightarrow \N(0,c^\top T c).   $$ 
Then, by \eqref{approxbyqfskechols}, 
since $\|X_{n_k'}^{\dag \top} c\|/\|L_{n_k'}^{-1}\| = O(1)$ and $\tau_{m,n}/m\to 0$,
it follows that 
$$(m_k')^{1/2}\tau_{m_k', n_k'}^{-1/2}\|\ep_{n_k'}\|^{-1}\|L_{n_k'}^{-1}\|^{-1}c^\top\left(\hat{\beta}_{m_k',n_k'}^\s-\beta_{n_k'}\right) \Rightarrow \N(0,c^\top T c).
$$

Using the Cramer-Wold device,
we conclude that 
$$(m_k')^{1/2}\tau_{m_k', n_k'}^{-1/2}\|\ep_{n_k'}\|^{-1}\|L_{n_k'}^{-1}\|^{-1} 
\left(\hat{\beta}_{m_k',n_k'}^\s-\beta_{n_k'}\right)  \Rightarrow \N(0,T).   $$ 
Consequently, 
using again that $T_{n_k'} \to T$,
$d_p\left(L(W_{n_k'}),\N(0,I_p)\right)\to 0$ holds along the subsequence $(n_k')_{k\ge 1}$. Thus, for any subsequence $(n_k)_{k\ge 1}$, there exists a further sub-subsequence $(n_k')_{k\ge 1}$ of $(n_k)_{k\ge 1}$, such that the distance $d_p\left(L(W_{n_k'}),\N(0,I_p)\right)\to 0$. 
Thus, $d_p\left(L(W_{n}),\N(0,I_p)\right)$ tends to zero, finishing the proof of \eqref{ssa}.

\subsubsection*{Partial sketching}

Continuing with partial sketching, 
if $X_n \beta_n\neq 0$,
\begin{equation}\label{geneformupartial}\begin{aligned}
     \hbp &=  V_n L_n^{-1} (U_n^\top S_{m,n}^\top S_{m,n} U_n)^{-1} U_n^\top y_n\\
     &=\beta_n + \| X_n \beta_n\|\cdot V_n L_n^{-1} \left[(U_n^\top S_{m,n}^\top S_{m,n} U_n)^{-1}-I_p \right]\widebar{X_n\beta_n}.
 \end{aligned}    
\end{equation}
Thus, using \eqref{nc}, 
we have
for any $c \in\R^n$ such that 
$X_n^{\dag \top} c\neq 0$ that
\begin{align}\label{approxqfpartialols}
&\| X_n \beta_n\|^{-1}\|X_n^{\dag \top} c\|^{-1}c^\top(\hbp-\beta_n)=
-\|X_n^{\dag \top} c\|^{-1} c^\top V_n L_n^{-1} U_n^\top (S_{m,n}^\top S_{m,n} -I_n)\widebar{X_n\beta_n}+O_P\left(\frac{\tau_{m,n}}{m}\right).
\end{align}

Due to Condition \ref{generalquadratic forms} \textup{\texttt{1-dim}}, for 
$a_n=U_n b_n$
and 
$\ta_n=\widebar{X_n\beta_n}$, 
for every $n\ge 1$, where $b_n\in\psp$, we have $$
m^{1/2}\tau_{m,n}^{-1/2} (b_n^\top \Mp b_n)^{-1/2}b_n^\top U_n^\top \left(S_{m,n}^\top S_{m,n} - I_n \right) \widebar{X_n\beta_n} \Rightarrow \N(0,1),
$$
where $\Mp$ is defined in \eqref{defMn}.
Choosing $b_n=L_n^{-1} V_n^\top c/\|X_n^{\dag \top} c\|$, it follows that
$$
m^{1/2}\tau_{m,n}^{-1/2}\left(c^\top V_nL_n^{-1}\Mp L_n^{-1} V_n^\top c\right)^{-1/2} c^\top V_n L_n^{-1} U_n^\top (S_{m,n}^\top S_{m,n}-I_n) \widebar{X_n\beta_n} \Rightarrow \N(0,1).
$$

Similar to the proof for sketch-and-solve least squares above, 
we define $T'_n:=V_n L_n^{-1} \Mp L_n^{-1} V_n^\top/\|L_n^{-2}\|$, $W'_n:=m^{1/2}\tau_{m,n}^{-1/2}\cdot(T'_{n})^{ -1/2}\| X_n \beta_n\|^{-1}\|L_n^{-1}\|^{-1}(\hbp-\beta_n)$,
and need to show that 
$d_p(L(W'_n),\N(0,I_p))\to 0$. 
This is followed by a subsequence argument essentially identical to the one used above in the proof for sketch-and-solve least squares, and we omit the proof to avoid repetition.

\subsubsection*{Inference}

In particular, 
if Condition \ref{generalquadratic forms} \textup{\texttt{1-dim'}} holds,
 then $M_n  = I_p$, and so \eqref{ssa}
reduces to \eqref{simols}.
Further, we can verify that
Condition \ref{generalquadratic forms} \textup{\texttt{1-dim'}}
with $\alpha\in\{0,1\}$ implies that 
$\Mp(\widebar{X_n\beta_n}) = I_p + (\alpha+1)U_n^\top\widebar{X_n\beta_n}\widebar{X_n\beta_n}^\top U_n$
for $\Mp$ from \eqref{defMn}.
Hence, with 
%\beqs%\label{sigp}
$\Sigp = \|y_n-\ep_n\|^2 (X_n^\top X_n)^{-1}+ (\alpha+1)\beta_n \beta_n^\top$,
%\eeqs
we have
\beq\label{simpa}
m^{1/2}\tau_{m,n}^{-1/2}\left(\Sigp\right)^{-1/2}(\hbp-\beta_n) \Rightarrow \N(0,I_p).
\eeq

%Similar to the proof of Proposition \ref{consest}, w
Without loss of generality, we can consider $y_n\in\nsp$ and $X_n$ of unit spectral radius. Indeed, for general $y_n$ and $X_n$, we let  $y_n' = y_n/\|y_n\|$ and $X_n'=X_n/\|X_n\|$, 
and we consider the observed data $S_{m,n}(X_n',y_n')$. 
By applying a similar argument
and corresponding rescaled estimators,
$\|X_n\|$ and $\|y_n\|$  cancel out, leading to the desired result for general $X_n$ and $y_n$.

Next, since $\|U_n^\top S_{m,n}^\top S_{m,n} U_n - I_p\|=O_P (\tau_{m,n}^{1/2}/\sqrt{m})$ and $\|X_n\|=1$, we have \begin{equation}\label{89ahd}
    \|\tX_{m,n}^\top \tX_{m,n}-X_n^\top X_n\|=O_P (\tau_{m,n}^{1/2}/\sqrt{m}).
\end{equation}
Moreover, we can write
\begin{equation}\label{89ahd+}
\begin{aligned}
&\| \tep_{m,n}\|^2 =\|S_{m,n} (y_n - X_n \hbs)\|^2\\&= \|S_{m,n} (y_n - X_n \beta_n)\|^2 + \|S_{m,n} X_n(\beta_n - \hbs)\|^2+2\langle S_{m,n} (y_n - X_n \beta_n), S_{m,n} X_n(\beta_n - \hbs)\rangle.
\end{aligned}\end{equation}
By using Condition \ref{generalquadratic forms} \textup{\texttt{Bounded}}, we have $\|S_{m,n} (y_n - X_n \beta_n)\|^2/ \| \ep_n\|^2-1=O_P(\tau_{m,n}^{1/2}/\sqrt{m})=o_P(1)$. 
From \eqref{89ahd}, 
\begin{align*}
&\|S_{m,n} X_n(\beta_n - \hbs)\|^2=
\| X_n (\beta_n - \hbs)\|^2+O_P(\|\beta_n - \hbs\|^2\cdot\tau_{m,n}^{1/2}/\sqrt{m})\le  \|\beta_n - \hbs\|^2(1+o_P(1)).
\end{align*}
Also, from \eqref{simols},
since $\|X_n\|=1$ and 
the condition number of $X_n$ is bounded above from Condition \ref{spectralcondition} \textup{\texttt{A}},
$\|(X_n^\top X_n)^{-1}\| = O(1)$ and
$$\|\hbs-\beta_n\|=O_P\left(\frac{\tau_{m,n}^{1/2}\|\ep_n\|\cdot\|(X_n^\top X_n)^{-1}\|^{1/2}}{\sqrt{m}}\right)=O_P\left(\frac{\tau_{m,n}^{1/2}\|\ep_n\|}{\sqrt{m}}\right).$$
Therefore, 
the second term
on the second line of \eqref{89ahd+}
is $O_P \left(m^{-1} \tau_{m,n}\|\ep_n\|^2 \right)$. 
Further, the third term is $O_P \left(m^{-1/2} \tau_{m,n}^{1/2}\|\ep_n\|^2 \right)$ by the Cauchy-Schwarz inequality. Therefore, 
we can conclude that
$\| \tep_{m,n}\|^2/\| \ep_n\|^2\to_P 1$. 
This verifies \eqref{1dim'lsinf1}.

Continuing with partial sketching,
we have from \eqref{simpa} that
\begin{equation}\label{olspaest}
\|\hbp-\beta_n\|=O_P\left(\frac{\tau_{m,n}^{1/2}\|\Sigp\|^{1/2}}{\sqrt{m}}\right)=O_P\left(\frac{\tau_{m,n}^{1/2}\|\beta_n\|}{\sqrt{m}}\right).\end{equation} 
Thus, $\|\hbp \hbeta_{m,n}^{\pa\top}-\beta_n\beta_n^\top\|=o_P(\|\beta_n\|^2)$. Further,
since we can write  $\|y_{n}-\ep_{n}\|^2 = \|X_n \beta_n\|^2$, 
using \eqref{89ahd} and \eqref{olspaest}, 
we have 
\begin{align*}
&\|\tX_{m,n} \hbp\|^2 -\|y_{n}-\ep_{n}\|^2 
=(\|\tX_{m,n} \hbp\|^2-\|\tX_{m,n}\beta_n\|^2)
+(\|\tX_{m,n}\beta_n\|^2-\|y_{n}-\ep_{n}\|^2)\\
&=\tr\left(\tX_{m,n}^\top \tX_{m,n}(\hbp \hbeta_{m,n}^{\pa\top}-\beta_n\beta_n^\top)\right)+\beta_n^\top (\tX_{m,n}^\top \tX_{m,n}-X_n^\top X_n)\beta_n\\
&\le  \|\tX_{m,n}^\top \tX_{m,n}\|\|\hbp \hbeta_{m,n}^{\pa\top}-\beta_n\beta_n^\top\|+\|\beta_n\|^2\|\tX_{m,n}^\top \tX_{m,n}-X_n^\top X_n\|
=o_P(\|\beta_n\|^2). 
\end{align*}
Therefore, $\|\Sigp-\hSigp\|_{\Fr} = o_{P}(\|\beta_n\|^2)$. 
Now,
$\min_{a\in\psp} a^\top\left(\|X_n \beta_n\|^2 (X_n^\top X_n)^{-1}+(\alpha+1)\beta_n\beta_n^\top\right)a$ 
= $\lambda_{\min}(\Sigp)$, 
and we have for any $a\in\psp$ that
$$\|X_n \beta_n\|^2 a^\top(X_n^\top X_n)^{-1}a\ge \lambda_{\min}(X_n^\top X_n)\|\beta_n\|^2\lambda_{\min}\left((X_n^\top X_n)^{-1}\right)=\lambda_{\min}(X_n^\top X_n)\|\beta_n\|^2.$$ 
Combined with Condition \ref{spectralcondition} \textup{\texttt{A}}, 
it follows that $\|(\Sigp)^{-1}\|
= \lambda_{\min}(\Sigp)^{-1}
=O_P(\|\beta_n\|^{-2})$. 
Combined with $\|\Sigp-\hSigp\|_{\Fr} = o_{P}(\|\beta_n\|^2)$,
this leads to $\|(\Sigp)^{-1}\hSigp-I_p \| = o_P(1)$, 
and  \eqref{1dim'lsinf2} follows.

\subsection{Proof of \texorpdfstring{Corollary \ref{rndls}}{Corollary \ref{rndls}}}
\label{pfrndls}

Recall from \Cref{lemgeneols} that conditional on $(X_n,y_n)_{n\ge 1}$, \eqref{1dim'lsinf1} holds. 
This implies that we also have the unconditional result
\begin{equation}\label{infls1}
m^{1/2}\tau_{m,n}^{-1/2}\|\tep_{m,n}\|^{-1}(\tX_{m,n}^\top \tX_{m,n})^{1/2}(\hbs-\beta_n) \Rightarrow \N(0,I_p).
\end{equation} 
Moreover, we have 
$\sigma_o^{-1}(X_n^\top X_n)^{1/2}(\beta_n-\beta^*)\Rightarrow\N(0,1)$, 
see Example 2.28 in \cite{van1998asymptotic}.
From \eqref{89ahd} and \eqref{89ahd+}, we have $(X_n^\top X_n)^{-1}(\tX_{m,n}^\top \tX_{m,n})\to_P I_p$ and $\|\tep_{m,n}\|^{-1}\|\ep_{n}\|\to_P 1$. Due to the law of large numbers, we have $\|\ep_{n}\|^2/n\to_P \sigma_o^2$.  
From Slutsky's theorem, since $m/(n\tau_{m,n}) \to 0$, we conclude \eqref{rndls1}.

For partial sketching we have from \eqref{simpa} in \Cref{pflemgeneols} that  unconditionally,
\beq\label{infls2} m^{1/2}\tau_{m,n}^{-1/2}\left(\Sigp\right)^{-1/2}(\hbp-\beta_n) \Rightarrow \N(0,I_p).\eeq
Now, $\sigma_o^{-1}(X_n^\top X_n)^{1/2}(\beta_n-\beta^*)\Rightarrow\N(0,1)$ is equivalent to
\begin{equation*}
\sqrt{\frac{\tau_{m,n}}{m}}\sigma_o^{-1}(X_n^\top X_n)^{1/2}\left(\Sigp\right)^{1/2}\cdot\sqrt{\frac{m}{\tau_{m,n}}}\left(\Sigp\right)^{-1/2}(\beta_n-\beta^*)\Rightarrow\N(0,1).
\end{equation*}
Observe that
\begin{align*}
\left\|\left(\Sigp\right)^{-1/2}(X_n^\top X_n)^{-1/2}\right\|&=\lambda_{\min}^{-1} \left((X_n^\top X_n)^{1/2}\left(\Sigp\right)^{1/2}\right)\\
&=\lambda_{\min}^{-1/2} \left(\|y_n-\ep_n\|^2 + (\alpha+1)X_n^\top X_n\beta_n \beta_n^\top\right)\\
&\leq\left(\|y_n-\ep_n\|^2 + (\alpha+1)\lambda_{\min}(X_n^\top X_n)\lambda_{\min}(\beta_n \beta_n^\top)\right)^{-1/2}\leq\|y_n-\ep_n\|^{-1}. 
\end{align*}
From our assumptions, $m^{1/2}/(\tau_{m,n}^{1/2}\|X_n\beta^*\|)\to_P 0$. Therefore, $m^{1/2}\tau_{m,n}^{-1/2}\left\|\left(\Sigp\right)^{-1/2}(X_n^\top X_n)^{-1/2}\right\|\to_P 0$. Combining with Slutsky's theorem, we have $m^{1/2}\tau_{m,n}^{-1/2}\left(\Sigp\right)^{-1/2}(\hbp-\beta^*) \Rightarrow \N(0,I_p)$.
Finally, as in the proof of \eqref{1dim'lsinf2}, we can replace $\Sigp$ to its consistent estimator $\hSigp$. This completes the proof of \eqref{rndls2}.

\subsection{Proof of \texorpdfstring{\Cref{infPCAgeneral}}{Theorem \ref{infPCAgeneral}}}
\label{pfinfPCAgeneral}

The proof relies on the following result about asymptotic distributions for
the sketched PCA solutions, 
obtained leveraging the delta method.
%, see e.g., Theorem 3.8 in \cite{van1998asymptotic}.

\begin{proposition}[Asymptotic Normality of Eigenvectors and Eigenvalues]
\label{delta}
Suppose that the sequence $(X_{n}^\top X_{n})_{n\ge 1}$
of matrices
converges to $\Sigma\in\Sp$, and $(G_n)_{n\ge 1}$ converges to a positive semi-definite matrix $G$. Assume $\Sigma$ has the spectral decomposition 
$V^\top L^2 V$, where $V=(\nu_1,\nu_2,\ldots,\nu_p)$ and $L=\diag(\lambda_{1,0},\ldots,\lambda_{p,0})$.
Under Condition \ref{generalquadratic forms} \textup{\texttt{Sym}}, 
we have for all $i\in [p]$ that 
\begin{equation}\label{asymptoticnormaleigenvalue}
\sqrt{m} \tau_{m,n}^{-1/2}\lambda_{i,0}^{-1}\left(\hat{\Lambda}_{m,n,i}-\Lambda_{n,i}\right)\Rightarrow 
\N\left(0,G_{(ii),(ii)}\right)
\end{equation}
and with $\Delta_i$ from \eqref{defHi},
\begin{equation}\label{asymptoticnormaleigenvector}
\sqrt{m}\tau_{m,n}^{-1/2} \left(\hat{v}_{m,n,i}-v_i\right)
\Rightarrow \N \left(0,\Delta_i(\Sigma, G) \right).
\end{equation}
\end{proposition}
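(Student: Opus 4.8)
The plan is to reduce the eigenstructure of $\hat M_{m,n}:=X_n^\top S_{m,n}^\top S_{m,n}X_n$ to that of the single symmetric quadratic form $W_{m,n}:=U_n^\top S_{m,n}^\top S_{m,n}U_n$, which is precisely the object controlled by Condition \ref{generalquadratic forms} \textup{\texttt{Sym}}, and then to transport the Gaussian fluctuations of $W_{m,n}$ through the eigenvalue and eigenvector maps by the delta method. Writing the SVD $X_n=U_nL_nV_n^\top$, we have the exact identities $\hat M_{m,n}=V_nL_nW_{m,n}L_nV_n^\top$ and $M_n:=X_n^\top X_n=V_nL_n^2V_n^\top$, hence $\hat M_{m,n}-M_n=V_nL_n(W_{m,n}-I_p)L_nV_n^\top$; moreover the $i$-th eigenvector of $M_n$ is $w_{n,i}=V_ne_i$, with eigenvalue $\Lambda_{n,i}=\ell_{n,i}^2$, and $\hat\Lambda_{m,n,i},\hat{v}_{m,n,i}$ are the matched eigenvalue and eigenvector of $\hat M_{m,n}$. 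In the conclusion we read $v_i=v_i(X_n^\top X_n)$.

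\emph{First, a full CLT for $W_{m,n}$.} Since $G_n\to G$ gives $D_p^\top G_nD_p\to D_p^\top GD_p$, Slutsky's theorem applied to \eqref{symmatform-1} yields $\sqrt m\,\tau_{m,n}^{-1/2}D_p^\top\vec(W_{m,n}-I_p)\Rightarrow\N(0,D_p^\top GD_p)$. Because $W_{m,n}-I_p$ is symmetric, its full vectorization is a fixed linear image of $D_p^\top\vec(W_{m,n}-I_p)$, and the absolute symmetry of $G$ (Definition \ref{abs}) makes the induced limiting covariance of $\vec(W_{m,n}-I_p)$ equal to $G$ itself; thus $\sqrt m\,\tau_{m,n}^{-1/2}\vec(W_{m,n}-I_p)\Rightarrow\N(0,G)$, so in particular $W_{m,n}\to_P I_p$ and $\hat M_{m,n}\to_P\Sigma$, which has distinct eigenvalues.

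\emph{Next, the delta method and the variances.} Because $\lambda_i$ and $v_i$ are continuously differentiable in a neighborhood of $\Sigma$ (as $\Sigma$ has distinct nonzero eigenvalues, so the sign normalization of $v_i$ is locally constant there; see \cite{magnus2019matrix}), $M_n\to\Sigma$, and $\hat M_{m,n}-M_n=O_P(\tau_{m,n}^{1/2}m^{-1/2})$, a first-order expansion around $M_n$ using the standard differentials $d\lambda_i(M_n)[\Delta]=w_{n,i}^\top\Delta\,w_{n,i}$ and $dv_i(M_n)[\Delta]=\sum_{k\ne i}(\Lambda_{n,i}-\Lambda_{n,k})^{-1}(w_{n,k}^\top\Delta\,w_{n,i})w_{n,k}$, and substituting $\Delta=\hat M_{m,n}-M_n$ together with $w_{n,j}=V_ne_j$, gives
\begin{gather*}
\hat\Lambda_{m,n,i}-\Lambda_{n,i}=\Lambda_{n,i}(W_{m,n}-I_p)_{ii}+o_P(\tau_{m,n}^{1/2}m^{-1/2}),\\
\hat{v}_{m,n,i}-v_{n,i}=\sum_{k\ne i}\frac{\sqrt{\Lambda_{n,k}\Lambda_{n,i}}}{\Lambda_{n,i}-\Lambda_{n,k}}(W_{m,n}-I_p)_{ki}\,w_{n,k}+o_P(\tau_{m,n}^{1/2}m^{-1/2}).
\end{gather*}
Multiplying by $\sqrt m\,\tau_{m,n}^{-1/2}$ and passing to the limit via $\Lambda_{n,j}\to\lambda_{j,0}=\lambda_j(\Sigma)$, $w_{n,j}\to v_j$, and the joint Gaussian limit from the previous step (the entries indexed by $(ki)$ of $\sqrt m\,\tau_{m,n}^{-1/2}(W_{m,n}-I_p)$ have limiting covariances $G_{(ki),(li)}=G_{(ik),(il)}$ by absolute symmetry), the factor $\Lambda_{n,i}$ in the first line is cancelled by the $\lambda_{i,0}^{-1}$ normalization, yielding \eqref{asymptoticnormaleigenvalue} with limiting variance $G_{(ii),(ii)}$; the second line yields \eqref{asymptoticnormaleigenvector} with limiting covariance $\sum_{k\ne i,\,l\ne i}\frac{\lambda_i(\Sigma)\sqrt{\lambda_k(\Sigma)\lambda_l(\Sigma)}}{(\lambda_i(\Sigma)-\lambda_k(\Sigma))(\lambda_i(\Sigma)-\lambda_l(\Sigma))}G_{(ik),(il)}v_kv_l^\top$, which is exactly $\Delta_i(\Sigma,G)$ of \eqref{defHi}.

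The main obstacle is the bookkeeping in this last evaluation: matching the delta-method covariance of the eigenvector to the closed form $\Delta_i(\Sigma,G)$, carefully tracking the $\sqrt{\lambda_k(\Sigma)\lambda_l(\Sigma)}$ and $\lambda_i(\Sigma)$ factors produced by the $L_n$-conjugation together with the cross-covariances $G_{(ik),(il)}$ coming from the off-diagonal entries of $W_{m,n}-I_p$, and verifying that the repeated indices contribute nothing. A secondary delicate point is the first step, where the passage from the $D_p$-reduced statement of Condition \texttt{Sym} to the full covariance $G$ relies essentially on the absolute-symmetry structure of $G$; and one should note that the standing hypothesis $X_n^\top X_n\to\Sigma$ used here is removed in \Cref{infPCAgeneral} by the same subsequence argument already employed to prove \Cref{lemgeneols}.
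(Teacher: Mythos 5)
Your proof is correct and follows the same core strategy as the paper: derive a full CLT for $W_{m,n}=U_n^\top S_{m,n}^\top S_{m,n}U_n$ from Condition~\texttt{Sym}, then push it through the eigenvalue/eigenvector maps via the delta method. The execution is somewhat different in a way worth noting. The paper introduces the intermediate rescaled matrix $\bar X_n=X_n(X_n^\top X_n)^{-1/2}\Sigma^{1/2}$ (so that $\bar X_n^\top\bar X_n=\Sigma$ exactly), writes the fluctuation in vectorized form via $\vec(\bar X_n^\top S_{m,n}^\top S_{m,n}\bar X_n-\Sigma)=\bigl((VL)\otimes(VL)\bigr)\vec(\bar U_n^\top S_{m,n}^\top S_{m,n}\bar U_n-I_p)$, invokes the mean value theorem on the event $\Omega_n$ where $\hat M_{m,n}$ lies in a differentiability neighborhood of $\Sigma$, and then extracts the variance by evaluating $\nabla\lambda_i(\Sigma)=\nu_i\otimes\nu_i$ and $Jv_i(\Sigma)=\nu_i\otimes(\lambda_{i,0}I_p-\Sigma)^\dagger$ against $\rho=(VL\otimes VL)G(LV^\top\otimes LV^\top)$. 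You instead conjugate through the SVD of $X_n$ directly, write $\hat M_{m,n}-M_n=V_nL_n(W_{m,n}-I_p)L_nV_n^\top$, linearize at $M_n$ with the classical scalar perturbation formulas, and pass to the limit; this avoids the tensor-product bookkeeping and the $\bar X_n$ detour and is arguably tidier. Two points that you treat more tersely than the paper but which do hold: (i) the passage from the $D_p$-reduced statement of Condition~\texttt{Sym} to the full covariance $G$ requires the "copy-upper-triangle-to-lower" operator $\Pi = LD_p^\top$ to satisfy $\Pi G\Pi^\top=G$, which is exactly what absolute symmetry of $G$ gives; (ii) linearizing at $M_n$ rather than at an intermediate point is safe because $M_n\to\Sigma$ and the maps are $C^2$ near $\Sigma$ with uniformly bounded second derivative, so the quadratic remainder is $O_P(\tau_{m,n}/m)=o_P(\tau_{m,n}^{1/2}m^{-1/2})$ (the paper controls this via the events $\Omega_n$ and the mean value theorem, but the conclusion is the same).
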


\begin{proof}
From \eqref{symmatform-1}
in Condition \ref{generalquadratic forms} \textup{\texttt{Sym}}
and  $G_n\to G$, we conclude that \eqref{symmatform} holds. 
Also, 
we can consider $\lambda_i$ 
and $v_i$, $i\in[p]$, as functions on 
$\R^{p^2}$;
and since all eigenvalues of $\Sigma$ are distinct,
$\lambda_i$, $v_i$, $i\in[p]$, can be viewed as smooth functions in a neighborhood 
$N\subset \R^{p^2}$ of $\vec(\Sigma)$.  
Given the assumption that $X_{n}^\top X_{n}\to \Sigma$,
it follows that for some $n_0 \in \mathbb{N}$,
$\vec(X_n^\top X_n)\in N$
for all $n \ge n_0$. 
Further, \eqref{symmatform}
implies that
$\vec\left(X_{n}^\top S_{m,n}^\top S_{m,n}X_{n}-X_{n}^\top X_{n}\right)\rightarrow_P 0$, 
so
the events $\Omega_n = \{\vec(X_n^\top S_{m,n}^\top S_{m,n} X_n)\in N\}$ have $P(\Omega_n )\to 1$. 
Now fix $i\in[p]$.
By the mean value theorem, for some $\theta_{m,n}\in [0,1]$,
\begin{align*}
&\lambda_i(X_{n}^\top S_{m,n}^\top S_{m,n}X_{n})-\lambda_i(X_{n}^\top X_{n})=\nabla\lambda_i\left(X_{n}^\top X_{n}+\theta_{m,n}(X_{n}^\top S_{m,n}^\top S_{m,n}X_{n}-X_{n}^\top X_{n}) \right)^\top \\
&\qquad\cdot \vec(X_{n}^\top S_{m,n}^\top S_{m,n}X_{n}-X_{n}^\top X_{n})I(\Omega_n )
+\left(\lambda_i(X_{n}^\top S_{m,n}^\top S_{m,n}X_{n})-\lambda_i(X_{n}^\top X_{n})\right)I(\Omega_n ^c).
\end{align*}

Since $P(\Omega_n ^c)\to 0$,
by Slutsky's theorem, the limiting distribution 
of $\lambda_i(X_{n}^\top S_{m,n}^\top S_{m,n}X_{n})-\lambda_i(X_{n}^\top X_{n})$
is determined by the first term on the event $\Omega_n $. 
Applying the continuous mapping theorem on $\Omega_n $, we have
$$\nabla\lambda_i\left(X_{n}^\top X_{n}+\theta_{m,n}(X_{n}^\top S_{m,n}^\top S_{m,n}X_{n}-X_{n}^\top X_{n}) \right)\rightarrow_P \nabla\lambda_i(\Sigma).$$
Now, 
defining $\bar X_{n}=X_{n}(X_{n}^\top X_n)^{-1/2}\Sigma^{1/2}$, 
we can write 
$$\sqrt{m} (X_{n}^\top S_{m,n}^\top S_{m,n}X_{n}-X_{n}^\top X_{n})=\sqrt{m} (X_{n}^\top X_n)^{1/2}\Sigma^{-1/2}(\bar X_{n}^\top S_{m,n}^\top S_{m,n}\bar X_{n}-\Sigma)\Sigma^{-1/2}(X_{n}^\top X_n)^{1/2}.$$ 
By Slutsky's theorem, 
it is sufficient to characterize the limiting distribution of $\sqrt{m}(\bar X_{n}^\top S_{m,n}^\top S_{m,n}\bar X_{n}-\Sigma)$.

Using the SVD $X_n=U_n L_n V_n^\top$ and that $\Sigma = V^\top L^2 V$,
we can write $\bar X_n=U_n V_n^\top V L V^\top$. 
Denoting $\bar U_n=U_n V_n^\top V$, 
the SVD of $\bar X_n$ is therefore $\bar X_n=\bar U_n L V^\top$. 
Thus,
$\bar X_n^\top S_{m,n}^\top S_{m,n}\bar X_n-\Sigma
=VL(\bar U_n^\top S_{m,n}^\top S_{m,n} \bar U_n$  $-I_p)LV^\top$, 
and
consequently, 
recalling the discussion from
 \Cref{tr},
\beq\label{vt}
\vec(\bar X_n^\top S_{m,n}^\top S_{m,n}\bar X_n-\Sigma)
=
\big((VL) \otimes (VL)\big) \vec(\bar U_n^\top S_{m,n}^\top S_{m,n} \bar U_n-I_p).
\eeq
Additionally, since $X_n^\top X_n \to \Sigma$ and the eigenvalues of $\Sigma$ are distinct,
and as our conventions resolve the sign-ambiguity of eigenvectors,
we have that $V_n \to V$.
Since $(U_n)_{n\ge 1}$ satisfies Condition $\mathfrak{P}'$, 
by Condition \ref{generalquadratic forms} \textup{\texttt{Sym}} %\eqref{symmatform} 
and applying Slutsky's theorem
to $U_n^\top S_{m,n}^\top S_{m,n}U_n-I_p=V_n^\top V(\bar U_n^\top S_{m,n}^\top S_{m,n}\bar U_n-I_p)V^\top V_n$, 
we have
$\sqrt{m} \tau_{m,n}^{-1/2}\vec(\bar U_n^\top S_{m,n}^\top S_{m,n}\bar U_n$ $-I_p)\Rightarrow\N(0, G)$. Consequently,
by \eqref{vt}, with 
$\rho:=\big((VL) \otimes (VL)\big)G\big((LV^\top )\otimes (LV^\top)\big)$,
we have 
$$\sqrt{m}\tau_{m,n}^{-1/2}\vec(\bar X_{n}^\top S_{m,n}^\top S_{m,n}\bar X_{n}-\Sigma)\Rightarrow\N(0, \rho).$$

Next, we recall 
the gradient
$\nabla\lambda_i(\Sigma)$ and 
the Jacobian
 $Jv_i(\Sigma)$, in a form appropriate for our use.
Based on Theorem 8.9 in \cite{magnus2019matrix}, we know that $\mathrm{d} \lambda_i=v_i^\top \mathrm{d}(X) v_i$. This implies that $\mathrm{d}\lambda_i=(v_i\otimes v_i)^\top \mathrm{d}(\vec(X))$. 
Therefore, we can conclude that $\nabla\lambda_i(\Sigma)=\nu_i\otimes \nu_i$.
Similarly, 
%from \cite{magnus2019matrix}, 
$\mathrm{d}v_i=(\lambda_{i,0} I_p-\Sigma)^\dag \mathrm{d}(X) v_i$, and thus $\mathrm{d}v_i=\left(v_i\otimes (\lambda_{i,0} I_p-\Sigma)^\dag\right)^\top \mathrm{d}(\vec(X))$. Consequently, we have $Jv_i(\Sigma)=\nu_i\otimes \left(\lambda_{i,0} I_p-\Sigma\right)^\dag$.

Applying Slutsky's theorem and combining terms, we find that
\begin{align*}
&\sqrt{m}\tau_{m,n}^{-1/2}\left(\lambda_i(X_{n}^\top S_{m,n}^\top S_{m,n}X_{n})-\lambda_i(X_{n}^\top X_{n})\right)
\Rightarrow
\N\bigg(0,(\nu_i\otimes \nu_i)^\top \rho (\nu_i\otimes \nu_i)\bigg),
\end{align*}
as well as
\begin{align*}
&\sqrt{m}\tau_{m,n}^{-1/2}\left(v_i(X_{n}^\top S_{m,n}^\top S_{m,n}X_{n})-v_i(X_{n}^\top X_{n})\right)
\Rightarrow \N\bigg(0,\big(\nu_i\otimes (\lambda_{i,0} I_p-\Sigma)^\dag\big)^\top \rho \big(\nu_i\otimes (\lambda_{i,0} I_p-\Sigma)^\dag\big)\bigg).
\end{align*}

Since 
$V=(\nu_1,\nu_2,\ldots,\nu_p)$, 
we have $LV^\top \nu_i=Le_i=\lambda_{i,0}e_i$
for each $i\in[p]$. 
Therefore, 
$\big((LV^\top) \otimes (LV^\top)\big)(\nu_i\otimes \nu_i)=(LV^\top \nu_i)\otimes (LV^\top \nu_i)=\lambda_{i,0}^2 e_i\otimes e_i$.
 Thus,
$$(\nu_i\otimes \nu_i)^\top \rho (\nu_i\otimes \nu_i)=\lambda_{i,0}^2(e_i\otimes e_i)^\top G(e_i\otimes e_i)=\lambda_{i,0}^2G_{(ii),(ii)}.$$
This shows \eqref{asymptoticnormaleigenvalue}.
Moreover, $(\lambda_{i,0} I_p-\Sigma)^\dag=V^\top \diag((\lambda_{i,0}-\lambda_{k,0})^{-1}(1-\delta_{ki}))_{k=1}^p V$, 
where we interpret $0/0:=0$.
Also, $LV^\top(\lambda_{i,0} I_p-\Sigma)^\dag=\diag(\sqrt{\lambda_{k,0}}(\lambda_{i,0}-\lambda_{k,0})^{-1}(1-\delta_{ki}))_{k=1}^p V^\top$. 
Denote
$\alpha_k=\sqrt{\lambda_{k,0}}/(\lambda_{i,0}-\lambda_{k,0})$ for all $k\neq i$ and $\alpha_i=0$.
Also let 
$\tilde{V}=\diag(\sqrt{\lambda_{k,0}}(\lambda_{i,0}-\lambda_{k,0})^{-1}(1-\delta_{ki}))_{k=1}^p V^\top=(\alpha_1\nu_1,\ldots,\alpha_p\nu_p)^\top$. 
Then,
\begin{align*}
&\big(\nu_i\otimes (\lambda_{i,0} I_p-\Sigma)^\dag\big)^\top \rho \big(\nu_i\otimes (\lambda_{i,0} I_p-\Sigma)^\dag\big)
=\lambda_{i,0}(e_i\otimes \tilde{V})^\top G(e_i\otimes \tilde{V}).
\end{align*}
We denote by $\tilde{G}^i$
the $p\times p$ principal submatrix of $G$, for which $\tilde{G}^i_{kl}=G_{(ik),(il)}$ for all $k,l\in[p]$.
Then $$
(e_i\otimes \tilde{V})^\top G(e_i\otimes \tilde{V})=\tilde{V}^\top\tilde{G}^i\tilde{V}=(\alpha_1\nu_1,\ldots,\alpha_p\nu_p)\tilde{G}^i(\alpha_1\nu_1,\ldots,\alpha_p\nu_p)^\top=\sum_{k,l}\alpha_k\alpha_l(\tilde{G}^i)_{kl}\nu_k\nu_l^\top.
$$
Hence, 
$$
\lambda_{i,0}(e_i\otimes \tilde{V})^\top G(e_i\otimes \tilde{V})=\sum_{k\neq i, l\neq i}\frac{\lambda_{i,0}\sqrt{\lambda_{k,0}\lambda_{l,0}}}{(\lambda_{i,0}-\lambda_{k,0})(\lambda_{i,0}-\lambda_{l,0})}G_{(ik),(il)}\nu_k\nu_l^\top.
$$
Due to the definition of $\Delta_i$ from \eqref{defHi},
this proves \eqref{asymptoticnormaleigenvector}.
    
\end{proof}

We first show \Cref{infPCAgeneral}
for a sequence of data matrices $(X_n)_{n\ge 1}$ and covariance matrices $(G_n)_{n\ge 1}$ satisfying the conditions of Proposition \ref{delta}.
In that case, by using that 
\eqref{asymptoticnormaleigenvalue} holds due to Proposition \ref{delta},
and from Slutsky's theorem, 
%and considering that $\Lambda_{n,i}\to\lambda_{i,0}$, 
we conclude \eqref{infsingval}.
Next,
recall that 
from the continuous mapping theorem, 
since $X_n^\top X_n\to \Sigma$, $G_n\to G$, and $X_n^\top S_{m,n}^\top S_{m,n} X_n-X_n^\top X_n\rightarrow_P 0$,
we have
$\Lambda_k\to \lambda_{k,0}$, $\hat{\Lambda}_{m,n,k}\rightarrow_P\lambda_{k,0}$,
$v_k\to \nu_k$, and
 $\hat{v}_{m,n,k}\rightarrow_P\nu_k$ for all $k\in[p]$. 
This shows that $\hat{\Delta}_{m,n,i}$ 
is well-defined with probability tending to unity, and $\hat{\Delta}_{m,n,i}\to_P \Delta_i(\Sigma, G)$,
$\Delta_{n,i}\to \Delta_i(\Sigma, G)$. Since $\liminf_{n\to\infty}c^\top \Delta_{n,i}c>0$, there exists $\ep>0$ such that $c^\top \Delta_{n,i}c>\ep$ for all sufficiently large $n\in\NN$, and as a result, $c^\top \Delta_i(\Sigma, G)c\ge\ep>0$. Thus, the left-hand side in \eqref{infsingvec} is well-defined for sufficiently large $n\in\NN$. 
Moreover, using  \eqref{asymptoticnormaleigenvector} and
Slutsky's theorem, 
\eqref{infsingvec} holds under the conditions of Proposition \ref{delta}.

Finally, we use a subsequence argument to show \Cref{infPCAgeneral} under the conditions from its statement.
Without loss of generality we can assume that $\|X_n\| = 1$ by working with $X_n/\|X_n\|$.
Indeed, 
note that $\Delta_i$  from \eqref{defHi} is a homogeneous function with respect to $\Xi$, i.e.,
for any $c >0$,
any
$G\in\R^{p^2\times p^2}$, and all feasible choices of $\Xi\in\Sp$, 
%\begin{equation}\label{homogeneous_covar}
$\Delta_i(\Xi, G)=\Delta_i(c\Xi, G)$.
%\end{equation}
We observe that the left-hand side of \eqref{infsingval} and \eqref{infsingvec}
remains invariant when we scale the data $X_n$ to $X'_n=X_n/g_n$ for some nonzero scaling sequence $(g_n)_{n\ge 1}$. 
In this case, we have $\hat{\Lambda}'_{m,n,i}=\hat{\Lambda}_{m,n,i}/g_n^2$ and $\Lambda'_{n,i}=\Lambda_{n,i}/g_n^2$, leading to $\hat{\Lambda}_{m,n,i}^{-1}(\hat{\Lambda}_{m,n,i}-\Lambda_{n,i})=(\hat{\Lambda}_{m,n,i}')^{-1}(\hat{\Lambda}'_{m,n,i}-\Lambda'_{n,i})$, and 
due to 
the homogeneity of the function $\Delta_i$, 
we have
$(c^\top\hat{\Delta}_{m,n,i}c)^{-1/2}c^\top(\hat{v}_{m,n,i}-v_{n,i})=(c^\top\hat{\Delta}'_{m,n,i}c)^{-1/2}c^\top(\hat{v}'_{m,n,i}-v'_{n,i})$. 
Hence, from now on, we assume without loss of generality that $\|X_n\| = 1$.

Let
\begin{align}\label{deftw}
& T_n:=X_n^\top X_n,\qquad
W_n:=m^{1/2}\tau_{m,n}^{-1/2}(G_n)_{(ii),(ii)}^{-1/2}\hat{\Lambda}_{m,n,i}^{-1}(\hat{\Lambda}_{m,n,i}-\Lambda_{n,i}),\\ 
&\chi_n:=m^{1/2}\tau_{m,n}^{-1/2}(c^\top \Delta_{n,i}c)^{-1/2} c^\top(\hat{v}_{m,n,i}-v_{n,i}),\qquad
\tilde\chi_n:=m^{1/2}\tau_{m,n}^{-1/2}(c^\top\hat{\Delta}_{m,n,i}c)^{-1/2}c^\top(\hat{v}_{m,n,i}-v_{n,i}).\nonumber
\end{align}

We have that $\|T_n\|=
1$, and the singular values of $T_n$ are $(\ell_{n,i}^2/\ell_{n,1}^2)_{i\in[p]}
= (\ell_{n,i}^2)_{i\in[p]}$. 
Thus, due to Condition \ref{spectralcondition} 
\textup{\texttt{B}},
 there is a compact set $K \subset \Sp$ 
such that for all $n\ge 1$, $T_n \in K$. 
Since there exists $0<c\le C$, 
such that $cI_{p(p+1)/2}\preceq D_p^\top G_n D_p\preceq CI_{p(p+1)/2}$, there is a compact set $\tilde{K}\subset \R^{p(p+1)/2\times p(p+1)/2}$ such that for all $n\ge 1$, $D_p^\top G_n D_p \in \tilde{K}$.

Moreover, \eqref{infsingval} 
is equivalent to 
$d_1(L(W_n),\N(0,1))\to 0$.
Now, consider any subsequence $(n_k)_{k\ge 1}$ of $\NN$.
Since $T_{n_k} \in K, D_p^\top G_{n_k} D_p \in \tilde{K}$
for all $k\ge 1$  and $K, \tilde{K}$ are compact,
there exists a further sub-subsequence
$(n_k')_{k\ge 1}$ 
of $(n_k)_{k\ge 1}$
such that 
$T_{n_k'} \to \Sigma$ and $D_p^\top G_{n_k'} D_p \to D_p^\top GD_p$, for some $\Sigma \in \Sp$ and absolutely symmetric $p^2\times p^2$ matrix $G$. 
Since
all eigenvalues of $T_n$ are uniformly separated
by Condition \ref{spectralcondition} \textup{\texttt{B}},
all eigenvalues of $\Sigma$ are distinct. 
Then $d_1(L(W_{n_k'}),\N(0,1))\to 0$
holds along the subsequence 
$(n_k')_{k\ge 1}$. %by Corollary \ref{infPCAgeneral}.
Thus, for any subsequence 
$(n_k)_{k\ge 1}$, 
there exists a further sub-subsequence
$(n_k')_{k\ge 1}$
of $(n_k)_{k\ge 1}$
such that
$d_1(L(W_{n_k'}),\N(0,1))\to 0$.
Therefore, $d_1(L(W_{n}),\N(0,1))\to 0$,
proving \eqref{infsingval}.

The proof of \eqref{infsingvec} is similar,
with a few differences. 
%and  is omitted to avoid repetition
We prove the result for $\chi_n$ first. By $c^\top \Delta_{n,i}c>\ep$ for all $n\ge 1$, for every possible sub-subsequence limit $\Sigma$, $c^\top \Delta_i(\Sigma, G) c\ge \ep>0$.
This shows that     
$\chi_n$ and $\tilde\chi_n$ are well defined for all sufficiently large $n\in\NN$. 
As in the previous proof, $d_1(L(\chi_{n}),\N(0,1))\to 0$.
Since $(c^\top \hat{\Delta}_{m,n,i} c)^{-1}c^\top \Delta_{n,i}c\to_P 1$, we have $d_1(L(\tilde\chi_n),\N(0,1))\to 0$ by Slutsky's theorem. This finishes the proof.

If Condition \ref{generalquadratic forms} \textup{\texttt{Sym-1'}} holds, then $G_{(ii),(ii)}=2+\alpha$ and 
$\Delta_{n,i}=\sum_{k\neq i}\Lambda_{n,i}\Lambda_{n,k}/(\Lambda_{n,i}-\Lambda_{n,k})^2 v_{n,k}v_{n,k}^\top$. Since $\Lambda_{n,i}\Lambda_{n,k}/(\Lambda_{n,i}-\Lambda_{n,k})^2\ge \Lambda_{n,p}^2/\Lambda_{n,1}^2> c_0^2$, 
for some $c_0>0$,
we can conclude $c^\top \Delta_{n,i}c>c_0^2 \ep$
if $\sum_{k\neq i} (c^\top v_{n,k})^2>\ep$, which is equivalent to $(c^\top v_{n,i})^2<1-\ep$. Based on the proof above, \eqref{infsingval'} and \eqref{infsingvec'} hold.

\subsection{Proof of \texorpdfstring{Corollary \ref{rndpca}}{Corollary \ref{rndpca}}}
\label{pfrndpca}

Recall from \Cref{infPCAgeneral} that conditional on $(X_n)_{n\ge 1}$, \eqref{infsingval} holds. 
This implies that we also have the unconditional result
\begin{equation}\label{infPCA1}
\sqrt{\frac{m}{\tau_{m,n}(G_n)_{(ii),(ii)}}}\hat{\Lambda}_{m,n,i}^{-1}\left(\hat{\Lambda}_{m,n,i}-\Lambda_{n,i}\right)\Rightarrow \N(0,1).
\end{equation} 
Moreover, 
from \cite{waternaux1976asymptotic} on inference for one eigenvalue, if we define $\kappa_4^i$ as the kurtosis
of the marginal
distribution
of the $i$-th dimension, we have 
\begin{equation}\label{infPCA2}
\sqrt{\frac{n}{2+\kappa_{4}^i/\lambda_i^{* 2}}}\lambda_i^{* -1}\left(\Lambda_{n,i}-\lambda_i^*\right)\Rightarrow \N(0,1).
\end{equation}
From \eqref{infPCA1} and \eqref{infPCA2}, we have $\Lambda_{n,i}\to_P \lambda_i^*$ and $\hat{\Lambda}_{m,n,i}^{-1}\to_P \lambda_i^*$.
If $m\cdot\left(n\tau_{m,n}(G_n)_{(ii),(ii)}\right)^{-1} $ $\to 0$, from Slutsky's theorem, we can conclude \eqref{ri}.

\subsection{Proof of \texorpdfstring{\Cref{PCAsrht}}{Theorem \ref{PCAsrht}}}
\label{pfPCAsrht}

By Lemma \ref{lemsymqfsrht}, if $(U_n)_{n\ge 1}$ satisfies Condition $\mathfrak{P}'$,
 Condition \ref{generalquadratic forms} \texttt{Sym-1}
 holds with
$G=I_{p^2}+P_p+Q_p$ and $\tau_{m,n}=1-\gamma_n$. Therefore, Condition \ref{generalquadratic forms} \textup{\texttt{Sym-1'}}
 holds with $\alpha=1$.
Then since Condition \ref{spectralcondition} \textup{\texttt{B}} is satisfied, 
we conclude that \eqref{infPCAvalsrht} and \eqref{infPCAvecsrht} hold
due to 
\eqref{infsingval'} and \eqref{infsingvec'} from
\Cref{infPCAgeneral}, respectively.

\subsection{Proof of \texorpdfstring{\Cref{thhadamard}}{Theorem \ref{thhadamard}}}

From Lemma \ref{lemboundqfsrht}, we conclude that for any 
$(b_n)_{n\ge 1}$ such that
$b_n\in\psp$ for all $n\ge 1$, 
$({U_nb_n, U_nb_n})_{n\ge 1}$ satisfies Condition \ref{generalquadratic forms} \textup{\texttt{Bounded}}.
If \eqref{assudelocalized} holds,
$({U_nb_n, \bar{\ep}_n})_{n\ge 1}$ satisfies Condition $\mathfrak{P}$. 
Thus, from Lemma \ref{lemgeneralqfsrht}
applied to 
$(a_{n}, \ta_{n})_{n\ge 1} = ({U_nb_n, \bar{\ep}_n})_{n\ge 1}$, 
Condition \ref{generalquadratic forms} \textup{\texttt{1-dim'}} holds for $({U_nb_n, \bar{\ep}_n})_{n\ge 1}$ with $\alpha=1$.
Then by \Cref{lemgeneols}, we conclude the first result.

Similarly, if \eqref{assudelocalized1} holds,
then from Lemma \ref{lemgeneralqfsrht}, it follows that $({U_nb_n, \widebar{X_n\beta_n}})_{n\ge 1}$ satisfies Condition $\mathfrak{P}$. 
Then the second claim is established similarly to the first one above.

\subsection{Proof of \texorpdfstring{Lemma \ref{lemboundqfsrht}}{Lemma \ref{lemboundqfsrht}}}
\label{pflemboundqfsrht}

Recall that the sketching matrix is defined as
$S_{m,n} = \sqrt{n/m}B H D$, where $H = H_l$ is the Hadamard matrix.
Denote $R_i:=\left(H D a_n\right)_i$ and $\tiR_i:=\left(H D \ta_n\right)_i$ for $i\in[n]$. 
Recall $\gamma_n = m/n$
and $B = \diag(B_1, \ldots, B_n)$, where 
$B_i$ are 
i.i.d.~Bernoulli random variables with a success probability of $\gamma_n^{-1}$. 
Let $h_{kl}$, $k,l\in[n]$ be the entries of $H$, and $D = \diag(D_1, \ldots, D_n)$.
Then, letting $a_n = (a_{n,1}, \ldots, a_{n,n})^\top$ and $\tilde{a}_n = (\tilde{a}_{n,1}, \ldots, \tilde{a}_{n,n})^\top$, we have 
$R_i=\sum_{j\in[n]}  h_{ij}D_j a_{n,j}.$
Moreover, we can write
$
a_n^{\top} S_{m, n}^{\top} S_{m, n} \tilde{a}_n=\sum_{i=1}^n R_i \tiR_i \gamma_n^{-1} B_i$
and thus find that with 
\begin{align}\label{cvh}
& s_n^2:=
\Var{a_n^{\top} S_{m, n}^{\top} S_{m, n} \tilde{a}_n \mid D}
=\frac{1-\gamma_n}{\gamma_n} \sum_{i=1}^n R_i^2 \tiR_i^2,
\end{align}
we have 
\begin{align*}
& \E\left[s_n^2\right]
 =\frac{1-\gamma_n}{\gamma_n} \E \sum_{i=1}^n \sum_{j_1, j_2, j_3, j_4=1}^n h_{i j_1} h_{i j_2} h_{i j_3} h_{i j_4} D_{j_1} D_{j_2} D_{j_3} D_{j_4} a_{n, j_1} a_{n, j_2} \tilde{a}_{n, j_3} \tilde{a}_{n, j_4} \nonumber\\
& =\frac{1-\gamma_n}{\gamma_n} n^{-1}\left(\sum_{j=1}^n a_{n, j}^2 \tilde{a}_{n, j}^2+\sum_{j_1 \neq j_2} a_{n, j_1}^2 \tilde{a}_{n, j_2}^2+2 \sum_{j_1 \neq j_2} a_{n, j_1} \tilde{a}_{n, j_1} a_{n, j_2} \tilde{a}_{n, j_2}\right) \nonumber\\
& =\frac{1-\gamma_n}{\gamma_n} n^{-1}\left(1+2\left(a_n^{\top} \tilde{a}_n\right)^2-2 \sum_{j=1}^n a_{n, j}^2 \tilde{a}_{n, j}^2\right),\nonumber
\end{align*}
where the first step uses that $B_i$ are i.i.d.~Bernoulli$\left(\gamma_n\right)$ variables, the third step uses that $D_i$ are i.i.d.~variables equal to $\pm 1$ with probability $1/2$, and the last step uses that $a_n$ and $\ta_n$ are of unit norm. We also have
$$
\Var{\E\left[a_n^{\top} S_{m, n}^{\top} S_{m, n} \tilde{a}_n \mid D\right]}=\Var{\sum_{i=1}^n R_i \tiR_i}=\Var{a_n^{\top} \tilde{a}_n}=0 .
$$
Combining the above two equations, we have $\Var{a_n^{\top} S_{m, n}^{\top} S_{m, n} \tilde{a}_n}=O\left((1-\gamma_n)/m\right)$, and Condition \ref{generalquadratic forms} \textup{\texttt{Bounded}} follows.

\subsection{Proof of \texorpdfstring{Lemma \ref{lemgeneralqfsrht}}{Lemma \ref{lemgeneralqfsrht}}}

\begin{proof}
For all $i\in [n]$, 
recall the
notations $R_i = (HDa_n)_i$ and $\tiR_i=(HD\ta_n)_i$
from the proof of Lemma \ref{lemboundqfsrht} in \Cref{pflemboundqfsrht}.
Recall that
$a_{n}^\top S_{m,n}^\top S_{m,n} \ta_{n}=\sum_{i=1}^n R_i\tiR_i \gamma_{n}^{-1}B_i$, is a weighted sum of the $n$ independent Bernoulli variables $B_i$, $i\in[n]$, given $D$. 
With $s_n^2$ from \eqref{cvh},
we will show that for a small $\delta>0$, 
\begin{equation}\label{hadalyapunov}
    \frac{\sum_{i=1}^n |R_i \tiR_i|^{2+\delta}\gamma_n^{-2-\delta}\E |B_i-\E B_i|^{2+\delta}}{s_n^{2+\delta}}=o_P(1),
\end{equation}
and then, by the Lyapunov central limit theorem and Lemma \ref{rrp} below, 
the result will follow. 
To verify \eqref{hadalyapunov}, we will show the following bound: 
\begin{equation}\label{bdri}
\mathbb{P}\left\{\max_{1\le  i\le  n}\{|R_i|\vee |\tiR_i|\}\le  \sqrt{\frac{\log n}{n}} \right\} \ge 1-\frac{8}{n}.
\end{equation}
By writing $R_i = \sum_{j=1}^n h_{ij}D_j a_{n,j}$, we can apply Hoeffding's inequality to obtain $\mathbb{P}(|R_i|>t) \le 2 \exp{(-2nt^2)}$
for any $t>0$. Then \eqref{bdri} holds by taking $t=n^{-1/2}\log^{1/2}{n}$ and further taking a union bound over $i\in [n]$.
Define the event 
$$\Xi:=\left\{\max_{1\le  i\le  n}\{|R_i|\vee |\tiR_i|\}\le  (n^{-1}\log n)^{1/2}\right\} \bigcap \left\{1/2< n \sum_{i=1}^n R_i^2 \tiR_i^2 < 4\right\}.$$
By \eqref{bdri} and Lemma \ref{rrp} below, the event $\Xi$ holds with probability tending to one. 
On $\Xi$, we have
$\sum_{i=1}^n (R_i \tiR_i)^{2+\delta} \le  4(n^{-1}\log{n})^{\delta}/n$ and  $s_n^{2+\delta}>[2n\gamma_n/(1-\gamma_n)]^{-1-\delta/2}$. 
We also find 
$\E|B_i -\E B_i|^{2+\delta} = \gamma_n(1-\gamma_n)^{2+\delta}+\gamma_n^{2+\delta}(1-\gamma_n) \le  \gamma_n(1-\gamma_n).$
Combining these bounds, the left side of \eqref{hadalyapunov} is bounded by $C(m^{-1/2}\log n)^{\delta}/(1-\gamma_n)^{\delta/2}$ for some constant $C>0$.
Therefore, since $m/\log^2 n\to\infty$ and $\limsup m/n <1$, we conclude \eqref{hadalyapunov}.

\begin{lemma}\label{rrp}
We have
    \begin{equation}\label{varlimhada}
    n \sum_{i=1}^n R_i^2 \tiR_i^2 -[ 1+2(a_n^\top \ta_n)^2] =o_P(1).
\end{equation}
\end{lemma}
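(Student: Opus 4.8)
The plan is to set $Z_n:=n\sum_{i=1}^n R_i^2\tiR_i^2$ and to prove the sharper statement that $\E Z_n\to 1+2(a_n^\top\ta_n)^2$ and $\Var{Z_n}\to 0$; then \eqref{varlimhada} follows from Chebyshev's inequality. The first ingredient is the dyadic (bitwise) structure of the Hadamard matrix. Identifying $[n]=\{1,\dots,2^l\}$ with $\{0,1\}^l$ via binary expansion, an easy induction on $t$ using the recursion defining $H_{t+1}$ gives $(H_l)_{ij}=n^{-1/2}(-1)^{\langle i,j\rangle}$, where $\langle i,j\rangle:=\sum_k i_kj_k\bmod 2$. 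Hence, writing $\oplus$ for bitwise XOR and using $\sum_{i\in\{0,1\}^l}(-1)^{\langle i,c\rangle}=n\,I\{c=0\}$, we get $\sum_i (H_l)_{ij_1}(H_l)_{ij_2}(H_l)_{ij_3}(H_l)_{ij_4}=n^{-1}I\{j_1\oplus j_2\oplus j_3\oplus j_4=0\}$, and substituting $R_i=\sum_j(H_l)_{ij}D_ja_{n,j}$, $\tiR_i=\sum_j(H_l)_{ij}D_j\ta_{n,j}$ yields
\[
Z_n=\sum_{j_1\oplus j_2\oplus j_3\oplus j_4=0} D_{j_1}D_{j_2}D_{j_3}D_{j_4}\,a_{n,j_1}a_{n,j_2}\ta_{n,j_3}\ta_{n,j_4}.
\]

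Next I would split $Z_n$ by the coincidence patterns of $(j_1,j_2,j_3,j_4)$. Since $D_j^2=1$, and the constraint $j_1\oplus j_2\oplus j_3\oplus j_4=0$ rules out any pattern with exactly one or exactly three coinciding indices, the only contributing patterns are: all four distinct; two disjoint equal pairs (namely $\{j_1=j_2,\,j_3=j_4\}$, $\{j_1=j_3,\,j_2=j_4\}$, or $\{j_1=j_4,\,j_2=j_3\}$); or all four equal. The last two groups sum to a deterministic quantity, which must equal $\E Z_n=1+2(a_n^\top\ta_n)^2-2\sum_j a_{n,j}^2\ta_{n,j}^2$ --- this is precisely the moment computation already carried out in the proof of Lemma~\ref{lemboundqfsrht} --- while the ``all distinct'' group, which I will call $Z_n^\circ$, has mean zero, so $\Var{Z_n}=\E[(Z_n^\circ)^2]$. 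Setting $\eta_n:=\min\{\|a_n\|_\infty,\|\ta_n\|_\infty\}$, Condition $\mathfrak P$ gives $\eta_n\to 0$ and $\sum_j a_{n,j}^2\ta_{n,j}^2\le\eta_n^2\to 0$, so $\E Z_n\to 1+2(a_n^\top\ta_n)^2$; it remains to bound $\E[(Z_n^\circ)^2]$.

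For the variance I would use orthogonality of Rademacher monomials: $\E[D_{j_1}\cdots D_{j_4}D_{k_1}\cdots D_{k_4}]=1$ exactly when $\{j_1,\dots,j_4\}=\{k_1,\dots,k_4\}$ as four-element sets, and since XOR is permutation invariant, $(k_1,\dots,k_4)$ then ranges over the $24$ permutations of $(j_1,\dots,j_4)$. Thus $\E[(Z_n^\circ)^2]$ is a sum of at most $24$ terms of the form $\sum'\prod_{m=1}^4 w_m(j_m)$, where $\sum'$ runs over distinct tuples with $j_1\oplus\cdots\oplus j_4=0$ and each factor $w_m(j_m)$ is one of $a_{n,j_m}^2$, $\ta_{n,j_m}^2$, $a_{n,j_m}\ta_{n,j_m}$; a power count (total $a$-degree and $\ta$-degree each equal $4$) shows the number of ``mixed'' factors $a_{n,j_m}\ta_{n,j_m}$ is $0$, $2$, or $4$, the remaining factors being split evenly between the $a^2$- and $\ta^2$-types. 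To bound one such term I would solve the constraint for one index, pass to absolute values (which removes the distinctness restriction and only increases the sum, leaving a free sum over the other three indices), bound the chosen index's factor by its supremum --- $\|a_n\|_\infty^2$, $\|\ta_n\|_\infty^2$, or $\max_j|a_{n,j}\ta_{n,j}|\le\eta_n$ (since $|a_{n,j}|\le\|a_n\|_2=1$ and symmetrically) according to its type --- and note that, picking for the chosen index whichever square-type factor is present with vanishing sup-norm (or the mixed type when no square is present), the remaining triple sum factors into copies of $\|a_n\|_2^2=\|\ta_n\|_2^2=1$ and $\sum_j|a_{n,j}\ta_{n,j}|\le 1$ (Cauchy--Schwarz), hence is at most $1$. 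This makes each term $O(\eta_n)$, so $\E[(Z_n^\circ)^2]=O(\eta_n)\to 0$, and Chebyshev's inequality finishes the proof.

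I expect the main obstacle to be the case analysis in the last step --- enumerating the coincidence and permutation patterns and checking that \emph{every} one of the $\le 24$ terms is controlled. The delicate case is the fully mixed term $\sum_{j_1\oplus\cdots\oplus j_4=0}\prod_{m=1}^4 a_{n,j_m}\ta_{n,j_m}$, where no factor is a square that can be bounded by an $\ell_\infty$-norm that vanishes by assumption; here one must instead use $\max_j|a_{n,j}\ta_{n,j}|\le\eta_n$ together with $\sum_j|a_{n,j}\ta_{n,j}|\le 1$, which is exactly where the one-sided $\ell_\infty$-delocalization in Condition $\mathfrak P$ (rather than a two-sided condition) enters.
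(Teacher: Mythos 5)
Your proposal is correct and uses the same core ideas as the paper: the XOR / dyadic characterization of $\sum_i h_{ij_1}h_{ij_2}h_{ij_3}h_{ij_4}$ (the paper's Lemma~\ref{4i}), a decomposition of $n\sum_i R_i^2\tiR_i^2$ by coincidence patterns of $(j_1,\ldots,j_4)$, identifying the deterministic pieces with the expected value already computed in Lemma~\ref{lemboundqfsrht}, and then a second-moment/Chebyshev bound for the all-distinct part. The one place where you diverge is the finishing step for the variance bound: the paper handles both the deterministic piece $I_{42}$ and the variance of the all-distinct piece $I_{41}$ by writing $\ta_n = t_1 a_n + t_2 a_n'$ (the component along $a_n$ plus an orthogonal complement) and tracking cross-terms, whereas you solve the XOR constraint for one index and bound the resulting triple sum directly via sup-norm and Cauchy--Schwarz ($\max_j|a_{n,j}\ta_{n,j}|\le\eta_n$, $\sum_j|a_{n,j}\ta_{n,j}|\le 1$). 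Your closing argument is a bit more streamlined and makes it transparent how the one-sided $\ell_\infty$-delocalization in Condition~$\mathfrak{P}$ is used in every one of the (at most $24$) permutation terms, while the paper's orthogonal decomposition is more parallel to its treatment of $I_{42}$. Both routes are valid; yours is essentially a cosmetic simplification of the same argument, not a genuinely different proof.
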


\begin{proof}
Below, all sums over indices such as $j_1,j_2,\cdots$ will range over the index set $[n]$, and we recall $H=(h_{ij})_{i,j\in[n]}$. We start by considering
\begin{equation*}\begin{aligned}
   &\qquad\qquad\qquad\qquad\qquad n\sum_{i=1}^n R_i^2 \tiR_i^2  =  n \sum_{i=1}^n \left(\sum_{j=1}^n h_{ij} D_j a_{n,j}\right)^2 \left(\sum_{j=1}^n h_{ij} D_j \ta_{n,j}\right)^2 \\& =
   n \sum_{i=1}^n\! \left( \sum_{j=1}^n  h_{ij}^2 a_{n,j}^2 + \sum_{j_1 \neq j_2} h_{ij_1}h_{ij_2} D_{j_1}D_{j_2}a_{n,j_1}a_{n,j_2}\right)\!\!  \left( \sum_{j=1}^n  h_{ij}^2 \ta_{n,j}^2 + \sum_{j_1 \neq j_2} h_{ij_1}h_{ij_2} D_{j_1}D_{j_2}\ta_{n,j_1}\ta_{n,j_2}\right)\!.
\end{aligned}\end{equation*}
We can thus decompose $n\sum_{i=1}^n R_i^2 \tiR_i^2$ into the sum of the following terms:
\begin{equation*}\begin{aligned}
    I_1 & = n \sum_{i=1}^n \sum_{j_1,j_2} h_{i j_1}^2 h_{i j_2}^2 a_{n,j_1}^2 \ta_{n,j_2}^2;\qquad
    I_2 =  n \sum_{i=1}^n \left( \sum_{j_1 \neq j_2} h_{ij_1}h_{ij_2} D_{j_1}D_{j_2}a_{n,j_1}a_{n,j_2}\right)\left(\sum_{j=1}^n  \ta_{n,j}^2\right);\\
    I_3 & = n \sum_{i=1}^n \left( \sum_{j_1 \neq j_2} h_{ij_1}h_{ij_2} D_{j_1}D_{j_2}\ta_{n,j_1}\ta_{n,j_2}\right)\left(\sum_{j=1}^n  a_{n,j}^2\right);
\end{aligned}
\label{g}\end{equation*}
and 
\begin{align*}
    I_4 & = n \sum_{i=1}^n \left( \sum_{j_1 \neq j_2}\sum_{j_3\neq j_4} h_{ij_1}h_{ij_2}h_{ij_3}h_{ij_4} D_{j_1}D_{j_2}D_{j_3}D_{j_4}a_{n,j_1}a_{n,j_2}\ta_{n,j_3}\ta_{n,j_4}\right)\\ 
    & = n \sum_{i=1}^n \left(\sum_{(j_1,j_2,j_3,j_4)} \star \; + 2\sum_{j_1=j_3, j_2 = j_4,j_1 \neq j_2} \star \; + 2 \sum_{j_1 = j_3, j_2\neq j_4} \star \;\right) =: I_{41}+I_{42}+I_{43},
\end{align*}
where $\star$ stands for the term inside the sum on the first line, and $(j_1,j_2,j_3,j_4)$ shows that these four indices must take different values. 

To calculate $I_1$, we observe that $h_{ij} = \pm1/\sqrt{n}$ for all $i,j$, so that 
$I_1 = \sum_{j_1,j_2} a_{n,j_1}^2 \ta_{n,j_2}^2  =1$.
To calculate $I_2$ and $I_3$, we change the order of summation and use that $\sum_{i=1}^n h_{ij_1} h_{ij_2} = 0$ when $j_1 \neq j_2$, to find that $I_2 = I_3 = 0$.
Similarly, we find that $I_{43} = 0$.

Next, $I_{42} = 2 \sum_{j_1\neq j_2} a_{n,j_1} a_{n,j_2}\ta_{n,j_1}\ta_{n,j_2}$, since $h_{ij}^2 = 1/n$ and $D_j^2=1$ for any $i,j \in [n]$.
If $a_n \neq \pm\ta_n$,
define $a'_n = (I-a_n a_n^\top) \ta_n/\|(I-a_n a_n^\top) \ta_n\|$, 
otherwise, define $a_n'$ as an arbitrary unit length vector orthogonal to $a_n$,
so that we have
\begin{equation}\label{decomptan}
    \ta_n = a_n a_n^\top \cdot \ta_n +(I_n-a_n a_n^\top)\cdot \ta_n
    =(a_n^\top \ta_n)a_n+(1-(a_n^\top \ta_n)^2)^{1/2}a'_n.
\end{equation}
Moreover, denote $t_1 = a_n^\top \ta_n, t_2=(1-(a_n^\top \ta_n)^2)^{1/2}$. 
Then, we can express $I_{42}$ as
\begin{equation*}\begin{aligned}
    &I_{42} = 2 \sum_{j_1 \neq j_2} a_{n,j_1} a_{n,j_2} (t_1 a_{n,j_1}+t_2 a'_{n,j_1})(t_1 a_{n,j_2}+t_2 a'_{n,j_2})\\&= 2\sum_{j_1\neq j_2} t_1^2  a_{n,j_1}^2a_{n,j_2}^2+2\sum_{j_1\neq j_2}t_1 t_2 a_{n,j_1}a_{n,j_2}(a_{n,j_1}a'_{n,j_2}+a'_{n,j_1}a_{n,j_2})+2\sum_{j_1\neq j_2}t_2^2 a_{n,j_1}a_{n,j_2}a'_{n,j_1}a'_{n,j_2}\\
    &=:I_{42}^{(1)}+I_{42}^{(2)}+I_{42}^{(3)}.
\end{aligned}\end{equation*}
Due to Condition $\mathfrak{P}$ defined in \eqref{assmaxo1}, without loss of generality, 
we can assume that 
$\max_{1\le  i\le  n} $ $ |a_{n,i}| \to 0$. 
Hence, we have \begin{equation*}
    I_{42}^{(1)} =2 t_1^2 \left[\biggl(\sum_{j=1}^n a_{n,j}^2\biggr)^2-\sum_{j=1}^n a_{n,j}^4\right]=2 t_1^2\left(1+o(1)\right);
\end{equation*}
and since $a_n'$ is orthogonal to $a_n$,
\begin{equation*}
    I_{42}^{(3)}=2t_2^2  \left[\biggl(\sum_{j=1}^n a_{n,j}a'_{n,j}\biggr)^2-\sum_{j=1}^n a_{n,j}^2(a'_{n,j})^2\right] = o(t_2^2).
\end{equation*}
Moreover, we find \begin{equation*}
    \left|I_{42}^{(2)}\right| = \left|-4\sum_{j=1}^nt_1 t_2 a_{n,j}^3a'_{n,j}\right|\le  4 t_1 t_2 \max_{1\le  j\le  n}{|a_{n,j}a'_{n,j}|}\cdot \sum_{j=1}^n a_{n,j}^2 = o(t_1 t_2),
\end{equation*}
%where we again use the orthogonality of the unit norm vectors $a_n$ and $a'_n$ in the first step. 
Therefore, $I_{42}=2(a_n^\top \ta_n)^2 + o(1)$.

To analyze $I_{41}$, we further introduce $t_{j_1,j_2,j_3,j_4} = \sum_{i=1}^n h_{ij_1}h_{ij_2}h_{ij_3}h_{ij_4} $ 
for any indices $j_1,j_2,j_3,j_4 \in [n]$,
and express $$I_{41} =n \sum_{(j_1,j_2,j_3,j_4)} t_{j_1,j_2,j_3,j_4} D_{j_1} D_{j_2} D_{j_3} D_{j_4}a_{n,j_1}a_{n,j_2}\ta_{n,j_3}\ta_{n,j_4}.$$ 
Then, we consider \begin{equation*}
    \E I_{41}^2 = n^2 \E \sum_{(j_1,\ldots,j_4),(j_5,\ldots,j_8)} t_{j_1,\ldots,j_4}t_{j_5,\ldots,j_8}D_{j_1}\ldots D_{j_8} a_{n,j_1} a_{n,j_2} \ta_{n,j_3}\ta_{n,j_4} a_{n,j_5} a_{n,j_6} \ta_{n,j_7}\ta_{n,j_8}.
\end{equation*}
Now,
only terms such that $\{j_1,j_2,j_3,j_4\}=\{j_5,j_6,j_7,j_8\}$
have a non-zero expectation.
Thus
after decomposing $\ta_{n}$ 
into two orthogonal vectors as in handling $I_{42}$ above, using  Lemma \ref{4i} and the assumption that $\max_{i\in[n]} |a_{n,i}| \to  0$, we conclude that the above expectation is $o(1)$. Thus $I_{41} = o_P(1)$ and we conclude \eqref{varlimhada} by summing up the above bounds.
    
\end{proof}
\begin{lemma}\label{4i}
For any $j_1,j_2,j_3 \in[n]$ there is a unique $j_4$  such that $t_{j_1,j_2,j_3,j_4} \neq 0$, 
and then $t_{j_1,j_2,j_3,j_4} = 1/n$.  
\end{lemma}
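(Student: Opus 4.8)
The claim in Lemma \ref{4i} concerns the quantity $t_{j_1,j_2,j_3,j_4} = \sum_{i=1}^n h_{ij_1}h_{ij_2}h_{ij_3}h_{ij_4}$, and the plan is to exploit the recursive dyadic structure of the Hadamard matrix directly. First I would introduce a dyadic (binary) indexing: since $n = 2^l$, identify each index $j \in [n]$ with its binary expansion $j = (j^{(1)}, \ldots, j^{(l)}) \in \{0,1\}^l$, and similarly $i = (i^{(1)}, \ldots, i^{(l)})$. The key structural fact, provable by a straightforward induction on $l$ using the block recursion $H_{t+1} = \tfrac{1}{\sqrt 2}\begin{pmatrix} H_t & H_t \\ H_t & -H_t\end{pmatrix}$, is that $h_{ij} = n^{-1/2} (-1)^{\langle i, j\rangle}$ where $\langle i, j\rangle = \sum_{k=1}^l i^{(k)} j^{(k)}$ is the inner product over $\mathbb{F}_2$ (or over $\mathbb{Z}$, since only the parity matters).

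With this Walsh-function representation in hand, the product becomes $h_{ij_1}h_{ij_2}h_{ij_3}h_{ij_4} = n^{-2} (-1)^{\langle i,\, j_1 \oplus j_2 \oplus j_3 \oplus j_4\rangle}$ up to parity, where $\oplus$ denotes coordinatewise XOR — more precisely $(-1)^{\langle i, j_1\rangle + \langle i, j_2\rangle + \langle i, j_3\rangle + \langle i, j_4\rangle} = (-1)^{\langle i,\, j_1 \oplus j_2 \oplus j_3 \oplus j_4\rangle}$. Summing over $i \in \{0,1\}^l$ gives $t_{j_1,j_2,j_3,j_4} = n^{-2} \sum_{i} (-1)^{\langle i, w\rangle}$ with $w := j_1 \oplus j_2 \oplus j_3 \oplus j_4$. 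This is a standard character sum over $\mathbb{F}_2^l$: it equals $n^{-2} \cdot n = n^{-1}$ if $w = 0$ (the zero vector), and $0$ otherwise, because $\sum_{i} (-1)^{\langle i, w\rangle} = \prod_{k=1}^l \big(\sum_{i^{(k)} \in \{0,1\}} (-1)^{i^{(k)} w^{(k)}}\big)$ and the $k$-th factor is $2$ if $w^{(k)}=0$ and $0$ if $w^{(k)}=1$.

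It remains to translate the condition $w = 0$ back into a statement about $j_4$. The equation $j_1 \oplus j_2 \oplus j_3 \oplus j_4 = 0$ is equivalent to $j_4 = j_1 \oplus j_2 \oplus j_3$, which, given $j_1, j_2, j_3$, uniquely determines $j_4 \in [n]$. This establishes both assertions: there is exactly one $j_4$ with $t_{j_1,j_2,j_3,j_4} \neq 0$, and for that $j_4$ the value is $1/n$.

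The only genuinely nontrivial step is the Walsh-function identity $h_{ij} = n^{-1/2}(-1)^{\langle i,j\rangle}$, which must be set up with a consistent bit-ordering convention matching the block recursion — one has to be careful about whether the most significant or least significant bit corresponds to the outermost block split, but either convention works as long as it is applied uniformly, since the final character-sum argument is insensitive to the ordering. Everything after that is routine Fourier analysis on $\mathbb{F}_2^l$. I do not anticipate any real obstacle; the main thing is to state the dyadic encoding cleanly so it can be reused in the proof of Lemma \ref{rrp} and elsewhere in the SRHT analysis.
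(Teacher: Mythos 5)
Your proof is correct and follows essentially the same route as the paper: both identify $h_{ij}=n^{-1/2}(-1)^{\langle i,j\rangle}$ via the binary expansion, reduce $t_{j_1,j_2,j_3,j_4}$ to a character sum, and observe that it vanishes unless the bitwise sum of $j_1,j_2,j_3,j_4$ is even in every coordinate, which pins down a unique $j_4$. Your product factorization of the character sum over $\{0,1\}^l$ is the same content as the paper's pairing argument over indices differing in a single bit $k_0$.
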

\begin{proof}
With $l=\log_2 n\in\NN$, we define the binary representation of $j_i-1$ by 
$j_{i,l-1}2^{l-1} + \ldots + j_{i,2} 2^2 + j_{i,1} 2 +j_{i,0} 1$, 
and define the binary representation $(\ell_{l-1}, \ldots, \ell_0)$ of $\ell-1$ similarly.
Then 
$ h_{j_i, \ell} = n^{-1/2} (-1)^{\sum_{k=0}^{l -1}j_{i,k} \ell_{k}}$,
and it follows that \begin{equation*}
    t_{j_1,j_2,j_3,j_4} = \frac{1}{n^2}\sum_{\ell=1}^n (-1)^{ \sum_{k=0}^{l -1} \sum_{i=1}^4 j_{i,k} \ell_{k}}.
\end{equation*}
We claim the following two facts: First, if $\sum_{i=1}^4 j_{i,k}$ is even for any $k\in[0: l]:=\{0,1,\ldots,l\}$, then  
$t_{j_1,j_2,j_3,j_4}=1/n$.  
Second, if $\sum_{i=1}^4 j_{i,k_0}$ is odd for some $k_0\in [0:l]$, then $t_{j_1,j_2,j_3,j_4}=0$. 
These two claims imply Lemma \ref{4i}.

Next, we prove the two claims.
The first claim holds by noting that $\sum_{k=0}^{l -1} \sum_{i=1}^4 j_{i,k} \ell_{k}$ is even for any $\ell \in [n]$. To conclude the second claim, we write \begin{equation*}
    t_{j_1,j_2,j_3,j_4} = \frac{1}{n^2}\sum_{\ell=1}^n (-1)^{\ell_{k_0}}\cdot (-1)^{ \sum_{k=0,k\neq k_0}^{l -1} \sum_{i=1}^4 j_{i,k} \ell_{k}}=: \frac{1}{n^2}\sum_{\ell=1}^n g_{\ell}.
\end{equation*}
%The second step follows by observing that 
For any $\ell_1\in [n]$, we can find a unique $\ell_2\in [n]$ such that $\ell_{1,k_0}+\ell_{2,k_0}=1$ and the other coefficients of their binary representations match. For such a pair of $\ell_1$ and $\ell_2$, we have 
\begin{equation*}\begin{aligned}g_{\ell_1}+g_{\ell_2}&=(-1)^{\ell_{1,k_0}}\cdot (-1)^{ \sum_{k=0,k\neq k_0}^{l -1} \sum_{i=1}^4 j_{i,k} \ell_{1,k}} + (-1)^{\ell_{2,k_0}}\cdot (-1)^{ \sum_{k=0,k\neq k_0}^{l -1} \sum_{i=1}^4 j_{i,k} \ell_{2,k}} \\&= \left[(-1)^{\ell_{1,k_0}}+(-1)^{\ell_{2,k_0}}\right] \cdot (-1)^{ \sum_{k=0,k\neq k_0}^{l -1} \sum_{i=1}^4 j_{i,k} \ell_{1,k}} =0.\end{aligned}\end{equation*} 
Moreover, we can find $n/2$ such pairs that run through $[n]$, thus $t_{j_1,j_2,j_3,j_4} = n^{-2}\sum_{\ell=1}^n g_{\ell}=0$.
\end{proof}

{Finally we provide an example to show that \eqref{hadafixnormal} is generally not true if Condition $\mathfrak{P}$ define\eqref{assmaxo1} is not satisfied.
Let $n\ge 4$ and 
consider $a_n = (1/2,1/2,-1/2,-1/2, 0, \ldots, 0)^\top$ and $\ta_n=(-1/2,1/2,-1/2,1/2, 0\ldots 0)^\top$. 
For all $i\in[n]$, 
denote the $i$-th column of $H$ by $h_i$. By the structure of the Hadamard matrix, 
for any diagonal matrix $B$, we have $h_1^\top B h_2 = h_3^\top B h_4$, $h_1^\top B h_3 = h_2^\top B h_4$, and $h_1^\top B h_4 = h_2^\top B h_3$. Additionally, we have $h_i^\top B h_i = \sum_{i=1}^n B_i/n$. 
Then, we can calculate: \begin{equation*}\begin{aligned}
&a_n^\top D H B H D \ta_n =\sum_{i=1}^4 a_{n,i} \ta_{n,i} h_i^\top B h_i \\& + [(a_{n,1} \ta_{n,2}+ a_{n,2} \ta_{n,1}) D_1 D_2 + (a_{n,3} \ta_{n,4}+ a_{n,4} \ta_{n,3}) D_3 D_4] h_1^\top B h_2 \\& +  [(a_{n,1} \ta_{n,3}+a_{n,3} \ta_{n,1}) D_1 D_3 + (a_{n,2} \ta_{n,4}+a_{n,4} \ta_{n,2}) D_2 D_4] h_1^\top B h_3 \\&+ [(a_{n,1} \ta_{n,4}+a_{n,4} \ta_{n,1}) D_1 D_4 + (a_{n,2} \ta_{n,3}+a_{n,3} \ta_{n,2}) D_2 D_3) h_1^\top B h_4 
& = \frac{1}{2}(D_1 D_4 -D_2 D_3) h_1^\top B h_4.
\end{aligned}\end{equation*}
Since $D_i$ are uniformly distributed over $\pm 1$ for all $i\in [4]$,
the above random variable places a non-vanishing point mass at zero, 
and is thus not asymptotically normal.}
\end{proof}

\subsection{Proof of \texorpdfstring{Lemma \ref{lemsymqfsrht}}{Lemma \ref{lemsymqfsrht}}}
With the Cramer-Wold device, 
since 
$U_{n}^\top S_{m,n}^\top S_{m,n}U_{n}-I_p$ is a symmetric matrix, 
it suffices to show
that for any symmetric $\Phi\in\R^{p\times p}$,
\begin{equation}\label{cw_vec_srht_PCA}
\sqrt{m} \tr\left(\Phi(U_n^\top S_{m,n}^\top S_{m,n}U_n-I_p)\right)\Rightarrow \N\left(0,\vec(\Phi)^\top G\  \vec(\Phi)\right).
\end{equation}
By the spectral decomposition 
$\Phi=\sum_{i=1}^p \mu_i w_iw_i^\top$
 of $\Phi$, where $(w_i)_{i\in [p]}$ forms an orthonormal basis of $\R^p$, 
we have
$$
\begin{aligned}
&\tr\left(\Phi(U_n^\top S_{m,n}^\top S_{m,n}U_n-I_p)\right)
=\sum_{i=1}^p \mu_i (w_i^\top U_n^\top S_{m,n}^\top S_{m,n}U_n w_i-1).
\end{aligned}
$$
Now $U_n w_i$, $i\in[p]$ 
constitutes an orthogonal basis of the $p$-dimensional linear subspace of $\R^n$ given by the span of the columns of $U_n$. 
Since $(U_n)_{n\ge 1}$ satisfies Condition $\mathfrak{P}'$,  
we have 
$\|U_n w_i\|_{\infty}\le \max_{i=1,\ldots, n} \|U_{i:}\| \|w_i\| \to0$ for every $i\in[p]$,
due to Condition $\mathfrak{P}'$ from \eqref{l_inftydelocalize}. 
We now denote
$R_i^k:=(HDU_nw_k)_i$ for $i\in[n], k\in [p]$, so that $$\sum_{i=1}^p \mu_i w_i^\top U_n^\top S_{m,n}^\top S_{m,n}U_n w_i=\sum_{j=1}^n \left(\sum_{i=1}^p \mu_i\cdot\left(R_j^i\right)^2\right)\gamma_n^{-1}B_j.$$ As in the proof of Lemma \ref{lemgeneralqfsrht},
conditional on $D$, this is a weighted sum of $n$ independent Bernoulli variables, with $D$-conditional variance
$s_n^2=\sum_{j=1}^n \frac{1-\gamma_n}{\gamma_n} \left(\sum_{i=1}^p \mu_i\cdot\left(R_j^i\right)^2\right)^2$.
We write $n\sum_{j=1}^n \left(\sum_{i=1}^p \mu_i\cdot\left(R_j^i\right)^2\right)^2 = n \sum_{i_1,i_2}\sum_{j=1}^n \mu_{i_1}\mu_{i_2}\cdot \left(R_{j}^{i_1}\right)^2
\left(R_{j}^{i_2}\right)^2$.
Since $(w_j)_{j\in [p]}$ are orthonormal, we have 
according to \eqref{varlimhada} that
for $i_1\neq i_2 \in [p]$, 
it holds that $\sum_{j=1}^n  \left(R_{j}^{i_1}\right)^2\left(R_{j}^{i_2}\right)^2 \rightarrow_P 1$, and for $i_1 = i_2\in [p]$, 
it holds that $\sum_{j=1}^n  \left(R_{j}^{i_1}\right)^2\left(R_{j}^{i_2}\right)^2 \rightarrow_P 3$. 
Therefore, 
\begin{equation*}
    n\sum_{j=1}^n \left(\sum_{i=1}^p \mu_i\cdot\left(R_j^i\right)^2\right)^2- 
    \biggl(2\|\mu\|^2 +\biggl(\sum_{i=1}^p\mu_i\biggr)^2\biggr) = o_P(1),
\end{equation*}
Now, since $\mu_i$ are the eigenvalues of $\Phi$,
$2\|\mu\|^2 =2\|\Phi\|_F^2=\vec(\Phi)^\top(I_p+P_p)\vec(\Phi)$.
Also, 
$$\vec(\Phi)^\top Q_p \vec(\Phi)=\sum_{(ij),(kl)}\Phi_{ij}\Phi_{kl}\delta_{ij}\delta_{kl}
=\sum_{i,k}\Phi_{ii}\Phi_{kk}
=(\tr\Phi)^2=\biggl(\sum_{i=1}^p\mu_i\biggr)^2.$$
Thus, $m s_n^2/(1-\gamma_n)$ tends to $\vec(\Phi)^\top (I_{p^2}+P_p+Q_p)\vec(\Phi)$.

Moreover,
we can 
verify 
similarly to the proof of \eqref{hadalyapunov}
that for a small $\delta>0$
\begin{equation*}
    \sum_{j=1}^n \left(\sum_{i=1}^p \mu_i\cdot\left(R_j^i\right)\right)^{2+\delta}\gamma_n^{-2-\delta}\E |B_i-\E B_i|^{2+\delta}s_n^{-(2+\delta)}=o_P(1),
\end{equation*}
assuming that $m/\log^2 n\to\infty$. This can be verified by Hoeffding's inequality similar to
how it is used above in the proof for \eqref{hadalyapunov}. Thus, we conclude \eqref{cw_vec_srht_PCA}.

\subsection{Proof of \texorpdfstring{Theorem \ref{PCAcs}}{Theorem \ref{PCAcs}}}
From Lemma \ref{lemsymqfcs}, if $(U_n)_{n\ge 1}$ satisfies Condition $\mathfrak{P}'$,
 Condition \ref{generalquadratic forms} \texttt{Sym-1}
 holds with $G=I_{p^2}+P_p$ and $\tau_{m,n}=1$. 
Therefore, Condition \ref{generalquadratic forms} \textup{\texttt{Sym-1'}}
 holds with $\alpha=0$.
Then, since Condition \ref{spectralcondition} \textup{\texttt{B}} holds by assumption, 
we conclude the desired results based on \Cref{infPCAgeneral}.

\subsection{Proof of \texorpdfstring{Theorem \ref{cs}}{Theorem \ref{cs}}}
%$\sigma_n^2(a_n, \ta_n)=1+(a_n^\top \ta_n)^2$, and
Due to \eqref{olscsdelocal1}, 
we conclude that
for any $b_n\in\psp$, 
$({U_nb_n, \bar{\ep}_n})_{n\ge 1}$ and $({U_nb_n, U_nb_n})_{n\ge 1}$ satisfy Condition $\mathfrak{P}$. 
Thus, from Lemma \ref{lemgeneralqfcs}
Condition \ref{generalquadratic forms} \textup{\texttt{1-dim'}} holds for $({U_nb_n, \bar{\ep}_n})_{n\ge 1}$ with $\alpha=0$.
Then by \Cref{lemgeneols}, we conclude the first result.
Similarly, if \eqref{olscsdelocal2} holds, $({U_nb_n, \widebar{X_n\beta_n}})_{n\ge 1}$ satisfies Condition $\mathfrak{P}$. 
Then the second claim follows similarly from Lemma \ref{lemgeneralqfcs}. 

\subsection{Proof of \texorpdfstring{Lemma \ref{lemgeneralqfcs}}{Lemma \ref{lemgeneralqfcs}}}
For all $i\in [n]$,
we denote the $i$-th column of $S_{m,n}$ as $R_{m,i}$. 
Then,
we have $R_{m,i} =_d \bar{D}_i G_{m,i}$, where $\bar{D}_i=\diag(D_{1,i},\ldots,D_{m,i})$, $D_{m,i}$ are i.i.d.~Rademacher random variables, and $G_{m,i}$ are uniformly distributed on the set $\{f/\sqrt{\zeta_m}\in\R^m:f_1,\ldots,f_m\in\{0,1\}\text{ and }\|f\|^2=\zeta_m\}$, independently across $i$.
We can express $a_n^\top S_{m,n}^\top S_{m,n}\ta_n = \sum_{i=1}^n a_{n,i}\ta_{n,i} + \sum_{i\neq j} a_{n,i}\ta_{n,j} 
R_{m,i}^\top R_{m,j}.$

Defining 
$h_n(x,y)=x^\top y$ for $x,y\in \R^n$, we have
$$\sum_{i\neq j} a_{n,i}\ta_{n,j} R_{m,i}^\top R_{m,j} 
=\sum_{i<j} (a_{n,i}\ta_{n,j}+a_{n,j}\ta_{n,i}) h_n(R_{m,i}, R_{m,j}).$$ The variables $T_i:=R_{m,i}$ are i.i.d.~across $i \in [n]$, 
making the sum a weighted U-statistic of order two. 
Moreover,
for $i\neq j$,
$\E[h_n(T_i, T_j)|T_i]=(R_{m,i})^\top \E[R_{m,j}]=0$ and $\E[h_n(T_i, T_j)|T_j]=0$. 
This shows that the U-statistic is degenerate.

We use
the martingale central limit theorem,
see e.g., Corollary 3.1 from \cite{hall2014martingale},  to derive our result. In particular, we focus on analyzing the sum $\sum_{i<j} w_{ij} h_n(T_i, T_j)$, where $w_{ij}=a_{n,i}\ta_{n,j}+a_{n,j}\ta_{n,i}$ for all $i,j$.
To analyze this sum, we construct the sequence
\begin{equation}\label{weightedUstat}
\Upsilon_k=\sum_{j=1}^{k-1} w_{jk} h_n(T_j, T_k),\, k\in[n].
\end{equation} 
Due to the independence between $T_j$ and $T_k$ for all $j\neq k$, 
we have $\E[\Upsilon_k|T_1,\ldots,T_{k-1}]=0$ for all $k\in[n]$. 
Consequently, the sequence $\{S_i=\sum_{l=2}^i \Upsilon_l, 2\le  i \le  n\}$ forms a martingale with respect to the $\sigma$-field generated by $T_1,\ldots, T_i$, $i\in[n]$. Moreover, we can write $S_n=\sum_{i<j} w_{ij} h_n(T_i, T_j)$.

To show that $S_n/\sqrt{\Var{S_n}}$ tends to a standard normal distribution, 
% we need to establish that
% for any given $\epsilon>0$, 
% $s_{n}^{-2}\sum_{i=2}^{n}\E\left[\Upsilon_i^{2}I\{|\Upsilon_i|>\epsilon s_{n}\}\right]\rightarrow0$,
% where $s_{n}^{2}=\E S_n^2$;
it is enough to show the Lyapunov condition
$
s_n^{-4}\sum_{i=2}^{n}\E \Upsilon_i^4\rightarrow 0.
$
Additionally, we need to show that
$
s_{n}^{-2}q_n^{2}\rightarrow_P 1,
$
where $q_n^{2}:=\sum_{i=2}^{n}\E[\Upsilon_i^{2}|T_{1},\ldots,T_{i-1}].$

\subsubsection*{Proof of the Lyapunov condition}

First we show the
Lyapunov condition
$
s_n^{-4}\sum_{i=2}^{n}\E \Upsilon_i^4\rightarrow 0.
$
Using the properties of square-integrable martingales, we have
$
s_n^2=\sum_{i=2}^n \E \Upsilon_i^2$, 
and
\begin{align*}
\E \Upsilon_k^2
&=\E \left(\sum_{j=1}^{k-1} w_{jk} h_n(T_j, T_k)\right)^2
=\sum_{j=1}^{k-1} \sum_{i=1}^{k-1} w_{ik}w_{jk}\E[h_n(T_j,T_k)h_n(T_i,T_k)]\\
&=\sum_{j=1}^{k-1} \sum_{i=1}^{k-1} w_{ik}w_{jk}\E[R_{m,j}^\top R_{m,k} R_{m,i}^\top R_{m,k}]
=\sum_{i=1}^{k-1}w_{ik}^2\E[(R_{m,i}^\top R_{m,k})^2].
\end{align*}
Here, for $i\neq j$, we use the independence of $R_{m,i}$, $R_{m,j}$ and $R_{m,k}$. 
Also, since 
$D_{m,i}$ are i.i.d.~Rademacher random variables, and in particular  $D_{k,i}^2=1$ for all $k,i$
\begin{align*}
&\E[(R_{m,i}^\top R_{m,k})^2]=\E[(G_{m,i}^\top \bar{D}_i\bar{D}_kG_{m,k})^2]=\E[G_{m,i}^\top \bar{D}_i\bar{D}_kG_{m,k}G_{m,k}^\top \bar{D}_i\bar{D}_kG_{m,i}]\\
&=\E[G_{m,i}^\top \diag(G_{m,k}\odot G_{m,k})G_{m,i}].
\end{align*}
Since
$G_{m,i}$, $G_{m,j}$ are independently and uniformly distributed on the set $\{f/\sqrt{\zeta_m}\in\R^m:f_1,\ldots,f_m\in\{0,1\}\text{ and }\|f\|^2=\zeta_m\}$,
every component of $G_{m,i}$ equals to zero or $1/\sqrt{\zeta_m}$,
and hence $G_{m,i}\odot G_{m,i} = G_{m,i}/\sqrt{\zeta_m}$ for all $i$.
Further,
$ \E[G_{m,k}] = \sqrt{\zeta_m} 1_m/m$,
and 
we have for $i\neq k$ that
\begin{align*}
&\E[G_{m,i}^\top \diag(G_{m,k}\odot G_{m,k})G_{m,i}]
=\E[(G_{m,i}\odot G_{m,i})^\top (G_{m,k}\odot G_{m,k})]=\E[G_{m,i}^\top] \E[G_{m,k}]/\zeta_m=1/m.
\end{align*}
Therefore,
\begin{align}\label{sn2}
s_n^2
&=\sum_{k=2}^n \E \Upsilon_k^2=\sum_{k=2}^n \frac{1}{m}\sum_{i=1}^{k-1}w_{ik}^2=\frac{1}{m}\sum_{i<k}w_{ik}^2=\frac{1}{m}\sum_{i<k}\left(a_{n,i}\ta_{n,k}+a_{n,k}\ta_{n,i}\right)^2\nonumber\\
&=\frac{1}{m}\sum_{i<k}\left(a_{n,i}^2\ta_{n,k}^2+a_{n,k}^2\ta_{n,i}^2+2a_{n,i}\ta_{n,i} a_{n,k}\ta_{n,k}\right)=\frac{1}{m}\sum_{i\neq k}\left(a_{n,i}^2\ta_{n,k}^2+a_{n,i}\ta_{n,i} a_{n,k}\ta_{n,k}\right)\nonumber\\
&=\frac{1}{m}\left(\left(\sum_{i=1}^n a_{n,i}^2\right)\left(\sum_{i=1}^n \ta_{n,i}^2\right)-\sum_{i=1}^n a_{n,i}^2\ta_{n,i}^2+\left(\sum_{i=1}^n a_{n,i}\ta_{n,i}\right)^2-\sum_{i=1}^n a_{n,i}^2\ta_{n,i}^2\right)\nonumber\\
&=\frac{1}{m}\left(1+(a_n^\top \ta_n)^2-2\sum_{i=1}^n a_{n,i}^2\ta_{n,i}^2\right).
\end{align}

Additionally, $\E \Upsilon_k^4$ equals
\begin{align*}
&\E \left[\sum_{j=1}^{k-1} w_{jk} h_n(T_j, T_k)\right]^4
=\sum_{i_1, i_2,i_3,i_4=1}^{k-1} w_{i_1 k}w_{i_2 k}w_{i_3 k}w_{i_4 k}\E\left[h_n\left(T_{i_1},T_k\right)h_n\left(T_{i_2},T_k\right)h_n\left(T_{i_3},T_k\right)h_n\left(T_{i_4},T_k\right)\right]\\
&=\sum_{i_1, i_2,i_3,i_4=1}^{k-1} w_{i_1 k}w_{i_2 k}w_{i_3 k}w_{i_4 k}
\E\left[R_{m,i_1}^\top R_{m,k}R_{m,i_2}^\top R_{m,k}R_{m,i_3}^\top R_{m,k}R_{m,i_4}^\top R_{m,k}\right].
\end{align*}
Due to independence and as $\E[R_{m,i}] = 0$ for all $i\in[k-1]$,
if an index occurs only once in $(i_1,i_2,i_3,i_4)$, 
the corresponding term is zero. 
%Thus, we only need to focus terms where all indices occur pairwise. 
Therefore, the sum becomes
$$
\sum_{i=1}^{k-1} w_{ik}^4 \E\left[\left(R_{m,i}^\top R_{m,k}\right)^4\right]+3 \sum_{i=1}^{k-1}\sum_{j=1}^{k-1} (1-\delta_{ij}) w_{ik}^2 w_{jk}^2 \E\left[\left(R_{m,i}^\top R_{m,k}\right)^2 \left(R_{m,j}^\top R_{m,k}\right)^2\right].
$$
Define $M_{\zeta_m}\in\R^{m\times m}$, 
with $(M_{\zeta_m})_{ii}=1$ for $i\in[m]$ and $(M_{\zeta_m})_{ij}=(\zeta_m-1)/(m-1)$ for $i\neq j, i,j\in[m]$. Then one can verify 
that for all $k$,
$\E[G_{m,k}G_{m,k}^\top]=M_{\zeta_m}/m$. Therefore, 
since $D_{k,i}^2=1$ for all $k,i$,
and $G_{m,i}\odot G_{m,i} = G_{m,i}/\sqrt{\zeta_m}$ for all $i$
we have 
for all distinct $ i,j,k$ that 
\begin{align*}
&\E\left[(R_{m,i}^\top R_{m,k})^2 (R_{m,j}^\top R_{m,k})^2\right]
=\E\left[(G_{m,i}^\top \bar{D}_{i}\bar{D}_{k}G_{m,k})^2(G_{m,j}^\top \bar{D}_{j}\bar{D}_{k}G_{m,k})^2\right]\\
&=\frac{1}{\zeta_m^2}
\E\left[G_{m,i}^\top G_{m,k}G_{m,k}^\top G_{m,j}\right]=\frac{1}{\zeta_m^2}\E\left[G_{m,i}\right]^\top \E\left[G_{m,k}G_{m,k}^\top\right] \E\left[G_{m,j}\right]
=\frac{1}{\zeta_m^3} \left(\frac{\zeta_m}{m}\right)^2\frac{1_m^\top M_{\zeta_m} 1_m}{m}=\frac{1}{m^2}.
\end{align*}
In addition,
for all $i\neq k$,
again since $G_{m,i}\odot G_{m,i} = G_{m,i}/\sqrt{\zeta_m}$ for all $i$
\begin{align*}
\E\left[\left(R_{m,i}^\top R_{m,k}\right)^4\right]&=\E\left[\left(G_{m,i}^\top \bar{D}_{i}\bar{D}_{k}G_{m,k}\right)^4\right]=\E\left[\left(\sum_{l=1}^mD_{l,i}D_{l,k}\left(G_{m,i}\right)_l \left(G_{m,k}\right)_l \right)^4\right]\\
&=\E\left[\sum_{l=1}^m\left(G_{m,i}\right)_l^4 \left(G_{m,k}\right)_l^4\right]+6\E\left[\sum_{l_1<l_2}\left(G_{m,i}\right)_{l_1}^2 \left(G_{m,k}\right)_{l_1}^2\left(G_{m,i}\right)_{l_2}^2 \left(G_{m,k}\right)_{l_2}^2\right]\\
&=\frac{1}{\zeta_m^3}\E\left[\sum_{l=1}^m\left(G_{m,i}\right)_l \left(G_{m,k}\right)_l\right]+\frac{3}{\zeta_m^2} \E\left[\sum_{l_1\neq l_2}\left(G_{m,i}\right)_{l_1} \left(G_{m,k}\right)_{l_1}\left(G_{m,i}\right)_{l_2} \left(G_{m,k}\right)_{l_2}\right]\\
&=\frac{1}{\zeta_m^3}\E\left[G_{m,i}^\top G_{m,k}\right]
+\frac{3}{\zeta_m^2} \E\left[\left(\sum_{l=1}^m \left(G_{m,i}\right)_{l} \left(G_{m,k}\right)_{l}\right)^2-\sum_{l=1}^m\left(G_{m,i}\right)_{l}^2 \left(G_{m,k}\right)_{l}^2\right].
\end{align*}
This further equals
\begin{align*}
&\frac{1}{\zeta_m^3}\E\left[G_{m,i}^\top G_{m,k}\right]
+\frac{3}{\zeta_m^2} \E\left[\left(\sum_{l=1}^m \left(G_{m,i}\right)_{l} \left(G_{m,k}\right)_{l}\right)^2\right]
-
\frac{3}{\zeta_m^3} \E\left[
\sum_{l=1}^m(G_{m,i})_{l} (G_{m,k})_{l}\right]\\
&=-\frac{2}{m\zeta_m^2}+\frac{3}{\zeta_m^2} \E\left[\E\left[G_{m,i}^\top G_{m,k}G_{m,k}^\top G_{m,i}|G_{m,i}\right]\right]\\
&=-\frac{2}{m\zeta_m^2}+\frac{3}{\zeta_m^2} \E\left[\sum_{l=1}^m \left(G_{m,i}\right)_{l}^2+\frac{\zeta_m-1}{m-1}\sum_{l_1\neq l_2}\left(G_{m,i}\right)_{l_1}\left(G_{m,i}\right)_{l_2}\right]\\
&=-\frac{2}{m\zeta_m^2}+\frac{3}{m\zeta_m^2}\left(\frac{m}{\zeta_m}\cdot \frac{\zeta_m}{m}+\frac{\zeta_m-1}{m-1}\cdot \frac{m(m-1)}{\zeta_m} \cdot \frac{\zeta_m(\zeta_m-1)}{m(m-1)}\right)
=\frac{m-1+3(\zeta_m-1)^2}{m(m-1)\zeta_m^2}.
\end{align*}
Hence, we can express $\E \Upsilon_k^4$ as
$$
\E \Upsilon_k^4=\frac{m-1+3(\zeta_m-1)^2}{m(m-1)\zeta_m^2}\sum_{i=1}^{k-1} w_{ik}^4+\frac{3}{m^2}\sum_{i=1}^{k-1}\sum_{j=1}^{k-1} (1-\delta_{ij}) w_{ik}^2 w_{jk}^2.
$$
Therefore, 
$$
\frac{\sum_{k=2}^n \E \Upsilon_k^4}{s_n^4}=\frac{\frac{m(m-1+3(\zeta_m-1)^2)}{(m-1)\zeta_m^2}\sum_{i<k} w_{ik}^4+3\sum_{k=2}^n\sum_{i=1}^{k-1}\sum_{j=1}^{k-1} (1-\delta_{ij}) w_{ik}^2 w_{jk}^2}{\left(1+(a_n^\top \ta_n)^2-2\sum_{i=1}^n a_{n,i}^2\ta_{n,i}^2\right)^2}.
$$
Thus, we only need to show that
\begin{equation}\label{martprof4momentto01}
\frac{m(m-1+3(\zeta_m-1)^2)}{(m-1)\zeta_m^2}\sum_{i<k} w_{ik}^4\to 0,\quad\text{ and }\quad
\sum_{k=2}^n\sum_{i=1}^{k-1}\sum_{j=1}^{k-1} w_{ik}^2 w_{jk}^2\to 0,
\end{equation}
where recall that  $w_{ij}=a_{n,i}\ta_{n,j}+a_{n,j}\ta_{n,i}$ for all $i,j$.
Using the Cauchy-Schwarz inequality, we find $w_{ij}^2=(a_{n,i}\ta_{n,j}+a_{n,j}\ta_{n,i})^2\le (a_{n,i}^2+\ta_{n,i}^2)(a_{n,j}^2+\ta_{n,j}^2)$. Consequently,
\begin{align*}
\sum_{i<k} w_{ik}^4
&\le  \sum_{i,k} (a_{n,i}^2+\ta_{n,i}^2)^2(a_{n,k}^2+\ta_{n,k}^2)^2= \left(\sum_{i=1}^n (a_{n,i}^2+\ta_{n,i}^2)^2\right)^2
\le  \left(2\sum_{i=1}^n (a_{n,i}^4+\ta_{n,i}^4)\right)^2.
\end{align*}
Based on
Condition $\mathfrak{P}$ from \eqref{csstate1} and 
since $\zeta_m^2/m\to 0$, we find
$\sqrt{\left(m-1+3(\zeta_m-1)^2\right)/\zeta_m^2}\sum_{i=1}^n a_{n,i}^4\rightarrow 0$ and $\sqrt{\left(m-1+3(\zeta_m-1)^2\right)/\zeta_m^2}\cdot\sum_{i=1}^n \ta_{n,i}^4\rightarrow 0$, 
and thus $m\left(m-1+3(\zeta_m-1)^2\right)/\left((m-1)\zeta_m^2\right)\cdot\sum_{i<k} w_{ik}^4\rightarrow 0$.

Similarly, since $\zeta_m^2/m\to 0$, $\sum_{k=1}^n (a_{n,i}^4+\ta_{n,i}^4)=o(1)$. 
Thus,
\begin{align*}
&\sum_{k=2}^n\sum_{i=1}^{k-1}\sum_{j=1}^{k-1} w_{ik}^2 w_{jk}^2  = \sum_{k=2}^n \left(\sum_{i=1}^{k-1} w_{ik}^2\right)^2
\le  \sum_{k=1}^n \left(\sum_{i=1}^{k-1} (a_{n,i}^2+\ta_{n,i}^2)(a_{n,k}^2+\ta_{n,k}^2)\right)^2\\
&\le  \sum_{k=1}^n \left(\sum_{i=1}^n (a_{n,i}^2+\ta_{n,i}^2)(a_{n,k}^2+\ta_{n,k}^2)\right)^2
= 4\sum_{k=1}^n (a_{n,k}^2+\ta_{n,k}^2)^2
\le  8\sum_{k=1}^n (a_{n,i}^4+\ta_{n,i}^4)\rightarrow 0.
\end{align*}
In the last equation, we have used that for all $i$,
$\sum_{i=1}^n a_{n,i}^2=\sum_{i=1}^n \ta_{n,i}^2=1$.

Additionally, $2\sum_{i=1}^n a_{n,i}^2\ta_{n,i}^2\le  \sum_{i=1}^n (a_{n,i}^4+\ta_{n,i}^4)\rightarrow 0$.
In summary, we have shown that $\sum_{k=2}^n \E \Upsilon_k^4/s_n^4\rightarrow 0$, which verifies the Lyapunov condition.

\subsubsection*{Proof of ratio-consistency of variance}

Next, 
 recalling $q_n^2
=\sum_{k=2}^n \E[\Upsilon_k^{2}|T_{1},\ldots,T_{k-1}]$,
and $s_n^2$ from \eqref{sn2},
we aim to prove that $
s_{n}^{-2}q_n^{2}\rightarrow_P 1.
$
For this, we calculate, for any $k\in \{2,\ldots, n\}$ 
\begin{align*}
\E[\Upsilon_k^{2}|T_{1},\ldots,T_{k-1}]
&=\sum_{j=1}^{k-1}\sum_{i=1}^{k-1} w_{ik}w_{jk}\E[h_n(T_j,T_k)h_n(T_i,T_k)|T_{1},\ldots,T_{k-1}]\\
&=\sum_{j=1}^{k-1}\sum_{i=1}^{k-1} w_{ik}w_{jk}R_{m,j}^\top \E[R_{m,k} R_{m,k}^\top] R_{m,i}=\sum_{j=1}^{k-1}\sum_{i=1}^{k-1} w_{ik}w_{jk}R_{m,j}^\top M_{\zeta_m} R_{m,i}/m.
\end{align*}
Therefore, we have
\begin{align*}
q_n^2
&=\frac{1}{m}\sum_{k=1}^n \sum_{j=1}^{k-1}\sum_{i=1}^{k-1} w_{ik}w_{jk}R_{m,j}^\top M_{\zeta_m} R_{m,i}
=\frac{1}{m}\sum_{k=1}^n \left(\sum_{i=1}^{k-1} w_{ik}^2+\sum_{i=1}^{k-1}\sum_{j=1}^{k-1}(1-\delta_{ij}) w_{ik}w_{jk}R_{m,j}^\top M_{\zeta_m} R_{m,i}\right).
\end{align*}
and thus
$$
q_n^2-s_n^2=\frac{1}{m}\sum_{k=1}^n \sum_{i=1}^{k-1}\sum_{j=1}^{k-1}(1-\delta_{ij}) w_{ik}w_{jk}R_{m,j}^\top M_{\zeta_m} R_{m,i}.
$$
As a result,
\begin{align}\label{qs}
\E [(q_n^2-s_n^2)^2]
&= \frac{1}{m^2}\E\left[\left(\sum_{k=1}^n \sum_{i=1}^{k-1}\sum_{j=1}^{k-1}(1-\delta_{ij}) w_{ik}w_{jk} R_{m,j}^\top M_{\zeta_m} R_{m,i}\right)^2\right]\\
&= \frac{1}{m^2}\sum_{i_1\neq j_1<k_1}\sum_{i_2\neq j_2<k_2} w_{i_1 k_1}w_{j_1 k_1}w_{i_2 k_2}w_{j_2 k_2}
\E\left[R_{m,i_1}^\top M_{\zeta_m} R_{m,j_1} R_{m,i_2}^\top M_{\zeta_m} R_{m,j_2}\right],\nonumber
\end{align}
where $\sum_{i_1\neq j_1<k_1} \star \;$ is an abbreviation for $\sum_{k_1=1}^n \sum_{i_1=1}^{k_1-1}\sum_{j_1=1}^{k_1-1}(1-\delta_{i_1j_1}) \; \star \;$. 
From the computation of the fourth moment of $\Upsilon_k$, we know that the terms above
are nonzero only when either
$i_2=i_1, j_2=j_1$ or $i_2=j_1, j_2=i_1$. 
Due to the symmetry of the inner product, 
$R_{m,i_2}^\top M_{\zeta_m} R_{m,j_2}=R_{m,i_1}^\top M_{\zeta_m} R_{m,j_1}$ in both cases.
Moreover, 
based on our previous results, for $i_1\neq j_1$,
\begin{align*}
    &\E\left[\left(R_{m,i_1}^\top M_{\zeta_m} R_{m,j_1}\right)^2\right]=\E\left[R_{m,i_1}^\top M_{\zeta_m} \E[R_{m,j_1}R_{m,j_1}^\top] M_{\zeta_m} R_{m,i_1}\right]=1/m\cdot\E\left[\tr(M_{\zeta_m}^2 R_{m,i_1}R_{m,i_1}^\top)\right]\\
    &=1/m^2\cdot\tr(M_{\zeta_m}^2)=1/m\cdot\left(1+(\zeta_m-1)^2/(m-1)\right).
\end{align*}
This yields 
that \eqref{qs} equals
\begin{align}\label{intq}
\frac{1+(\zeta_m-1)^2/(m-1)}{m^3}\sum_{k_1=2}^n\sum_{k_2=2}^n\sum_{i_1=1}^{k_1\wedge k_2-1}\sum_{j_1=1}^{k_1\wedge k_2-1}(1-\delta_{i_1j_1}) w_{i_1 k_1}w_{j_1 k_1}w_{i_1 k_2}w_{j_1 k_2}.
\end{align}
Now,
 recalling that  $w_{ij}=a_{n,i}\ta_{n,j}+a_{n,j}\ta_{n,i}$ for all $i,j$,
and applying the Cauchy-Schwarz inequality, we have
$$\left|w_{i_1 k_1}w_{j_1 k_1}w_{i_1 k_2}w_{j_1 k_2}\right|\le  \left(a_{n,i_1}^2+\ta_{n,i_1}^2\right)\left(a_{n,j_1}^2+\ta_{n,j_1}^2\right)\left(a_{n,k_1}^2+\ta_{n,k_1}^2\right)\left(a_{n,k_2}^2+\ta_{n,k_2}^2\right).$$
Thus, 
 using that for all $i$,
$\sum_{i=1}^n a_{n,i}^2=\sum_{i=1}^n \ta_{n,i}^2=1$,
\eqref{intq} can be bounded by
\begin{align*}
&\frac{1+\frac{(\zeta_m-1)^2}{m-1}}{m^3}\sum_{k_1=2}^n\sum_{k_2=2}^n\sum_{i_1=1}^{k_1\wedge k_2-1}\sum_{j_1=1}^{k_1\wedge k_2-1}\left(a_{n,i_1}^2+\ta_{n,i_1}^2\right)\left(a_{n,j_1}^2+\ta_{n,j_1}^2\right)\left(a_{n,k_1}^2+\ta_{n,k_1}^2\right)\left(a_{n,k_2}^2+\ta_{n,k_2}^2\right)\\
&\le \frac{1+\frac{(\zeta_m-1)^2}{m-1}}{m^3}\sum_{i_1, j_1, k_1, k_2}\left(a_{n,i_1}^2+\ta_{n,i_1}^2\right)\left(a_{n,j_1}^2+\ta_{n,j_1}^2\right)\left(a_{n,k_1}^2+\ta_{n,k_1}^2\right)\left(a_{n,k_2}^2+\ta_{n,k_2}^2\right) 
=\frac{16\left(1+\frac{(\zeta_m-1)^2}{m-1}\right)}{m}.
\end{align*}
Therefore, since $\zeta_m^2/m\to 0$, we have established that
$$
s_n^{-4}\E [(q_n^2-s_n^2)^2]\le 
\frac{16/m^3\cdot\left(1+(\zeta_m-1)^2/(m-1)\right)}{\left(1+(a_n^\top \ta_n)^2-2\sum_{i=1}^n a_{n,i}^2\ta_{n,i}^2\right)^2/m^2}\rightarrow 0.
$$
This shows that 
$s_n^{-2} q_n^2\stackrel{\ell_2}{\rightarrow} 1$, and thus $s_n^{-2} q_n^2\rightarrow_P 1$.
Since
$s_n^{-1}S_n\Rightarrow \N(0,1)$ and
$
m s_n^2/\!\left(1+(a_n^\top \ta_n)^2\right)$ $ \to_P 1
$, \eqref{csclt} follows.

\subsection{Proof of \texorpdfstring{Lemma \ref{lemsymqfcs}}{Lemma \ref{lemsymqfcs}}}
We will use a martingale central limit theorem argument, as in the proof of Lemma \ref{lemgeneralqfcs}.
By  the Cramer-Wold device as shown in \eqref{cw_vec_srht_PCA}, we can focus on
$$
\tr\left(\Phi(U_n^\top S_{m,n}^\top S_{m,n}U_n-I_p)\right)
=\sum_{i=1}^p \mu_i (w_i^\top U_n^\top S_{m,n}^\top S_{m,n}U_n w_i-1).
$$
We denote $t_{i}^k:=(U_nw_i)_k$ for $i\in [p], i\in [n]$. Then, we can rewrite the above as
\begin{align*}
\sum_{i=1}^p \mu_i \left(\sum_{k\neq j} t_{k}^it_{j}^i  R_{m,k}^\top R_{m,j}\right)
=\sum_{k<j} 2\left(\sum_{i=1}^p \mu_i t_{k}^it_{j}^i\right)  R_{m,k}^\top R_{m,j}.
\end{align*}
As in \eqref{weightedUstat}, 
this is a weighted degenerate U-statistic,
where $w_{kj}=2\left(\sum_{i=1}^p \mu_i t_{k}^it_{j}^i\right)$ for all $k,j$. 
Using notations from Lemma \ref{lemgeneralqfcs},
as well as $t^k = (t_{i}^k)_{i\in [n]}$ for $k\in [p]$, 
we find
\begin{align*}
m s_n^2
&=\sum_{i<j} w_{ij}^2=4\sum_{i<j} \left(\sum_{k=1}^p 
\mu_k t_{i}^k t_{j}^k \right)^2=4\sum_{k_1,k_2} \mu_{k_1}\mu_{k_2} \sum_{i<j} 
t_{i}^{k_1} t_{j}^{k_1} t_{i}^{k_2} t_{j}^{k_2}\\
&=2\sum_{k_1,k_2} \mu_{k_1}\mu_{k_2} \left(\left(t^{k_1,\top} t^{k_2}\right)^2-\sum_{i=1}^n 
\left(t_{i}^{k_1}\right)^2 \left(t_{i}^{k_2}\right)^2\right).
\end{align*}
By the orthogonality of $U_n$, 
$t^{k_1,\top} t^{k_2} = w_{k_1}^\top w_{k_2} = \delta_{k_1 k_2}$, and so 
this equals
\begin{align*}
&2\sum_{k_1,k_2} \mu_{k_1}\mu_{k_2} \left(\delta_{k_1 k_2}-\sum_{i=1}^n 
\left(t_{i}^{k_1}\right)^2 \left(t_{i}^{k_2}\right)^2\right)
=2\sum_{k=1}^p \mu_{k}^2 
- 2\sum_{k_1,k_2} \mu_{k_1}\mu_{k_2} \sum_{i=1}^n 
\left(t_{i}^{k_1}\right)^2 \left(t_{i}^{k_2}\right)^2.
\end{align*}
Since $(U_n)_{n\ge 1}$ satisfies Condition $\mathfrak{P}'$ 
from \eqref{csstate2},
we have $\sum_{i=1}^n (U_n)_{ki}^4=o\left(\zeta_m/\sqrt{m}\right)$ for every $k\in [p]$,
and 
so recalling that  $t_{i}^k=(U_nw_k)_i$ for $k\in [p], i\in [n]$,
by the Cauchy-Schwarz inequality, 
we can derive that $\sum_{i=1}^n\left(t_{i}^{k_1}\right)^2\left(t_{i}^{k_2}\right)^2=o\left(\zeta_m/\sqrt{m}\right)$ for every $k_1,k_2\in [p]$. 
Thus, 
$ms_n^2-2\sum_{k=1}^p \mu_k^2\rightarrow 0$.
By applying the Cauchy-Schwarz inequality once more, we find that
$$
w_{ij}^2=4\left(\sum_{k=1}^p \mu_k t_{i}^k t_{j}^k\right)^2\le  4\left(\sum_{k=1}^p\mu_k^2 \left(t_{i}^k\right)^2 \right)\left(\sum_{k=1}^p \left(t_{j}^k\right)^2\right)\le  4\left(\sum_{k=1}^p\mu_k^2 \right)\left(\sum_{k=1}^p\left(t_{i}^k\right)^2 \right)\left(\sum_{k=1}^p \left(t_{j}^k\right)^2\right).
$$
Denoting $C:=4\left(\sum_{k=1}^p\mu_k^2 \right)$, we arrive at
\begin{align*}
\sum_{i<j} w_{ij}^4 
&\le  C^2 \sum_{i<j} \left(\sum_{k=1}^p\left(t_{i}^k\right)^2 \right)^2\left(\sum_{k=1}^p \left(t_{j}^k\right)^2\right)^2\le  C^2 \sum_{i,j}\left(\sum_{k=1}^p\left(t_{i}^k\right)^2 \right)^2\left(\sum_{k=1}^p \left(t_{j}^k\right)^2\right)^2\\
&=\left(C\sum_{i=1}^n \left(\sum_{k=1}^p\left(t_{i}^k\right)^2\right)^2 \right)^2=\left(C\sum_{k_1,k_2}\sum_{i=1}^n\left(t_{i}^{k_1}\right)^2\left(t_{i}^{k_2}\right)^2 \right)^2.
\end{align*}
Since $\sum_{i=1}^n\left(t_{i}^{k_1}\right)^2\left(t_{i}^{k_2}\right)^2=o(\zeta_m/\sqrt{m})$, we conclude that
$\frac{m\left(m-1+3(\zeta_m-1)^2\right)}{\left((m-1)\zeta_m^2\right)}\cdot\sum_{i<j} w_{ij}^4 \rightarrow 0$, which verifies 
the first part of 
\eqref{martprof4momentto01}.
Additionally,
since  for all $k\in[p]$,
$\sum_{i=1}^n (t_{i}^k)^2
=\sum_{i=1}^n (U_nw_k)_i^2=\|w_k\|^2=1$,
\begin{align*}
\sum_{j=1}^n \left(\sum_{i=1}^{j-1} w_{ij}^2\right)^2
&\le  C^2\sum_{j=1}^n 
\left[\sum_{i=1}^{j-1}\left(\sum_{k=1}^p \left(t_{i}^k\right)^2\right)
\left(\sum_{k=1}^p \left(t_{j}^k\right)^2\right)\right]^2
\le  C^2\sum_{j=1}^n \left[\sum_{i=1}^n\left(\sum_{k=1}^p \left(t_{i}^k\right)^2\right)\left(\sum_{k=1}^p \left(t_{j}^k\right)^2\right)\right]^2\\
&= C^2\sum_{j=1}^n \left(p\sum_{k=1}^p \left(t_{j}^k\right)^2\right)^2
= p^2C^2\sum_{k_1,k_2}\sum_{i=1}^n\left(t_{i}^{k_1}\right)^2\left(t_{i}^{k_2}\right)^2\rightarrow 0,
\end{align*}
which verifies the second part of \eqref{martprof4momentto01}. Finally, we have
\begin{align*}
&\frac{1+(\zeta_m-1)^2/(m-1)}{m} \sum_{k_1=2}^n\sum_{k_2=2}^n\sum_{i_1=1}^{k_1\wedge k_2-1}\sum_{j_1=1}^{k_1\wedge k_2-1} w_{i_1 k_1}w_{j_1 k_1}w_{i_1 k_2}w_{j_1 k_2}\\
&\le  \frac{C^2\left(1+(\zeta_m-1)^2/(m-1)\right)}{m}\sum_{i_1,j_1,k_1,k_2} \left(\sum_{l=1}^p \left(r_{i_1}^l\right)^2\right)
\left(\sum_{l=1}^p \left(r_{j_1}^l\right)^2\right)
\left(\sum_{l=1}^p \left(r_{k_1}^l\right)^2\right)
\left(\sum_{l=1}^p \left(r_{k_2}^l\right)^2\right)\\
&= \frac{C^2\left(1+(\zeta_m-1)^2/(m-1)\right)}{m}\left(\sum_{i=1}^n\sum_{l=1}^p \left(r_{i}^l\right)^2\right)^4= \frac{C^2p^4\left(1+(\zeta_m-1)^2/(m-1)\right)}{m}\rightarrow 0.
\end{align*}
Thus, 
the conditions needed for the martingale central limit theorem, as used previously in the proof of Lemma \ref{lemgeneralqfcs},
are satisfied. 
Since 
$2\sum_{k=1}^p \mu_k^2=\vec(\Phi)^\top (I_{p^2}+P_p)\vec(\Phi)$, 
we conclude that
$$
\sqrt{m}\tr\left(\Phi(U_n^\top S_{m,n}^\top S_{m,n}U_n-I_p)\right)\Rightarrow \N\left(0,\vec(\Phi)^\top (I_{p^2}+P_p)\vec(\Phi)\right),
$$
and \eqref{cssqf} follows.

\subsection{Proof of \texorpdfstring{Theorem \ref{theigss}}{Theorem \ref{theigss}}}

The proofs of all claims other than \eqref{sse2} can be directly obtained from \Cref{infPCAgeneral}
by letting $G_n:= n\sum_{i=1}^n\left((u_{n,i}u_{n,i}^\top)\otimes(u_{n,i}u_{n,i}^\top)\right)$,
as $(G_n)_{(ii),(ii)}=n\sum_{k=1}^n(U_n)_{ki}^4$.
Now, to conclude \eqref{sse2} from \eqref{sse},
let $u_{n,i}$ be the $i$-th column of $U_n$.
By calculating variances, we argue that
\begin{equation}\label{ra}
    \frac{m\sum_{j=1}^m (s_j^\top u_{n,i})^4} {n\sum_{i=1}^n (U_n)^4_{ki}}\to_P 1.
\end{equation}
Since
$
\sum_{j=1}^m (s_j^\top u_{n,i})^4=n^2/m^2\cdot\sum_{l=1}^n (b_l(U_{n})_{li})^4=n^2/m^2\cdot\sum_{l=1}^n (U_{n})_{li}^4 b_l,
$
by Condition $\mathfrak{P}'$, the variance of the left hand side of \eqref{ra} is $$\frac{1-\gamma_n}{\gamma_n}\frac{\sum_{l=1}^n (U_{n})_{li}^8}{(\sum_{l=1}^n (U_{n})_{li}^4)^2}\le \frac{1-\gamma_n}{\gamma_n}\frac{\max_{l\in[n]} (U_{n})_{li}^4}{\sum_{l=1}^n (U_{n})_{li}^4}\to_P 0.$$
Now, recall that $\tX_{m,n} =  \tU_{m,n}\hat L_{m,n}\hat V_{m,n}^\top $ is the SVD of $\tX_{m,n}$. 
By Condition $\mathfrak{P}'$ defined in \eqref{assdelocalsubsamp3},
$n\sum_{k=1}^n(U_n)_{ki}^4/m=(G_n)_{(ii),(ii)}/m\to 0$; combining this with \Cref{theigss}, we have
$L_n^{-1} \hat L_{m,n} \to_P I_p$ and $V_n^{-1}\hat V_{m,n} \to_P I_p$.
Noting that $\tU_{m,n} = \tX_{m,n}\hat V_{m,n}  \hat L_{m,n}^{-1}
=  S_{m,n}U_n L_n V_n^\top \hat V_{m,n} \hat  L_{m,n}^{-1}$,
and $ L_n V_n^\top \hat V_{m,n} \hat  L_{m,n}^{-1} \to_P I_p$. Expanding the summation in the numerator according to the rules of matrix element multiplication, we conclude that
\begin{equation*}
    n^{-1}m\sum_{j=1}^m \tilde{U}_{ji}^4 / \left[n^{-1}m\sum_{j=1}^m (s_j^\top u_{n,i})^4\right] \to_P 1.
\end{equation*}
Hence, \eqref{sse2} follows from \eqref{sse}.

\subsection{Proof of \texorpdfstring{Theorem \ref{thsubsamp}}{Theorem \ref{thsubsamp}}}
The proof is based on the following asymptotic distribution and consistency properties.
\begin{lemma}[Asymptotic Normality in Least Squares with Uniform Random Sampling]\label{lemsubsamp}
Under the conditions of \Cref{thsubsamp},
with
\begin{equation}\label{asssubsamp}    \Sigma_{n}:=n(X_n^\top X_n)^{-1} \left( \sum_{i=1}^n \ep_{n,i}^2x_{n,i} x_{n,i}^\top \right) (X_n^\top X_n)^{-1},
\end{equation}
we have
$m^{1/2}(1-\gamma_n)^{-1/2}\Sigma_n^{-1/2}(\hbs-\beta_n) \Rightarrow \N(0,I_p).$
Similarly, defining 
\begin{equation}\label{asssubsamp2}     \Sigp:=n(X_n^\top X_n)^{-1} \left(\sum_{i=1}^n (y_{n,i}- \ep_{n,i})^2 x_{n,i} x_{n,i}^\top  \right)(X_n^\top X_n)^{-1},\end{equation}
we have
$m^{1/2}(1-\gamma_n)^{-1/2}(\Sigp)^{-1/2}(\hbp-\beta_n) \Rightarrow \N(0,I_p)$.
\end{lemma}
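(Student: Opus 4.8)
The plan is to obtain both displays as instances of Theorem \ref{lemgeneols}, using the quadratic-form statements for uniform subsampling from Lemma \ref{lemboundqfsubsamp}, and then to rewrite the asymptotic covariance produced there in the sandwich forms \eqref{asssubsamp} and \eqref{asssubsamp2}. First I would record the elementary fact that, since $S_{m,n}=\sqrt{n/m}\,\bar B_n$ with $\bar B_n=\diag(B_1,\ldots,B_n)$ and $B_i\in\{0,1\}$, we have $S_{m,n}^\top S_{m,n}=(n/m)\diag(B_1,\ldots,B_n)$, so that $a_n^\top S_{m,n}^\top S_{m,n}\ta_n=(n/m)\sum_{i=1}^n B_i a_{n,i}\ta_{n,i}$ for any $a_n,\ta_n\in\nsp$. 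Lemma \ref{lemboundqfsubsamp} then supplies Condition \ref{generalquadratic forms} \textup{\texttt{Bounded}} (under Condition $\mathfrak{p}$) and \textup{\texttt{1-dim}} (under Condition $\mathfrak{P}$), with $\tau_{m,n}=1-\gamma_n$ and $\sigma_n^2(a_n,\ta_n)=n\sum_{i=1}^n a_{n,i}^2\ta_{n,i}^2$; taking $a_n=U_nb_n$ and $\ta_n=\bar\ep_n$ identifies the matrix $M_n$ of \eqref{defMn} as $M_n(\bar\ep_n)=n\,U_n^\top\diag(\bar\ep_n\odot\bar\ep_n)\,U_n$, and likewise $\Mp(\widebar{X_n\beta_n})=n\,U_n^\top\diag(\widebar{X_n\beta_n}\odot\widebar{X_n\beta_n})\,U_n$.

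The second step is to verify the structural hypotheses of Theorem \ref{lemgeneols}: for every $(b_n)_{n\ge1}$ with $b_n\in\psp$, the pairs $(U_nb_n,U_nb_n)_{n\ge1}$ and $(U_nb_n,\bar\ep_n)_{n\ge1}$ should satisfy both Condition $\mathfrak{p}$ and Condition $\mathfrak{P}$ of \eqref{assdelocalsubsamp2}. For $(U_nb_n,U_nb_n)$ this follows from $\|\vec(U_n)\|_\infty=O(1/\sqrt n)$: writing $u_{n,i}$ for the $i$-th row of $U_n$, one has $\|u_{n,i}\|^2=O(1/n)$, hence $1\le n\sum_i(u_{n,i}^\top b_n)^4\le n\max_i(u_{n,i}^\top b_n)^2=O(1)$ (the lower bound by Cauchy--Schwarz, using $\sum_i(u_{n,i}^\top b_n)^2=1$), while $\max_i(u_{n,i}^\top b_n)^4/\sum_i(u_{n,i}^\top b_n)^4=O(1/n)$. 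For $(U_nb_n,\bar\ep_n)$, with $D_\ep:=\diag(\bar\ep_n\odot\bar\ep_n)$ one has $n\sum_i(u_{n,i}^\top b_n)^2\bar\ep_{n,i}^2=b_n^\top(n\,U_n^\top D_\ep U_n)b_n$, which by \eqref{strdelocsubsamp1} lies in $[c,C]$ uniformly in $b_n\in\psp$, giving Condition $\mathfrak{p}$; the upper bound in \eqref{strdelocsubsamp1} also forces $n\,\bar\ep_{n,i}^2\,u_{n,i}u_{n,i}^\top\preceq n\,U_n^\top D_\ep U_n\preceq CI_p$, hence $\|u_{n,i}\|^2\bar\ep_{n,i}^2\le C/n$ for all $i$, which combined with $\|\vec(U_n)\|_\infty=O(1/\sqrt n)$ gives the delocalization $\max_i(u_{n,i}^\top b_n)^2\bar\ep_{n,i}^2\big/\sum_i(u_{n,i}^\top b_n)^2\bar\ep_{n,i}^2\to0$ needed for Condition $\mathfrak{P}$. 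I expect this last point --- establishing the delocalization uniformly over the direction $b_n$, i.e.\ controlling the largest summand of a weighted sum against its total using only the entrywise bound on $U_n$ together with the spectral bracketing \eqref{strdelocsubsamp1} --- to be the main technical obstacle.

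With the hypotheses in hand, Theorem \ref{lemgeneols} yields $m^{1/2}(1-\gamma_n)^{-1/2}\big(V_nL_n^{-1}M_n(\bar\ep_n)L_n^{-1}V_n^\top\big)^{-1/2}\|\ep_n\|^{-1}(\hbs-\beta_n)\Rightarrow\N(0,I_p)$, and it remains to recognize the covariance. Writing the SVD as $X_n=U_nL_nV_n^\top$, so that $X_n^{\dag\top}=U_nL_n^{-1}V_n^\top=X_n(X_n^\top X_n)^{-1}$, one computes $\|\ep_n\|^2\,V_nL_n^{-1}M_n(\bar\ep_n)L_n^{-1}V_n^\top=n\,X_n^{\dag}\diag(\ep_n\odot\ep_n)X_n^{\dag\top}=n(X_n^\top X_n)^{-1}\big(\sum_{i=1}^n\ep_{n,i}^2x_{n,i}x_{n,i}^\top\big)(X_n^\top X_n)^{-1}=\Sigma_n$, with $x_{n,i}$ the $i$-th row of $X_n$. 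Since $\Sigma_n$ is uniformly well conditioned (by \eqref{strdelocsubsamp1} and Condition \ref{spectralcondition} \textup{\texttt{A}}), replacing $(V_nL_n^{-1}M_n(\bar\ep_n)L_n^{-1}V_n^\top)^{1/2}\|\ep_n\|$ by $\Sigma_n^{1/2}$ is legitimate, which proves the first claim.

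Finally, the argument for $\hbp$ is parallel. From $\hbp-\beta_n=\|X_n\beta_n\|\,V_nL_n^{-1}\big[(U_n^\top S_{m,n}^\top S_{m,n}U_n)^{-1}-I_p\big]U_n^\top\widebar{X_n\beta_n}$ and the identity $U_nU_n^\top\widebar{X_n\beta_n}=\widebar{X_n\beta_n}$ (the fitted values lie in the column space of $U_n$), the governing quadratic form is again of the type $(U_nb_n)^\top S_{m,n}^\top S_{m,n}\widebar{X_n\beta_n}$, now with variance factor $\Mp(\widebar{X_n\beta_n})$. Condition \eqref{strdelocsubsamp2}, together with $y_n-\ep_n=X_n\beta_n$ and $\|y_n-\ep_n\|=\|X_n\beta_n\|$, plays exactly the role that \eqref{strdelocsubsamp1} played above in verifying Conditions $\mathfrak{p},\mathfrak{P}$ for the pair $(U_nb_n,\widebar{X_n\beta_n})$. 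Theorem \ref{lemgeneols} then gives $m^{1/2}(1-\gamma_n)^{-1/2}\big(V_nL_n^{-1}\Mp(\widebar{X_n\beta_n})L_n^{-1}V_n^\top\big)^{-1/2}\|X_n\beta_n\|^{-1}(\hbp-\beta_n)\Rightarrow\N(0,I_p)$, and the same pseudoinverse manipulation, using $(X_n\beta_n)_i=y_{n,i}-\ep_{n,i}$, identifies the limiting covariance with $\Sigp$ of \eqref{asssubsamp2}.
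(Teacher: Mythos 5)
Your proposal follows the same strategy as the paper's proof: apply Theorem \ref{lemgeneols} once the uniform-subsampling quadratic-form facts of Lemma \ref{lemboundqfsubsamp} supply Conditions \textup{\texttt{Bounded}} and \textup{\texttt{1-dim}} with $\tau_{m,n}=1-\gamma_n$, read off $M_n(\bar\ep_n)=n\,U_n^\top\diag(\bar\ep_n\odot\bar\ep_n)U_n$ and $\Mp(\widebar{X_n\beta_n})=n\,U_n^\top\diag(\widebar{X_n\beta_n}\odot\widebar{X_n\beta_n})U_n$, and rewrite the resulting covariances in the sandwich forms via $X_n^\dag=V_nL_n^{-1}U_n^\top=(X_n^\top X_n)^{-1}X_n^\top$. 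Your verification of Conditions $\mathfrak{p}$ and $\mathfrak{P}$ for $(U_nb_n,U_nb_n)$ is also correct.

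However, the step you flag as the main obstacle does not actually close. To verify Condition $\mathfrak{P}$ for $(U_nb_n,\bar\ep_n)$ you derive $\|u_{n,i}\|^2\bar\ep_{n,i}^2\le C/n$ from the PSD ordering, and you have the denominator bound $\sum_j(u_{n,j}^\top b_n)^2\bar\ep_{n,j}^2\ge c/n$ from \eqref{strdelocsubsamp1}. These two facts only give $\max_i(u_{n,i}^\top b_n)^2\bar\ep_{n,i}^2\big/\sum_j(u_{n,j}^\top b_n)^2\bar\ep_{n,j}^2\le C/c$, a bounded constant, not $o(1)$. Invoking $\|\vec(U_n)\|_\infty=O(1/\sqrt n)$ does not improve this: that gives $(u_{n,i}^\top b_n)^2=O(1/n)$, and since $\bar\ep_{n,i}^2\le 1$ the numerator is again only $O(1/n)$. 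What is missing is a \emph{decaying pointwise} bound on $\bar\ep_{n,i}^2$; \eqref{strdelocsubsamp1} is a spectral sandwich and does not yield one, and, unlike $\widebar{X_n\beta_n}$, the normalized residual is orthogonal to the column span of $U_n$, so $|\bar\ep_{n,i}|$ is not dominated by $\|u_{n,i}\|$. The explicit sufficient condition the paper notes immediately after Theorem \ref{thsubsamp}, $c/\sqrt n\le |\ep_{n,i}|/\|\ep_n\|\le C/\sqrt n$ for all $i$, closes the gap (the numerator becomes $O(1/n^2)$ and the ratio $O(1/n)\to 0$); without some such pointwise hypothesis your bounds stall at $O(1)$. (The paper's own proof asserts the same $\mathfrak{P}$ claim without justification, so this is a shared subtlety, but your written derivation makes visible that the bounds you establish do not suffice.) By contrast, your argument on the $\hbp$ side is sound: since $\widebar{X_n\beta_n}=U_nv$ for some $v\in\psp$, one has $|(\widebar{X_n\beta_n})_i|\le\|u_{n,i}\|=O(1/\sqrt n)$ pointwise, and the corresponding numerator is genuinely $O(1/n^2)$, yielding the delocalization needed.
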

The asymptotic result \eqref{asssubsamp} coincides with Theorem 2 of \cite{yu2022optimal} by taking $\Psi(x) = x$ and $p_i = r/n$ therein. Their result is assumes that $X_n$ has i.i.d.~rows and $y_n$ has i.i.d.~entries,
and then conditions on $X_n$ and $y_n$.
In contrast, we do not make this assumption  on $X_n$ and $y_n$.

\begin{proof}
For any $b_n\in\psp$, $n\ge 1$, since $\|\vec(U_n)\|_\infty=O(1/\sqrt{n})$, we have $\|U_nb_n\|_\infty=O(1/\sqrt{n})$. 
Then $({U_nb_n, U_nb_n})_{n\ge 1}$ satisfies Condition $\mathfrak{p}$, and due to Lemma \ref{lemboundqfsubsamp}, 
Condition \ref{generalquadratic forms} \textup{\texttt{Bounded}} holds for the sequences $({U_nb_n, U_nb_n})_{n\ge 1}$.

%If \eqref{assudelocalized} holds,
From Lemma \ref{lemboundqfsubsamp}, $\sigma_n^2(a_n, \ta_n)=n\sum_{i=1}^n (a_{n,i}\ta_{n,i})^2$. 
By \eqref{strdelocsubsamp1}, $({U_nb_n, \bar{\ep}_n})_{n\ge 1}$ satisfies Condition $\mathfrak{P}$ and the sequence of variances
$\sigma_n^2(U_nb_n, \bar{\ep}_n)=n\sum_{i=1}^n (U_nb_n)_{i}^2(\bar{\ep}_n)_{i}^2$ is uniformly bounded away from zero and infinity. Therefore, 
Condition \ref{generalquadratic forms} \textup{\texttt{1-dim}} holds for the sequences $({U_nb_n, \bar{\ep}_n})_{n\ge 1}$, and $M_n=n U_n^\top\diag\big(\bar{\ep}_n\odot \bar{\ep}_n\big) U_n$.
Similarly to the proof in \Cref{lemiid}, combining this with Lemma \ref{lemboundqfsubsamp}, we conclude 
the desired claim for $\hbs$. 

Similarly, Condition \ref{generalquadratic forms} \textup{\texttt{1-dim}} holds for $({U_nb_n, \widebar{X_n\beta_n}})_{n\ge 1}$, and using Lemma \ref{lemboundqfsubsamp}, we have
$\Mp = n\|X_n \beta_n\|^{-2}U_n^\top\diag((X_n\beta_n) \odot (X_n \beta_n))U_n \in \Sp$. 
Therefore, if we take $b_n=L_n^{-1} V_n^\top c/\|X_n^{\dag \top}c\|$,
the claim for $\hbp$ follows as in \Cref{lemiid}.
\end{proof}
\begin{proposition}[Consistency of the Uniform Random Sampling Estimator of the Covariance Matrices]\label{consestsubsamp}
Under the conditions of \Cref{thsubsamp}, we have 
\begin{equation*}
\|  \Sigma_n^{-1}\hat{\Sigma}_{m,n}-I_p \|_{\Fr} = o_{P}(1)
\,\textnormal{  and  }\, \|(\Sigp)^{-1}\hSigp-I_p \|_{\Fr} = o_{P}(1).\end{equation*}
\end{proposition}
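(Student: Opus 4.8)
The plan is to reduce the claim to the convergence of two $p\times p$ random matrices and then assemble the pieces via continuity of matrix inversion. Since $\Sigma_n^{-1}\hat\Sigma_{m,n}$ is unchanged if we rescale $X_n$ (both $\Sigma_n$ and $\hat\Sigma_{m,n}$ pick up the same factor $\|X_n\|^2$), we may assume $\|X_n\|=1$. Write the SVD $X_n=U_nL_nV_n^\top$, recall $S_{m,n}=\gamma_n^{-1/2}\bar B_n$ with $\bar B_n=\diag(B_1,\dots,B_n)$ and $B_i^2=B_i$, and set $P:=\sum_{i=1}^n B_i u_{n,i}u_{n,i}^\top=U_n^\top\bar B_nU_n$, $Q:=\sum_{i=1}^n \ep_{n,i}^2 u_{n,i}u_{n,i}^\top=U_n^\top\diag(\ep_n\odot\ep_n)U_n$, and $R:=\sum_{i=1}^n B_i\hat e_{n,i}^2 u_{n,i}u_{n,i}^\top$, where $\hat e_{n,i}:=(y_n-X_n\hbs)_i$. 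Substituting the SVD into $\tX_{m,n}^\top\tX_{m,n}=\gamma_n^{-1}X_n^\top\bar B_nX_n$ and $\tX_{m,n}^\top\diag(\tep_{m,n}\odot\tep_{m,n})\tX_{m,n}=\gamma_n^{-2}\sum_iB_i\hat e_{n,i}^2 x_{n,i}x_{n,i}^\top$ (using $B_i^2=B_i$) yields the identities $\hat\Sigma_{m,n}=mV_nL_n^{-1}P^{-1}RP^{-1}L_n^{-1}V_n^\top$ and $\Sigma_n=nV_nL_n^{-1}QL_n^{-1}V_n^\top$, hence $\Sigma_n^{-1}\hat\Sigma_{m,n}-I_p=V_nL_n\big(\gamma_nQ^{-1}P^{-1}RP^{-1}-I_p\big)L_n^{-1}V_n^\top$. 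Because $\|AXB\|_{\Fr}\le\|A\|\,\|B\|\,\|X\|_{\Fr}$, $V_n$ is orthogonal, and $\|L_n\|\,\|L_n^{-1}\|$ is the condition number of $X_n$, which is $O(1)$ by Condition \ref{spectralcondition} \textup{\texttt{A}}, it suffices to prove $\gamma_nQ^{-1}P^{-1}RP^{-1}\to_P I_p$. The partial-sketching statement is entirely parallel, with $\ep_{n,i}$ replaced by $(X_n\beta_n)_i=y_{n,i}-\ep_{n,i}$, $\hbs$ by $\hbp$, $\hat e_{n,i}$ by $x_{n,i}^\top\hbp$, and \eqref{strdelocsubsamp1} by \eqref{strdelocsubsamp2}, so I only describe the sketch-and-solve case.

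First I would handle $P$. Since $P/\gamma_n=U_n^\top S_{m,n}^\top S_{m,n}U_n$, the pairs $(U_nb_n,U_nb_n)$ with $b_n\in\psp$ satisfy Condition $\mathfrak{p}$ (the bound $\|\vec(U_n)\|_\infty=O(1/\sqrt n)$ forces $n\sum_i(U_nb_n)_i^4=\Theta(1)$), so \Cref{lemboundqfsubsamp} together with a union bound over a finite cover of $\psp$ gives, exactly as in \eqref{nc}, $\|P/\gamma_n-I_p\|=O_P(\sqrt{(1-\gamma_n)/m})=o_P(1)$; hence, on an event of probability tending to one, $\hat\Sigma_{m,n}$ is well defined and $P^{-1}=\gamma_n^{-1}(I_p+E_P)$ with $\|E_P\|=O_P(\sqrt{(1-\gamma_n)/m})$.

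Next I would show $\gamma_n^{-1}R\to Q$ in the relative sense $\|Q^{-1}(\gamma_n^{-1}R-Q)\|\to_P0$; since \eqref{strdelocsubsamp1} gives $c\|\ep_n\|^2 n^{-1}I_p\preceq Q\preceq C\|\ep_n\|^2 n^{-1}I_p$, it is enough to prove $\|\gamma_n^{-1}R-Q\|=o_P(\|\ep_n\|^2/n)$, which (as $p$ is fixed) may be done entrywise. Split $R=R_0+\Delta R$ with $R_0:=\sum_iB_i\ep_{n,i}^2u_{n,i}u_{n,i}^\top$ and $\Delta R:=\sum_iB_i(\hat e_{n,i}^2-\ep_{n,i}^2)u_{n,i}u_{n,i}^\top$. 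For $R_0$: $\E R_0=\gamma_nQ$, and the $(j,k)$ entry of $\gamma_n^{-1}R_0-Q$ is a centered weighted Bernoulli sum with variance $\gamma_n^{-1}(1-\gamma_n)\sum_i\ep_{n,i}^4(U_n)_{ij}^2(U_n)_{ik}^2\le\gamma_n^{-1}(1-\gamma_n)\|\vec(U_n)\|_\infty^2\big(\max_i\ep_{n,i}^2\big)Q_{kk}=O\big((\max_i\ep_{n,i}^2/\|\ep_n\|^2)\cdot\|\ep_n\|^4/n^2\big)$, so this entry is $o_P(\|\ep_n\|^2/n)$ because the residuals are suitably delocalized, $\max_i\ep_{n,i}^2/\|\ep_n\|^2\to0$ — a property guaranteed by the residual-delocalization hypotheses attached to \Cref{thsubsamp} (in particular by its stated sufficient condition $\max_i|\ep_{n,i}|/\|\ep_n\|\le C/\sqrt n$, which is in force for \Cref{lemsubsamp} as well). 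For $\Delta R$: from $\hat e_{n,i}-\ep_{n,i}=-x_{n,i}^\top(\hbs-\beta_n)$, the bound $\|x_{n,i}\|^2\le\|u_{n,i}\|^2=O(1/n)$ (using $\|X_n\|=1$ and $\|\vec(U_n)\|_\infty=O(1/\sqrt n)$), the rate $\|\hbs-\beta_n\|=O_P(\sqrt{(1-\gamma_n)/m}\,\|\ep_n\|)$ from \eqref{simols}/\Cref{lemsubsamp}, the expansion $\hat e_{n,i}^2-\ep_{n,i}^2=(\hat e_{n,i}-\ep_{n,i})^2-2(\hat e_{n,i}-\ep_{n,i})\ep_{n,i}$, a Cauchy--Schwarz estimate of the resulting cross term against $R_0$, and $\|\sum_iB_iu_{n,i}u_{n,i}^\top\|=O_P(\gamma_n)$, one gets $\|\Delta R\|=o_P(\gamma_n\|\ep_n\|^2/n)$. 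Together these give $\|\gamma_n^{-1}R-Q\|=o_P(\|\ep_n\|^2/n)$ and, via $\|R_0\|\le\max_i\|u_{n,i}\|^2\sum_iB_i\ep_{n,i}^2=O_P(\gamma_n\|\ep_n\|^2/n)$, also $\|R\|=O_P(\gamma_n\|\ep_n\|^2/n)$.

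To conclude, I would combine the two ingredients: $\gamma_nP^{-1}RP^{-1}=\gamma_n^{-1}(I_p+E_P)R(I_p+E_P)=\gamma_n^{-1}R+\gamma_n^{-1}(E_PR+RE_P+E_PRE_P)$, where $\|\gamma_n^{-1}E_PR\|\le\gamma_n^{-1}\|E_P\|\,\|R\|=O_P(\sqrt{(1-\gamma_n)/m}\,\|\ep_n\|^2/n)=o_P(\|\ep_n\|^2/n)$ and similarly for the other two error terms, so $\gamma_nP^{-1}RP^{-1}=\gamma_n^{-1}R+o_P(\|\ep_n\|^2/n)=Q+o_P(\|\ep_n\|^2/n)$; multiplying on the left by $Q^{-1}$ and using $\|Q^{-1}\|=O(n/\|\ep_n\|^2)$ gives $\gamma_nQ^{-1}P^{-1}RP^{-1}=I_p+o_P(1)$, as required. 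I expect the main obstacle to be the second ingredient: controlling the residual-weighted subsampling sum $R_0$ around its mean in the \emph{sharp} relative scale $\|\ep_n\|^2/n$. This is where the delocalization of both the leverage scores and the residual vector is used essentially, and where one must be careful because no convergence — nor even stabilization of the scale — of $\|\ep_n\|$ is available, so every estimate has to be carried out in ratio/relative form rather than by replacing $\Sigma_n$ and $\hat\Sigma_{m,n}$ by limits.
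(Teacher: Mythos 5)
Your proposal follows essentially the same route as the paper's proof: reduce to $\|X_n\|=1$, use the boundedness of the quadratic form $U_n^\top S_{m,n}^\top S_{m,n}U_n$ (as in \eqref{nc}) to control $\tX_{m,n}^\top\tX_{m,n}$, and then expand $\hat e_{n,i}^2$ around $\ep_{n,i}^2$, handling the leading term by a Bernoulli variance calculation and the cross and squared-error corrections via the rate $\|\hbs-\beta_n\|=O_P\bigl(\sqrt{(1-\gamma_n)/m}\,\|\ep_n\|\bigr)$. The only real difference is presentational: you work in $U_n$-coordinates with $P,Q,R$ and track every bound at the relative scale $\|\ep_n\|^2/n$ (correctly noting that $\|\ep_n\|$ need not stabilize), whereas the paper normalizes $\|y_n\|=1$ at the outset and restores general scaling at the end.
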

\begin{proof}
We consider $y_n\in \nsp$ and $X_n$ of a unit spectral radius first.
From \eqref{nc}, $\|\tX_{m,n}^\top \tX_{m,n} - X_n^\top X_n\| = o_P(1)$.
Thus, it suffices to show that
\begin{equation*}
\left\| \frac{m}{n}\tX_{m,n}^\top \diag(\tep_{m,n} \odot \tep_{m,n}) \tX_{m,n} 
- \sum_{i=1}^n \ep_{n,i}^2x_{n,i} x_{n,i}^\top\right\| = o_P(1).
\end{equation*}
Since $S_{m,n}=\diag(\sqrt{n/m}B_1,\ldots,\sqrt{n/m}B_n)$, we have
\begin{align*}
\tX_{m,n}^\top \diag(\tep_{m,n} \odot \tep_{m,n}) \tX_{m,n}
&=\sqrt{n/m}X_n^\top S_{m,n}\diag(B_1^2(y_{n}-X_n \hbs)_{1}^2, \ldots, B_n^2(y_{n}-X_n \hbs)_{n}^2) S_{m,n} X_{n}\\
&=(n/m)\cdot\sum_{i=1}^n (n/m) B_i(\ep_n+X_n(\beta_n- \hbs))_i^2x_{n,i}x_{n,i}^\top.
\end{align*}
Here, we can check
that 
$\sum_{i=1}^n [(n/m)B_i-1]\ep_{n,i}^2(x_{n,i}^\top c)^2=o_P(1)$, while $\|\beta_n- \hbs\|=O_P(\sqrt{1-\gamma_n)/m})$. 
Therefore, 
$$\sum_{i=1}^n (n/m)B_i\cdot 2\ep_{n,i}x_{n,i}^\top(\beta_n- \hbs) (x_{n,i}^\top c)^2=O_P(\sqrt{(1-\gamma_n)/m}\cdot n/m)$$
and $\sum_{i=1}^n (n/m)B_i\cdot (x_{n,i}^\top(\beta_n- \hbs))^2 (x_{n,i}^\top c)^2=O_P((1-\gamma_n)/m\cdot n/m)$.

Thus, the first claim of Proposition \ref{consestsubsamp} holds when $\|y_n\|=1$ and $\|X_n\|=1$. 
For general $y_n$ and $X_n$, the proof is similar to that of Proposition \ref{consest}. 
For the second conclusion, the proof is almost identical to the above argument.
\end{proof}

\subsection{Proof of \texorpdfstring{Lemma \ref{lemboundqfsubsamp}}{Lemma \ref{lemboundqfsubsamp}}}
To show \eqref{ssbo}, notice that
$\Var{a_n^{\top} S_{m, n}^{\top} S_{m, n} \tilde{a}_n}=n\sum_{i=1}^n a_{n,i}^2\ta_{n,i}^2\cdot(1-\gamma_n)/m$.
Then %Condition \ref{generalquadratic forms} \textup{\texttt{Bounded}}
\eqref{ssbo} follows by Condition $\mathfrak{p}$ from \eqref{assdelocalsubsamp2}.

To see \eqref{ssn},
we know that
$a_n^\top S_{m,n}^\top S_{m,n}\ta_n=\frac{n}{m}\sum_{i=1}^n a_{n,i}\ta_{n,i} B_i$ and $\E B_i=m/n$. 
Denoting $J_i=(1/\gamma_n-1)^{-1/2}(\frac{n}{m} B_i-1)$,
we have that $J_i$ are i.i.d.~with $\E J_i=0$ and $\E J_i^2=1$.
As a consequence, $a_n^\top S_{m,n}^\top S_{m,n}\ta_n-a_n^\top \ta_n=(1/\gamma_n-1)^{1/2}\sum_{i=1}^n a_{n,i}\ta_{n,i} J_i$.
We check the Lindeberg condition
\begin{equation}\label{lcss}
    \sum_{i=1}^n \E[(a_{n,i} \ta_{n,i})^2 J_i^2I\{(a_{n,i} \ta_{n,i})^2J_i^2>\epsilon s_n^2\}]/s_n^2\rightarrow 0
\end{equation}
for all $\epsilon>0$, where $s_n^2=\Var{\sum_{i=1}^n a_{n,i}\ta_{n,i} J_i}=\sum_{i=1}^n a_{n,i}^2\ta_{n,i}^2\Var{J_i}=\sum_{i=1}^n a_{n,i}^2\ta_{n,i}^2$.
We have
$$\E [J_i^2I\{(a_{n,i} \ta_{n,i})^2J_i^2>\epsilon s_n^2\}]\le  \E\left [J_i^2I\left\{\max_{i\in[n]} \frac{(a_{n,i} \ta_{n,i})^2}{s_n^2}J_i^2>\epsilon \right\}\right],$$
so the left hand side of \eqref{lcss} is upper bounded by
$$
\sum_{i=1}^n \frac{(a_{n,i} \ta_{n,i})^2}{s_n^2} \E\left [J_i^2I\left\{\max_{i\in[n]} \frac{(a_{n,i} \ta_{n,i})^2}{s_n^2}J_i^2>\epsilon \right\}\right]
=%\frac{\gamma_n}{1-\gamma_n}
\E\left [J_i^2I\left\{\max_{i\in[n]} \frac{(a_{n,i} \ta_{n,i})^2}{s_n^2}J_i^2>\epsilon \right\}\right]\to 0,
$$
where the convergence to zero holds due to Condition $\mathfrak{P}$.
Then \eqref{ssn} follows.

A counterexample 
that does not satisfy 
Condition $\mathfrak{P}$, 
such that 
$\sqrt{m} a_n^\top S_{m,n}^\top S_{m,n} \ta_n$ is not asymptotically normal
is $a_n=\ta_n=(1,0,\ldots,0)$. Then,
$$
\sqrt{\frac{m}{1-\gamma_n}}\sqrt{\frac{1}{n\sum_{i=1}^n a_{n,i}^2\ta_{n,i}^2}}\sum_{i=1}^n a_{n,i}\ta_{n,i} J_i=\sqrt{\frac{m}{n-m}}J_1.
$$
This random variable takes two values, 
so it is not asymptotically normal.

\subsection{Proof of \texorpdfstring{Lemma \ref{lemsymqfunifrandsamp}}{Lemma \ref{lemsymqfunifrandsamp}}}
By the subsequence argument and Cramer-Wold device as in the beginning of the proof of Lemma \ref{symiid}, it suffices to show that for any upper triangular $\Phi\in\R^{p\times p}$,
\begin{equation*}%\label{cw_vec_srht_PCA}
\sqrt{\frac{m}{1-\gamma_n}} \left(\vec(\Phi)^\top G_n\vec(\Phi)\right)^{-1/2} \vec(\Phi)^\top \vec(U_n^\top S_{m,n}^\top S_{m,n}U_n-I_p)\Rightarrow \N\left(0,1\right).
\end{equation*}
Denote $U_n=(u_{n,1},\ldots,u_{n,n})^\top$, where $u_{n,i}^\top$ is the $i$-th row of $U_n$. Then
$$
\tr\left(\Phi(U_n^\top S_{m,n}^\top S_{m,n}U_n-I_p)\right)
=\tr\left(\Phi\sum_{i=1}^n u_{n,i} u_{n,i}^\top\left(\frac{n}{m} B_i-1\right)\right)
=\sum_{i=1}^n u_{n,i}^\top\Phi u_{n,i}  \left(\frac{n}{m} B_i-1\right).
$$
Denote $J_i=(1/\gamma_n-1)^{-1/2}(\frac{n}{m} B_i-1)$ for all $i\in [n]$, 
so that $\E J_i=0, \E J_i^2=1$. 
Then from Lemma \ref{lemboundqfsubsamp},
if 
$U_n$ 
satisfies 
$\frac{\max_{i\in[n]}(u_{n,i}^\top \Phi u_{n,i})^2}{\sum_{i=1}^n (u_{n,i}^\top \Phi u_{n,i})^2}$ $\to 0$
in Condition $\mathfrak{P}'$ from \eqref{assdelocalsubsamp3}, 
the above term is asymptotically normal.

\subsection{Proof of \texorpdfstring{Theorem \ref{iid1}}{Theorem \ref{iid1}}}
By Lemma \ref{symiid}, since $(U_n)_{n\ge 1}$ satisfies Condition $\mathfrak{P}'$,
 Condition \ref{generalquadratic forms} \textup{\texttt{Sym}}
 holds 
with
$\tau_{m,n}=1$ and
$G_n = I_{p^2}+P_p+\Gamma_n$. 
Moreover, for all $i\in [p]$,
$(G_n)_{(ii),(ii)}=2+(\Gamma_n)_{(ii),(ii)},$ and thus from \eqref{defHi},
$$\Delta_i=\Delta_i(\Xi, G_n)=
\sum_{k\neq i, l\neq i} \frac{\lambda_i(\Xi)\sqrt{\lambda_k(\Xi)\lambda_l(\Xi)}}{\left(\lambda_i(\Xi)-\lambda_k(\Xi)\right)\left(\lambda_i(\Xi)-\lambda_l(\Xi)\right)}\left(\delta_{kl}+(\Gamma_n)_{(ij),(kl)}\right)v_k(\Xi)v_l(\Xi)^\top.
$$
Since $(X_n)_{n\ge 1}$ satisfies Condition \ref{spectralcondition} \textup{\texttt{B}}, 
the conclusion holds due to \Cref{infPCAgeneral}.

\subsection{Proof of \texorpdfstring{Theorem \ref{thiid}}{Theorem \ref{thiid}}}
We first show the following result, which establishes the asymptotic normality of sketched least squares estimators. 
The result can be regarded as a special case of Theorem 3.2 in \cite{zhang2023framework} for a fixed dimension $p$,
but 
with the weaker condition that the moments 
of the sketching matrices have a $(4+\delta)$-th moment, as opposed to having all moments finite.
Also, it turns out 
that for i.i.d.~sketching,
using our general framework would require
assuming that $(U_n)_{n\ge 1}$ satisfies the conditions of Lemma \ref{symiid}.
However, by a more direct analysis, we can avoid this condition; and 
so the proof we present deviates from our general framework.

In the remainder, we denote $X_n=(x_{n,1}^\top,\ldots,x_{n,p}^\top)^\top\in\R^{n\times p}$, $y_n=(y_{n,1},\ldots,y_{n,p})^\top\in\R^p$.

\begin{lemma}[Asymptotic Normality in Least Squares with i.i.d.~Sketching]\label{lemiid}
Under the conditions of \Cref{thiid},
with
\begin{equation}\label{assiidfix}    \Sigma_{n}:=(X_n^\top X_n)^{-1} \left\{ \sum_{i=1}^n \ep_{n,i}^2\left[X_n^\top X_n+(\kappa_{n,4}-3)  x_{n,i} x_{n,i}^\top\right] \right\} (X_n^\top X_n)^{-1},
\end{equation}
we have
$m^{1/2}\Sigma_n^{-1/2}(\hbs-\beta_n) \Rightarrow \N(0,I_p).$
Similarly, defining 
\begin{equation}\label{assiidfix2}     \Sigp:=(X_n^\top X_n)^{-1} \|y_{n}-\ep_{n}\|^2 + (X_n^\top X_n)^{-1} \left[(\kappa_{n,4}-3) \sum_{i=1}^n (y_{n,i}- \ep_{n,i})^2 x_{n,i} x_{n,i}^\top  \right](X_n^\top X_n)^{-1}+\beta_n\beta_n^\top,\end{equation}
we have
$m^{1/2}(\Sigp)^{-1/2}(\hbp-\beta_n) \Rightarrow \N(0,I_p)$.
\end{lemma}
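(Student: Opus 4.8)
I would start from the singular-value representation \eqref{geneformucomplete}, $\hbs=\beta_n+\|\ep_n\|\,V_nL_n^{-1}(U_n^\top S_{m,n}^\top S_{m,n}U_n)^{-1}U_n^\top S_{m,n}^\top S_{m,n}\bar\ep_n$, together with its partial-sketching analogue \eqref{geneformupartial} with $\bar\ep_n,\|\ep_n\|$ replaced by $\widebar{X_n\beta_n},\|X_n\beta_n\|$. As the discussion above the lemma indicates, I would \emph{not} route the argument through Condition \ref{generalquadratic forms} \texttt{Bounded}: for i.i.d.\ sketches with $\kappa_{n,4}\to\infty$ that condition can fail unless the leverage scores are delocalized, and the point of the lemma is precisely to avoid such a hypothesis. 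The plan is: (a) show directly that the ``Gram factor'' $U_n^\top S_{m,n}^\top S_{m,n}U_n$ is consistent for $I_p$, with a rate good enough that replacing its inverse by $I_p$ in \eqref{geneformucomplete} costs a strictly lower-order term; (b) handle the surviving linear term $\|\ep_n\|\,V_nL_n^{-1}U_n^\top(S_{m,n}^\top S_{m,n}-I_n)\bar\ep_n$ (recall $U_n^\top\bar\ep_n=0$) by the Cramér--Wold device and the scalar CLT of Lemma \ref{lemgeneralqfiid}; (c) conclude by a subsequence argument, since the normalizing matrix $\Sigma_n$ need not converge; and (d) identify $\Sigma_n$, and its partial-sketching counterpart $\Sigp$, via the change of variables $x_{n,i}=V_nL_nU_{i:}$.

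For (a): first, the hypotheses force $\kappa_{n,4}=o(m)$, trivially in case (i), and in case (ii) because the interpolation bound $\kappa_{n,4+\delta}^{4/(4+\delta)}\ge\kappa_{n,4}^{(4+2\delta)/(4+\delta)}$ (log-convexity of $t\mapsto\log\E|T_{11}|^t$) combined with $\kappa_{n,4+\delta}^{4/(4+\delta)}/\kappa_{n,4}=o(m^{\delta/(4+\delta)})$ gives $\kappa_{n,4}^{\delta/(4+\delta)}=o(m^{\delta/(4+\delta)})$. Writing $T_{li}=\sqrt m\,(S_{m,n})_{li}$ and $\xi_{lj}=\sum_i(U_n)_{ij}T_{li}$, so that $(U_n^\top S_{m,n}^\top S_{m,n}U_n)_{jk}=m^{-1}\sum_l\xi_{lj}\xi_{lk}$, a moment computation gives mean $\delta_{jk}$ and variance $m^{-1}\bigl(1+\delta_{jk}+(\kappa_{n,4}-3)\sum_i(U_n)_{ij}^2(U_n)_{ik}^2\bigr)=O(\kappa_{n,4}/m)$; since $p$ is fixed, Chebyshev yields $\|U_n^\top S_{m,n}^\top S_{m,n}U_n-I_p\|=O_P(\sqrt{\kappa_{n,4}/m})\to_P0$, hence $\|(U_n^\top S_{m,n}^\top S_{m,n}U_n)^{-1}-I_p\|=O_P(\sqrt{\kappa_{n,4}/m})$. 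A one-term Neumann expansion, together with $\|U_n^\top S_{m,n}^\top S_{m,n}\bar\ep_n\|=O_P(\sqrt{\kappa_{n,4}/m})$ (from the cross-term variance), shows that replacing $(U_n^\top S_{m,n}^\top S_{m,n}U_n)^{-1}$ by $I_p$ in \eqref{geneformucomplete} changes $c^\top(\hbs-\beta_n)$ by $O_P(\|\ep_n\|\,\kappa_{n,4}/m)$, which is $o_P$ of the $\|\ep_n\|\sqrt{\kappa_{n,4}/m}$-scale of the linear term because $\kappa_{n,4}/m\to0$. Making this comparison rigorous in case (ii), where the Gram factor only concentrates at the slower rate $\sqrt{\kappa_{n,4}/m}$, is the step I expect to require the most care.

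For (b)--(c): fix $c\in\R^p$, set $b_n=L_n^{-1}V_n^\top c/\|X_n^{\dag\top}c\|\in\psp$, and note that $(U_nb_n,\bar\ep_n)$ satisfies Condition $\mathfrak{P}$ of \eqref{piid} --- automatically in case (i), and in case (ii) because \eqref{strdelociid1} gives $c\le\sum_i(U_nb_n)_i^2\bar\ep_{n,i}^2\le C$ for unit $b_n$. Lemma \ref{lemgeneralqfiid} then gives $\sqrt m\,\sigma_n^{-1}\,b_n^\top U_n^\top(S_{m,n}^\top S_{m,n}-I_n)\bar\ep_n\Rightarrow\N(0,1)$ with $\sigma_n^2=b_n^\top M_n(\bar\ep_n)b_n$ and, using $U_n^\top\bar\ep_n=0$, $M_n(\bar\ep_n)=I_p+(\kappa_{n,4}-3)U_n^\top\diag(\bar\ep_n\odot\bar\ep_n)U_n$. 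Combining this with (a) and the Cramér--Wold device, $\sqrt m\,(V_nL_n^{-1}M_n(\bar\ep_n)L_n^{-1}V_n^\top)^{-1/2}\|\ep_n\|^{-1}(\hbs-\beta_n)\Rightarrow\N(0,I_p)$ along any subsequence on which the operator-norm-normalized version of this covariance converges, and the subsequence argument is then carried out exactly as in the proof of \Cref{lemgeneols}: the normalized covariance lies in a compact subset of $\Sp$ by Condition \ref{spectralcondition} \texttt{A} together with, in case (ii), the two-sided bound \eqref{strdelociid1}, which keeps the condition number of $M_n(\bar\ep_n)$ bounded even as $\kappa_{n,4}\to\infty$. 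Finally, for (d), using $x_{n,i}=V_nL_nU_{i:}$, $X_n^\top X_n=V_nL_n^2V_n^\top$ and $\|\ep_n\|^2=\sum_i\ep_{n,i}^2$, one checks $\|\ep_n\|^2(X_n^\top X_n)^{-1}=(X_n^\top X_n)^{-1}\bigl(\sum_i\ep_{n,i}^2X_n^\top X_n\bigr)(X_n^\top X_n)^{-1}$ and $\|\ep_n\|^2V_nL_n^{-1}U_n^\top\diag(\bar\ep_n\odot\bar\ep_n)U_nL_n^{-1}V_n^\top=(X_n^\top X_n)^{-1}\bigl(\sum_i\ep_{n,i}^2x_{n,i}x_{n,i}^\top\bigr)(X_n^\top X_n)^{-1}$, whence $\|\ep_n\|^2V_nL_n^{-1}M_n(\bar\ep_n)L_n^{-1}V_n^\top=\Sigma_n$ as in \eqref{assiidfix}.

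For the partial-sketching estimator the argument is identical with $\bar\ep_n$ replaced by $\widebar{X_n\beta_n}$; the only new feature is that $\widebar{X_n\beta_n}$ lies in the column space of $U_n$, so $b_n^\top U_n^\top\widebar{X_n\beta_n}\neq0$ and $M_n$ acquires the extra rank-one term $U_n^\top\widebar{X_n\beta_n}\widebar{X_n\beta_n}^\top U_n$; since $V_nL_n^{-1}U_n^\top\widebar{X_n\beta_n}=\beta_n/\|X_n\beta_n\|$ and $\|X_n\beta_n\|^2=\|y_n-\ep_n\|^2$, after the same change of variables this contributes precisely the $\beta_n\beta_n^\top$ summand of \eqref{assiidfix2}, giving $m^{1/2}(\Sigp)^{-1/2}(\hbp-\beta_n)\Rightarrow\N(0,I_p)$.
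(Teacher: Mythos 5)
Your proposal is essentially correct and matches the remark the paper makes immediately before the lemma, but it takes a genuinely different route from the paper's written proof. The paper's proof of \Cref{lemiid} proceeds by verifying the inputs of the general result \Cref{lemgeneols}: it cites \Cref{lemgeneralqfiid} for Condition \ref{generalquadratic forms} \texttt{Bounded}, argues via \eqref{vlb} that the $\sigma_n^2$ in Condition \ref{generalquadratic forms} \texttt{1-dim} is uniformly bounded (for the sketch-and-solve claim with $\tau_{m,n}=1$; for partial sketching it rescales by $\tau_{m,n}=\kappa_{n,4}$), invokes \eqref{ssa} and \eqref{ssb}, and then does exactly the change-of-variables $x_{n,i}=V_nL_nU_{i:}$ that you perform in step (d) to identify $\Sigma_n$ and $\Sigp$. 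You instead re-derive the key ingredients of \Cref{lemgeneols} from scratch: a direct variance/Chebyshev bound $O_P(\sqrt{\kappa_{n,4}/m})$ for the Gram factor and for $U_n^\top S_{m,n}^\top S_{m,n}\bar\ep_n$, a one-term Neumann expansion as in \eqref{approxbyqfskechols}, Cram\'er--Wold plus \Cref{lemgeneralqfiid} for the linear piece, and the subsequence argument. This buys you transparency on exactly the point the paper's surrounding text flags — namely that appealing to \Cref{lemgeneralqfiid}'s CLT to establish Condition \texttt{Bounded} for $(U_nb_n,U_nb_n)$ would require hypothesis \eqref{piid} on $\|U_nb_n\|_4^4$, which \Cref{thiid}(ii) does not impose — and your interpolation argument for $\kappa_{n,4}=o(m)$ makes precise what the paper dispatches as ``by H\"older.'' One slight overcorrection: your step (a) is in fact a direct verification of Condition \texttt{Bounded} with $\tau_{m,n}=\kappa_{n,4}$ (no CLT required), so the argument can stay inside the framework; what must be avoided is only the use of \Cref{lemgeneralqfiid}'s normality (and its attendant Condition $\mathfrak{P}$) as the vehicle for the $O_P$ bound. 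Aside from that labeling issue and the extra length caused by redoing the Neumann-expansion and subsequence argument of \Cref{lemgeneols} by hand, your plan is correct, including the rank-one $\beta_n\beta_n^\top$ contribution in the partial-sketching covariance.
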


\begin{proof}
Consider a sequence of vectors $({a_n, \ta_n})_{n\ge 1}$ such that $a_n, \ta_n\in\nsp$ for all $n$. 
From Lemma \ref{lemgeneralqfiid}, 
we have Condition \ref{generalquadratic forms} \textup{\texttt{Bounded}}. 

For the first claim, 
\eqref{vlb}
in the proof of Lemma \ref{lemgeneralqfiid} in \Cref{pflemgeneralqfiid}
shows that the sequence of variances
$\sigma_n^2(a_n, \ta_n)=1+(a_n^\top \ta_n)^2+(\kappa_{n,4}-3)\sum_{i=1}^n (a_{n,i}\ta_{n,i})^2$ is uniformly bounded away from zero and infinity. Therefore,
Condition \ref{generalquadratic forms} \textup{\texttt{1-dim}} holds for the sequence $({U_nb_n, \bar{\ep}_n})_{n\ge 1}$, and we have $M_n=I_p+(\kappa_{n,4}-3)U_n^\top\diag\big(\bar{\ep}_n\odot \bar{\ep}_n\big) U_n$.
Thus,
\begin{align*}
V_nL_n^{-1}M_nL_n^{-1}V_n^\top
&=V_nL_n^{-2}V_n^\top+(\kappa_{n,4}-3)V_nL_n^{-1}U_n^\top\diag\big(\bar{\ep}_n\odot \bar{\ep}_n\big) U_nL_n^{-1}V_n^\top\\
%&=(X_n^\top X_n)^{-1}+(\kappa_{n,4}-3)X_n^{\dag \top}\diag\bigg(\bar{\ep}_n\odot\bar{\ep}_n\bigg)X_n^\dag\\
&=(X_n^\top X_n)^{-1}+(\kappa_{n,4}-3)(X_n^\top X_n)^{-1}X_n^{\top}\diag\big(\ep_n\odot \ep_n\big)X_n(X_n^\top X_n)^{-1}/\|\ep_n\|^2\\
&=(X_n^\top X_n)^{-1}+(\kappa_{n,4}-3)(X_n^\top X_n)^{-1}\biggl(\sum_{i=1}^n \ep_{n,i}^2  x_{n,i} x_{n,i}^\top\biggr)(X_n^\top X_n)^{-1}/\|\ep_n\|^2.
\end{align*}
Therefore, 
the claim for $\hbs$ follows 
from \eqref{ssa} in \Cref{lemgeneols}; where we remark that all necessary conditions hold due to the stated assumptions. 

For the second claim, we define $\tau_{m,n}=\kappa_{n,4}$, 
so that by the H{\"o}lder inequality, $\tau_{m,n}=o(m)$. Also, $\sigma_n^2(a_n, \ta_n)=(1+(a_n^\top \ta_n)^2+(\kappa_{n,4}-3)\sum_{i=1}^n (a_{n,i}\ta_{n,i})^2)/\kappa_{n,4}=\sum_{i=1}^n (a_{n,i}\ta_{n,i})^2+o(1)$. From the proof of Lemma \ref{lemgeneralqfiid}, 
$\sigma_n^2(a_n, \ta_n)$,  are uniformly bounded away from zero and infinity for  $n\ge 1$. 
Similarly, Condition \ref{generalquadratic forms} \textup{\texttt{1-dim}} holds for $({U_nb_n, \widebar{X_n\beta_n}})_{n\ge 1}$, and using Lemma \ref{lemgeneralqfiid}, we have  
$\Mp = I_p+ \|X_n \beta_n\|^{-2} L_nV_n \beta_n \beta_n^\top V_n^\top L_n +\|X_n \beta_n\|^{-2} (\kappa_{n,4}-3)U_n^\top\diag((X_n\beta_n) \odot (X_n \beta_n))U_n \in \Sp$. 
Therefore, %if we take $b_n=L_n^{-1} V_n^\top c/\|X_n^{\dag \top}c\|$ and denote
we can calculate
\begin{align*}
\Sigp&=\|X_n\beta_n\|^2V_nL_n^{-1}\Mp L_n^{-1}V_n^\top\\ &=\|X_n\beta_n\|^2(X_n^\top X_n)^{-1}+\beta_n\beta_n^\top+(\kappa_{n,4}-3)V_nL_n^{-1}U_n^\top\diag\left((X_n\beta_n)\odot(X_n\beta_n)\right)U_nL_n^{-1}V_n^\top\\
&=(X_n^\top X_n)^{-1} \|y_{n}-\ep_{n}\|^2 + (X_n^\top X_n)^{-1} \left[(\kappa_{n,4}-3) \sum_{i=1}^n (y_{n,i}- \ep_{n,i})^2 x_{n,i} x_{n,i}^\top  \right](X_n^\top X_n)^{-1}+\beta_n\beta_n^\top,
\end{align*}
the claim for $\hbp$ follows from \Cref{lemgeneols}.
\end{proof}

We will also use 
the following proposition, which shows that $\hat{\Sigma}_{m,n}$ and $\hSigp$ are ratio-consistent estimators of $\Sigma_n$ and $\Sigp$, respectively. 
\begin{proposition}[Consistency of the i.i.d.~Sketching Estimator of the Covariance Matrices]\label{consest}
Under the conditions of \Cref{thiid}, we have 
\begin{equation*}
\|  \Sigma_n^{-1}\hat{\Sigma}_{m,n}-I_p \|_{\Fr} = o_{P}(1)
\,\textnormal{  and  }\, \|(\Sigp)^{-1}\hSigp-I_p \|_{\Fr} = o_{P}(1).\end{equation*}
\end{proposition}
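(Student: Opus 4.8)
The plan is to establish $\|\hat{\Sigma}_{m,n}-\Sigma_n\|_{\Fr}=o_P(1)$ together with $\liminf_n\lambda_{\min}(\Sigma_n)>0$ under a normalization, which combine to give the stated ratio-consistency for $\hat{\Sigma}_{m,n}$, and then to treat $\hSigp$ by a parallel but more delicate argument. As in the proofs of \eqref{1dim'lsinf2} and \Cref{consestsubsamp}, the expressions $\hat{\Sigma}_{m,n},\Sigma_n$ (and $\hSigp,\Sigp$) are invariant under the simultaneous rescaling $(X_n,y_n)\mapsto(X_n/g_n,y_n/g_n)$, so I would assume $\|X_n\|=1$ and $\|y_n\|=1$; then $\|\ep_n\|\le 2$, and by Condition \ref{spectralcondition} \textup{\texttt{A}} we have $\lambda_{\min}(X_n^\top X_n)$ bounded away from zero, hence $(X_n^\top X_n)^{-1}=O(1)$. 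From \Cref{lemgeneralqfiid} (Condition \ref{generalquadratic forms} \textup{\texttt{Bounded}} with $\tau_{m,n}=1$, as in \eqref{nc}), $\|\tX_{m,n}^\top\tX_{m,n}-X_n^\top X_n\|=O_P(m^{-1/2})$, so $(\tX_{m,n}^\top\tX_{m,n})^{-1}=(X_n^\top X_n)^{-1}+O_P(m^{-1/2})$. Since $\hat{\Sigma}_{m,n}=(\tX_{m,n}^\top\tX_{m,n})^{-1}[m\,\tX_{m,n}^\top\diag(\tep_{m,n}\odot\tep_{m,n})\tX_{m,n}](\tX_{m,n}^\top\tX_{m,n})^{-1}$ and $\Sigma_n=(X_n^\top X_n)^{-1}\Psi_n(X_n^\top X_n)^{-1}$ with $\Psi_n:=\sum_{i=1}^n\ep_{n,i}^2[X_n^\top X_n+(\kappa_{n,4}-3)x_{n,i}x_{n,i}^\top]$ the middle factor of \eqref{assiidfix}, it suffices to show that the ``meat'' matrix $m\,\tX_{m,n}^\top\diag(\tep_{m,n}\odot\tep_{m,n})\tX_{m,n}$ converges in probability to $\Psi_n$.

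For this I would write $\tep_{m,n}=S_{m,n}\ep_n+S_{m,n}X_n(\beta_n-\hbs)$ and use $\|\hbs-\beta_n\|=O_P(m^{-1/2})$ from \eqref{ssa}/\Cref{thiid}; the terms in the expansion of $\diag(\tep_{m,n}\odot\tep_{m,n})$ involving $\beta_n-\hbs$ contribute $o_P(1)$ to the meat, checked using standard bounds on $\max_{j\le m}\|\tilde{s}_j^\top X_n\|$ from the $(4+\delta)$-th moment of the sketch entries, where $\tilde{s}_1^\top,\dots,\tilde{s}_m^\top$ denote the rows of $\sqrt{m}\,S_{m,n}$ (i.i.d., mean $0$, variance $1$, fourth moment $\kappa_{n,4}$). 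For the leading part one has the exact identity $m\,\tX_{m,n}^\top\diag\big((S_{m,n}\ep_n)\odot(S_{m,n}\ep_n)\big)\tX_{m,n}=\frac1m\sum_{j=1}^m W_j$ with $W_j:=(\tilde{s}_j^\top\ep_n)^2 X_n^\top\tilde{s}_j\tilde{s}_j^\top X_n$, an average of i.i.d.\ $p\times p$ matrices. A fourth-moment computation of the entries gives $\E W_1=\|\ep_n\|^2 X_n^\top X_n+2(X_n^\top\ep_n)(X_n^\top\ep_n)^\top+(\kappa_{n,4}-3)\sum_i\ep_{n,i}^2 x_{n,i}x_{n,i}^\top$, and the middle term vanishes because $X_n^\top\ep_n=0$ (normal equations), so $\E W_1=\Psi_n$. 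A weak law of large numbers then gives $\frac1m\sum_j W_j\to_P\Psi_n$; since $W_1$ has only a finite $(1+\delta/4)$-th moment, bounded via Cauchy--Schwarz by a constant times $\kappa_{n,4+\delta}$, I would prove this by truncating $W_j$ at a level $b_m$ and applying Chebyshev to the truncated sum, where in case (ii) ($\kappa_{n,4}\to\infty$) one works with $W_j/\kappa_{n,4}$ and the window $(m\kappa_{n,4+\delta})^{4/(4+\delta)}\ll b_m\ll m\kappa_{n,4}$ for $b_m$ is nonempty precisely by the assumed rate $\kappa_{n,4+\delta}^{4/(4+\delta)}/\kappa_{n,4}=o(m^{\delta/(4+\delta)})$. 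Combining, $\hat{\Sigma}_{m,n}=\Sigma_n+o_P(1)$, and since $\liminf_n\lambda_{\min}(\Sigma_n)>0$ follows from $\Psi_n\succeq 0$, Condition \ref{spectralcondition} \textup{\texttt{A}} and the kurtosis assumptions (in case (ii) using \eqref{strdelociid1} to sandwich $\sum_i\ep_{n,i}^2 x_{n,i}x_{n,i}^\top$ between multiples of $\|\ep_n\|^2 X_n^\top X_n$), we obtain $\|\Sigma_n^{-1}\hat{\Sigma}_{m,n}-I_p\|_{\Fr}\le\|\Sigma_n^{-1}\|\,\|\hat{\Sigma}_{m,n}-\Sigma_n\|_{\Fr}=o_P(1)$; the general $(X_n,y_n)$ case then follows by undoing the normalization as in \Cref{consestsubsamp}.

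For $\hSigp$ the scheme is the same with $\tep_{m,n}$ replaced by $\tX_{m,n}\hbp$ and $\ep_n$ by $X_n\beta_n=y_n-\ep_n$, but now the role of $\hbs-\beta_n$ is played by $\hbp-\beta_n=(\tX_{m,n}^\top\tX_{m,n})^{-1}(X_n^\top X_n-\tX_{m,n}^\top\tX_{m,n})\beta_n$, which is $O_P(m^{-1/2})$ yet \emph{not} negligible at the relevant scale, since it couples through the same rows $\tilde{s}_j$ appearing in the outer average. Expanding $\tilde{s}_j^\top X_n\hbp=\tilde{s}_j^\top X_n\beta_n-\tilde{s}_j^\top X_n\gamma$ with $\gamma=\hbp-\beta_n$ and carrying the second-order terms---in particular the ``diagonal'' ($l=j$) piece of $X_n\gamma$, which produces terms of the form $\frac1m(\tilde{s}_j^\top X_n\beta_n)(\|U_n^\top\tilde{s}_j\|^2-1)$ after noting $\E\|U_n^\top\tilde{s}_j\|^2=p$---the leading term $\frac1m\sum_j(\tilde{s}_j^\top X_n\beta_n)^2 X_n^\top\tilde{s}_j\tilde{s}_j^\top X_n$ converges to $\|X_n\beta_n\|^2 X_n^\top X_n+2(X_n^\top X_n\beta_n)(X_n^\top X_n\beta_n)^\top+(\kappa_{n,4}-3)\sum_i(X_n\beta_n)_i^2 x_{n,i}x_{n,i}^\top$, while the cross term in $X_n\gamma$, after the analogous fourth-moment bookkeeping, contributes $-(X_n^\top X_n\beta_n)(X_n^\top X_n\beta_n)^\top$; after sandwiching by $(\tX_{m,n}^\top\tX_{m,n})^{-1}\to(X_n^\top X_n)^{-1}$ this reproduces exactly $\Sigp$ of \eqref{assiidfix2} (the $2\beta_n\beta_n^\top$ becomes $\beta_n\beta_n^\top$). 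Ratio-consistency follows using $\Sigp\succeq\beta_n\beta_n^\top$ and $\Sigp\succeq\|X_n\beta_n\|^2(X_n^\top X_n)^{-1}$, as in the proof of \eqref{1dim'lsinf2}.

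I expect the main obstacle to be twofold: first, pushing the weak law for the i.i.d.\ matrix average $\frac1m\sum_j W_j$ through under only a $(4+\delta)$-th moment on the sketch entries, which in case (ii) forces a truncation level calibrated precisely to the assumed growth of $\kappa_{n,4+\delta}$; and second, the partial-sketching computation, where $\hbp-\beta_n$ feeds back into the outer i.i.d.\ sum and one must carefully bookkeep its second-order contribution in order to recover exactly the $\beta_n\beta_n^\top$ term of $\Sigp$ rather than a spurious multiple of it.
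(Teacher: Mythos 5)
For the sketch-and-solve ($\hbs$) claim, your plan is essentially the paper's: after the normalization $\|X_n\|=\|y_n\|=1$, reduce to a weak law for the i.i.d.\ $p\times p$ matrix average $m^{-1}\sum_j(s_j^\top\ep_n)^2\,X_n^\top s_j s_j^\top X_n$ (with $s_j^\top$ the rows of $\sqrt{m}\,S_{m,n}$), whose mean is the middle factor of \eqref{assiidfix} because $X_n^\top\ep_n=0$ kills the cross term, together with $o_P(1)$ control of the $\hbs-\beta_n=O_P(m^{-1/2})$ contributions; you propose an explicit truncation for case~(ii), where the paper cites a triangular-array weak law (Thm.\ 2.2.6 of \cite{durrett2019probability}), and both routes handle the weak-moment regime.

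For the partial-sketching ($\hbp$) claim, the step you flagged as delicate is where the argument actually breaks, but not in the way you expected. You correctly identify that the raw average $m^{-1}\sum_j(s_j^\top X_n\beta_n)^2 X_n^\top s_j s_j^\top X_n$ has mean carrying $2(X_n^\top X_n\beta_n)(X_n^\top X_n\beta_n)^\top$, twice what $(X_n^\top X_n)\Sigp(X_n^\top X_n)$ has, and you rely on the cross term in $\gamma:=\hbp-\beta_n$ to cancel one copy. It does not. Writing $u=X_n\beta_n$ and substituting $\gamma\approx-(X_n^\top X_n)^{-1}(\tX_{m,n}^\top\tX_{m,n}-X_n^\top X_n)\beta_n$ into $T_1:=2m^{-1}\sum_j(s_j^\top u)(s_j^\top X_n\gamma)\,X_n^\top s_j s_j^\top X_n$ yields a double sum $\propto m^{-2}\sum_{j,l}(\cdots)$ whose $l=j$ piece carries an extra factor $m^{-1}$ and is $O_P(m^{-1})$, and whose $l\neq j$ piece has conditional mean zero given $s_j$ with variance $O(m^{-2})$; hence $T_1=O_P(m^{-1/2})=o_P(1)$, not $-(X_n^\top X_n\beta_n)(X_n^\top X_n\beta_n)^\top$, and the extra copy of $\beta_n\beta_n^\top$ survives. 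In fact the $\hbp$ half of the proposition is false as stated: take $p=1$, Gaussian entries ($\kappa_{n,4}=3$), $X_n=x$ with $\|x\|=1$ and $y_n=x$, so $\beta_n=1$, $\ep_n=0$, $\Sigp=2$; then $\hbp=\|\tX_{m,n}\|^{-2}$ and $\hSigp=m\|\tX_{m,n}\|^{-8}\sum_j(\tX_{m,n})_j^4\to_P 3$ because $m^{-1}\sum_j(s_j^\top x)^4\to_P 3$ and $\|\tX_{m,n}\|^2\to_P 1$, giving $(\Sigp)^{-1}\hSigp\to_P 3/2\neq 1$. For comparison, the paper's own proof errs at the same step but differently: it asserts $m^{-1}\sum_i(\sigp)^{-2}(c^\top X_n^\top s_i s_i^\top X_n\beta_n)^2\to_P 1$, whereas the left side converges to $1+(\sigp)^{-2}(c^\top X_n^\top X_n\beta_n)^2$, since \eqref{iidvar} is the centered second moment and the raw second moment $\E[(a_n^\top s_i s_i^\top\ta_n)^2]$ picks up an additional $(a_n^\top\ta_n)^2$. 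So you and the paper reach the same (incorrect) conclusion by different missteps; a ratio-consistent estimator requires amending $\hSigp$, for instance by subtracting $\hbp\hbpt$.
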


\begin{proof}
We consider $y_n\in \nsp$ and $X_n$ of a unit spectral radius first. For each $i\in[m]$, we denote the $i$-th row of $\sqrt{m}S_{m,n}$ by $s_i^\top$.
To prove the first claim, 
due to \eqref{hsmn} and \eqref{assiidfix},    
it suffices to show that
\begin{equation}\label{89ahd1}\|\tX_{m,n}^\top \tX_{m,n} - X_n^\top X_n\|_{\Fr} = o_P(1)\end{equation} 
and
\begin{equation}\label{89ahd2}
\left\| m\tX_{m,n}^\top \diag(\tep_{m,n} \odot \tep_{m,n}) \tX_{m,n} 
- \sum_{i=1}^n \ep_{n,i}^2\left[X_n^\top X_n+(\kappa_{n,4}-3)  x_{n,i} x_{n,i}^\top\right]\right\|_{\Fr} = o_P(1).
\end{equation}

We can conclude \eqref{89ahd1} from \eqref{nc}.
Also, 
since 
$\sqrt{m}\tep_{n,i} = s_i^\top (y_n-X_n \hbs)$,
to establish \eqref{89ahd2}, it is sufficient to show that for any fixed $c\in \R^p$, $c\neq 0$,
for
$\sigma_n^2 := c^\top \left\{ \sum_{i=1}^n \ep_{n,i}^2\left[X_n^\top X_n+(\kappa_{n,4}-3)  x_{n,i} x_{n,i}^\top\right] \right\} c$,
we have
\begin{equation}\label{final}
\sigma_n^2- 
\frac{1}{m}\sum_{i=1}^m \left(c^\top X_n^\top s_i s_i^\top (y_n - X_n \hbs)\right)^2  = o_P(1).   
\end{equation}

From \eqref{iidvar} with $a_n= X_n c$ and $\ta_n= \ep_n =y_n-X_n \beta_n$, and using the law of large numbers for triangular arrays (see, for instance, Theorem 2.2.6 in \cite{durrett2019probability}), we deduce that  \begin{equation}\label{89ahf}
     \frac{1}{m} \sum_{i=1}^m \sigma_n^{-2} \left(c^\top X_n^\top s_i s_i^\top (y_n - X_n \beta_{n})\right)^2 \rightarrow_P 1.
\end{equation}
We claim that  \begin{equation}\label{89ahg}
    \frac{1}{m} \sum_{i=1}^m \sigma_n^{-2} \left(c^\top X_n^\top s_i s_i^\top  X_n (\beta_{n}-\hbs)\right)^2 \rightarrow_P 0.
\end{equation}
This claim follows from the following three facts:
\begin{equation*}\|\beta_n - \hbs\| = O_P(m^{-1/2}\sigma_n),\quad  \max_{i\in [m]} \|s_i^\top X_n\|=o_P(m^{1/2}),\quad \text{and}\quad
 c^\top X_n^\top S_{m,n}^\top S_{m,n} X_n c =  O_P(1).
\end{equation*}
The first claim follows from Lemma \ref{lemiid}; 
the second is a consequence of Markov's inequality and a union bound, and the third one follows from
$\|a_n^\top S_{m,n}^\top S_{m,n} \ta_n-a_n^\top \ta_n\|=O_P(m^{-1/2})$. 

Then, from \eqref{89ahf}, \eqref{89ahg}, 
we can conclude \eqref{final}.
Thus, the first claim of Proposition \ref{consest} holds when $\|y_n\|=1$ and $\|X_n\|=1$. 

For general $y_n$ and $X_n$, we let  $y_n' = y_n/\|y_n\|$ and $X_n'=X_n/\|X_n\|$, 
and we consider the observed data $S_{m,n}(X_n',y_n')$. 
By applying the above argument with  
$\sigma_n^2 =   
c^\top \{ \sum_{i=1}^n (\ep_{n,i}/\|y_n\|)^2$ $[X_n^\top X_n+(\kappa_{n,4}-3)  x_{n,i} x_{n,i}^\top]/\|X_n\|^2 \} c$
rescaled by $\|X_n\|$ and  $\|y_n\|$, 
and corresponding rescaled estimators,
as well as all other appropriate
quantities rescaled,
the $\|X_n\|$ and $\|y_n\|$ terms cancel out, leading to the desired result for general $X_n$ and $y_n$.

Next, we consider the second conclusion from Proposition \ref{consest}.  
Similarly to the analysis of the first conclusion, it suffices to consider $y_n\in\nsp$.
By choosing $a_n = X_n c$ and $\ta_n =X_n \beta_n$ in \eqref{iidvar}, we can apply the law of large numbers for triangular arrays (see, for instance, Theorem 2.2.6 in \cite{durrett2019probability}) to obtain that
with
 \begin{equation*}\begin{aligned}
    (\sigp)^{2} &:= (c^\top X_n^\top X_n c)(\beta_n^\top X_n^\top X_n \beta) + (c^\top X_n^\top X_n \beta_n)^2 + (\kappa_{n,4}-3)c^\top X_n^\top \diag(X_n\beta_n \odot X_n \beta_n) X_n c\\
    &=c^\top\left( \|y_{n}-\ep_{n}\|^2\left[X_n^\top X_n+(\kappa_{n,4}-3)  x_{n,i} x_{n,i}^\top  \right]+X_n^\top y_n y_n^\top X_n\right) c,
    \end{aligned}
\end{equation*}
we have
$\frac{1}{m} \sum_{i=1}^m (\sigp)^{-2} \left(c^\top X_n^\top s_i s_i^\top  X_n \beta_{n}\right)^2 \rightarrow_P 1 .$
The conclusion follows similarly 
as for sketch-and-solve above, using  that $\|\beta_n - \hbp\| = O_P(m^{-1/2}\sigp)$.
\end{proof}
Combining the above two results, the conclusion follows.

\subsection{Proof of \texorpdfstring{Lemma \ref{lemgeneralqfiid}}{Lemma \ref{lemgeneralqfiid}}}
\label{pflemgeneralqfiid}

We start by observing that
$
a_{n}^\top S_{m,n}^\top S_{m,n} \ta_{n} -a_n^\top \ta_n =\frac{1}{m}\sum_{i=1}^m (s_i^\top a_n \ta_n^\top s_i - a_n^\top \ta_n).
$
By applying equation (9.8.6) in \cite{bai2010spectral}, 
each of the above i.i.d.~centered random variables
has variance
\begin{equation}\label{iidvar}
\sigma_n^2:=\E  (s_i^\top a_n \ta_n^\top s_i - a_n^\top \ta_n)^2=  (a_n^\top a_n)(\ta_n^\top \ta_n)+(a_n^\top \ta_n)^2+ (\kappa_{n,4}-3)\sum_{i=1}^n (a_{n,i} \ta_{n,i})^2.
\end{equation}

To establish the asymptotic normality, we use the Lindeberg-Feller central limit theorem for triangular arrays
for $\xi_{n,i}:=(s_i^\top a_n \ta_n^\top s_i-a_n^\top \ta_n)/\sigma_n$, so that  $\xi_{n,1},\ldots,\xi_{n,m}$ are i.i.d.~random variables with unit variance, 
whose distribution may depend on $n$. 
To check the Lindeberg condition, 
we denote $r=1+\delta/4>1$ and use the H{\"o}lder inequality to obtain
{\beq\label{h}
\E \left[\xi_{n,1}^2 I\{|\xi_{n,1}|>\epsilon \sqrt{m}\}\right]\le  
\E [\xi_{n,1}^{2r}]^{1/r} \cdot  
\PP{|\xi_{n,1}|>\epsilon \sqrt{m}}^{1-1/r}.
\eeq}
Applying Chebyshev's inequality, we have
{\beq\label{ch}
\PP {|\xi_{n,1}|>\epsilon\sqrt{m}}\le  \frac{\E[\xi_{n,1}^2]}{\epsilon^2 m}=\frac{1}{\epsilon^2 m}.
\eeq}
Furthermore, by Lemma B.26 in \cite{bai2010spectral} 
for the centered quadratic form $\xi_{n,1}$ in $s_1$, we find $
\E [\xi_{n,1}^{2r}]\le  C(\kappa_{n,4}^r+\kappa_{n,4r})/\sigma_n^{2r},
$
where $C$ is an absolute constant. 
Therefore, 
$\E \left[\xi_{n,1}^2 I\{|\xi_{n,1}|>\epsilon \sqrt{m}\}\right]\le  
[C(\kappa_{n,4}^r+\kappa_{n,4r})]^{1/r}/\sigma_n^{2} \cdot  
(\epsilon^2 m)^{1/r-1}$.

Now consider the first condition from Condition $\mathfrak{P}$ defined in \eqref{piid}.
We observe that if $\kappa_{n,4}\ge 3$, 
then $1\le \sigma_n^2\le 2+\kappa_{n,4}-3\le \kappa_{n,4+\delta}^{4/(4+\delta)}-1<+\infty$.
Otherwise, if $1+\delta^\prime\le \kappa_{n,4}<3$, we have
\begin{align}\label{vlb}\sigma_n^2 
& = \frac{\kappa_{n,4}-1}{2}+\frac{\kappa_{n,4}-1}{2}\left(\sum_{i=1}^n a_{n,i} \ta_{n,i}\right)^2+ \frac{3-\kappa_{n,4}}{2}\left[\sum_{i\neq j} a_{n,i}^2 \ta_{n,j}^2 + \sum_{i\neq j} a_{n,i}\ta_{n,i}a_{n,j}\ta_{n,j}\right]\nonumber\\
&  = \frac{\kappa_{n,4}-1}{2}+\frac{\kappa_{n,4}-1}{2}\left(\sum_{i=1}^n a_{n,i} \ta_{n,i}\right)^2+\frac{3-\kappa_{n,4}}{4} \sum_{i\neq j} (a_{n,i}\ta_{n,j}+a_{n,j}\ta_{n,i})^2 
\ge  \frac{\delta^\prime}{2}.
\end{align}
Thus $\sigma_n^2$ are bounded uniformly away from zero. 
Hence, $\E \left[\xi_{n,1}^2 I\{|\xi_{n,1}|>\epsilon \sqrt{m}\}\right]\to 0$.

Next, consider the second condition from Condition $\mathfrak{P}$ defined in \eqref{piid}, %so that $\kappa_{n,4+\delta}=o(m^{\delta/4})$, and thus $\kappa_{n,4}=o(m^{\delta/(4+\delta)})=o(m)$. 
In this case, since $\kappa_{n,4}\to \infty$, $\sigma_n^2/\kappa_{n,4}=\sum_{i=1}^n (a_{n,i}\ta_{n,i})^2+o(1)$ from the expression in \eqref{iidvar}, so $\sigma_n^2/\kappa_{n,4}$ are uniformly bounded away from zero and infinity.
Therefore, we can choose a $\delta''>0$ in this case such that $\sigma_n^2/\kappa_{n,4}\ge \delta''$.
Consequently, 
using H{\"o}lder's inequality again, 
and as $\kappa_{n,4+\delta}^{4/(4+\delta)}/\kappa_{n,4}=o(m^{\delta/(4+\delta)})$,
$\E \left[\xi_{n,1}^2 I\{|\xi_{n,1}|>\epsilon \sqrt{m}\}\right]$ is bounded above by
$$
\frac{C(\kappa_{n,4}^r+\kappa_{n,4r})^{1/r}}{(\epsilon^2 m)^{1-1/r}\kappa_{n,4}(\delta'\wedge\delta'')^2}\le  
\frac{C'\kappa_{n,4r}^{1/r}}{(\epsilon^2 m)^{1-1/r}\kappa_{n,4}(\delta'\wedge\delta'')^2}
=o\left(  
\frac{C'(m^{1-1/r})}{(\epsilon^2 m)^{1-1/r}(\delta'\wedge\delta'')^2}\right)\to 0.
$$
By the Lindeberg-Feller central limit theorem, the conclusion follows.

\subsection{Proof of \texorpdfstring{Lemma \ref{symiid}}{Lemma \ref{symiid}}}
By the subsequence argument as in the proof of \eqref{ssa} from \Cref{lemgeneols}, it suffices to show
that for any $\Phi\in\R^{p\times p}$,
\begin{equation}%\label{cw_vec_srht_PCA}
\sqrt{m} \left(\vec(\Phi)^\top D_p^\top G_n D_p  \vec(\Phi)\right)^{-1/2} \vec(\Phi)^\top D_p^\top\vec(U_n^\top S_{m,n}^\top S_{m,n}U_n-I_p)\Rightarrow \N\left(0,1\right).
\end{equation}
%where $\langle\cdot\rangle$ is the Euclidean inner product on $\R^{p^2}$.
Since $D_p\vec(\Phi)=\vec\left(\tilde{\Phi}\right)$, where $\tilde{\Phi}$ is an upper triangular matrix, we only need to show that for any upper triangular $\Phi\in\R^{p\times p}$,
\begin{equation*}%\label{cw_vec_srht_PCA}
\sqrt{m} \left(\vec(\Phi)^\top G_n\vec(\Phi)\right)^{-1/2} \vec(\Phi)^\top \vec(U_n^\top S_{m,n}^\top S_{m,n}U_n-I_p)\Rightarrow \N\left(0,1\right).
\end{equation*}
For each $i\in[m]$, we denote by $s_i^\top$
the $i$-th row of $\sqrt{m}S_{m,n}$, where $S_{m,n}^\top=(s_1,\ldots,s_m)$. 
Since
$S_{m,n}^\top S_{m,n}=\frac{1}{m}\sum_{i=1}^m s_is_i^\top$, 
we can write 
$$
\begin{aligned}
&\sqrt{m}\vec(\Phi)^\top\vec(U_n^\top S_{m,n}^\top S_{m,n}U_n-I_p)=\sqrt{m}\tr\left(\Phi \left(U_{n}^\top S_{m,n}^\top S_{m,n}U_{n}-I_p\right)\right)%\\
% &= \frac{1}{\sqrt{m}}\tr\left(\Phi \sum_{i=1}^m\left(U_{n}^\top s_is_i^\top U_{n}-I_p\right)\right)
% = \frac{1}{\sqrt{m}}\sum_{i=1}^m \left(\tr(s_i^\top U_{n} \Phi U_{n}^\top s_i)-\tr(\Phi)\right)
= \frac{1}{\sqrt{m}}\sum_{i=1}^m\left( s_i^\top U_{n} \Phi U_{n}^\top s_i-\tr(\Phi)\right).
\end{aligned}
$$
Denoting $\xi_{n,i} = s_i^\top U_{n} \Phi U_{n}^\top s_i-\tr(\Phi)$
for all $i\in[n]$,
$\xi_{n,1},\ldots,\xi_{n,m}$ are i.i.d.~random variables.
According to (9.8.6) in \cite{bai2010spectral},
$$
\Var{\xi_{n,1}}=(\kappa_{n,4}-3)\sum_{i=1}^n (U_{n}\Phi U_{n}^\top)_{ii}^2+2\tr\left(U_{n}\Phi U_{n}^\top (U_{n}\Phi U_{n}^\top)^\top\right).
$$
Now
$$
\tr\left(U_{n}\Phi U_{n}^\top (U_{n}\Phi U_{n}^\top)^\top\right)
=\tr(\Phi U_{n}^\top U_{n}\Phi^\top U_{n}^\top U_{n})=\tr(\Phi \Phi^\top)=\vec(\Phi)^\top\vec(\Phi)=\vec(\Phi)^\top P_p \vec(\Phi),
$$
and thus 
$
2\tr(U_{n}\Phi U_{n}^\top (U_{n}\Phi U_{n}^\top)^\top)=\vec(\Phi)^\top (I_{p^2}+P_p) \vec(\Phi).
$

On the other hand,  $(\kappa_{n,4}-3)\sum_{i=1}^n (U_{n}\Phi U_{n}^\top)_{ii}^2=\vec(\Phi)^\top \Gamma_n \vec(\Phi)$, where for every $k_1,k_2,k_3,k_4\in[p]$, $$(\Gamma_n)_{(k_1k_2),(k_3k_4)}=(\kappa_{n,4}-3)\sum_{h=1}^n (U_{n})_{hk_1}(U_{n})_{hk_2}(U_{n})_{hk_3}(U_{n})_{hk_4}.$$
%According to our assumption, 
%$\Gamma_n\to\Gamma$ where $\Gamma_{(k_1k_2),(k_3k_4)}=\gamma_{k_1k_2k_3k_4}$.
Therefore, 
$\Var{\xi_{n,1}}=\vec(\Phi)^\top G_n\vec(\Phi)$.

In order to apply the central limit theorem for triangular arrays $(1/\sqrt{m})\cdot\sum_{i=1}^m \left(\xi_{n,i}/\sqrt{\Var{\xi_{n,i}}}\right)$, we need to verify the Lindeberg-Feller condition that for every fixed $\epsilon>0$, 
$\E\left[\frac{\xi_{n,1}^2}{\Var{\xi_{n,1}}} I \left\{|\xi_{n,1}|>\epsilon\sqrt{m\Var{\xi_{n,1}}}\right\}\right]\rightarrow 0.$
With $r=1+\delta/4>1$, 
by an argument identical to the one in the proof of Lemma \ref{lemgeneralqfiid}, 
we can conclude \eqref{h} and the first inequality in \eqref{ch}.
Then 
$\E[\xi_{n,1}^2]$ and $\E [\xi_{n,1}^{2r}]$ can be bounded as in that argument via Lemma B.26 in \cite{bai2010spectral},
and using that $\Phi$ does not depend on $m$ and $n$.

Now consider the first condition from Condition $\mathfrak{P}$ defined in \eqref{piid}.
One can check as in the proof of Lemma \ref{lemgeneralqfiid}, calculating similarly as in \eqref{vlb} that
$(\delta'/2)\cdot\|\Phi\|_{\Fr}^2\le \vec(\Phi)^\top G_n\vec(\Phi)
$ $\le (\kappa_{n,4+\delta}^{4/(4+\delta)}-1)\|\Phi\|_{\Fr}^2$. 

Next, consider the second condition from Condition $\mathfrak{P}$ defined in \eqref{piid}, so that  $\kappa_{n,4}\to\infty$, and $\kappa_{n,4+\delta}^{4/(4+\delta)}/\kappa_{n,4}=o(m^{\delta/(4+\delta)})$. Moreover, $\vec(\Phi)^\top G_n\vec(\Phi)/\kappa_{n,4}=\sum_{i=1}^n (U_{n}\Phi U_{n}^\top)_{ii}^2+o(1)$, 
so $\vec(\Phi)^\top G_n\vec(\Phi)/\kappa_{n,4}$ are uniformly bounded away from zero and infinity,
as in the previous proof for Lemma \ref{lemgeneralqfiid}.
Therefore, 
$$\E\left[\frac{\xi_{n,1}^2}{\Var{\xi_{n,1}}} I \left\{|\xi_{n,1}|>\epsilon\sqrt{m\Var{\xi_{n,1}}}\right\}\right]\le \frac{C'\kappa_{n,4r}^{1/r}}{(\epsilon^2 m)^{1-1/r}\delta^{\prime 2}} \to 0,$$ 
verifying the Lindeberg-Feller condition.

\subsection{Proof of \texorpdfstring{Theorem \ref{PCAHaar}}{Theorem \ref{PCAHaar}}}
By Lemma \ref{lemsymqfhaar}, if $(U_n)_{n\ge 1}$ satisfies Condition $\mathfrak{P}'$,
 Condition \ref{generalquadratic forms} \texttt{Sym-1}
 holds with $G=I_{p^2}+P_p$ and $\tau_{m,n}=1-\gamma_n$.
 Therefore, Condition \ref{generalquadratic forms} \textup{\texttt{Sym-1'}}
 holds with $\alpha=0$.
Then under Condition \ref{spectralcondition} \textup{\texttt{B}}, we 
finish the proof based on \Cref{infPCAgeneral}.

\subsection{Proof of \texorpdfstring{Theorem \ref{thhaar}}{Theorem \ref{thhaar}}}
From Lemma \ref{lemgeneralqfhaar}, we know that Condition \ref{generalquadratic forms} \textup{\texttt{1-dim'}} holds for $({U_nb_n, \bar{\ep}_n})_{n\ge 1}$ with $\alpha=0$.
Then by \Cref{lemgeneols}, we conclude the first result, 
and the second one follows similarly.

\subsection{Proof of \texorpdfstring{Lemma \ref{lemgeneralqfhaar}}{Lemma \ref{lemgeneralqfhaar}}}
If $a_n \neq \pm\ta_n$,
define $a'_n = (I-a_n a_n^\top) \ta_n/\|(I-a_n a_n^\top) \ta_n\|$, 
otherwise, define $a_n'$ as an arbitrary unit length vector orthogonal to $a_n$, 
so that we have 
\eqref{decomptan}.
Thus, we can write \begin{equation}\label{eq096}\begin{aligned}
 a_n^\top S_{m,n}^\top S_{m,n}\ta_n =(a_n^\top \ta_n) a_n^\top   S_{m,n}^\top S_{m,n} a_n + (1-(a_n^\top \ta_n)^2)^{1/2}a_n^\top S_{m,n}^\top S_{m,n} a'_n.
\end{aligned}
\end{equation}
Now, let $Z = (Z_1,\ldots, Z_n)^\top, \tZ = (\tZ_1, \cdots, \tZ_n)^\top$ be two independent vectors, each containing independent standard normal entries. 
For a vector $v \in \R^n$ and some
integers $m,m'$ with
$1 \le m\le m'\le n$, we denote $v_{m:m'} = (v_m,\ldots, v_m')^\top$.
We then claim that we have the following representation:
\begin{equation}\label{rep}
   S_{m,n} a_n =_d  \gamma_{n}^{-1/2} \frac{Z_{1:m}}{\|Z\|},\quad S_{m,n} a'_n =_d  \gamma_{n}^{-1/2} \frac{\tZ_{1:m}- \frac{Z^\top \tZ}{\|Z\|^2}Z_{1:m}}{\sqrt{\|\tZ\|^2-\frac{(Z^\top \tZ)^2}{\|Z\|^2}}}.
\end{equation}
This follows because 
$\gamma_n^{1/2}S_{m,n}(a_n, a'_n) $ has the same distribution as an 
$m \times 2$ submatrix of a Haar distributed matrix. 
By generating the Haar matrix using the Gram-Schmidt process starting from $(Z,\tZ)$, we obtain the representation from \eqref{rep}. 
Specifically, the first column is given by $w_1= Z/\|Z\|$, and the second is obtained as $[\tZ-(w_1^\top \tZ) w_1]/\|\tZ-(w_1^\top \tZ) w_1\|$. Taking the first $m$ coordinates of these columns gives us the representation in \eqref{rep}.

We define the following quantities: \begin{equation*}
    \mathcal{X}_{n,1}= m^{-1}\|Z_{1:m}\|^2,\,
\mathcal{X}_{n,2}=(n-m)^{-1}\|Z_{(m+1):n}\|^2,\,
\mathcal{C}_{n,1}= m^{-1}\sum_{i=1}^m Z_i \tZ_i,\, \mathcal{C}_{n,2}= (n-m)^{-1}\sum_{i=m+1}^n Z_i \tZ_i,
\end{equation*}
and $\tilde{\mathcal{X}}_{n,1}$, $\tilde{\mathcal{X}}_{n,2}$ are defined by replacing the coordinates of $Z$ in the definitions of $\mathcal{X}_{n,1}, \mathcal{X}_{n,2}$ with the corresponding ones of $\tZ$. 
According to the 
central limit theorem, the asymptotic distribution
as $m, n-m\to\infty$ 
%with $m/n\to c\in (0,1)$,
of the vector
\begin{align}
   &\left(\sqrt{m}(\mathcal{X}_{n,1}-1) ,
\sqrt{n-m}(\mathcal{X}_{n,2}-1), 
\sqrt{m}\mathcal{C}_{n,1},\sqrt{n-m}\mathcal{C}_{n,2}, 
\sqrt{m}(\tilde{\mathcal{X}}_{n,1}-1),
\sqrt{n-m}(\tilde{\mathcal{X}}_{n,2}-1)\right) \label{ldn}
\end{align}
follows a multivariate normal distribution with mean zero and a diagonal covariance matrix with the vector $(2,2,1,1,2,2)$ on the diagonal. 
Finally, we have 
\begin{equation*}    \|S_{m,n}a_{n}\|^2 = \frac{n}{m}\frac{\|Z_{1:m}\|^2}{\|Z_{1:m}\|^2+\|Z_{(m+1):n}\|^2} =\frac{\mathcal{X}_{n,1}}{\gamma_{n} \mathcal{X}_{n,1}+(1-\gamma_{n})\mathcal{X}_{n,2}},\end{equation*} 
and
\begin{equation*} \begin{aligned} a_n^\top &S_{m,n}^\top S_{m,n}a'_n = \frac{\mathcal{C}_{n,1}-\frac{ \mathcal{X}_{n,1}}{\gamma_n\mathcal{X}_{n,1}+(1-\gamma_n)\mathcal{X}_{n,2}}(\gamma_n\mathcal{C}_{n,1}+(1-\gamma_n)\mathcal{C}_{n,2})}{\sqrt{\gamma_n\mathcal{X}_{n,1}+(1-\gamma_n)\mathcal{X}_{n,2}}\left[\gamma_n \mathcal{X}_{n,1}+(1-\gamma_n)\mathcal{X}_{n,2}-\frac{(\gamma_n\mathcal{C}_{n,1}+(1-\gamma_n)\mathcal{C}_{n,2})^2}{\gamma_n\mathcal{X}_{n,1}+(1-\gamma_n)\mathcal{X}_{n,2}}\right]}.\end{aligned} \end{equation*} 
Combining \eqref{eq096} with the above two equations, we define the function
$g:(0,\infty)^6\to \R$
such that for all values of 
$\mathcal{X}_{n,p,i}$, $\mathcal{C}_{n,p,i}$,
$\tilde{\mathcal{X}}_{n,p,i}$, for $i=1,2$, we have 
\begin{equation*}  a_n^\top S_{m,n}^\top S_{m,n}\ta_n = g(\mathcal{X}_{n,1},\mathcal{X}_{n,2},\mathcal{C}_{n,1},\mathcal{C}_{n,2},\tilde{\mathcal{X}}_{n,1},\tilde{\mathcal{X}}_{n,2}).\end{equation*} 
By direct calculation, 
we find that \begin{align*}
&\nabla g(1,1,0,0,1,1) = \big(a_n^\top \ta_n(1-\gamma_n), -a_n^\top \ta_n(1-\gamma_n),\sqrt{1-(a_n^\top \ta_n)^2}(1-\gamma_n), -\sqrt{1-(a_n^\top \ta_n)^2}(1-\gamma_n),0,0\big).
\end{align*}
Using the delta method, we have \begin{equation*}\begin{aligned}\sqrt{\frac{m}{1-\gamma_n}}& \left(g(\mathcal{X}_{n,1},\mathcal{X}_{n,2},\mathcal{C}_{n,1},\mathcal{C}_{n,2},\tilde{\mathcal{X}}_{n,1},\tilde{\mathcal{X}}_{n,2}) -g(1,1,0,0,1,1)\right)\\&=a_n^\top \ta_n \sqrt{1-\gamma_n}\sqrt{m}(\mathcal{X}_{n,1}-1)-a_n^\top \ta_n\sqrt{\gamma_n}\sqrt{n-m}(\mathcal{X}_{n,2}-1)\\
& +\sqrt{1-(a_n^\top \ta_n)^2}\sqrt{1-\gamma_n}\sqrt{m}\mathcal{C}_{n,1}-\sqrt{1-(a_n^\top \ta_n)^2}\sqrt{\gamma_n}\sqrt{n-m}\mathcal{C}_{n,2}+o_P(1).
\end{aligned}\end{equation*}
Dividing both sides by $\sqrt{1+(a_n^\top \ta_n)^2}$ and using the limiting distribution in \eqref{ldn}, 
we can conclude the desired result.

\subsection{Proof of \texorpdfstring{Lemma \ref{lemsymqfhaar}}{Lemma \ref{lemsymqfhaar}}}
We denote $\Tilde{U}_n=(U_n,U_{n,\perp})$, 
so 
$S_{m,n}U_n=S_{m,n}\Tilde{U}_n(I_p,0)^\top$. 
Since the
distribution of $S_{m,n}$ is rotationally invariant, i.e., $S_{m,n}=_d S_{m,n}\Tilde{U}_n$,
we have
$$
U_n^\top S_{m,n}^\top S_{m,n}U_n=(I_p,0)\Tilde{U}_n^\top S_{m,n}^\top S_{m,n}\Tilde{U}_n(I_p,0)^\top=_d (I_p,0)S_{m,n}^\top S_{m,n}(I_p,0)^\top.
$$
%This expression corresponds to the first $p\times p$ principal submatrix of $S_{m,n}^\top S_{m,n}$.

We can represent $S_0=(ZZ^\top)^{-1/2}Z$, where $Z$ is an $m\times n$ matrix with i.i.d.~$\N(0,1)$ entries, see for example, Theorem 8.2.5 in \cite{gupta1999matrix}. Thus, we
can write 
$S_0=(Z_1 Z_1^\top+Z_2 Z_2^\top)^{-1/2}(Z_1, Z_2)$, where $Z_1\in\R^{m\times p}$ and $Z_2\in\R^{m\times(n-p)}$. 
Since $S_{m,n}=\gamma_{n}^{-1/2} S_0$,
we have
$$
(I_p,0)S_{m,n}^\top S_{m,n}(I_p,0)^\top
=\gamma_{n}^{-1} Z_1^\top(Z_1Z_1^\top+Z_2Z_2^\top)^{-1}Z_1.
$$
Applying the Sherman-Morrison-Woodbury formula, we obtain
$$
P_1:=Z_1^\top(Z_1Z_1^\top+Z_2Z_2^\top)^{-1}Z_1=Z_1^\top(Z_2Z_2^\top)^{-1}Z_1\left(I+Z_1^\top(Z_2Z_2^\top)^{-1}Z_1\right)^{-1}.
$$
Now, to show that $Z_1^\top(Z_2Z_2^\top)^{-1}Z_1$ is asymptotically normal,
we have that if the following almost sure limits exist:
\begin{align}\label{aslimitforhaar}
\lim_{m\to\infty}\frac{1}{m} \sum_{i=1}^m \left(\left(Z_2Z_2^\top/(n-p)\right)^{-1}_{ii}\right)^2&=\omega\ ; 
\lim_{m\to\infty}\frac{1}{m} \tr\left(\left(Z_2Z_2^\top/(n-p)\right)^{-2}\right)=\theta\ ,
\end{align}
then
by
Theorem 11.8 in \cite{yao2015large}  with $A_n=\left(Z_2Z_2^\top/(n-p)\right)^{-1}$ and $W=Z_1^\top$,
\beq\label{rmtor}
R_m:=\frac{1}{\sqrt{m}}\left(Z_1^\top\left(Z_2Z_2^\top/(n-p)\right)^{-1}Z_1-\tr\left(\left(Z_2Z_2^\top/(n-p)\right)^{-1}\right)I_p\right)\Rightarrow R,
\eeq
where $R$ follows a Gaussian distribution described below. 
By the Skorokhod representation theorem,
instead of the
almost sure limits in 
\eqref{aslimitforhaar}, it is enough to assume
convergence in probability, i.e., 
\begin{align}\label{problimitforhaar}
\frac{1}{m} \sum_{i=1}^m \left(\left(Z_2Z_2^\top/(n-p)\right)^{-1}_{ii}\right)^2&\to_P\omega;\qquad 
\frac{1}{m} \tr\left(\left(Z_2Z_2^\top/(n-p)\right)^{-2}\right)\to_P\theta.
\end{align}

Let $x_{i}$, $i \in [p]$, represent i.i.d.~random variables with a standard normal distribution.
%, and $\theta,\omega$ be as specified in Lemma 11.7 of \cite{yao2015large} for $A_n=\left(Z_2Z_2^\top/(n-p)\right)^{-1}$. 
Then, 
recalling Wick's formula $\E(x_{i}x_{j}x_{i'}x_{j'})=\delta_{ij}\delta_{i'j'}+\delta_{ii'}\delta_{jj'}+\delta_{ij'}\delta_{ji'}$, according to Lemma 11.7 of \cite{yao2015large},
the covariance matrix of $R$ is given by
$$
\Cova[R_{ij},R_{i'j'}]=\omega\left\{\E(x_{i}x_{j}x_{i'}x_{j'})-\delta_{ij}\delta_{i'j'}\right\}+(\theta-\omega)\left\{\delta_{ij}\delta_{i'j}+\delta_{ii'}\delta_{jj'}\right\}=\theta\left\{\delta_{ij'}\delta_{i'j}+\delta_{ii'}\delta_{jj'}\right\}.
$$

Now, $Z_2Z_2^\top\sim W_m(n-p, I_m)$ follows a Wishart distribution. Hence, 
$(Z_2Z_2^\top)^{-1}\sim IW_m(n-p+m+1, I_m)$ follows an inverted Wishart distribution, 
see e.g., Theorem 3.4.1 in \cite{gupta1999matrix}. 
Furthermore, Corollary 3.4.2.1 in \cite{gupta1999matrix} allows us to deduce that each diagonal element of $(Z_2Z_2^\top)^{-1}$ follows an inverted Gamma distribution $Q_i:=(Z_2Z_2^\top)^{-1}_{ii}\sim \textnormal{InvGamma}(\frac{n-p-m+1}{2},\frac{1}{2})$. 
Using the properties of this distribution,  
$$
\E (n-p)^2Q_i^2=\frac{(n-p)^2}{(n-p-m-1)(n-p-m-3)}\to \frac{1}{(1-c)^2},
$$
as well as
$$
\Var{Q_i^2}=\E Q_i^4-(\E Q_i^2)^2=\frac{8(n-p-m-4)}{(n-p-m-1)^2(n-p-m-3)^2(n-p-m-5)(n-p-m-7)};
$$
Applying the Cauchy-Schwarz inequality, we obtain
$$
\begin{aligned}
&\E{\left[(n-p)^2\left(\frac{1}{m}\sum_{i=1}^m(Q_i^2-\E Q_i^2)\right)\right]^2}
\le  \frac{(n-p)^4}{m}\E{\sum_{i=1}^m(Q_i^2-\E Q_i^2)^2}= (n-p)^4 \Var{Q_1^2}\\
&\qquad\qquad= \frac{8(n-p-m-4)(n-p)^4}{(n-p-m-1)^2(n-p-m-3)^2(n-p-m-5)(n-p-m-7)},
\end{aligned}
$$
which tends to zero as $n,m\to\infty$ with $m/n\to c>0$. Therefore, by using Chebyshev's inequality, we conclude that
$$
\lim_{m\to\infty}\frac{1}{m} \sum_{i=1}^m \left((Z_2Z_2^\top/(n-p))^{-1}_{ii}\right)^2-\E (n-p)^2Q_1^2\rightarrow_P 0.
$$
Consequently, we have $\omega=1/(1-c)^2$.

Furthermore, $Z_2Z_2^\top/(n-p)\sim W_m(n-p, \frac{1}{n-p}I_m)$, and  
by equation (9.3.9.g) in
Section 9.3.9 of \cite{holgersson2020collection} it follows that $\theta=1/(1-c)^3$, and the proof of \eqref{problimitforhaar} is complete.
As a result, \eqref{rmtor} follows. 

Moreover, based on Theorem 4.2 in \cite{pielaszkiewicz2020mixtures}, we conclude that $$\frac{n-p}{\sqrt{m}}\left(\tr\left((Z_2Z_2^\top)^{-1}\right)-\frac{m}{n-m-p-1}\right)\rightarrow_P 0.$$
% This implies that substituting the trace of the inverted Wishart matrix with its expectation does not alter the convergence.
Thus
$
\sqrt{m}\left(Z_1^\top (Z_2Z_2^\top)^{-1} Z_1-\frac{m}{n-m-p-1}I_p\right)\Rightarrow cR.
$
By the delta method, we can deduce that
$$
\sqrt{m}\left(Z_1^\top (Z_2Z_2^\top)^{-1} Z_1(I+Z_1^\top (Z_2Z_2^\top)^{-1} Z_1)^{-1}-\frac{m}{n-p-1}I_p\right)\Rightarrow (1-c)^2cR.
$$
Hence, 
$
\sqrt{m}\left(P_1-\frac{m}{n-p-1}I_p\right)\Rightarrow (1-c)^2cR,
$
and thus 
$$
\sqrt{m}\left(\frac{m}{n}U_{n}^\top S_{m,n}^\top S_{m,n}U_{n}-\frac{m}{n-p-1}I_p\right)\Rightarrow (1-c)^2cR.
$$
Equivalently,
$
\sqrt{m}\left(U_{n}^\top S_{m,n}^\top S_{m,n}U_{n}-I_p\right)\Rightarrow (1-c)^2R.
$
Since
the covariance matrix of $\vec(R)$ is $\theta(I_{p^2}+P_p)$, that
of $\vec\left((1-c)^2R\right)$ is $(1-c)^4\theta(I_{p^2}+P_p)=(1-c)(I_{p^2}+P_p)$.
Therefore, by Slutsky's theorem, 
Condition \ref{generalquadratic forms} \textup{\texttt{Sym}}
holds for 
$G=I_{p^2}+P_p$.

% \subsection{Additional Results}
% \label{ar}

\subsection{PCA after Sketching with i.i.d.~Gaussian Matrices}\label{seciid}
% For sketching matrices with i.i.d.~entries, 
% \Cref{thiid} shows 
% how to construct
% confidence regions for $\beta_n$ based on the sketched data using the pivotal method. 
% \subsubsection{PCA after Gaussian sketching}
% \label{gapc}
Here we  
present how our result from Section \ref{pcaiid} 
simplifies for a Gaussian sketch, 
recovering results of \cite{anderson1963asymptotic} for individual eigenvalues.
\begin{corollary}[Inference for Eigenvalues and Eigenvectors with i.i.d.~Gaussian Sketching]\label{PCAiid}
Let $(X_n)_{n\ge  1}$ satisfy Condition \ref{spectralcondition} \textup{\texttt{B}}, and 
the sketching matrices $m^{1/2} S_{m,n}$ have i.i.d.~standard normal entries.
Then,
\begin{equation}\label{singvaliidGaussian}
\sqrt{\frac{m}{2}}\hat{\Lambda}_{m,n,i}^{-1}\left(\hat{\Lambda}_{m,n,i}-\Lambda_{n,i}\right)\Rightarrow \N(0,1),
\end{equation}
and for any vector $c\in\psp$ satisfying $\limsup_{n\to\infty}(c^\top v_{n,i})^2<1$,
\begin{equation}\label{singveciidGaussian}
\sqrt{m}\left(\sum_{k\neq i}\frac{\hat{\Lambda}_{m,n,i}\hat{\Lambda}_{m,n,k}}{(\hat{\Lambda}_{m,n,i}-\hat{\Lambda}_{m,n,k})^2} (c^\top\hat{v}_{m,n,k})^2\right)^{-1/2}c^\top(\hat{v}_{m,n,i}-v_{n,i})\Rightarrow \N(0,1).
\end{equation}
\end{corollary}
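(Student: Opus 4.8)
The plan is to derive this as a direct specialization of \Cref{iid1} (equivalently, as an application of \Cref{infPCAgeneral} through Lemma \ref{symiid}), using the single structural fact that the standardized Gaussian entries $T_{ij}$ of $m^{1/2}S_{m,n}$ have kurtosis $\kappa_{n,4} = \E T_{ij}^4 = 3$. First I would check that the hypotheses of \Cref{iid1} hold in the present setting: the entries of $m^{1/2}S_{m,n}$ satisfy $\E T_{ij} = 0$, $\E T_{ij}^2 = 1$, and $\kappa_{n,4} = 3 > 1 + \delta'$ (say with $\delta' = 1$); moreover all moments of a standard Gaussian are finite, so in particular $\sup_{n\ge 1}\kappa_{n,4+\delta} < +\infty$ for any fixed $\delta > 0$. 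Consequently the first clause of Condition $\mathfrak{P}'$ from \eqref{p'iid} is satisfied, so $(U_n)_{n\ge 1}$ meets Condition $\mathfrak{P}'$ automatically, with \emph{no} delocalization requirement on the left singular vectors of $X_n$; and $(X_n)_{n\ge 1}$ satisfies Condition \ref{spectralcondition} \texttt{B} by assumption.

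Next I would compute the limiting covariance structure. Since $\kappa_{n,4} = 3$, the excess-kurtosis matrix $\Gamma_n$ from \Cref{iid1} is identically zero, so $G_n = I_{p^2} + P_p$ and $\tau_{m,n} = 1$ in Lemma \ref{symiid}. A short computation with the upper-triangulation operator $D_p$ from Definition \ref{uto} shows that $D_p^\top(I_{p^2}+P_p)D_p$ is the diagonal matrix whose entry indexed by $(ij)$ equals $1$ when $i<j$ and $2$ when $i=j$; hence $I_{p(p+1)/2} \preceq D_p^\top(I_{p^2}+P_p)D_p \preceq 2I_{p(p+1)/2}$, $G_n = I_{p^2}+P_p$ is absolutely symmetric in the sense of Definition \ref{abs}, and Lemma \ref{symiid} therefore delivers Condition \ref{generalquadratic forms} \texttt{Sym} with this $G_n$ and $\tau_{m,n}=1$. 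Because $I_{p^2} + P_p + 0\cdot Q_p = I_{p^2}+P_p$, this is precisely Condition \ref{generalquadratic forms} \texttt{Sym-1'} with $\alpha = 0$. (Equivalently, one may read \eqref{singvaliidGaussian} off \eqref{iidgeneralval} directly, since $(\Gamma_n)_{(ii),(ii)} = 0$ makes $2 + (\Gamma_n)_{(ii),(ii)} = 2$, and read \eqref{singveciidGaussian} off the eigenvector conclusion of \Cref{iid1}, since $\Gamma_n = 0$ collapses the double sum in $\hat{\Delta}_{m,n,i}$ to $\sum_{k\neq i}\hat{\Lambda}_{m,n,i}\hat{\Lambda}_{m,n,k}(\hat{\Lambda}_{m,n,i}-\hat{\Lambda}_{m,n,k})^{-2}\hat{v}_{m,n,k}\hat{v}_{m,n,k}^\top$.)

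Finally I would invoke the special-case conclusions of \Cref{infPCAgeneral}: under Condition \ref{generalquadratic forms} \texttt{Sym-1'} and Condition \ref{spectralcondition} \texttt{B}, equations \eqref{infsingval'} and \eqref{infsingvec'} hold. Substituting $\alpha = 0$ and $\tau_{m,n} = 1$ into \eqref{infsingval'} yields $\sqrt{m/2}\,\hat{\Lambda}_{m,n,i}^{-1}(\hat{\Lambda}_{m,n,i} - \Lambda_{n,i}) \Rightarrow \N(0,1)$, which is \eqref{singvaliidGaussian}, and \eqref{infsingvec'} with $\tau_{m,n}=1$ is exactly \eqref{singveciidGaussian}. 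I would also recall, as in the closing paragraph of the proof of \Cref{infPCAgeneral}, that the assumption $\limsup_{n\to\infty}(c^\top v_{n,i})^2 < 1$ is what guarantees the nondegeneracy $\liminf_{n\to\infty}c^\top\Delta_{n,i}c > 0$ needed there: each ratio $\Lambda_{n,i}\Lambda_{n,k}/(\Lambda_{n,i}-\Lambda_{n,k})^2$ is bounded below by $(\ell_{n,p}/\ell_{n,1})^2$, which is bounded away from zero by Condition \ref{spectralcondition} \texttt{B}, and $\sum_{k\neq i}(c^\top v_{n,k})^2$ is bounded away from zero precisely when $(c^\top v_{n,i})^2$ stays bounded away from $1$. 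Since every nontrivial ingredient already appears in \Cref{iid1}, Lemma \ref{symiid}, and \Cref{infPCAgeneral}, there is no substantive obstacle here; the only points requiring care are confirming that $\Gamma_n$ vanishes in the Gaussian case and that Condition $\mathfrak{P}'$ and the boundedness in Condition \ref{generalquadratic forms} \texttt{Sym} come for free, both of which follow from the finiteness of all Gaussian moments together with $\kappa_{n,4}=3$.
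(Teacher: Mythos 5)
Your proof is correct and follows the same route as the paper: the paper's own proof is simply the one-line observation that the corollary is implied by \Cref{iid1}, and you have filled in exactly the details that justify this — checking the moment hypotheses for Gaussian entries, noting that $\kappa_{n,4}=3$ forces $\Gamma_n = 0$, and reading off \eqref{singvaliidGaussian} and \eqref{singveciidGaussian} from \eqref{iidgeneralval} and the eigenvector conclusion of \Cref{iid1}. The alternative framing you give via Condition \ref{generalquadratic forms} \texttt{Sym-1'} with $\alpha=0$ and \Cref{infPCAgeneral} is an equivalent and equally valid path, but the direct specialization of \Cref{iid1} is closer to what the paper does and suffices.
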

This result is implied by \Cref{iid1}.

\subsection{Additional Experimental Results}

In this section, we present some additional experimental results.
For each setting, we use 500 Monte Carlo trials. 
For the simulated datasets given in \Cref{secsimulation}, 
we use $p=15, n=2048, c=(1,0,\ldots,0)^\top$ as in the figures in the main text, 
and vary $m$ from 200 to 1600. 

In 
\Cref{coverage_merge,realdata_conf_merge_new}, we show coverage results of 95\% intervals
using 
sketched PCA and LS, for various sketching methods and datasets; 
along with 
95\% Clopper-Pearson intervals for the coverage.
We provide details in the figure caption.

\begin{figure}[ht]
    \begin{minipage}{0.26\linewidth}
 	\vspace{3pt}
 	\centerline{\includegraphics[width=\textwidth]{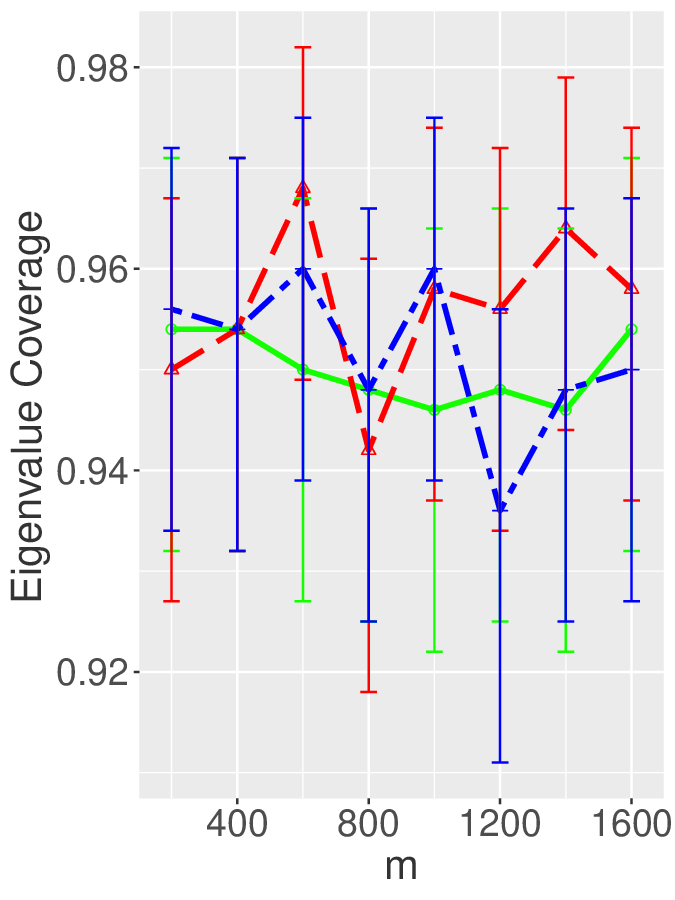}}
 	\vspace{3pt}
 	\centerline{\includegraphics[width=\textwidth]{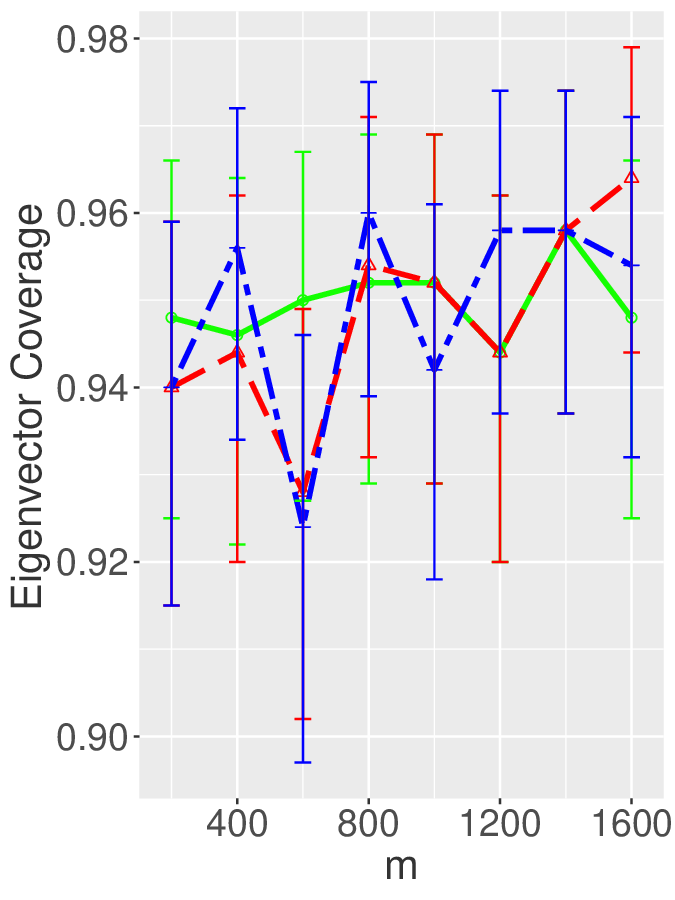}}
 	\vspace{3pt}
 	\centerline{(a) $k=2$, Case 1}
    \end{minipage}
    \begin{minipage}{0.23\linewidth}
 	\vspace{3pt}
 	\centerline{\includegraphics[width=\textwidth]{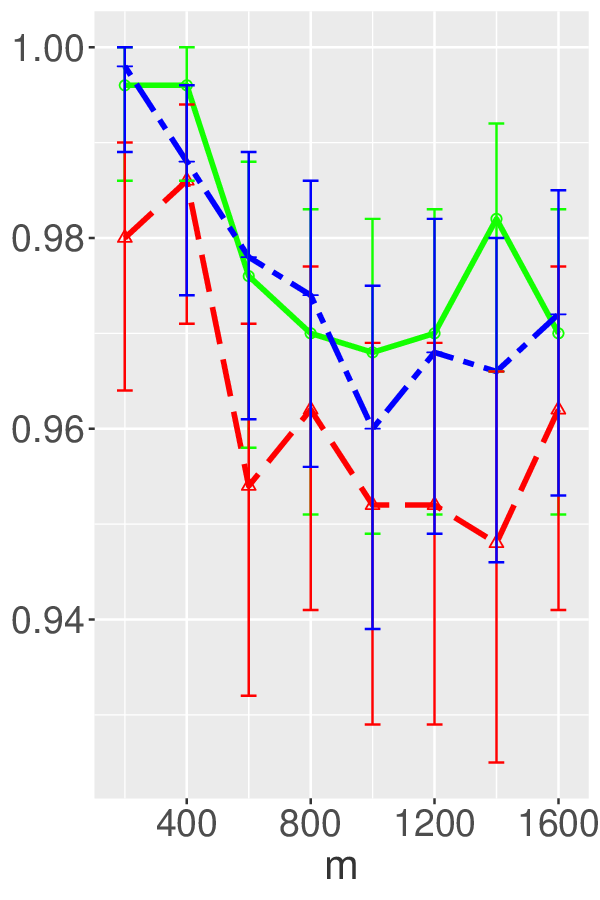}}
 	\vspace{3pt}
 	\centerline{\includegraphics[width=\textwidth]{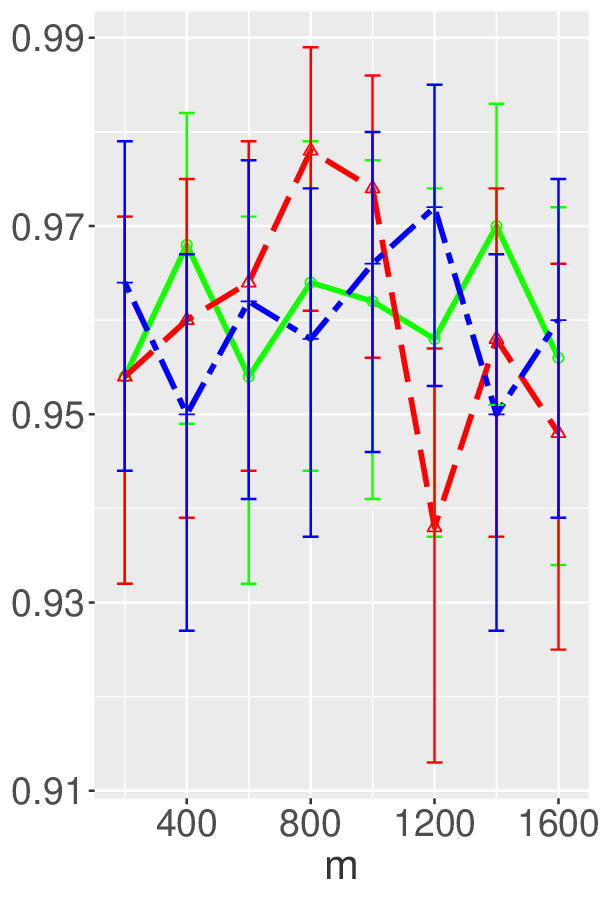}}
 	\vspace{3pt}
 	\centerline{(b) $k=1$, Case 2}
    \end{minipage}
    \begin{minipage}{0.23\linewidth}
 	\vspace{3pt}
 	\centerline{\includegraphics[width=\textwidth]{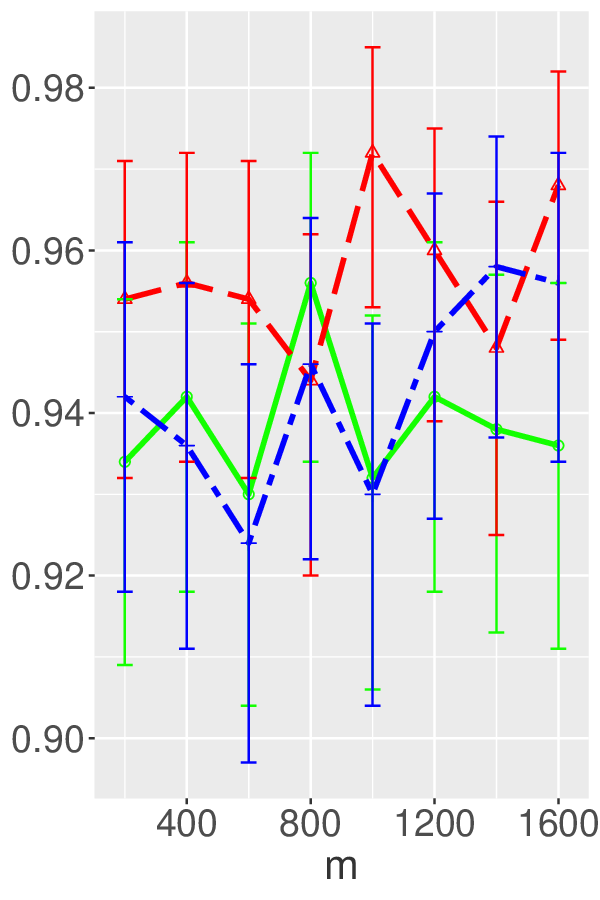}}
 	\vspace{3pt}
 	\centerline{\includegraphics[width=\textwidth]{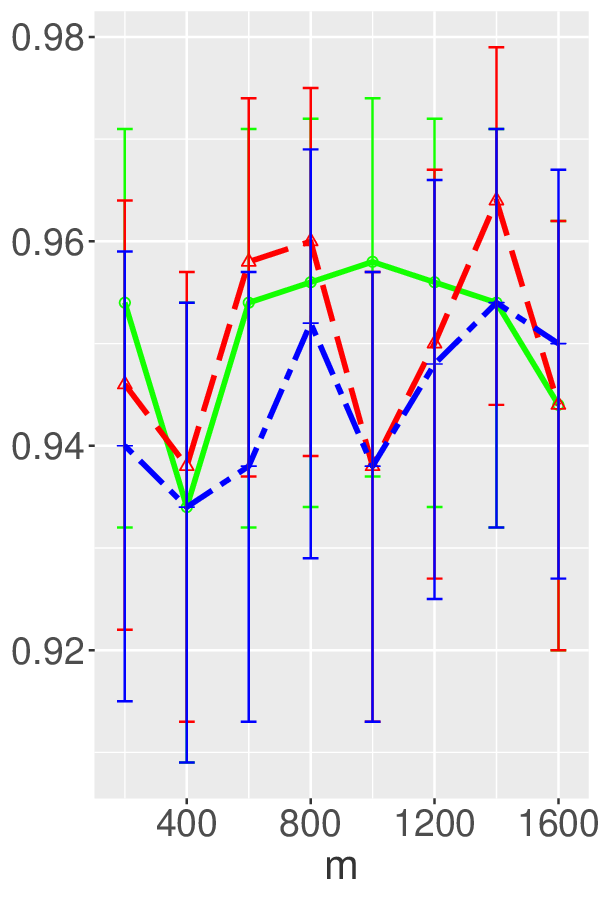}}
 	\vspace{3pt}
 	\centerline{(c) $k=1$, Case 3}
    \end{minipage}
    \begin{minipage}{0.23\linewidth}
 	\vspace{3pt}
 	\centerline{\includegraphics[width=\textwidth]{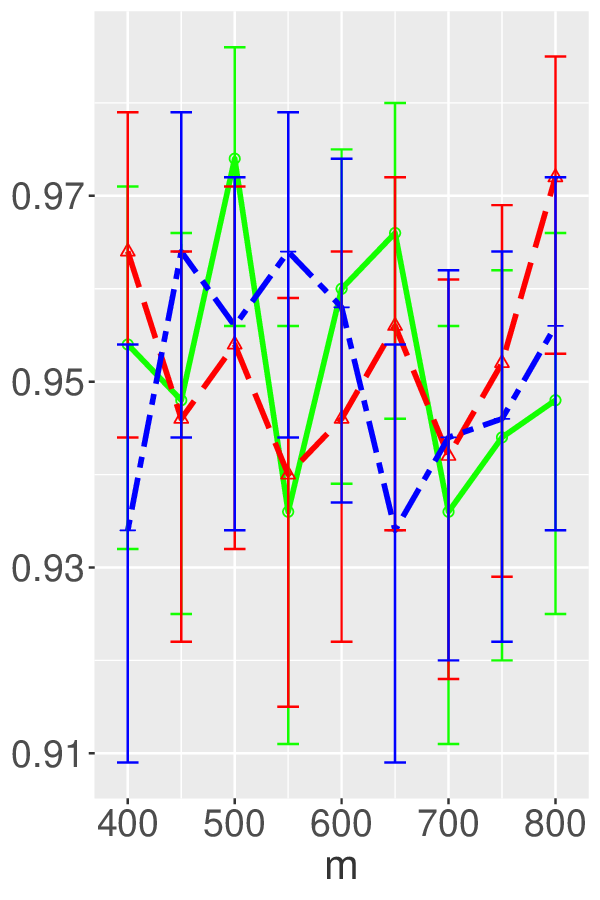}}
 	\vspace{3pt}
 	\centerline{\includegraphics[width=\textwidth]{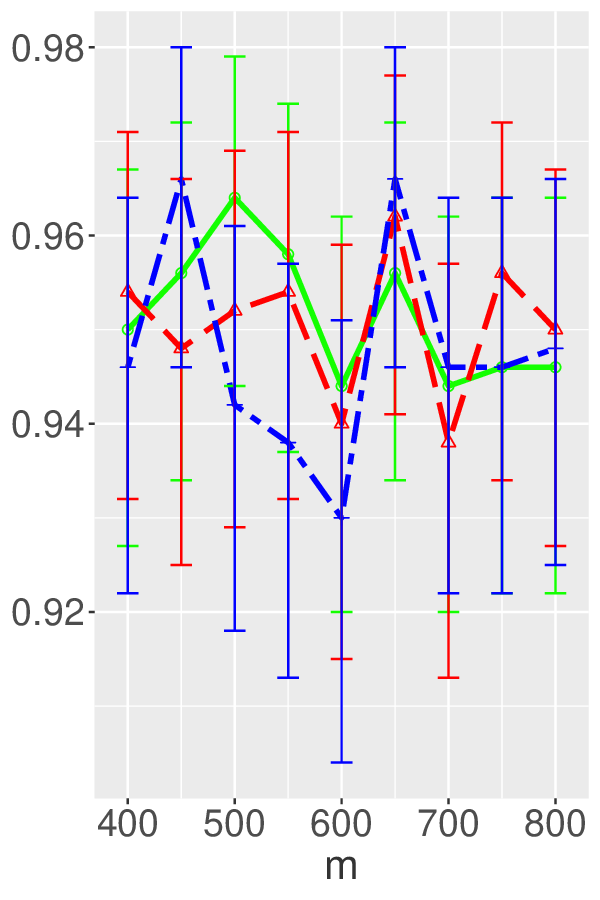}}
 	\vspace{3pt}
 	\centerline{(d) $k=1$, HGDP Dataset}
    \end{minipage}
    \caption{Coverage rates of $\lambda_k$ and $c^\top v_k$ in a variety of cases, using the same
    protocol as in \Cref{n2048p15_Case1_coverage_merge}:
    (a) $k=2$, Case 1. (b) $k=1$, Case 2. (c) $k=1$, Case 3. (d) $k=1$, HGDP dataset.}
    \label{coverage_merge}
\end{figure}

\begin{figure}[ht]
    \begin{minipage}{0.48\linewidth}
    \vspace{3pt}
 	\centerline{\includegraphics[width=\textwidth]{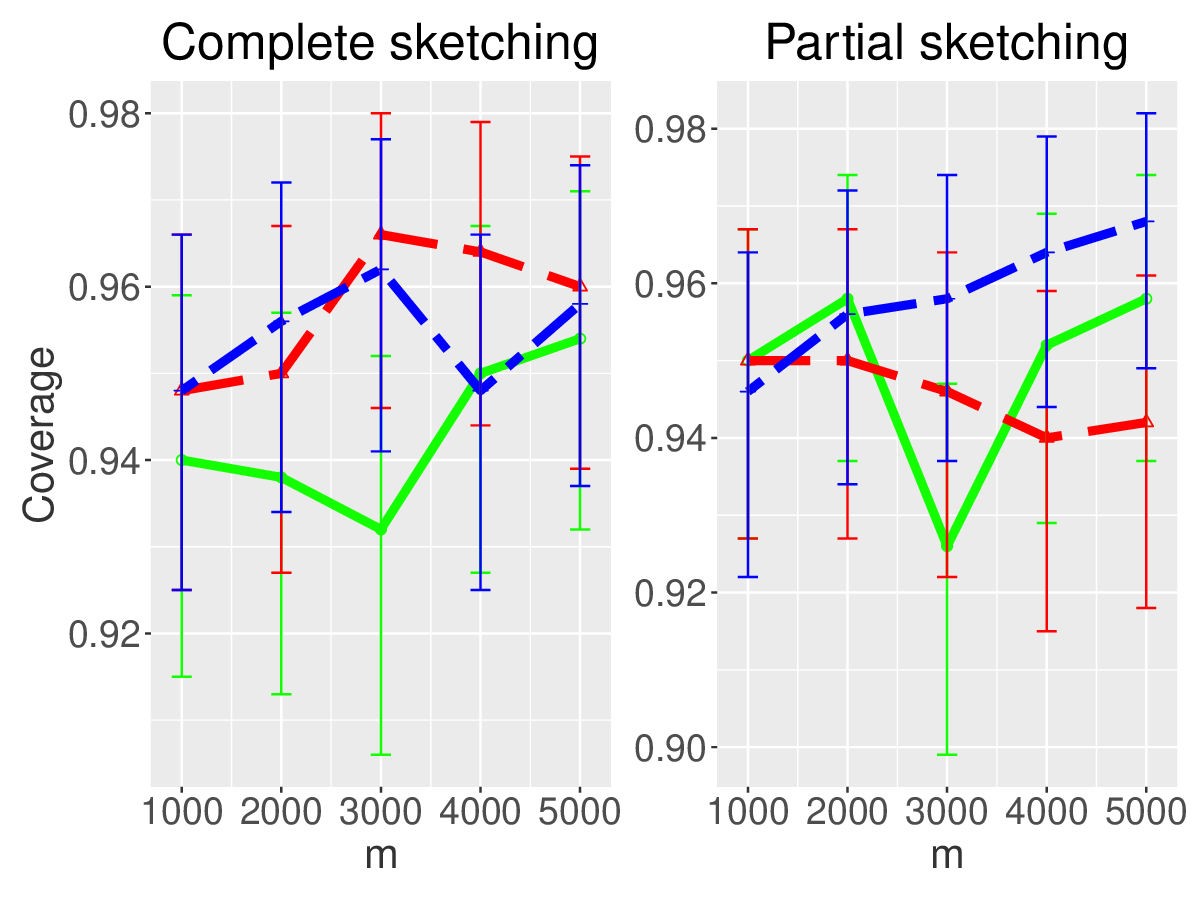}}
 	\vspace{3pt}
 	
    \end{minipage}
    \begin{minipage}{0.48\linewidth}
        \vspace{3pt}
 	\centerline{\includegraphics[width=\textwidth]{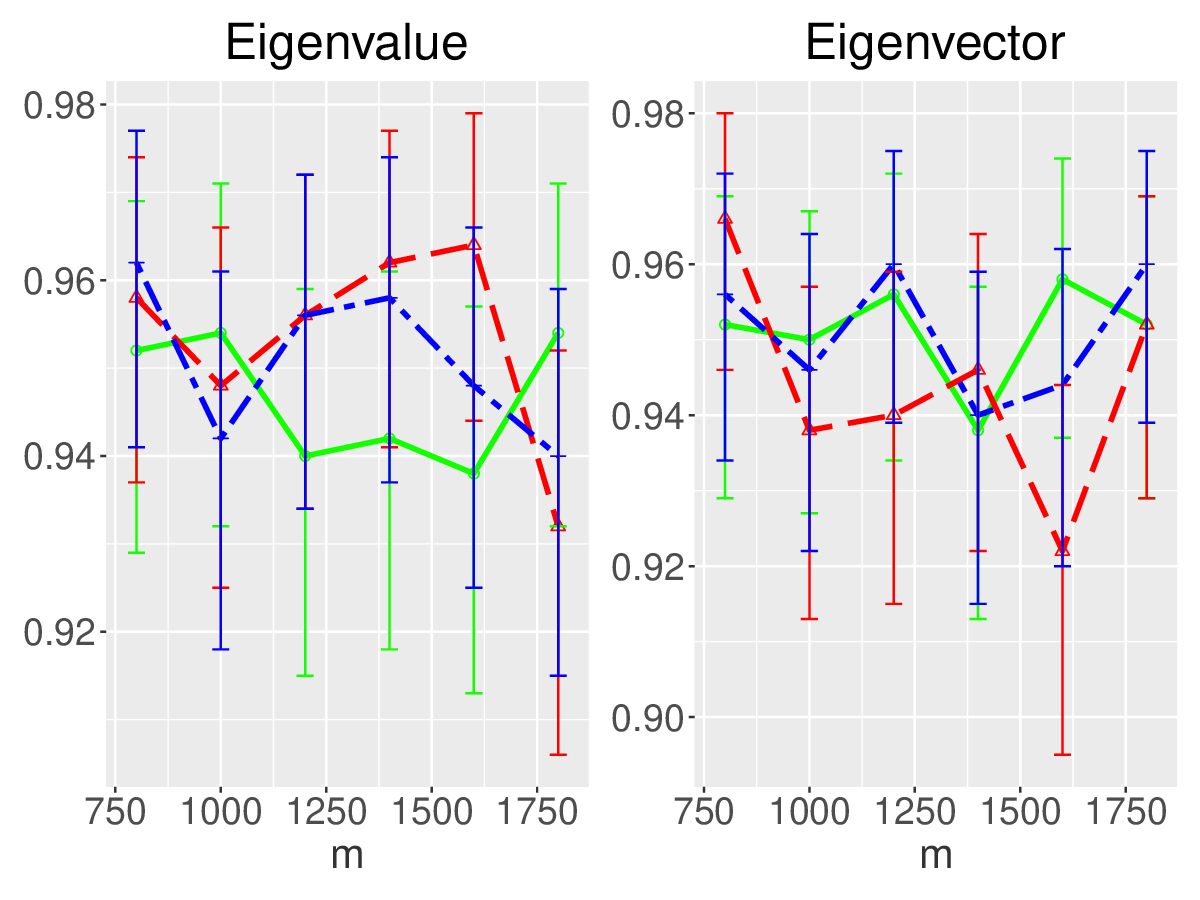}}
 	\vspace{3pt}
    \end{minipage}
    
    \caption{Coverage of 95\% intervals for the first coordinate of the least squares solution using the New York flights dataset; as well as the largest eigenvalue and associated eigenvector 
    from the Million Songs dataset. 
    Also shown are 95\% Clopper-Pearson intervals for the coverage, where we use the same
    protocol as in \Cref{n2048p15_Case1_coverage_merge}.}
    \label{realdata_conf_merge_new}
\end{figure}

In \Cref{var_merge},
we show the empirical variances of sketched PCA solutions over 500 Monte Carlo trials 
using various sketching methods and datasets.

\begin{figure}[ht]
    \begin{minipage}{0.26\linewidth}
 	\vspace{3pt}
 	\centerline{\includegraphics[width=\textwidth]{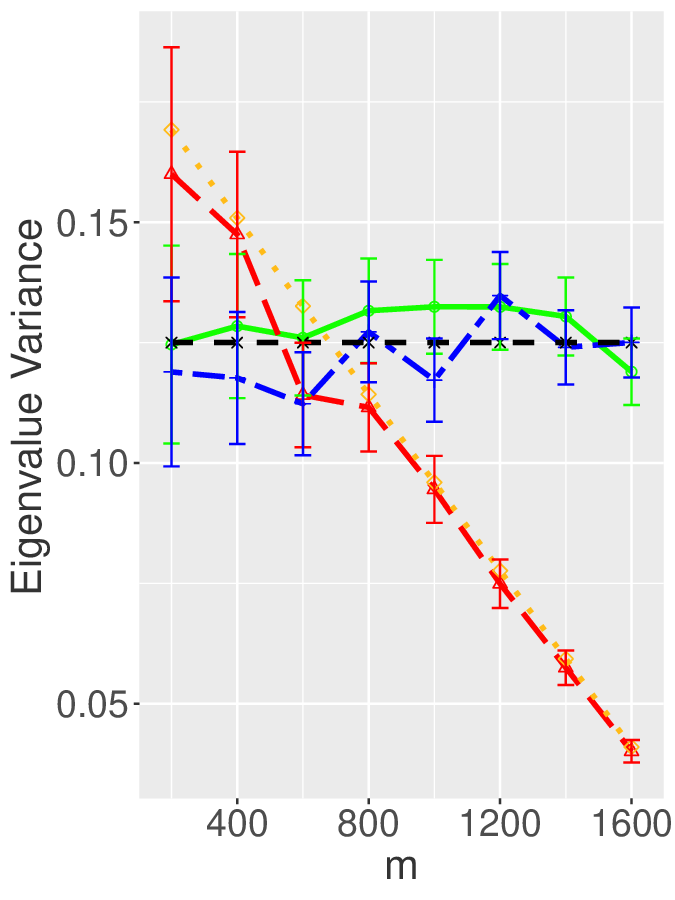}}
 	\vspace{3pt}
 	\centerline{\includegraphics[width=\textwidth]{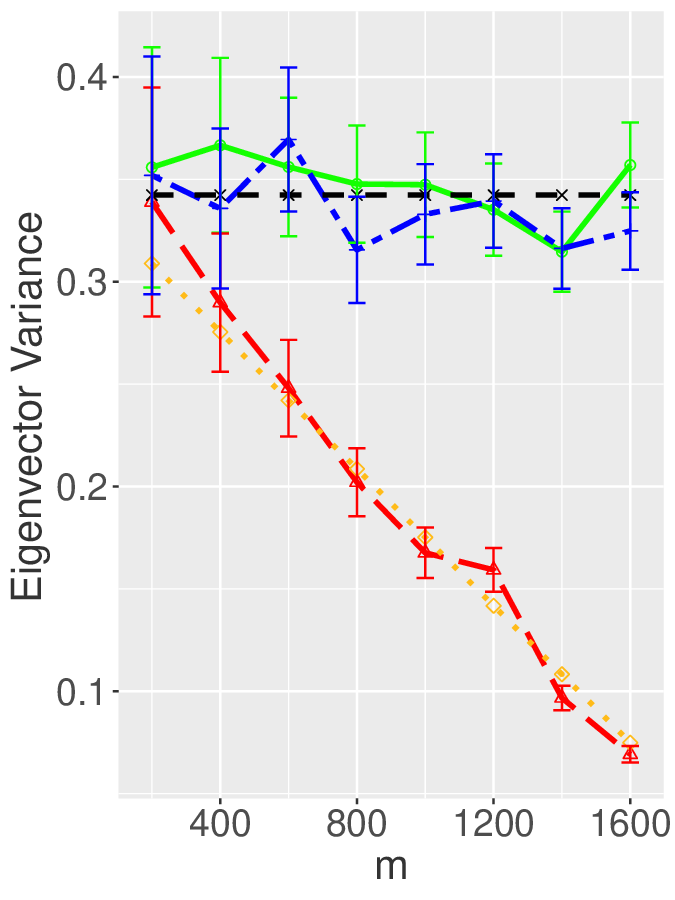}}
 	\vspace{3pt}
 	\centerline{(a) $k=2$, Case 1}
    \end{minipage}
    \begin{minipage}{0.23\linewidth}
 	\vspace{3pt}
 	\centerline{\includegraphics[width=\textwidth]{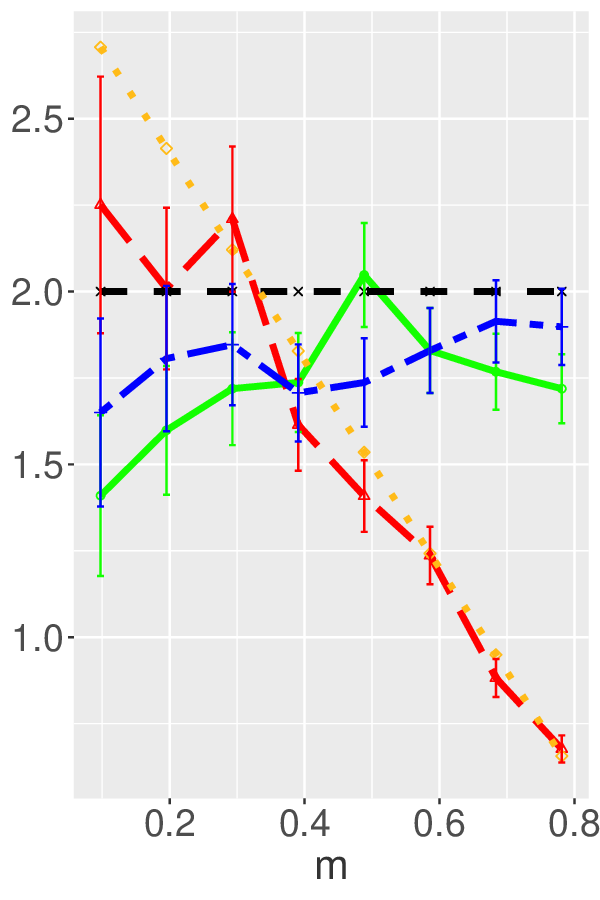}}
 	\vspace{3pt}
 	\centerline{\includegraphics[width=\textwidth]{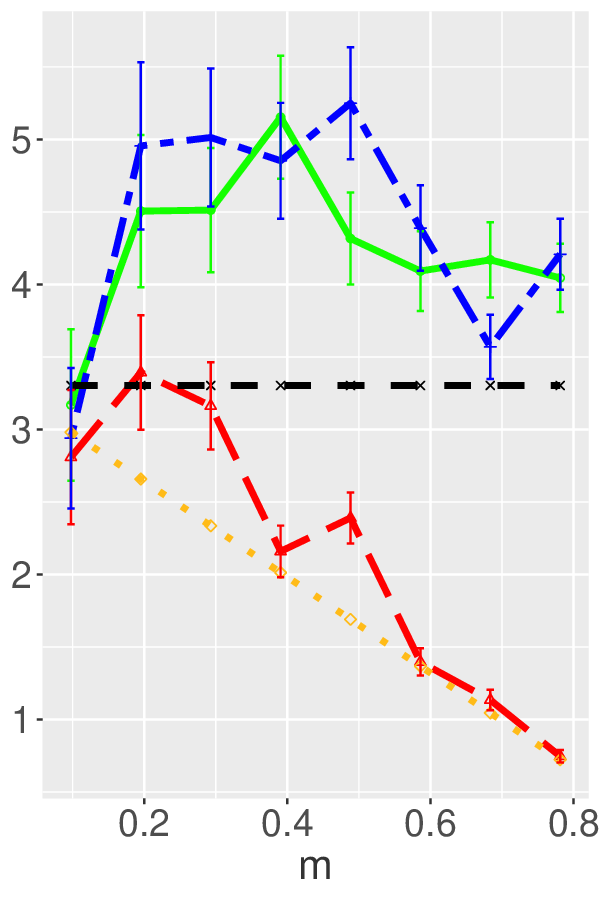}}
 	\vspace{3pt}
 	\centerline{(b) $k=1$, Case 2}
    \end{minipage}
    \begin{minipage}{0.23\linewidth}
 	\vspace{3pt}
 	\centerline{\includegraphics[width=\textwidth]{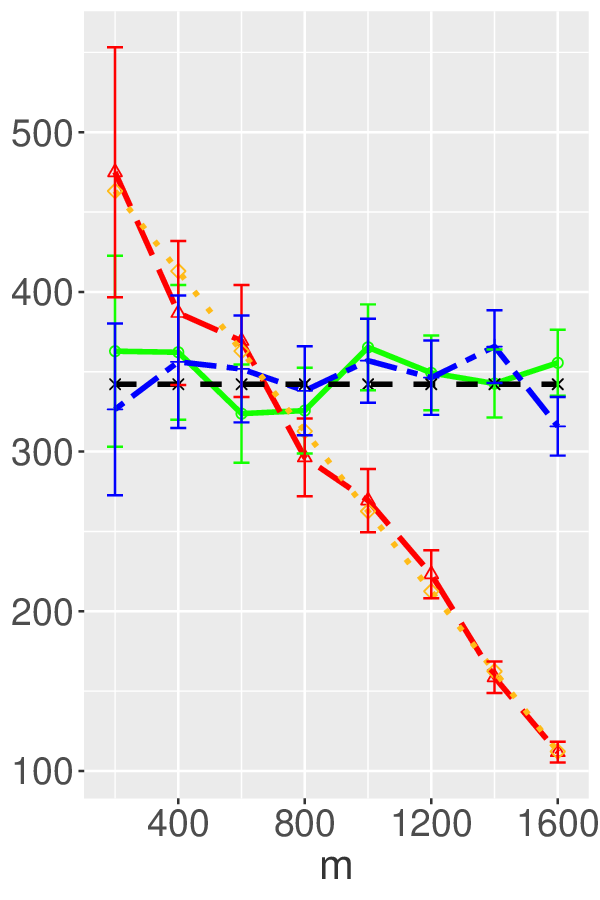}}
 	\vspace{3pt}
 	\centerline{\includegraphics[width=\textwidth]{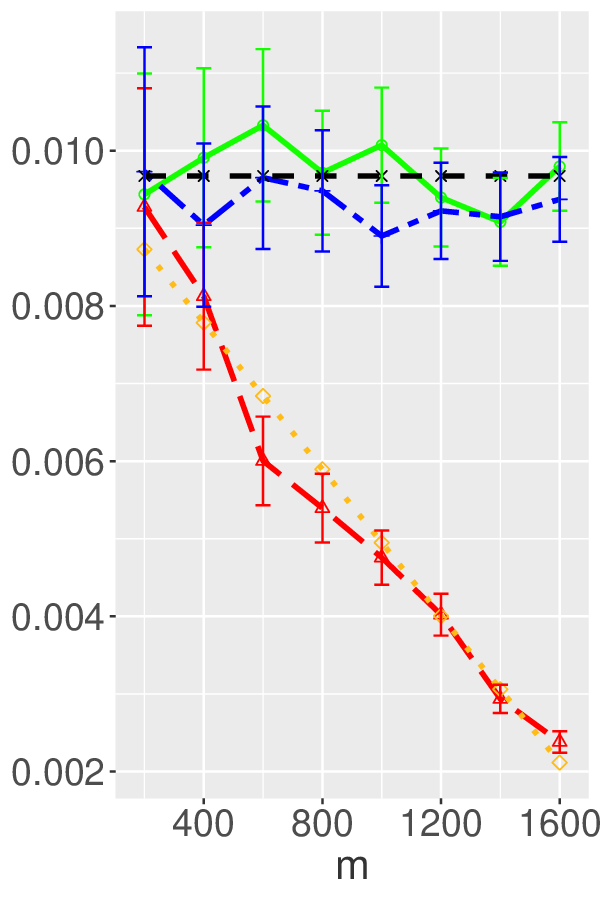}}
 	\vspace{3pt}
 	\centerline{(c) $k=1$, Case 3}
    \end{minipage}
    \begin{minipage}{0.23\linewidth}
 	\vspace{3pt}
 	\centerline{\includegraphics[width=\textwidth]{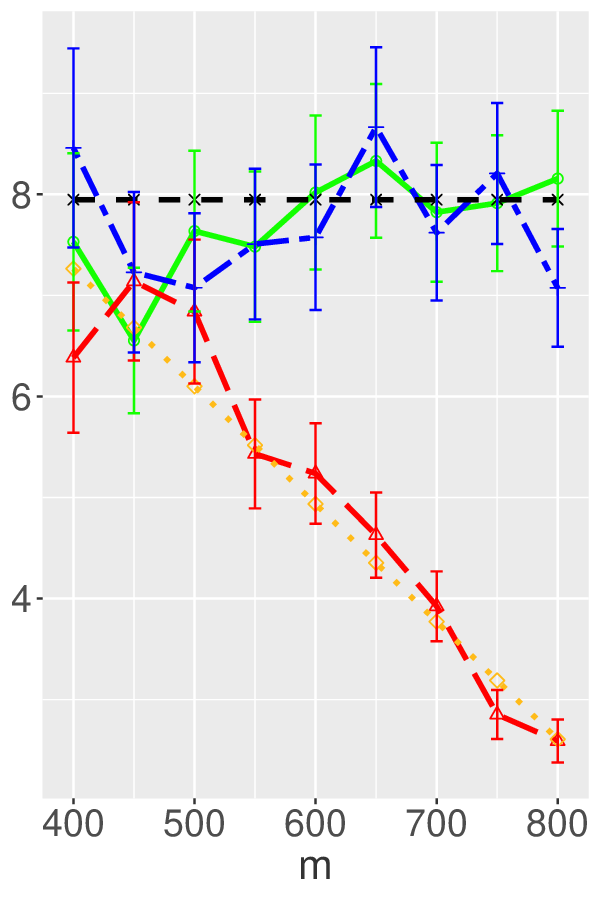}}
 	\vspace{3pt}
 	\centerline{\includegraphics[width=\textwidth]{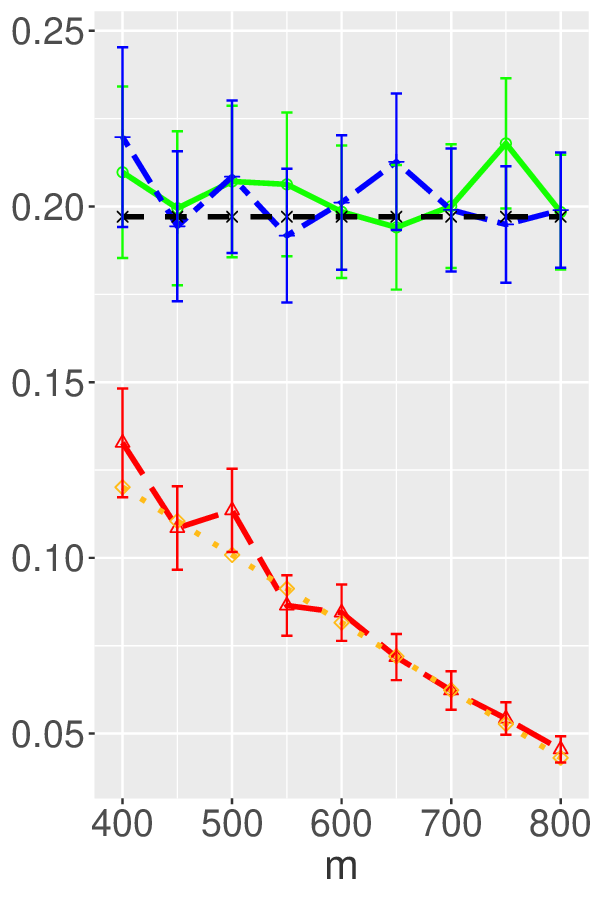}}
 	\vspace{3pt}
 	\centerline{(d) $k=1$, HGDP Dataset}
    \end{minipage}
    \caption{The variance of $\sqrt{m}\hat{\Lambda}_{m,n,k}$ and $\sqrt{m}c^\top\hat{v}_{m,n,k}$ for various cases, using the same protocol as in \Cref{n2048p15_Case1_var_coverage_merge}: 
    (a) $k=2$, Case 1. (b) $k=1$, Case 2. (c) $k=1$, Case 3. (d) $k=1$, HGDP dataset.}
    \label{var_merge}
\end{figure}

{\small
\setlength{\bibsep}{0.2pt plus 0.3ex}

% \bibliographystyle{plainnat-abbrev}
% \bibliography{arxiv_v1}
}

\end{document}